\newtheorem{theorem}{Theorem}[section]
\newtheorem{lemma}[theorem]{Lemma}
\newtheorem{e-proposition}[theorem]{Proposition}
\newtheorem{corollary}[theorem]{Corollary}
\newtheorem{e-definition}[theorem]{Definition}
\newtheorem{remark}{\it Remark\/}
\newtheorem{example}{\it Example\/}
\numberwithin{equation}{section}
\newenvironment{proof}{\par\noindent\textbf{Proof. }}{\hspace{\stretch{1}}$\square$\medskip\par}
\begin{document}

\begin{frontmatter}

 \author{Ch\'erif Amrouche $^a$}
 \ead{cherif.amrouche@univ-pau.fr}
 \address[authorlabel1]{Laboratoire de Math\'ematiques et Leurs Applications, UMR CNRS 5142\\ Universit\'e de Pau et des Pays de l'Adour,  64000 Pau,  France}
 \author{Mohand Moussaoui $^b$}
 \address[authorlabel2]{Laboratoire des \'Equations aux D\'eriv\'ees Partielles Non Lin\'eaires et
Histoire des Math\'ematiques, Ecole Normale Sup\'erieure, Kouba, Algeria}

\title{The Dirichlet Problem for the Laplacian \\ in Lipschitz Domains Revisited\\ \vspace{12pt} \large Dedicated to the memory of my friend Mohand Moussaoui}

\begin{abstract}  
The purpose of this work is to study the Dirichlet  problem:
$$
(\mathscr{L}_D)\ \ \ \  -\Delta u = f\quad \ \mbox{in}\ \Omega \quad
\mbox{and } \quad u = g \ \ \mbox{on }\Gamma,
$$
with data belonging to appropriate Sobolev spaces, when the domain $\Omega$ in $\mathbb{R}^N,$ with $N \geq 2$, is assumed to be only Lipschitz. Although this has been extensively studied by many authors, we would like to return to a number of fundamental questions and known results, such as the traces, the uniqueness and the maximal regularity of solutions. 
 
First, to treat non-homogeneous boundary conditions, we rigorously define the notion of traces for non regular functions. This approach replaces the non-tangential trace notion that has dominated the literature since the 80s. We identify a functional space 
\begin{equation*}
 E(\nabla;\, \Omega)\ =\ \left\{\,v\in H^{1/2}(\Omega);\ \nabla v\in  [\textit{\textbf H}^{\, 1/2}(\Omega)]'\,\right\},
\end{equation*}
which satisfies the embeddings $H^{1/2}_{00}(\Omega)\hookrightarrow E(\nabla;\, \Omega) \hookrightarrow H^{1/2}(\Omega)$ and the trace operator $\gamma: E(\nabla;\, \Omega)\rightarrow  L^2(\Gamma)$ is well defined, continuous and leads to a new
characterization of $H^{1/2}_{00}(\Omega)$ as precisely the kernel of this operator. Second, by using Grisvard's results and interpolation theory, we prove the following inequality:   let $\Omega$ be a polygonal domain, with $K$ angles $\omega_1, \ldots, \omega_K$ larger than $\pi$,  $\mathscr{A} = \{1 - \pi/\omega_k, \;   k = 1, \ldots, K\}$   and  $0 < s < 1$, then for any $v \in H^{2- s}(\Omega)\cap H^1_0(\Omega),$
\begin{equation}\label{inegH3demiabst}
 \Vert  v \Vert_{H^{2-s}(\Omega)} \leq C(\Omega)\Vert \Delta v \Vert_{H^{-s}(\Omega)} \quad \mathrm{iff} \quad s \notin \mathscr{A} ,
\end{equation}
where for $s = 1/2$ the space $H^{-1/2}(\Omega)$ denotes the dual space of $H^{1/2}_{00}(\Omega)$ and  $C_{P, L} (\Omega)$ depends only on $s$,  on Poincar\'e and Lipschitz constants of  $\Omega$. The same estimate holds for polyhedral domains with dihedral angle openings (toward the interior of $\Omega$) larger than $\pi$. By virtue of the inequality \eqref{inegH3demiabst}, the above characterization of $H^{1/2}_{00}(\Omega)$ and the uniqueness of $H^{1/2}(\Omega)$ solution to Problem $(\mathscr{L}_D)$,  we prove that  maximal regularity $H^{3/2}$ holds, in any bounded Lipschitz domain $\Omega$, for all right-hand sides in the dual of $H^{1/2}_{00}(\Omega)$.  This conclusion contradicts the prevailing claims in the literature since the 90s.  Third, we return to the very delicate question of the uniqueness of the solutions $W^{s, p}(\Omega)$ to  $(\mathscr{L}_D)$ problem, when $p$ lies between 1 and a limit value $p_0$ that depends only on $\Omega$.  Using explicit examples, we show that this question is poorly understood and that it has led to existence and regularity results which are only partially valid for the study of the $(\mathscr{L}_D)$ problem.

Finally, we revisit the classical Area Integral Estimate of Dahlberg, and of Kenig, Pipher, and Verchota. For a harmonic function $u$ in $\Omega$ vanishing at some interior point, the estimate asserts
\begin{equation}\label{ineg}
\int_\Gamma \vert u \vert^2 d\sigma \leq C \int_\Gamma \vert S(u)\vert^2 d\sigma \simeq C \int_\Omega \varrho \vert\nabla u\vert^2 dx,
\end{equation}
where $S(u)$ is the area integral of $u$ and $\varrho$ is the distance to the boundary. Using the inequality \eqref{inegH3demiabst} with $s = 1/2$ and an explicit function given by Ne$\mathrm{\check{c}}$as, we show that this inequality cannot hold in its stated form. However, we show that when the domain $\Omega$  is of class $\mathscr{C}^{1, 1}$, the above inequality is valid.
 Since the estimate \eqref{ineg} has been widely used to argue that  $H^{3/2}$-regularity  is unattainable for data in the dual of $H^{1/2}_{00}(\Omega)$, our counterexample provides a decisive clarification.

\medskip

\end{abstract}

\begin{keyword}
Dirichlet problem, Laplacian, Bilaplacian, fractional Sobolev spaces, weighted Sobolev spaces, traces, Lipschitz domains, maximal regularity, harmonic functions  

\MSC  35B45 \sep 35B65 \sep 35J05 \sep 35J25 \sep  35J47 
\end{keyword}

\end{frontmatter}

\tableofcontents

\section{Introduction and main results}

The purpose of this work is to study the Dirichlet  problem:
$$
(\mathscr{L}_D)\ \ \ \  -\Delta u = f\quad \ \mbox{in}\ \Omega \quad
\mbox{and } \quad u = g \ \ \mbox{on }\Gamma,
$$
with data belonging to appropriate Sobolev spaces, where the domain $\Omega$ is assumed to be only Lipschitz. When $g = 0$ 
we denote this problem by $(\mathscr{L}_D^0)$ and when $f = 0$ we denote it by $(\mathscr{L}_D^H)$.  The Dirichlet problem has been extensively studied since the 1960s. In their classical work \cite{Lions},  Lions and Magenes provided a complete analysis for smooth domains and within the $L^2$-theory. Later, Grisvard \cite{Gri} and Ne\v{c}as  \cite{Necas} investigated the case where $\Omega$ is of class $\mathscr{C}^{r,1}$, with nonnegative integer $r$, while Grisvard  \cite{Gri1}, \cite{Gri2} also studied the particular case of polygons and polyhedra. As a consequence of Calder\'on-Zygmund theory on singular integrals and boundary layer potentials, it is well known that for every $f\in W^{m-2,p}(\Omega)$ and $g\in W^{m-1/p,p}(\Gamma)$ with integer $m \geq 1$ and $1 < p < \infty$, the problem $(\mathscr{L}_D)$ admits a unique solution $u\in W^{m,p}(\Omega)$ provided that $\Omega$ is of class $\mathscr{C}^{m-1,1}$ if $m \geq 2$ and of class $\mathscr{C}^{1}$ if $m = 1$. Moreover, if $f\in W^{s-2,p}(\Omega)$ and $g\in W^{s-1/p,p}(\Gamma)$ with $ m < s < m+1$, then $u\in W^{s,p}(\Omega)$ whenever $\Omega$ is of class $\mathscr{C}^{m,1}$.  Observe that the regularity of $\Omega$ can be weakened when $ m < s < m+1/p$.

During the 1980s, considerable attention shifted to the case where $\Omega$ is merely Lipschitz, a setting in which the situation changes
dramatically (see for instance \cite{Cos, Dahl, Dahlberg, Fabes1, Fabes2, Jer1, Jer2, Verchota1}). These problems continue to attract sustained interest up to the present day (see for example \cite{Fabes3, GK, J-K, MMS, Mik, Mitrea1, Mitrea2, McL}).

As we noted above, we know that if $\Omega $ is of class $\mathscr{C}^{1}$   and $1 < p < \infty$, then for any $f\in W^{-1,\, p}(\Omega)$ and $ g\in W^{1 - 1/p,\, p}(\Gamma),$ Problem $(\mathscr{L}_D)$ admits a unique solution $u\in W^{1,\, p}(\Omega).$ In the 1980s, Ne\v{c}as raised the question of solvability for Problem $(\mathscr{L}^0_D)$, {\it i.e}, with homogeneous boundary condition $g = 0$ in Lipschitz domains, when  $f\in W^{-1,\, p}(\Omega)$. The answer to this question was given in the celebrated paper of Jerison and Kenig  \cite{J-K}, Theorem A. Specifically, if $N \geq 3$, then for any $p > 3$, there exists a Lipschitz domain $\Omega$ and $f\in \mathscr{C}^{\infty}(\overline{\Omega})$ such that the solution $u\in H^1_0(\Omega)$ of Problem $(\mathscr{L}_D^0)$ does not belong to $W^{1,\, p}(\Omega)$ (for $N = 2$, the result holds for any $p > 4$). On the other hand, for every bounded Lipschitz  domain $\Omega$, there exists $q > 4$,  when $N = 2$ and  $q > 3$  (in fact when  $q > 2N/(N-1)$) when $N \geq 3$, depending on $\Omega$,  such that if $q' < p < q,$ then Problem $(\mathscr{L}^0_D)$ admits a unique solution $u\in W^{1,\, p}_0(\Omega)$ satisfying an estimate
$$
\Vert u \Vert_{W^{1,\, p}(\Omega)}\leq C \Vert f \Vert_{W^{-1,\, p}(\Omega)}.
$$
When $\Omega$ is of class $\mathscr{C}^1$, one can in fact take $q = \infty$. It is noteworthy that the exponents $4$ and $4/3$ in dimension 2,  respectively $3$ and $3/2$ in dimension 3,  corresponding respectively to limiting cases for the existence and uniqueness of solutions in $W^{1,\, p}_0(\Omega)$, are conjugate. This reflects the self-adjoint nature of the operator $\Delta : W^{1,\, p}_0(\Omega) \longrightarrow  W^{-1,\, p}(\Omega)$.

In the present paper, we focus on solving Problems $(\mathscr{L}_D^0)$ and $(\mathscr{L}_D^H)$ when $\Omega$ is Lipschitz.  For the existence of solutions, we restrict ourselves to the Hilbertian framework of Sobolev spaces $H^s(\Omega)$ with real  $s > 0$.  The $L^p$-theory is also discussed but will be treated in a subsequent work.  Before a more in-depth study, we will simply provide  criteria for the uniqueness of $W^{1, p}$ solutions for the problem $(\mathscr{L}_D)$, which allows us to revisit certain published results (see Section \ref{secuniqueness}). While these problems are not new and have been widely investigated using various techniques since the 1980s, we show that the limiting cases $s= 1/2$ and $s = 3/2$ for Problem $(\mathscr{L}_D^0)$ remain far from fully understood. In this direction, we establish new results concerning traces of functions in $H^s(\Omega)$, for $s \leq 1/2,$ under additional structural conditions. These results play a crucial role in achieving maximal $H^{3/2}$-regularity for right-hand sides belonging to the dual space of $H^{1/2}_{00}(\Omega)$.

It is worth noting that several arguments in the literature suggest that such maximal regularity fails, based on the so-called \emph{Area Integral Estimate}  \eqref{ineg}. However, we show that this inequality does not, in fact, hold in the general Lipschitz setting.

In the remainder of this introduction, we present our main results. Unless otherwise specified, $\Omega$ will denote a bounded Lipschitz
 domain in  $\mathbb{R}^N$, with $N \geq 2$. Throughout this manuscript the vector fields and the spaces of vector fields are denoted by bold fonts.
 
 \bigskip

\begin{center}
{\bf Main results}
\end{center}

The first result concerns norms equivalences,  see Section \ref{Preliminary results in weighted Sobolev}, which are one of the keys to establish the maximal $H^{3/2}$ regularity for the Laplace equation with a
homogeneous Dirichlet condition. 

\begin{theorem}[{\bf Norms Equivalences}]\label{c06-l1MR}

\noindent{i)} Let $0 \leq s \leq 1$. There exist constants $C_1> 0$  and $C_2> 0$ depending only on $s$ on the Poincar\'e constant of $\Omega$  such that for any $v\in H^s(\Omega)$
 \begin{equation*}
 \inf_{K\in\, \mathbb{R}} \Vert\,v+K\, \Vert_{H^{s}(\Omega)}\leq C_1\Vert\,\nabla v\Vert_{ [\widetilde{\textit{\textbf H}}\,^{1-s}(\Omega)]' } \leq C_2\Vert\, \varrho^{1-s}\nabla v\Vert_{{ \textit{\textbf L}}^2(\Omega) },
 \end{equation*}
 where $\varrho$ is the distance function to the boundary of $\Omega$ and $\widetilde{\textit{\textbf H}}\,^{1-s}(\Omega)$ is equal to $ {\textit{\textbf H}}\,^{1-s}_0(\Omega)$ if $s \neq 1/2$ and equal to  ${\textit{\textbf H}}\,^{1/2}_{00}(\Omega)$ if not.\\
ii) Let $v\in\mathscr{D}'(\Omega) $ and $0 \leq s \leq 1.$ Then we have the following implications 
\begin{equation*}
\sigma^{1-s}\nabla v\in  {\textit{\textbf L}}^2(\Omega) \Longrightarrow \nabla v\in  [\widetilde{\textit{\textbf H}}\,^{1-s}(\Omega)]'\Longrightarrow v \in H^s(\Omega),
\end{equation*}
 where $\sigma$ is the regularized distance function to the boundary of $\Omega$.\\
iii) For the critical case $s = 1/2$, we have also the following equivalence norms: 
 \begin{equation*}
 \Vert v \Vert_{H^{1/2}_{00}(\Omega)} \simeq \Vert \nabla v \Vert_{[H^{1/2}(\Omega)]'}, \quad for \; v\in H^{1/2}_{00}(\Omega),
\end{equation*} 
where 
\begin{equation*}\label{a3-e3}
H^{1/2}_{00}(\Omega)\ =\ \left\{v\in H^{1/2}(\Omega); \ \ v/ \sqrt \varrho \in L^2(\Omega)\right\}.
\end{equation*}
\end{theorem}

The second result is about traces of functions belonging to Sobolev spaces, which is crucial in the study of boundary value problems. We know that if $v\in H^s(\Omega)$ with $s > 1/2$ then the function $v$ has a trace which belongs to  $H^{s-1/2}(\Gamma)$. However, if $v\in H^{1/2}(\Omega)$ only, in general this function $v$ may have no trace. In the following theorem, we see  that  a very fine additional condition on $\nabla v$  allows to obtain such a trace, see Section \ref{traces}.

\begin{theorem}[{\bf Traces in the limit cases}]\label{TracesH1demigradH1demiprime1} 

i) The linear mapping $\gamma: v \mapsto v_{\vert \Gamma}$ defined on $\mathscr{D}(\overline{\Omega})$ can be extended by continuity to a linear and continuous mapping, still denoted $\gamma$, from $E(\nabla;\, \Omega)$ into $L^2(\Gamma)$.\\
ii) The kernel of $\gamma$ is equal to $H^{1/2}_{00}(\Omega)$, which gives the following characterisation:
$$
H^{1/2}_{00}(\Omega) =\ \left\{\,v\in H^{1/2}(\Omega);\ \nabla v\in  [\textit{\textbf H}^{\, 1/2}(\Omega)]'\, and\; v = 0 \;  on\, \Gamma \right\}.
$$ 
iii) If $\Omega$ is of class $\mathscr{C}^{1, 1}$, the mapping $\gamma: E(\nabla;\, \Omega) \rightarrow L^2(\Gamma)$ is surjective. 
\end{theorem}
As a consequence we get immediately the following implication: 
$$
v\in H^{3/2}(\Omega)\quad \mathrm{with}\quad \nabla^2 v\in  [\textit{\textbf H}^{\, 1/2}(\Omega)]' \quad \Longrightarrow \quad v_{\vert\Gamma} \in H^1(\Gamma) \quad \mathrm{and}\quad \partial_{\textit{\textbf n}} v \in L^2(\Gamma).
$$
Moreover, we have the following property: 
\begin{equation*}\label{traceH^{3/2}_{00}1}
v\in H^{3/2}_{00}(\Omega)\quad \Longrightarrow \quad v = 0 \quad \mathrm{in}\; H^1(\Gamma)\qquad  \mathrm{and}\qquad \partial_{\textit{\textbf n}} v    = 0 \quad \mathrm{in}\; L^2(\Gamma).
\end{equation*}
The notation $\nabla^k v$ denotes the derivatives $\partial^\alpha v$ with $\vert \alpha \vert = k$ and $\nabla^2 v$ denotes the Hessian of $v$. \medskip

Before exploring the questions of existence and regularity for the problem $(\mathscr{L}_D)$, it is important to first examine the case where the domain $\Omega$ is such that:
\begin{equation}\label{hyppoly}
\Omega\; \mathrm{is \, \, a \, \, polygon\, \, with} \, \,  n \, \, \mathrm{angles\, \, satisfying}:\quad \pi  <  \omega_1\leq \ldots \leq \omega_n. 
\end{equation}
Recall that the following kernel space
$$
\mathscr{N}_0(\Omega) =: \{\varphi \in L^{2}(\Omega); \;  \Delta \varphi = 0 \;\; \mathrm{in}\,  \Omega \; \; \mathrm{and} \;\;   \varphi = 0 \; \;on \, \Gamma\}
$$
which is not trivial contrarily to the case where $\Omega$ is $\mathscr{C}^{1, 1}$ or convex is of dimension $n$. We denote by $z_{1}, \ldots, z_{n}$ a base of this space (see Section \ref{poly} for more details). 

\begin{theorem} [{\bf Solutions in $H^{s}(\Omega)$ with $\Omega$ polygon}] \label{regpolthetasm+} Let $\Omega$ satisfying \eqref{hyppoly}. Setting  $\alpha_k = \pi/\omega_k$ for $k = 1, \ldots, n$ and by convention $\alpha_0 = 1$. Then,\\
i) the following operators are isomorphisms:
\begin{equation*}
\ \Delta : H^{2-\theta}(\Omega) \cap H^1_0(\Omega)\longrightarrow {H}^{-\theta}(\Omega),\quad  \mathrm{for} \; \theta \in \, ] 1 - \alpha_n, 1[, \; \mathrm{ with }\; \theta \neq 1/2,  
\end{equation*}
and
\begin{equation*}
 \Delta : H^{3/2}_0(\Omega)\longrightarrow  [{H}^{1/2}_{00}(\Omega)]'  \quad  \mathrm{for} \; \theta = 1/2.
\end{equation*}
ii) For any fixed $k = 0, \ldots, n-1$, $\theta \in \, ] 1 - \alpha_k, 1 - \alpha_{k+1}[$ and $f\in H^{-\theta}(\Omega)$ satisfying the following compatibility condition
\begin{equation*}
\forall \varphi \in \langle z_{k+1}, \ldots, z_{n}\rangle, \quad \langle f, \varphi\rangle = 0, 
\end{equation*}
Problem  $(\mathscr{L}_D^0)$ has a unique solution $u \in H^{2-\theta}(\Omega)$.\\
iii) For critical values $\theta = 1 - \alpha_k $, with $k = 1, \ldots, n$,   the operator
\begin{equation*}\label{CC}
 \Delta : H^{1 + \alpha_k}(\Omega) \cap H^1_0(\Omega)\longrightarrow M_{1 - \alpha_k}(\Omega)
\end{equation*}
is an isomorphism, where $M_\theta(\Omega) : = [(\mathscr{N}_0(\Omega))^\bot,  H^{-1}(\Omega)]_{\theta}$ and $(\mathscr{N}_0(\Omega))^\bot$ is the orthogonal subspace of
$\mathscr{N}_0(\Omega).$ Moreover  
$$
\bigcap_{r < 1 - \alpha_k}H^{-r}(\Omega) \hookrightarrow M_{1 - \alpha_k}(\Omega)\hookrightarrow H^{-1 + \alpha_k}(\Omega),$$
where the topology of $ M_{1 - \alpha_k}(\Omega)$ is finer than that of $ H^{-1 + \alpha_k}(\Omega)$.
\end{theorem}

\begin{remark}\label{rem3demi+eps} \upshape The results in Point i)  (also valid in the case of polyhedra) are not new, unlike those in Point ii) and Point iii). Note that when $\alpha_n$ is near 1/2, the  domain $\Omega$ is close to a cracked domain and the expected regularity in this case is better than $ H^{3/2}$. That means that for any nonconvex polygon there exists $\varepsilon = \varepsilon(\omega_n)\in \, ]0, 1/2[$ depending on  $\omega_n$ (in fact, $\varepsilon(\omega_n) = \alpha_n - 1/2$) such that for any $ 0 < s < \varepsilon$ and $f \in H^{s -1/2}(\Omega)$, the  $H^1_0(\Omega)$ solution of Problem $(\mathscr{L}_D^0)$ belongs to $ H^{s + 3/2}(\Omega)$. This result is not valid for general Lipschitz domain. However, we shall see that the $H^{3/2}(\Omega)$ regularity holds for RHS in the dual of $H^{1/2}_{00}(\Omega)$  for any bounded  Lipschitz domain $\Omega$.
\end{remark}

The next result of this manuscript addresses the $L^p$-theory. Unlike the case where the domain $\Omega$ is regular, the kernel space of harmonic functions $W^{s, p}(\Omega)$ that are zero on the boundary of $\Omega$, we denote by $W^{s, p}_0(\Omega)\cap \mathscr{H}$,  is generally non-trivial for a large collection of $s$ and $p$, as shown by the following explicit example. Indeed, let us  consider the following domain:
\begin{equation*}\label{domOmeg}
\Omega = \{(r, \theta);\; 0 < r < 1,\quad 0 < \theta < \pi /a\}, \quad \mathrm{with}\; a > 1/2\quad  \mathrm{and}\quad  \mathrm{close \, \, to } \, \, 1/2.
\end{equation*}
We can easily verify that the following function
\begin{equation*}\label{sing}
z(r, \theta) = (r^{-a} - r^{a})\mathrm{sin}(a\theta)
\end{equation*}
is harmonic in $\Omega$ with $z = 0$ on $\Gamma$. Moreover, given $s$ in the interval $[0, 3/2[$, it is easy to verify that the function $z$ belongs to $ W^{s, p}(\Omega) $ for any $p$ such that $1< p < 2/ (a + s)$. In particular for $s = 1$,  we have $z \in   W^{1, p}(\Omega) = L^p_1(\Omega) $ for any $p$ such that $1< p < 2/ (a + 1)$,  where $2/ (a + 1)$ is strictly less than  $4/3 $ but close  to $4/3$. We can also see  that $z$ belongs to $W^{3/2 - \varepsilon, 1}(\Omega)$ for any $\varepsilon > a - 1/2$ and $z \in L^q(\Omega)$ for any $q  < 2/a$.  We give in Section  \ref{secuniqueness} another example when $N =  3$. Note that this kernel space may even be of infinite dimension when $N \geq 3$.  

Let us now recall the following result established by Jerison and Kenig in \cite{J-K}, Proposition 5.17: let $\Omega$ be a bounded Lipschitz domain and suppose that $v$ satisfies the following properties:
\begin{equation*}\label{KerJK}
v \in L^p_s(\Omega)\; \; \mathrm{for \, some} \; s > 1/p, \quad \mathrm{with}\; \Delta v = 0\; \;  \mathrm{in}\,  \Omega  \quad \mathrm{and}\quad   v = 0\; \; \mathrm{on}\; \Gamma,
\end{equation*} 
then $v$ is identically zero.   This result is in fact true for $\mathscr{C}^1$ domains but not for Lipschitz domains in general, as shown in the example above with  $s = 1$ and $1 < p <   2/ (a + 1)$. 

This shows that this issue is poorly understood. Unfortunately, this lack of understanding has led to existence and regularity results that are only partially valid for the study of the problem $(\mathscr{L}_D)$.
 
In Section \ref{secuniqueness}  we prove the following:

\begin{theorem} [{Uniqueness Criteria}] \label{thunicityintro}  Let $\Omega$ be a bounded Lipschitz  domain of $\mathbb{R}^N$ with $N \geq 2$. 

\noindent i) Let \(u \in H^{1/2}(\Omega)\) be harmonic in \(\Omega\) and satisfy
\(u|_{\Gamma} = 0\). Then

\[u \equiv 0 \quad \text{in } \Omega.\]

\noindent ii) More generally, 
\begin{equation*}\label{carkerdim2}
\forall \, 1/2 < \alpha < (N+1)/2, \quad 
W^{\alpha, 2N/(N+2\alpha-1)}_0(\Omega) \cap \mathscr{H}= \{0\},
\end{equation*}
or equivalently
\begin{equation*}\label{carkerdim3}
\forall \, 1 < p <  2, \quad 
W^{1/2 + N(1/p - 1/2), \, p}_0(\Omega) \cap \mathscr{H}= \{0\},
\end{equation*}
where $ \mathscr{H} $ is the space of harmonic functions in $\Omega$.

\noindent iii) For any $ p <  2N/(N+1)$, there exists a bounded Lipschitz  domain $\Omega$ such that
\begin{equation*}\label{carkerdim4}
W^{1, p}_0(\Omega) \cap \mathscr{H}\neq  \{0\}.
\end{equation*}
More generally, let $1 < p < 2$ be fixed. For any $1 < r < p$, there exists a bounded Lipschitz  domain $\Omega$ such that
\begin{equation*}\label{carkerdim5}
W^{1/2 + N(1/p - 1/2), \, r}_0(\Omega) \cap \mathscr{H}\neq \{0\}.
\end{equation*}

\noindent iv) For any bounded Lipschitz  domain $\Omega$ of $\mathbb{R}^N$, with $N \geq 2$, there exists $p_0(\Omega) < 2N/(N+1)$ such that
\begin{equation}\label{carkerLp1bintro}
W^{1, p}_0(\Omega) \cap \mathscr{H} =  \{0\} \, \; \mathrm{if} \; \; p \geq p_0(\Omega)\quad  and  \quad W^{1, p}_0(\Omega) \cap \mathscr{H}\neq  \{0\} \, \; \mathrm{if} \; \; 1< p < p_0(\Omega).
\end{equation}
When $\Omega$ is $\mathscr{C}^1$ or convex,  the exponent $p_0(\Omega)$ may be taken to be equal $1$. 

\noindent v) Moreover if $\Omega$ is a polygonal (resp. 3D polyedral), we have
\begin{equation*}\label{defp02D}
p_0(\Omega) = 2\omega^\star/(\pi + \omega^\star) \quad \mathrm{with} \quad \omega^\star < 2\pi \; \mathrm{is \, \, the\, \,  larger\, \,  angle\, \,   of} \; \Omega, \quad \mathrm{for}\; N = 2. 
\end{equation*}
and
\begin{equation*}\label{defp03D}
p_0(\Omega) = 3\omega^\star/(\pi + \omega^\star) \quad \mathrm{with} \quad \omega^\star < \pi \; \mathrm{is \, \, the\, \,  larger\, \, polar\, \, angle\, \,   of} \; \Omega, \quad \mathrm{for}\; N = 3. 
\end{equation*}
\end{theorem}

The kernels above are very useful, not only for studying uniqueness, but also to study the regularity of the solutions to the Dirichlet problem for the Laplacian.  In \cite{J-K} Theorem 0.5, the authors write that for any bounded Lipschitz domain $\Omega$ in $\mathbb{R}^N$, there is an exponent  $p_1$ with $p_1 > 4$ when $N = 2$ and $p_1 > 3$ when $N \geq 3$, such that if $p_1' < p < p_1 $ and $f\in W^{-1, p}(\Omega)$,  then the inhomogeneous Dirichlet problem $(\mathscr{L}^0_D)$ has a unique solution $u \in W^{1, p}(\Omega)$. In fact, the maximal interval for the exponent $p$ to get a unique solution in $W^{1, p}(\Omega)$ decreases, in the sense of inclusion, as the dimension $N$ increases and the exponent $p_1$ given in Theorem 0.5 satisfies, more precisely, the inequality  
$$
p_1 > 2N/(N-1)\quad \mathrm{ for\, \,  any}\quad N\geq 2.
$$  

Another very important consequence of Theorem \ref{thunicityintro} is that Theorems 1.1 and 1.3 of \cite{J-K}, as stated, need to be reformulate, since conditions (b) and (c) are inappropriate. As these two theorems, concerning the solvability in $L^p_s(\Omega)$ for Problem ($\mathscr{L}_D^0$),  are widely used, we felt it was important to revisit them,  see Section \ref{secuniqueness} for more details. For further information, see also  the Appendix.\\

In \cite{Necas}, Ne\v{c}as proved the following property (see Theorem 2.2 Section 6): if $\varrho^{\alpha/p} u\in L^p(\Omega)$ and $\varrho^{\alpha/p}\nabla u \in L^p(\Omega)$, with $0 \leq \alpha < p - 1$, then $u_{\vert\Gamma} \in L^p(\Gamma)$ and 
\begin{equation*}\label{inegNec}
\int_\Gamma \vert u \vert^p d\sigma \leq C(\Omega) (\int_\Omega \varrho^\alpha\vert  u \vert^p dx + \int_\Omega \varrho^\alpha\vert \nabla u \vert^p dx),
\end{equation*}
where $\varrho$ is the distance function to the boundary of $\Omega$. However, if $\alpha = p - 1$, the above inequality does not hold in general, as proved in a counter example with $\Omega= \,  ]0, 1/2[\,  \times\,  ]0, 1/2[ $. In particular, if $ \sqrt \varrho \, \nabla u \in L^2(\Omega)$, corresponding to the case  $p = 2$ and $\alpha = 1$, in which case we know  that $u \in H^{1/2}(\Omega)$ (see Theorem \ref{c06-l1}), the function $u$ may have no trace in $L^2(\Gamma)$.  At higher order and for example if $u\in H^{3/2}(\Omega)$, there is a $\mathscr{C}^1$ domain $\Omega \subset \mathbb{R}^2$ and a function $u\in H^{3/2}(\Omega)$ whose trace on $\Gamma$ does not have a tangential derivative in $L^2(\Gamma)$ (see Proposition 3.2 in  \cite{J-K}). However that is the case when the domain $\Omega$ is of class $\mathscr{C}^{1, 1}.$

What about if in addition the function $u$ is harmonic? In \cite{Dahl}, see Corollary, Section 6,  the author proved the following property: let $\textit{\textbf x}_0$ a fixed point in $\Omega$, then there exists a constant $C > 0$ such that if $u$ is a harmonic function in $\Omega$ and vanishes at $\textit{\textbf x}_0$, then 

\begin{equation}\label{inegaltraceL2Gamma1bis}
C^{-1} \Vert u \Vert_{L^2(\Gamma)} \leq \Vert \sqrt  \varrho\,  \nabla u  \Vert_{L^{2}(\Omega)} \leq C \Vert u \Vert_{L^2(\Gamma)}.
\end{equation}

In \cite{Dahlberg}, the authors showed in the same context,  see pages 1428 and 1429,  but with different proof, a result close to the one mentioned above:  there exists $C > 0$ such that
\begin{equation*}\label{ineg1bis}
\int_\Gamma \vert u \vert^2 d\sigma \leq C \int_\Gamma \vert S(u)\vert^2 d\sigma = C \int_\Omega \delta \vert\nabla u\vert^2 dx,
\end{equation*}
without any specification of spaces of such function and with unjustified formal calculations. Here $S(u)$ is the area integral of $u$ and $\delta$ is an adaptative distance to the boundary, equivalent  to the distance $\varrho$ to the boundary $\Gamma$. \medskip

Recall that if $u\in L^2(\Omega)$, then we have the following implications: 
$$
\sqrt \varrho\,  \nabla u \in L^2(\Omega) \Longrightarrow u \in H^{1/2}(\Omega) \quad \mathrm{and}\quad \sqrt \varrho\,  \nabla^2 u \in L^2(\Omega) \Longrightarrow u \in H^{3/2}(\Omega)
$$
and the reverse implications hold if moreover $u$ is a harmonic function, see Theorem 3.2 and Theorem 3.8 in \cite{AM} or Theorem 4.2 in \cite {J-K}. Here $\nabla^2 u$ denotes the Hessian matrix of $u$. In addition,  we have  the following equivalence norms for harmonic functions:
\begin{equation*}
\begin{array}{rl}
\Vert u \Vert_{H^{1/2}(\Omega)} \approx & \Vert u \Vert_{L^2(\Omega)} + \Vert \sqrt  \varrho\,  \nabla u  \Vert_{L^{2}(\Omega)} \quad \mathrm{for}\; u \in H^{1/2}(\Omega)\cap \mathscr{H}, \\
\Vert u \Vert_{H^{3/2}(\Omega)} \approx & \Vert u \Vert_{L^2(\Omega)} + \Vert \sqrt  \varrho\,  \nabla^2 u  \Vert_{L^{2}(\Omega)} \quad \mathrm{for}\; u \in H^{3/2}(\Omega)\cap \mathscr{H}. 
\end{array}
\end{equation*}
Inequalities \eqref{inegaltraceL2Gamma1bis} would then result in the equivalence of the following norms:
\begin{equation*}
\Vert u \Vert_{H^{1/2}(\Omega)} \approx  \Vert u \Vert_{L^2(\Gamma)} \quad \mathrm{for}\; u \in H^{1/2}(\Omega)\cap \mathscr{H} 
\end{equation*}
and would imply that any harmonic function $H^{1/2}(\Omega)$ has a trace  in $L^2(\Gamma)$.  Consequently, the gradient of any harmonic function $u$ in $H^{3/2}(\Omega)$ would have a trace in $\textit{\textbf L}^2(\Gamma)$, {\it i.e} $u\in H^1(\Gamma)$ and $\partial_\textit{\textbf n} u \in L^2(\Gamma)$.\medskip

In the same spirit, the first  inequality in  \eqref{inegaltraceL2Gamma1bis} would imply the following property: let $u$ be a harmonic function in $\Omega$ satisfying $u(\textit{\textbf x}_0) = 0$ and $\nabla u(\textit{\textbf x}_0) = {\bf 0}$ at some point $\textit{\textbf x}_0\in \Omega$, then
\begin{equation}\label{inegH32intro}
\Vert u \Vert_{H^1(\Gamma)} \leq C(\Omega)\Big(\int_\Omega \varrho\vert \nabla^2 u \vert^2 dx\Big)^{1/2}.
\end{equation}
And as above,  we would have the equivalence of the following norms:
\begin{equation*}
\Vert u \Vert_{H^{3/2}(\Omega)} \approx  \Vert u \Vert_{H^1(\Gamma)} \quad \mathrm{for}\; u \in H^{3/2}(\Omega)\cap \mathscr{H}.
\end{equation*}

In the following proposition, we give a counter-example which shows that the inequality \eqref{inegH32intro} and then  the first  inequality in  \eqref{inegaltraceL2Gamma1bis}  cannot in general be satisfied for Lipschitz domain $\Omega$, see Proposition \ref{ConterexampleH1demitrace1b}. However it is the case when $\Omega$ is $\mathscr{C}^{1, 1}$  as we can see in Section \ref{traces2}.\smallskip

\begin{e-proposition}  [{\bf Counter Example}]  \label{ConterexampleH1demitrace1}For any $\varepsilon > 0$, there is a Lipschitz domain $\Omega_\varepsilon \subset \mathbb{R}^2$ and a harmonic function $w_\varepsilon \in H^{3/2}(\Omega_\varepsilon), $ with $\sqrt \varrho_\varepsilon\, \nabla^2 w_\varepsilon  \in L^2(\Omega_\varepsilon) $ and where $\varrho_\varepsilon$ is the distance to the boundary $\Gamma_\varepsilon$, such that the following family 
$$
( \Vert w_\varepsilon \Vert_{H^{3/2}(\Omega_\varepsilon)} + \Vert \varrho_\varepsilon \nabla^2 w_\varepsilon \Vert_{L^{2}(\Omega_\varepsilon)})_\varepsilon,
$$ is bounded with respect $\varepsilon$ and
\begin{equation*}
\lim_{\varepsilon\rightarrow  0}\Vert w_\varepsilon \Vert_{H^1(\Gamma_\varepsilon)} = + \infty .
\end{equation*}
\end{e-proposition}
Of course, the above statement can be replaced by the following:
for any $\varepsilon > 0$, there is a Lipschitz domain $\Omega_\varepsilon \subset \mathbb{R}^2$ and a harmonic function $w_\varepsilon \in H^{1/2}(\Omega_\varepsilon)$, with $\sqrt \varrho_\varepsilon\, \nabla w_\varepsilon  \in L^2(\Omega_\varepsilon)$, such that the following family 
$$
( \Vert w_\varepsilon \Vert_{H^{1/2}(\Omega_{\varepsilon})} +  \Vert \varrho_\varepsilon \nabla w_\varepsilon \Vert_{L^{2}(\Omega_{\varepsilon})})_\varepsilon,
$$
is bounded with respect $\varepsilon$ and
\begin{equation*}
\Vert w_\varepsilon \Vert_{L^2(\Gamma_\varepsilon)} \rightarrow \infty \quad as \; \varepsilon\rightarrow  0.
\end{equation*}
As a consequence of Proposition \ref{ConterexampleH1demitrace1}, arguments in the literature relying on
inequality \eqref{inegH32intro} to conclude the failure of \(H^{3/2}\)-regularity are
invalid.\medskip

This brings us to the question of the maximal regularity in the case of Lipschitz domains. It is well known that for any $1/2 < s < 3/2$, Problem $(\mathscr{L}_D^0)$ has a unique solution $u \in H^{s}(\Omega)$ for every $f \in  H^{s-2}(\Omega)$. Moreover if $f\in L^2(\Omega)$ (or even if $f\in H^{-s}(\Omega)$ for any $s < 1/2$), then there exists a unique solution $u\in H^{3/2}_0(\Omega)$ to Problem $(\mathscr{L}_D^0)$. But these both assumptions on $f$ are too strong. So it would be interesting to characterize the range of  $H^{3/2}_0(\Omega)$ by the Laplacian operator. One of our main results, which was not proved yet so far as we know, is given by the next theorem, see Section 
\ref{Inhomogeneous Problem}.

\begin{theorem} [{\bf Solutions in $H^{3/2}_0(\Omega)$ and in $H^{1/2}_{00}(\Omega)$}] \label{IsoDeltaH3/2H1/21}

i) The operators 
\begin{equation}\label{isoH3demiintro}
\Delta : H^{3/2}_0(\Omega) \longrightarrow [{H}^{1/2}_{00}(\Omega)]'\quad and \quad \Delta : H^{1/2}_{00}(\Omega) \longrightarrow  [H^{3/2}_0(\Omega)]'
\end{equation}
are isomorphisms.\\
ii) For any $f\in [H^{1/2}(\Omega)]' $ satisfying the compatibility condition 
\begin{equation*}
 \forall  \varphi \in H^{1}(\Omega)\cap \mathscr{H}, \quad \langle f, \, \varphi  \rangle = 0,
\end{equation*} 
where 
$$
H^{1}(\Omega)\cap \mathscr{H}\ =\ \left\{ v\in H^{1}(\Omega); \ \ \Delta v=0  \;\; in\;\; \Omega\right\},
$$
there exists a unique solution $u\in  H^{3/2}_{00}(\Omega)$ such that $\Delta u = f$ in $\Omega$.  In addition to the Dirichlet  boundary condition $u = 0$, the normal derivative of this solution satisfies  $\partial_{\textit{\textbf n}}u = 0$. \\
\end{theorem}

In particular, this shows that the  regularity \(H^{3/2}\) holds for all bounded Lipschitz domains, in contrast with previous claims in the literature.  According to Remark \ref{rem3demi+eps}, this maximal regularity $H^{3/2}$ is not surprising. Moreover, recall that the operator
\begin{equation}\label{isoW12N/N-1intro}
\Delta : W^{1, 2N/(N- 1)}_0(\Omega) \longrightarrow W^{-1, 2N/(N- 1)}(\Omega)
\end{equation}
is an isomorphism for $N \geq 2$. As by Sobolev embeddings, we know that $ H^{3/2}_0(\Omega) \hookrightarrow W^{1, 2N/(N- 1)}_0(\Omega)$ and  $[{H}^{1/2}_{00}(\Omega)]'  \hookrightarrow W^{-1, 2N/(N- 1)}(\Omega)$, the first operator in \eqref{isoH3demiintro} can be considered as a regularity result with respect the isomorphism \eqref{isoW12N/N-1intro}.

From Theorem \ref{TracesH1demigradH1demiprime1}, the second isomorphism in Point i) above means that for any $f\in [H^{3/2}_0(\Omega)]'$, there exists a unique solution $u\in H^{1/2}_{00}(\Omega)$ satisfying 
$$
\Delta u = f\quad \mathrm{ in }\; \Omega\quad \mathrm{ and }\quad u = 0 \quad  \mathrm{on}\;  \Gamma.
$$ 
Furthermore, we deduce by interpolation that for any $0 < s < 1$, the following operator 
\begin{equation*}\label{isoHsb}
\Delta : H^{3/2- s}_0(\Omega) \rightarrow H^{-1/2-s}(\Omega)
\end{equation*}
is an isomorphism.

Concerning Problem $(\mathscr{L}_D^H)$, using harmonic analysis techniques, many authors have established existence results (see \cite{Jer1} and \cite{J-K}). In the case where $g\in L^2(\Gamma)$, it is proved in \cite{Dahl77} the existence of a unique harmonic function such that $u$ tends  nontangentially to $g$ {\it a.e} on $\Gamma$ (see \cite{Dahl77})  and $u$ satisfies
$$
\Vert u^\star\Vert_{L^2(\Gamma)} \leq C \Vert g \Vert_{L^2(\Gamma)}.
$$
Here, the nontangential maximal function $u^\star$ is defined by
$$
\textit{\textbf z}\in \Gamma, \quad u^\star(\textit{\textbf z}) = \sup_{\textit{\textbf x}\, \in \, \Gamma(\textit{\textbf z})} \vert u(\textit{\textbf x})\vert,
$$
where $\Gamma(\textit{\textbf z})  = \{\textit{\textbf x}\in \mathbb{R}^N; \; \vert \textit{\textbf x} - \textit{\textbf z}\vert < C \varrho(\textit{\textbf x})\},$ for a suitable constant $C > 1$ is a nontangential cone with vertex at $\textit{\textbf z}$. Recall that the notation $\varrho(\textit{\textbf x})$ denotes the distance from $\textit{\textbf x}\in \Omega$ to $\Gamma$. Similarly, when $g\in H^1(\Gamma)$, there exists a unique harmonic function such that $u = g$ on $\Gamma$ and $u$ tends  nontangentially to $g$ {\it a.e} on $\Gamma$ (see \cite{Jer1})  with the estimate
$$
\Vert (\nabla u)^\star\Vert_{L^2(\Gamma)} \leq C \Vert g \Vert_{H^1(\Gamma)}.
$$
This idea of a non tangential maximal function likely originates from the half-space: in \cite{Stein}, the author consider the Poisson integral of $g\in L^p(\mathbb{R}^{N-1})$, defined by
\begin{equation}\label{intPoisson}
u(\textit{\textbf x}\,', x_N) = P_{x_{N}} \star g (\textit{\textbf x}\,'), \quad \mathrm{with}\quad  P_{x_{N}}(\textit{\textbf x}\,') = c_N\frac{x_N}{(\vert \textit{\textbf x}\,' \vert ^2 + x_N^2)^{(N-1)/2}}, \quad \textit{\textbf x}\,' \in \mathbb{R}^{N-1}, \quad x_N > 0,
\end{equation}
which is harmonic in the half-space $\mathbb{R}^{N}_+$ and satisfies 
$$
\lim_{x_N\rightarrow 0} u(\textit{\textbf x}\,', x_N)  = g(\textit{\textbf x}\,'), \quad \mathrm{for\, \,  almost\, \, every}\;  \textit{\textbf x}\,' \in \mathbb{R}^{N-1}.
$$
Furthermore
$$
\lim_{
\begin{array}{c}
\textit{\textbf x}\rightarrow \textit{\textbf z}\\  \textit{\textbf x}\in \Gamma(\textit{\textbf z})
\end{array}}
 u(\textit{\textbf x})  = g( \textit{\textbf x}\, '^0), \quad \mathrm{for\, \,  almost\, \, every}\;  \textit{\textbf z} = ( \textit{\textbf x}\, '^0, 0)  \in \partial \mathbb{R}^{N}_+.
$$
However the relation \eqref{intPoisson} does not allow to define the trace of $u$  on $ \mathbb{R}^{N-1}$. Nevertheless, we can study the problem $(\mathscr{L}^0_D)$ for data $g$ in $ L^p(\mathbb{R}^{N-1})$. The appropriate functional framework is that of weighted fractional Sobolev spaces, which is more precise than that of homogeneous fractional Sobolev spaces. For simplicity, we consider the case $p = 2$. Problem $(\mathscr{L}^0_D)$ then has a unique solution $v$ that satisfies:

\begin{equation*}\label{expsol}
\widehat{v}(\xi', x_N) = e^{-2\pi\vert \xi'\vert x_N} \widehat{g}(\xi'), \quad \mathrm{for}\quad \xi' \in \mathbb{R}^{N-1}, \; x_N > 0.
\end{equation*}
Moreover 
$$
\int_{\mathbb{R}^{N}_+} \frac{\vert v \vert^2}{1+\vert \textit{\textbf x}\vert} d\textit{\textbf x} < \infty \quad \mathrm{and}\quad \vert v \vert_{H^{1/2}(\mathbb{R}^{N}_+)} = \frac{1}{\pi} \Vert g \Vert_{L^2(\mathbb{R}^{N-1})} < \infty,
$$
where $\vert \cdot \vert_{H^{1/2}(\mathbb{R}^{N}_+)}$ is the semi-norm $H^{1/2}(\mathbb{R}^{N}_+)$. Unlike the function $u$ defined above, on the boundary $\Gamma =  \mathbb{R}^{N-1}$ we have $v = g$ in the sense $L^2(\Gamma)$. Observe that 
$$
\widehat{v}(\xi', x_N) = \widehat{P_{x_{N}}}(\xi')\, \,  \widehat{g}(\xi'),  \quad \mathrm{for}\quad \xi' \in \mathbb{R}^{N-1}, \; x_N > 0
$$
{\it i.e} $v = u$ in $\mathbb{R}^{N}_+$.

We will give here a new proof of existence results in the case of boundary data in $L^2(\Gamma)$ or in $H^1(\Gamma)$, see Section \ref{SectionDPL},  which is essentially based in the case $g \in L^2(\Gamma)$ on the first isomorphism given in Theorem \ref{IsoDeltaH3/2H1/21} and on the following variant of  Ne\v{c}as' property:  
\begin{equation*}\label{uH10DeltauH1/2prime1}
\varphi\in H^1_0(\Omega)\quad \mathrm{ and} \quad\Delta \varphi  \in\ [H^{1/2}(\Omega)]' \quad\Longrightarrow \quad \partial_{\textit{\textbf n}} \varphi\in L^2(\Gamma),
\end{equation*}
(see Theorem \ref{d02-110118-th1a1} for how to extend this property for $\varphi\in H^1(\Omega)$).

\begin{theorem} [{\bf Homogeneous Problem in $H^{1/2}(\Omega)$ and in $H^{3/2}(\Omega)$}]\label{ThIsogL2Gamma1} i) For any $g\in L^{2}(\Gamma)$, Problem $(\mathscr{L}_D^H)$ has a unique solution $u\in H^{1/2}(\Omega)$. Moreover $\sqrt \varrho\, \nabla u  \in {\textit{\textbf L}}^{2}(\Omega)$ and there exists a constant $C(\Omega)$ such that 
\begin{equation*}
 \Vert u \Vert_{H^{1/2}(\Omega)} + \Vert\sqrt \varrho\, \nabla u \Vert_{\textit{\textbf L}^2(\Omega)}  \leq \ C(\Omega) \Vert g \Vert_{L^{2}(\Gamma)}.
\end{equation*}
ii) This solution satisfies the following relation: for any $\varphi \in H^2(\Omega)\cap H^1_0(\Omega)$, we have
\begin{equation*}
\int_\Omega u \Delta \varphi dx = \int_\Gamma g\partial_{\textit{\textbf n}}\varphi d\sigma.
\end{equation*}
iii) Moreover $u$ satisfies also the following property: for any positive integer $k$
$$
\varrho^{k + 1/2}\nabla^{k + 1}u \in \textit{\textbf L}^2(\Omega).
$$
iv) For any $g\in H^{1}(\Gamma)$, the problem $(\mathscr{L}_D^H)$ has a unique solution $u\in H^{3/2}(\Omega)$. Moreover $\sqrt \varrho \, \nabla^2 u \in {\textit{\textbf L}}^2(\Omega)$ and there exists a constant $C(\Omega)$ such that :
\begin{equation*}
 \Vert u \Vert_{H^{3/2}(\Omega)} + \Vert\sqrt \varrho\, \nabla^2 u \Vert_{\textit{\textbf L}^2(\Omega)} \leq \ C(\Omega) \Vert g \Vert_{H^{1}(\Gamma)}.
\end{equation*}
The solution $u$ satisfies also the following property: for any positive integer $k$
$$
\varrho^{k + 1/2}\nabla^{k + 2}u \in \textit{\textbf L}^2(\Omega).
$$
\end{theorem}

\begin{remark}\upshape i) In addition to the existence and uniqueness result given in Point i) above, the sense of the boundary condition $u = g$ is as usual for boundary value problems the one given by the trace $L^2(\Gamma)$ (see Remark \ref{remrelevbis}, Point iv)) and not in the non-tangential sense as it has been found in the literature since the 80s (see further comments in Section \ref{SectionDPL}).\\
ii) From Theorem \ref{IsoDeltaH3/2H1/21} and Point i) above, we deduce the following characterization of $H^{1/2}_{00}(\Omega)$ as follows: let $u\in H^{1/2}(\Omega)$, then
$$
u \in H^{1/2}_{00}(\Omega) \Longleftrightarrow \Delta u \in [H^{3/2}_0(\Omega)]' \quad \mathrm{and}\quad u = 0 \; \; \mathrm{on} \; \Gamma.
$$
With this characterization, we can also get the following: let $u\in H^{3/2}_0(\Omega)$, then
$$
u \in H^{3/2}_{00}(\Omega) \Longleftrightarrow \Delta u \in [H^{1/2}(\Omega)]' \quad \mathrm{and}\quad \partial_\textit{\textbf n} u\ = 0 \; \; \mathrm{on}\; \Gamma.
$$
iii) In Point i) and Point iii) above, the properties $\sqrt \varrho\, \nabla u  \in {\textit{\textbf L}}^{2}(\Omega)$ and  $\varrho^{k + 1/2}\nabla^{k + 1}u \in \textit{\textbf L}^2(\Omega)$ are a direct consequence of the harmonicity of $u$, as we can see in Theorem \ref{inegharm}.\\
iv) In a forthcoming paper, we will prove similar results in the case where the domain is of class $\mathscr{C}^{1,1}$ and the Dirichlet boundary condition $g$ belongs to $ H^2(\Gamma)$.
\end{remark}

We now give extensions of the classical Ne\v{c}as' property, that will be very useful for the study of the homogeneous Neumann problem and also for the  Dirichlet-to-Neumann operator for the Laplacian: if $u\in H^1(\Omega)$ with $\Delta u\in L^2(\Omega)$, then we have the following equivalence
\begin{equation*}\label{Necclass}
u\in H^1(\Gamma)\;  \Longleftrightarrow\;  \partial_{\textit{\textbf n}} u\in L^2(\Gamma) \;  \Longleftrightarrow \; \nabla u\in \textit{\textbf L}^2(\Gamma).
\end{equation*}
In the next theorem, see Section \ref{SectionDPL}, we relax the condition regarding the Laplacian:

\begin{theorem}  [{\bf Ne\v{c}as Property, New Version}] \label{d02-110118-th1a1}
Let 
$$
u\in H^1(\Omega)\quad \mathrm{ with} \quad \Delta u  \in\ [H^{1/2}(\Omega)]'.
$$
i) If $u\in H^1(\Gamma)$, then $\partial_{\textit{\textbf n}} u\in L^2(\Gamma)$ and
\begin{equation*}
 \Vert \partial_{\textit{\textbf n}} u\Vert_{ \textit{\textbf L}^2(\Gamma)}\ \leq\ C(\Omega)\left(\inf_{k\in \mathbb{R}}\Vert u + k \Vert_{H^1(\Gamma)} + \Vert \Delta u \Vert_{[H^{1/2}(\Omega)]'}\right), 
\end{equation*}
where the constant $C(\Omega)$ depends on the Lipschitz  constant of $\Omega.$

\noindent ii)  If $\partial_\textit{\textbf n} u\in L^2(\Gamma)$, then $u\in H^1(\Gamma)$ and we have the following estimate
\begin{equation*}
 \inf_{k\in \mathbb{R}}\Vert u + k\Vert_{H^1(\Gamma)}  \ \leq\ C(\Omega)\left( \Vert \partial_\textit{\textbf n} u\Vert _{ \textit{\textbf L}^2(\Gamma)} + \Vert  \Delta u\Vert _{[H^{1/2}(\Omega)]')}\right),
\end{equation*}
where the constant $C(\Omega)$ depends on the Lipschitz  constant of $\Omega.$\\
iii)  If $u\in H^1(\Gamma)$ or $ \partial_{\textit{\textbf n}}u\in L^2(\Gamma)$, then $u\in H^{3/2}(\Omega)$.
\end{theorem}

\begin{remark}\upshape  In a forthcoming paper, using the properties of the  Dirichlet-to-Neumann operator, we will give different regularity results for the Neumann problem.
\end{remark}
  
\section{Functional framework}  
  
 \noindent{\bf Definition}: The domain $\Omega$ is of class  $\mathscr{C}^{0,1}$, respectively  $\mathscr{C}^{k-1,1}$ with $k \geq 2$, if for every $\textit{\textbf x} \in \Gamma$, there exist a neighbourhood $V$ of $\textit{\textbf x} $ in $\mathbb{R}^N$ and a system of local charts $(\textit{\textbf y}',y_N) \in \mathbb{R}^N$  such that \\
i)  $V$ is a cylinder of the form:
$$
V =  \left\{\,(\textit{\textbf y}',y_N);\ \vert \textit{\textbf y}' \vert < \delta, \quad - b < y_N < b \right\},
$$
for some positive numbers $\delta$ and $b$,\\
ii) there exists a Lipschitz, respectively  $\mathscr{C}^{k-1,1}$, function $\psi$ defined in $B' = \{ \textit{\textbf y}'\in  \mathbb{R}^{N-1};\; \vert \textit{\textbf y}' \vert \leq \delta\}$ and such that for any $\textit{\textbf y}' \in B', $ we have $ \vert \psi (\textit{\textbf y}')\vert \leq b/2$ and
\begin{equation*}
\Omega\cap V = \left\{\,(\textit{\textbf y}',y_N)\in V;\;  y_N <  \psi (\textit{\textbf y}') \right\},\;
\Gamma\cap V =  \left\{\,(\textit{\textbf y}',y_N)\in V;\;  y_N = \psi (\textit{\textbf y}') \right\}.
\end{equation*}
Instead of neighbourhoods $V$, we can consider balls: for each point $\textit{\textbf x}_0\in\Gamma$, there exist $\delta > 0$ and 
$\xi\in C^{k-1,1}( \mathbb{R}^{N-1})$ with $\mathrm{supp}\,\xi$ compact such that, upon relabeling and reorienting the coordinates axes if necessary, we have
$$
\Omega\cap B(\textit{\textbf x}_0,\delta)\ = \ \left\{\,(\textit{\textbf x}',x_N)\in B(\textit{\textbf x}_0,\delta);\ \ x_N<\xi(\textit{\textbf x}') \right\}.
$$

Because $\Gamma$ is compact and  $\mathscr{C}^{k-1,1}$,  there exist $M$ sets    $U_1,..., U_M$, covering the boundary $\Gamma$, and $\theta_0, ..., \theta_M\in \mathscr{D}(\mathbb{R}^N)$  such that 
$$
\forall r=0,\ldots, M,\quad 0\leq \theta_r\leq 1, \ \quad \ \sum_{r=0}^M\theta_r=1\ \textrm{ in }\  D \supset \overline{\Omega},
$$
$$
\quad\forall r=1,\ldots, M, \quad \mathrm{supp}\, \theta_r \ \textrm{ compact   }\subset U_r, \quad \mathrm{supp}\, \theta_0\subset \Omega.
$$

\subsection{Spaces $H^s(\Omega)$}

In the rest of this section, we assume that $\Omega$ is a bounded Lipschitz domain of $\mathbb{R}^N$, with $N \geq 2$. Given a function $u$ in $\Omega$, we denote  $u_r = u\theta_r,$ for  $r = 0,\ldots, M$. Using a linear mapping, we transform the coordinate $\textit{\textbf x}$ into $(\textit{\textbf y}'_r, y_{rN})$; the regularity properties  in $\Omega$ or on the boundary do not change. So we can assume that the system $(\textit{\textbf y}'_r, y_{rN})$ coincides with the original system and for any $r = 1,\ldots, M$, we set
$$
u_{\xi_{r}} (\textit{\textbf x}') = u(\textit{\textbf x}', \xi_r(\textit{\textbf x}'))\quad \mathrm{or\,\, more\,\, simply}\quad u_{\xi} (\textit{\textbf x}') = u(\textit{\textbf x}', \xi(\textit{\textbf x}')), 
$$
where $ \xi\in \mathscr{C}^{k-1,1}(\mathbb{R}^{N-1})$ with $\mathrm{supp}\,\xi$ compact. 

For $1\leq j\leq N-1$, we introduce the following functions
$$
(\textit{\textbf x}',x_N)\in\Omega,\ \  v_{j}(\textit{\textbf x}',x_N)\ =\ \frac{\partial u}{\partial x_j}(\textit{\textbf x}',x_N)+\frac{\partial \xi}{\partial x_j}(\textit{\textbf x}')\frac{\partial u}{\partial x_N}(\textit{\textbf x}',x_N)
$$
$$
\textit{\textbf x}'\in\mathbb{R}^{N-1},\ \ \ v_{{j\xi}}(\textit{\textbf x}')\ =\ \frac{\partial u}{\partial x_j}(\textit{\textbf x}',\xi(\textit{\textbf x}'))+\frac{\partial \xi}{\partial x_j}(\textit{\textbf x}')\frac{\partial u}{\partial x_N}(\textit{\textbf x}',\xi(\textit{\textbf x}'))
$$
and
$$
\textit{\textbf x}'\in\mathbb{R}^{N-1},\ \ \ \omega_{\xi}(\textit{\textbf x}')\ =\ (\nabla u\cdot{\textit{\textbf n}})(\textit{\textbf x}',\xi(\textit{\textbf x}')), \quad \mathrm{with}\quad  \textit{\textbf n}\ =\ \frac{(-\nabla'\xi,1)}{\sqrt{1+|\nabla'\xi|^2}}. 
$$
Recall that $u\in H^s(\Gamma)$ with $0 \leq s \leq 1$ means that $u_{\vert \Gamma}\in L^2(\Gamma)$ and $u_{\xi}\in H^s(\mathbb{R}^{N-1})$. 

We now review the definitions of some Sobolev spaces and some important properties that will be useful later. Recall first the following Sobolev space: for $s\in \mathbb{R}$,
$$
H^s(\mathbb{R}^N)\ =\ \left\{\, v\in \mathscr{S}'(\mathbb{R}^N);\ (1+|\xi|^2)^{s/2}\widehat{v}\in L^2(\mathbb{R}^N) \right\}
$$
which is a Hilbert space for the norm:
$$
\Vert u\Vert_{H^s(\mathbb{R}^N)} = \left(\int_{\mathbb{R}^N} (1+|\xi|^2)^{s}\vert\widehat{v}\vert^2 dx\right)^{1/2}. 
$$
Here the notation $\widehat{v}$ denotes the Fourier transform of $v$. For each non negative real $s$, define
$$
H^s(\Omega)\ =\ \left\{\, v|_{\Omega};\ \ v\in H^s(\mathbb{R}^N) \right\},
$$
with the usual quotient norm
$$
\Vert u\Vert_{H^s(\Omega)} = \inf\{\Vert v\Vert_{H^s(\mathbb{R}^N)} ;\, v|_{\Omega} = u\; \mathrm{in}\; \Omega\}.
$$
If $m\in\mathbb{N}$, then
$$
H^m(\Omega)\ =\ \left\{\, v\in L^2(\Omega);\ D^\lambda v \in L^2(\Omega)\ \ \textrm{ for } 0<|\lambda|\leq m\right\},
$$
and we have the equivalence:
$$
\Vert u\Vert_{H^m(\Omega)} \simeq \left(\sum_{0\leq \vert\lambda\vert \leq m}\Vert  D^\lambda u\Vert^2_{L^2(\Omega)}\right)^{1/2}.
$$
Recall that there exits a bounded linear extension operator $P: H^s(\Omega) \rightarrow H^s(\mathbb{R}^N)$ for every non negative real number $s$. So,  by complex interpolation we have
$$
H^s(\Omega)\ =\ \left[H^m(\Omega),L^2(\Omega) \right]_\theta \quad \mathrm{with}\quad  s=(1-\theta)m\ \textrm{ and }0<\theta<1,
$$
with equivalence norms.

According to \cite{Ada} and \cite{Gri}, when $s = m + \sigma$, with $0 < \sigma < 1$, $H^s(\Omega)$ can be equipped with an equivalent and intrinsic norm
$$
\Vert u\Vert_{H^s(\Omega)} = (\Vert u\Vert_{H^m(\Omega)}^2 + \vert u\vert^2_{H^s(\Omega)})^{1/2},
$$
where
$$
\vert u\vert_{H^s(\Omega)} =  \big(\sum_{\vert\lambda\vert = m}\int_\Omega\int_\Omega\frac{\vert D^\lambda u(x) - D^\lambda u(y)\vert^2}{\vert x - y \vert^{N + 2\sigma}}dx\,dy\big)^{1/2}.
$$

\subsection{Spaces $H^s_0(\Omega)$ and  $H^s_{00}(\Omega)$}\label{introHs}

This leads us to introduce the following space
$$
H^s_0(\Omega)\ =\ \overline{\mathscr{D}(\Omega)}^{\, ||\,.\,||_{H^s(\Omega)}}\quad  \textrm{ with } s\geq 0, 
$$
\textit{i.e.}, the closure of the space $\mathscr{D}(\Omega)$ for the norm $||\,.\, ||_{H^s(\Omega)}$. Let us also recall that for any $ 0\leq s\leq 1/2$, the space $\mathscr{D}(\Omega) \textrm{ is dense in } H^s(\Omega)$. That means that $H^s(\Omega) = H^s_0(\Omega)$   for $ 0\leq s\leq 1/2.$  Moreover, we have the following properties:\\
\textit{\textbf {i}}) Let $u\in H^s_0(\Omega)$ with $0 \leq s \le 1$. Then 
\begin{equation*}\label{a2-e5}
\frac{u}{\varrho^s}\in L^2(\Omega)\quad  \textrm{ when } \quad s\neq 1/2 \qquad \mathrm{and}\qquad \Vert \frac{u}{\varrho^s} \Vert_{L^2(\Omega)} \leq C(\Omega) \vert u \vert_{H^s(\Omega)}.
\end{equation*}
\textit{\textbf {ii}}) More generally, let $u\in H^s_0(\Omega)$ with $s>0$ and such that $ s - 1/2$ is not an integer. Then
\begin{equation}\label{a2-e6}
\forall  |\lambda|\leq s, \quad \frac{D^\lambda u}{\varrho^{s-|\lambda|}}\in L^2(\Omega),
\end{equation}
with similar inequalities as above.

Let us to introduce the following space: for $ s \geq 0$
$$
 \widetilde{H}^s(\Omega)\ =\ \left\{\, v\in H^s(\Omega);\, \widetilde{v}\in H^s(\mathbb{R}^N)  \right\},
 $$
 where $ \widetilde{v}$ is the extension of $v$ by zero outside $\Omega$. The space $ \widetilde{H}^s(\Omega)$ is a Hilbert for the norm 
 $$
 \Vert u \Vert_{\widetilde{H}^s(\Omega)} =  \Vert \widetilde{u} \Vert_{H^s(\mathbb{R}^N)}
 $$
 and satisfies the following property:
 \begin{equation}\label{a2-e7}
\widetilde{H}^s(\Omega)=H^s_0(\Omega)\quad \mathrm{when}\quad s\notin\left\{1/2\right\}+\mathbb{N}.
\end{equation}

Another way to characterize the space $H^s_0(\Omega)$, for $s>1/2$ and $s\notin\left\{1/2\right\}+\mathbb{N}$, is given by
\begin{equation*}\label{a2-e8}
u\in H^s_0(\Omega)\ \Longleftrightarrow\ u\in H^s(\Omega)\ \textrm{ and }\ \frac{\partial^j u}{\partial \textit{\textbf {n}}\, ^j}=0,\ \  0\leq j\leq s-1/2,
\end{equation*}
where $\textit{\textbf {n}}$ is the outward normal vector to the boundary of $\Omega$.
For the case  $s = 3/2$, we have $H^{3/2}_0(\Omega) = H^{3/2}(\Omega)\cap H^{1}_0(\Omega)$. The interpolation between two spaces $H^s_0(\Omega)$ is somewhat different from the one between two spaces $H^s(\Omega)$. Indeed, if $s_1>s_2\geq 0$ such that $ s_1, s_2\notin\left\{1/2\right\}+\mathbb{N} $, then we have
\begin{equation*}\label{a3-e1}
[H^{s_1}_0(\Omega), H^{s_2}_0(\Omega)]_\theta=H^{(1-\theta)s_1+\theta s_2}_0(\Omega)\ \ \ \  \textrm{ if } \ (1-\theta)s_1+\theta s_2 \notin\left\{1/2\right\}+\mathbb{N} 
\end{equation*}
and 
\begin{equation}\label{a3-e2}
[H^{s_1}_0(\Omega), H^{s_2}_0(\Omega)]_\theta=H^{(1-\theta)s_1+\theta s_2}_{00}(\Omega)\ \ \ \  \textrm{ if } \ (1-\theta)s_1+\theta s_2 \in\left\{1/2\right\}+\mathbb{N} 
\end{equation}
where the space $H^s_{00}(\Omega)$ is defined as follows: For any $\mu\in\mathbb{N}$, 
\begin{equation*}\label{a3-e3}
H^{\mu+1/2}_{00}(\Omega)\ =\ \left\{\, u\in H^{\mu+ 1/2}_{0}(\Omega);\ \ \frac{D^\lambda u}{\varrho^{1/2}}\in L^2(\Omega), \ \ \forall |\lambda|=\mu \,\right\}.
\end{equation*}
This is a strict subspace of $H^{\mu+1/2}_{0}(\Omega)$ with a strictly finer topology and $\mathscr{D}(\Omega) $ is dense in  $H^{\mu+1/2}_{00}(\Omega)$  for this finer topology. 
 
The property (\ref{a2-e6}) admits a reciprocal one if $ s \notin\left\{1/2\right\}+\mathbb{N}$:
 \begin{equation*}\label{a3-e4}
 u\in H^s_0(\Omega)\ \Longleftrightarrow\ u\in  L^2(\Omega)\ \textrm{ and } \frac{D^\lambda u}{\varrho^{s-|\lambda|}}\in L^2(\Omega),\ \  \forall |\lambda|\leq s. 
 \end{equation*}
Regarding the property (\ref{a2-e7}), we have 
\begin{equation}\label{a3-e5}
\widetilde{H}^s(\Omega)=H^s_{00}(\Omega)\quad \mathrm{when}\quad s\in\left\{1/2\right\}+\mathbb{N}.
\end{equation}
So, by \eqref{a3-e2}-\eqref{a3-e5}, we have three equivalent definitions for the spaces $H^{1/2}_{00}(\Omega)$ and $H^{3/2}_{00}(\Omega)$ respectively. Later on, we will look at two other ways to define each of these two spaces.

We now have a look at their dual spaces. We set for $s\geq 0$,
\begin{equation*}\label{a3-e6}
\widetilde{H}^{-s}(\Omega)\ =\ \left[ \widetilde{H}^s(\Omega)\right]'\ \ \textrm { and } \ \ 
H^{-s}(\Omega)\ =\ \left[ {H}^s_0(\Omega)\right]',
\end{equation*}
and note that if $ s \notin\left\{1/2\right\}+\mathbb{N}$, then
$ \widetilde{H}^{-s}(\Omega)\ =\ H^{-s}(\Omega)$. And  since  $\mathscr{D}(\Omega)$ is dense in $\widetilde{H}^s(\Omega)$, then the dual spaces could be identified with subspaces of  $\mathscr{D}'(\Omega)$.

\section{Equivalent norms in fractional Sobolev spaces}\label{Preliminary results in weighted Sobolev}

In the rest of this work, we will assume that $\Omega$ is a bounded Lipschitz domain of $\mathbb{R}^N$, with $N \geq 2$, unless otherwise stated.

In Theorem 4.2 of \cite{J-K}, it is stated, for $v$ harmonic in $\Omega$, $0\leq s \leq 1$ and $k$ be a nonnegative integer, the following equivalence:
\begin{equation*}\label{normeqharm}
v\in H^{k+s}(\Omega)\Longleftrightarrow v, \nabla^k v \; \mathrm{and }\; \varrho^{1-s} \nabla^{k +1}v  \in L^2(\Omega).
\end{equation*}
In this section, we will improve and extend significantly this result and provide several equivalent norms that are very useful for establishing new regularity results for boundary value problems.

\begin{lemma}\label{densityHtilde} Let $0 < s < 1$ and 
$$
E_s(\Omega) : = \{v\in L^2(\Omega); \; \nabla v \in  [\widetilde{\textit{\textbf H}}\,^{1-s}(\Omega)]' \}.
$$
We have the following properties:\\
i) if  $1/2 < s < 1$, then
$$
\mathscr{D}(\overline{\Omega}) \quad is\; dense \;in \quad E_s(\Omega),
$$
ii) if  $0 < s \leq 1/2$, then
$$
\mathscr{D}(\Omega) \quad is\; dense \;in \quad E_s(\Omega).
$$
\end{lemma}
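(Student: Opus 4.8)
The plan is to reduce everything to a localization–and–regularization argument in the model situation of a Lipschitz hypograph, exactly as the functional framework section is set up (partition of unity $\theta_0,\dots,\theta_M$, flattening of the boundary pieces, the functions $u_\xi$, $v_{j\xi}$, $\omega_\xi$). So first I would fix $v\in E_s(\Omega)$ and write $v=\sum_{r=0}^M v\theta_r$; the interior piece $v\theta_0$ has compact support in $\Omega$ and is trivially approximated by $\mathscr D(\Omega)$ after mollification, so it suffices to treat a single boundary piece $u:=v\theta_r$ supported in a coordinate cylinder where $\Omega$ locally is $\{x_N<\xi(\mathbf x')\}$. The key point to check is that multiplication by $\theta_r$ and the change of variables do not leave $E_s$: since $v\in L^2(\Omega)$ and $\nabla v\in[\widetilde{\textbf H}^{1-s}(\Omega)]'$, we have $\nabla(v\theta_r)=\theta_r\nabla v+v\nabla\theta_r$, and because $\theta_r\in\mathscr D(\mathbb R^N)$, multiplication by $\theta_r$ maps $\widetilde{\textbf H}^{1-s}(\Omega)$ continuously into itself (this uses $0<1-s<1$), hence by duality maps $[\widetilde{\textbf H}^{1-s}(\Omega)]'$ into itself; the term $v\nabla\theta_r\in \textbf L^2(\Omega)\hookrightarrow[\widetilde{\textbf H}^{1-s}(\Omega)]'$ is harmless. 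The bi-Lipschitz flattening preserves $L^2$ and, since it is a $\mathscr C^{k-1,1}$ (here at least Lipschitz) diffeomorphism, preserves $\textbf H^{1-s}$ and its dual for $0<1-s<1$.

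Next, in the half-space model $\Omega_{\mathrm{loc}}=\{(\mathbf x',x_N): x_N<0\}$ (after a further Lipschitz straightening, or directly working with the hypograph and the tangential derivatives $v_{j\xi}$), I would use a translation in the $x_N$-direction: set $u_\tau(\mathbf x',x_N)=u(\mathbf x',x_N-\tau)$ for $\tau>0$, which pushes the support strictly inside, and then mollify. Translations are continuous on $L^2$ and on $[\widetilde{\textbf H}^{1-s}(\Omega)]'$ (the translate of a bounded functional against a translate of a test function), so $u_\tau\to u$ in $E_s$ as $\tau\to0$, and for each fixed $\tau>0$ the translate has support bounded away from $\Gamma$, hence lies in a genuine $H^1$-type space locally and can be approximated by $\mathscr D(\overline\Omega)$ functions (by mollification, with the mollification parameter small relative to $\tau$). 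This gives part (ii): when $0<s\le1/2$ the translation can be chosen to move the support strictly into $\Omega$ on \emph{every} boundary patch, and one ends up with approximants in $\mathscr D(\Omega)$; the reason one cannot in general conclude $\mathscr D(\Omega)$-density for $s>1/2$ is that $E_s(\Omega)$ then contains functions with a nonzero trace (recall $[\widetilde{\textbf H}^{1-s}(\Omega)]'=H^{s-1}(\Omega)=[H^{1-s}_0(\Omega)]'$ for $s\ne1/2$, and by Theorem \ref{c06-l1MR}(ii) such $v$ lie in $H^s(\Omega)$ with $s>1/2$, hence have a trace which mollified interior functions cannot reproduce), so only $\mathscr D(\overline\Omega)$-density can hold there.

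The main obstacle I anticipate is the borderline behaviour at $s=1/2$, where $\widetilde{\textbf H}^{1/2}(\Omega)=\textbf H^{1/2}_{00}(\Omega)\ne\textbf H^{1/2}_0(\Omega)$, so the duality $[\widetilde{\textbf H}^{1-s}(\Omega)]'$ is the dual of the $00$-space rather than of $H^{1/2}(\Omega)$; one has to be careful that multiplication by $\theta_r$ and the flattening diffeomorphism act continuously on $\textbf H^{1/2}_{00}(\Omega)$ (they do, because $H^{1/2}_{00}$ is characterized by the extra weighted condition $u/\varrho^{1/2}\in L^2$, which is stable under these operations), and that the translation argument still produces $\mathscr D(\Omega)$-approximants — which it does, since for $s=1/2$ every element of $E_{1/2}(\Omega)$ has a well-defined trace in $L^2(\Gamma)$ only after the extra hypothesis of Theorem \ref{TracesH1demigradH1demiprime1}, and the bare hypothesis $\nabla v\in[\textbf H^{1/2}_{00}(\Omega)]'$ used here is genuinely weaker, so no trace obstruction appears. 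A secondary technical point is to justify that $\mathscr D(\overline\Omega)$ is dense in the ``translated'' local spaces; this is standard once the support is interior, using a cutoff plus Friedrichs mollification and the intrinsic Sobolev-Slobodeckij norm recalled in Section 2.
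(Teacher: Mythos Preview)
Your direct approach via localization, flattening, translation and mollification is a natural idea and genuinely different from the paper's argument, but as written it does not actually handle the case split, and the translation step is confused.

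First a sign issue: with $\Omega_{\mathrm{loc}}=\{x_N<0\}$, the formula $u_\tau(\textit{\textbf x}',x_N)=u(\textit{\textbf x}',x_N-\tau)$ shifts the function toward and past the boundary, not strictly inside; the inward translate would be $u(\textit{\textbf x}',x_N+\tau)$, and evaluating that near $x_N=0^-$ requires extending $u$ (say by zero) outside $\Omega$.

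More seriously, you describe essentially one procedure and say it yields $\mathscr D(\Omega)$-approximants for (ii), then only explain why $\mathscr D(\Omega)$-density must \emph{fail} for $s>1/2$ --- but you never give a positive argument for (i). If inward translation plus mollification converged in $E_s$, it would produce compactly supported approximants and hence prove $\mathscr D(\Omega)$-density for \emph{all} $0<s<1$, which is false for $s>1/2$ (take $v\equiv1$). So the scheme must break, and it does: the zero extension $\widetilde u$ has $\nabla\widetilde u$ carrying a singular layer on $\Gamma$, and the inward shift brings this layer to the interior hypersurface $\{x_N=-\tau\}$; that surface distribution lies in $[\widetilde{\textit{\textbf H}}\,^{1-s}(\Omega)]'$ only if elements of $\widetilde{\textit{\textbf H}}\,^{1-s}(\Omega)$ admit traces on interior hypersurfaces, i.e.\ only if $1-s>1/2$. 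Hence your mechanism fails for every $s\ge 1/2$, in particular at the endpoint $s=1/2$ which belongs to (ii). For (i) you would instead need an extension of $u$ \emph{across} $\Gamma$, and you cannot invoke $E_s(\Omega)\subset H^s(\Omega)$ (Theorem~\ref{c06-l1MR}(ii)) to build one: in the paper the proof of that inclusion uses precisely the present lemma, so this would be circular.

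The paper avoids all of this with a short Hahn--Banach argument. A functional $\ell\in[E_s(\Omega)]'$ is represented by a pair $(f,\textit{\textbf F})\in L^2(\Omega)\times\widetilde{\textit{\textbf H}}\,^{1-s}(\Omega)$ via $\langle\ell,v\rangle=\int_\Omega fv+\langle\textit{\textbf F},\nabla v\rangle$. In case (i), vanishing of $\ell$ on $\mathscr D(\overline\Omega)$ gives $\mathrm{div}\,\widetilde{\textit{\textbf F}}=\widetilde f$ in $\mathbb R^N$, which forces $\textit{\textbf F}\cdot\textit{\textbf n}=0$; thus $\textit{\textbf F}$ lies in the space $\textit{\textbf H}_0^{\,1-s}(\mathrm{div};\Omega)$ in which $\mathscr D(\Omega)^N$ is dense (a Temam-type argument), and one concludes $\ell\equiv0$. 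Case (ii) is the same without the normal-trace step. No extension or translation of $v$ itself is ever needed, and the case split enters only through whether one tests against $\mathscr D(\overline\Omega)$ or $\mathscr D(\Omega)$.
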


\begin{proof}To prove the above density results, we will use Hahn-Banach's theorem. Let $\ell\in [E_s(\Omega)]'$. So there exist $f\in L^2(\Omega)$ and $\textit{\textbf F}\in \widetilde{\textit{\textbf H}}\,^{1-s}(\Omega)$  such that 
$$
\forall v \in E_s(\Omega), \quad \left\langle \ell, v \right\rangle = \int_\Omega f v dx+  \left\langle \textit{\textbf F}, \nabla v \right\rangle_{\widetilde{\textit{\textbf H}}\,^{1-s}(\Omega)\times [\widetilde{\textit{\textbf H}}\,^{1-s}(\Omega)]'}.
$$
{\bf i) Case $1/2 < s < 1$}. Suppose that $\ell_{\vert\mathscr{D}(\overline{\Omega}) } = 0$. In order to establish the claim of the lemma, we just need to show that $\ell$ is identically zero. Indeed for any $\varphi \in \mathscr{D}(\mathbb{R}^N)$, we have
$$
0 = \left\langle \ell, \varphi_{\vert\Omega} \right\rangle = \int_{\mathbb{R}^N }(\widetilde{f} \varphi +  \widetilde{\textit{\textbf F}}\cdot \nabla \varphi)dx,
$$
where $\widetilde{f}$ and $\widetilde{\textit{\textbf F}}$ are the extensions by zero outside of $\Omega$ of $f$ and $\textit{\textbf F}$ respectively. Since $\widetilde{f} \in L^2(\mathbb{R}^N)$ and $\widetilde{\textit{\textbf F}}\in \textit{\textbf H}\,^{1-s}(\mathbb{R}^N)$ we deduce that 
$$
\widetilde{f} = \mathrm{div}\,  \widetilde{\textit{\textbf F}}\quad \mathrm{in}\;  \mathbb{R}^N.
$$
Clearly $\textit{\textbf F} \in  \textit{\textbf H}\,^{1-s}(\Omega)$ and div $\textit{\textbf F}  \in L^2(\Omega)$, so $\textit{\textbf F}\cdot \textit{\textbf n}\in H^{-1/2}(\Gamma)$. Since $\mathrm{div}\, \widetilde{\textit{\textbf F}} \in L^2(\mathbb{R}^N)$, we then deduce that $\textit{\textbf F}\cdot \textit{\textbf n}= 0$ on $\Gamma$. So the vector field $\textit{\textbf F}$ belongs to the following space
$$
\textit{\textbf H}_0^{\,1-s}(\mathrm{div}\, ; \, \Omega):=  \{\textit{\textbf F} \in  \textit{\textbf H}\,^{1-s}(\Omega); \; \mathrm{div }\,  \textit{\textbf F}  \in L^2(\Omega)\; \; \mathrm{and}\; \; \textit{\textbf F}\cdot \textit{\textbf n}= 0\; \mathrm{on} \, \Gamma\}.
$$
Now, as in the proof of Theorem 1.3 in \cite{Temam}, we can show that $\mathscr{D}(\Omega)^N $ is dense in the space $\textit{\textbf H}_0^{\,1-s}(\mathrm{div}\, ; \, \Omega)$. Let us then consider  a  sequence $(\textit{\textbf F}_k)$, for $k \in \mathbb{N}^\star$, of vector fields belonging to $\mathscr{D}(\Omega)^N $  and such that $\textit{\textbf F}_k \rightarrow \textit{\textbf F}$ in $\textit{\textbf H}_0^{\,1-s}(\mathrm{div}\, ; \, \Omega)$ when $k\rightarrow \infty$.  For every $v \in E_s(\Omega)$, we have
$$
\left\langle \ell, v \right\rangle = \lim_{k\rightarrow \infty}\big(\int_\Omega v\, \mathrm{div}\, \textit{\textbf F}_k \, dx+ \langle \nabla v,  \textit{\textbf F}_k\rangle  \big) =   0.
$$

\noindent{\bf ii) Case $0 < s \leq 1/2 $}. Assume now that $\ell_{\vert\mathscr{D}(\Omega) } = 0$. So we get the relation $f = \mathrm{div}\,  \textit{\textbf F}$ in $\Omega$.  Using again the same arguments as in in the proof of Theorem 1.3 in  \cite{Temam}, we find that 
$$
\mathscr{D}(\Omega)^N \quad \mathrm{is\; dense \;in} \quad \{\textit{\textbf F}\in  \widetilde{\textit{\textbf H}}\,^{1-s}(\Omega);\; \mathrm{div }\,  \textit{\textbf F}  \in L^2(\Omega)\}.
$$ 
And we finish the proof as above.
\end{proof}

Recall that for any $0 < s < 1$, we have the following implication:
 \begin{equation*}\label{impliGri}
 v\in H^{s}(\Omega)\quad  \Longrightarrow \quad\nabla v\in [\widetilde{\textit{\textbf H}}\,^{1-s}(\Omega)]' 
 \end{equation*} 
(see Theorem 1.4.4.6 and Remark  1.4.4.7 in \cite {Gri}). Moreover it is easy to show by interpolation that for any $0 < s < 1$ we have the following inequality:
\begin{equation*}\label{inegH1-stildeprime}
\forall v\in H^{s}(\Omega), \quad \,\Vert \nabla v\Vert_{ [\widetilde{\textit{\textbf H}}\,^{1-s}(\Omega)]' }\leq \Vert v \Vert_{H^{s}(\Omega)},
\end{equation*}
with the continuity constant equal to 1. In the theorem below, we study the validity of the inverse inequality. Before that, let us recall  the following property given in  Lemma 3.1, Chapter 6 in \cite{Necas} (see also Theorem 2 - Chapter VI in Stein \cite{Stein}):  there exists a function $\sigma$ belonging to $\mathscr{C}^\infty(\Omega) \cap \mathscr{C}^{0,1}(\overline{\Omega})$ and such that for any $x\in \Omega $ and for any multi-index $\lambda$
\begin{equation*}\label{sigma}
C_1 \varrho(x) \leq \sigma (x) \leq C_2 \varrho (x) \quad \mathrm{and}\quad \vert D^\lambda \sigma \vert \leq C \sigma^{1 - \vert  \lambda\vert}.
\end{equation*}
In the following we will use one or other of the functions $\varrho$ or $\sigma$ alternatively, depending on our needs. 

\begin{theorem}\label{c06-l1}
\noindent{i)} Let $0 \leq s \leq 1$. There exist constants $C_1> 0$  and $C_2> 0$ depending only on $s$, on the Poincar\'e constant of $\Omega$  such that for any $v\in H^s(\Omega)$
 \begin{equation}\label{d02-7118-e1}
 \inf_{K\in\, \mathbb{R}} \Vert v+K\Vert _{H^{s}(\Omega)}\leq C_1\Vert \nabla v\Vert _{ [\widetilde{\textit{\textbf H}}\,^{1-s}(\Omega)]' } \leq C_2\,\Vert  \sigma^{1-s}\nabla v\Vert _{{ \textit{\textbf L}}^2(\Omega) }.
 \end{equation}
ii) Let $v\in\mathscr{D}'(\Omega) $ and $0 \leq s \leq 1.$ Then we have the following implications 
\begin{equation}\label{implicationsigmanabla}
\sigma^{1-s}\nabla v\in  {\textit{\textbf L}}^2(\Omega) \Longrightarrow \nabla v\in  [\widetilde{\textit{\textbf H}}\,^{1-s}(\Omega)]'\Longrightarrow v \in H^s(\Omega).
\end{equation}
 \end{theorem}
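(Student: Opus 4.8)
The plan is to prove the two chains of inequalities in \eqref{d02-7118-e1} separately, starting from the right and working left, and then to deduce \eqref{implicationsigmanabla} as a more-or-less formal consequence of part i) together with the density Lemma~\ref{densityHtilde} and the trivial one-sided bound recalled just above. For the second inequality in \eqref{d02-7118-e1}, namely $\Vert \nabla v\Vert_{[\widetilde{\textit{\textbf H}}\,^{1-s}(\Omega)]'}\leq C_2\Vert\sigma^{1-s}\nabla v\Vert_{\textit{\textbf L}^2(\Omega)}$, the idea is duality: for $\boldsymbol\varphi\in\widetilde{\textit{\textbf H}}\,^{1-s}(\Omega)$ write $\langle\nabla v,\boldsymbol\varphi\rangle=\int_\Omega\sigma^{1-s}\nabla v\cdot(\sigma^{-(1-s)}\boldsymbol\varphi)$ and apply Cauchy--Schwarz, so everything reduces to the weighted Hardy-type estimate
\begin{equation*}
\Vert\sigma^{-(1-s)}\boldsymbol\varphi\Vert_{\textit{\textbf L}^2(\Omega)}\ \leq\ C\,\Vert\boldsymbol\varphi\Vert_{\widetilde{\textit{\textbf H}}\,^{1-s}(\Omega)}\qquad\text{for all }\boldsymbol\varphi\in\widetilde{\textit{\textbf H}}\,^{1-s}(\Omega),
\end{equation*}
which is exactly the Hardy inequality recalled in Section~2 (the properties \textit{\textbf i}), \textit{\textbf ii}) of $H^s_0(\Omega)$, using $\varrho\simeq\sigma$), valid precisely because $\widetilde{\textit{\textbf H}}\,^{1-s}(\Omega)=\textit{\textbf H}^{\,1-s}_0(\Omega)$ when $s\neq1/2$ and $=\textit{\textbf H}^{\,1/2}_{00}(\Omega)$ when $s=1/2$ — the case split in the statement of $\widetilde{\textit{\textbf H}}$ is dictated by exactly this point, since at the critical exponent the naive Hardy inequality fails on $H^{1/2}_0$ but holds on $H^{1/2}_{00}$.

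For the first inequality, $\inf_{K\in\mathbb R}\Vert v+K\Vert_{H^s(\Omega)}\leq C_1\Vert\nabla v\Vert_{[\widetilde{\textit{\textbf H}}\,^{1-s}(\Omega)]'}$, the natural route is a Nečas/Lions–Peetre-type argument. One first handles the two endpoints $s=0$ and $s=1$ by hand: for $s=1$ this is the classical Nečas inequality $\inf_K\Vert v+K\Vert_{L^2}\leq C\Vert\nabla v\Vert_{H^{-1}(\Omega)}$ (equivalently, $\nabla$ has closed range with the constants $\mathbb R$ as kernel, by the Lipschitz analogue of the Bogovskii/Nečas lemma), and for $s=0$ it is the statement that $\inf_K\Vert v+K\Vert_{H^1}\leq C\Vert\nabla v\Vert_{\widetilde{\textit{\textbf H}}\,^1(\Omega)'}$, again a duality/closed-range statement on Lipschitz domains. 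Then one interpolates: the map $v\mapsto\nabla v$ descends to an injective operator $H^s(\Omega)/\mathbb R\to[\widetilde{\textit{\textbf H}}\,^{1-s}(\Omega)]'$, the target spaces $[\widetilde{\textit{\textbf H}}\,^{1-s}(\Omega)]'$ form a complex-interpolation scale between $H^{-1}(\Omega)$ and $[\widetilde{\textit{\textbf H}}\,^1(\Omega)]'$ (using $[\widetilde{\textit{\textbf H}}\,^{s_1},\widetilde{\textit{\textbf H}}\,^{s_2}]_\theta$ and duality), and the source spaces $H^s(\Omega)/\mathbb R$ interpolate between $L^2/\mathbb R$ and $H^1/\mathbb R$; a uniform-in-$s$ lower bound then follows from the open-mapping theorem applied fibrewise, with the constant controlled by the Lipschitz character and the Poincaré constant. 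The critical value $s=1/2$ is recovered by the same interpolation formula precisely \emph{because} $\widetilde{\textit{\textbf H}}\,^{1/2}$ is taken to be $\textit{\textbf H}^{\,1/2}_{00}$, which is the interpolation space $[\widetilde{\textit{\textbf H}}\,^{s_1},\widetilde{\textit{\textbf H}}\,^{s_2}]_{1/2}$ at a half-integer, so no separate argument is needed there.

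Finally, part ii): the first implication in \eqref{implicationsigmanabla} is immediate from the second inequality in part i) (if $\sigma^{1-s}\nabla v\in\textit{\textbf L}^2$ then $\nabla v$ defines a bounded functional on $\widetilde{\textit{\textbf H}}\,^{1-s}(\Omega)$ by the same Cauchy--Schwarz computation, without needing $v\in H^s$ a priori), and the second implication — if $\nabla v\in[\widetilde{\textit{\textbf H}}\,^{1-s}(\Omega)]'$ then $v\in H^s(\Omega)$ — is obtained by combining the first inequality of part i) with a density/completeness argument: regularize $v$ (e.g. by convolution on the charts, or simply by noting $v\in H^{-\infty}$ has $\nabla v$ in a Hilbert space so $v$ lies in an affine translate of a complete space), apply the a priori estimate $\inf_K\Vert v_\varepsilon+K\Vert_{H^s}\leq C_1\Vert\nabla v_\varepsilon\Vert_{[\widetilde{\textit{\textbf H}}\,^{1-s}]'}\leq C_1\Vert\nabla v\Vert_{[\widetilde{\textit{\textbf H}}\,^{1-s}]'}$ uniformly, and pass to the limit to conclude $v+K\in H^s(\Omega)$ for a suitable constant $K$, hence $v\in H^s(\Omega)$.

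\medskip

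The main obstacle, I expect, is the critical exponent $s=1/2$ in the \emph{first} inequality: one must be sure that the complex interpolation identity $[\,\textit{\textbf H}^{\,1-s_1}_0,\textit{\textbf H}^{\,1-s_2}_0\,]_\theta=\textit{\textbf H}^{\,1/2}_{00}$ holds \emph{with the correct dual pairing}, so that the dual scale $s\mapsto[\widetilde{\textit{\textbf H}}\,^{1-s}(\Omega)]'$ really does interpolate through $[H^{1/2}_{00}(\Omega)]'$ at $s=1/2$ and the endpoint estimates transfer there with a finite constant; this is exactly the point where most of the literature stumbles, and it is why the spaces in the statement are set up with the $H^{1/2}_{00}$ choice baked in. A secondary but genuine technical point is justifying the two endpoint Nečas inequalities on a merely Lipschitz domain with constants depending only on the Lipschitz character and the Poincaré constant — for $s=1$ this is standard, but the $s=0$ endpoint, $\inf_K\Vert v+K\Vert_{H^1}\leq C\Vert\nabla v\Vert_{[\widetilde{\textit{\textbf H}}\,^1(\Omega)]'}$, should be proved directly by the Bogovskii-operator / De Rham argument adapted to Lipschitz domains rather than quoted, since it is slightly less classical in this precise form.
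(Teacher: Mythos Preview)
Your proposal follows essentially the same route as the paper's proof: duality plus the Hardy inequality for the second inequality in \eqref{d02-7118-e1}, and complex interpolation between the two endpoint estimates for the first inequality, with part ii) deduced via the density Lemma~\ref{densityHtilde}.

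One slip to fix: you have the two endpoints swapped. At $s=0$ the estimate is $\inf_K\Vert v+K\Vert_{L^2}\leq C\Vert\nabla v\Vert_{H^{-1}}$ (this is the Ne\v{c}as/De Rham inequality, and indeed the less classical one on Lipschitz domains), while at $s=1$ it is the Poincar\'e--Wirtinger inequality $\inf_K\Vert v+K\Vert_{H^1}\leq C\Vert\nabla v\Vert_{L^2}$. Your ``$s=0$'' statement $\inf_K\Vert v+K\Vert_{H^1}\leq C\Vert\nabla v\Vert_{[\widetilde{\textit{\textbf H}}\,^{1}]'}$ is false as written, since $[\widetilde{\textit{\textbf H}}\,^{1}(\Omega)]'=H^{-1}(\Omega)$ and one cannot recover the full $H^1$ norm from an $H^{-1}$ gradient. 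Once the labels are corrected the argument goes through. The paper makes the interpolation step concrete by interpolating the \emph{inverse} operator $\nabla^{-1}$ between the closed-range isomorphisms $\nabla:L^2(\Omega)/\mathbb R\to\textit{\textbf V}^\perp\subset\textit{\textbf H}^{-1}(\Omega)$ and $\nabla:H^1(\Omega)/\mathbb R\to\textit{\textbf H}^\perp\subset\textit{\textbf L}^2(\Omega)$, which replaces your ``open-mapping theorem applied fibrewise'' by a direct bound with constant $C_1^\theta C_2^{1-\theta}$; identifying $[\textit{\textbf H}^\perp,\textit{\textbf V}^\perp]_\theta=\{\nabla\chi:\chi\in H^{1-\theta}(\Omega)\}$ and $[L^2,H^{-1}]_\theta=[\widetilde H^\theta(\Omega)]'$ then gives exactly the first inequality, including at $s=1/2$.
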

 
\begin{remark}\upshape If we assume that $v \in L^2(\Omega)$, instead of $v\in\mathscr{D}'(\Omega) $, we can replace in \eqref{implicationsigmanabla} the condition $\sigma^{1-s}\nabla v\in  {\textit{\textbf L}}^2(\Omega)$ by the condition $\varrho^{1-s}\nabla v\in  {\textit{\textbf L}}^2(\Omega) $, where $\varrho$ is the distance function to the boundary of $\Omega$. 
\end{remark}
 \begin{proof} {\bf Step 1.} Firstly, recall that
 $$
 \widetilde{H}\,^{1-s}(\Omega) = H^{1-s}_0(\Omega) \quad \mathrm{when}\; s \not= 1/2 \quad \mathrm{and}\quad  \widetilde{H}\,^{1/2}(\Omega) = H^{1/2}_{00}(\Omega).
 $$
 Recall also the following estimate (see \cite{AG})
\begin{equation}\label{inegL2H-1}
  \forall  v\in L^2(\Omega), \quad \inf_{K\in\,\mathbb{R}} \Vert v+K\Vert _{L^{2}(\Omega)}\leq C_1\, \Vert \nabla v\Vert _{ \textit{\textbf H}\,^{-1}(\Omega)}
\end{equation}
and the Poincar\'e Wirtinger inequality: 
 \begin{equation}\label{inegH1L2}
   \forall  v\in H^1(\Omega), \quad \inf_{K\in\,\mathbb{R}} \Vert v+K\Vert_{H^{1}(\Omega)}\leq C_2\,\Vert \nabla v\Vert _{ \textit{\textbf L}^{2}(\Omega)},
 \end{equation}
 where the constants $C_1$ and $C_2$ depend only on the Poincar\'e constant of $\Omega$. That means that the operators
  $$
  \nabla :  L^2(\Omega){/\mathbb{R}}\rightarrow { \textit{\textbf H}\,^{-1}(\Omega)} \quad \mathrm{and}\quad  \nabla :  H^1(\Omega){/\mathbb{R}}\rightarrow { \textit{\textbf L}\,^{2}(\Omega)}
  $$
  have their image closed in $\textit{\textbf H}\,^{-1}(\Omega)$ and in $\textit{\textbf L}\,^{2}(\Omega)$ respectively. These are respectively characterized as follows:
  $$
  \textit{\textbf V}^\bot = \{\textit{\textbf f}\in \textit{\textbf H}\,^{-1}(\Omega);\; \langle \textit{\textbf f}, \textit{\textbf v}\rangle = 0,\quad \forall  \textit{\textbf v}\in \textit{\textbf V}\}
  $$
  and
  $$
  \textit{\textbf H}^\bot = \{\textit{\textbf f}\in \textit{\textbf L}\,^{2}(\Omega);\; \langle \textit{\textbf f}, \textit{\textbf v}\rangle = 0, \quad\forall  \textit{\textbf v}\in \textit{\textbf H}\},
  $$
where
$$
\textit{\textbf V} =\{ \textit{\textbf v}\in \textit{\textbf H}\,^{1}_0(\Omega); \; \mathrm{div}\,  \textit{\textbf v} = 0\} \quad \mathrm{and}\; \textit{\textbf H} =\{ \textit{\textbf v}\in \textit{\textbf L}\,^{2}(\Omega); \; \mathrm{div}\,  \textit{\textbf v} = 0, \; \textit{\textbf v}\cdot \textit{\textbf n}= 0\}.
$$
So the operators 
\begin{equation}\label{isoderham}
  \nabla :  L^2(\Omega){/\mathbb{R}}\rightarrow  \textit{\textbf V}^\bot \quad \mathrm{and}\quad  \nabla :  H^1(\Omega){/\mathbb{R}}\rightarrow \textit{\textbf H}^\bot 
\end{equation}
are isomorphisms. Observe that the first isomorphism in \eqref{isoderham}, and also the second one, is nothing more than one of the variants of De Rham's theorem:  for any 
$\textit{\textbf f}\in \textit{\textbf H}\,^{-1}(\Omega)$ satisfying the condition 
$$
\langle \textit{\textbf f}, \textit{\textbf v}\rangle = 0,\quad \forall  \textit{\textbf v}\in \textit{\textbf V},
$$
there exists $\chi \in L^2(\Omega)$, unique up an additive constant, such that $\nabla \chi = \textit{\textbf f} \, $ in $\Omega$ (see \cite{AG} for instance).\medskip

\noindent{\bf Step 2.} {\bf i)} We will prove the first inequality in \eqref{d02-7118-e1}.  Since $H^1(\Omega)$ is dense in $L^2(\Omega)$, the quotient space $H^1(\Omega){/\mathbb{R}}$ is also dense in $L^2(\Omega){/\mathbb{R}}$. So from \eqref{isoderham}, we deduce by interpolation that  the following operator
\begin{equation*}\label{grad-13}
  \nabla :  [H^1(\Omega){/\mathbb{R}},\,  L^2(\Omega){/\mathbb{R}}\, ]_\theta \rightarrow [\textit{\textbf H}^\bot, \, \textit{\textbf V}^\bot]_\theta 
\end{equation*}
is then an  isomorphism for any $ 0 < \theta < 1$. 

We denote by $\nabla^{-1}$ the inverse operator of the operator $\nabla$. In particular, from  \eqref{inegL2H-1} and \eqref{inegH1L2} we have the following inequalities:
\begin{equation}\label{grad-11}
\forall  \textit{\textbf f} \in \textit{\textbf V}^\bot, \quad \Vert\nabla^{-1} \textit{\textbf f}\,  \Vert_{ L^2(\Omega){/\mathbb{R}}} \leq C_1 \Vert \textit{\textbf f}\,  \Vert_{ \textit{\textbf H}\,^{-1}(\Omega)}
\end{equation}
and
\begin{equation}\label{grad-12}
\forall  \textit{\textbf f} \in \textit{\textbf H}^\bot, \quad \Vert \nabla^{-1} \textit{\textbf f}\,  \Vert_{ H^1(\Omega){/\mathbb{R}}} \leq C_2 \Vert\textit{\textbf f} \,\Vert_{ \textit{\textbf L}\,^{2}(\Omega)}.
\end{equation}
 Using \eqref{grad-11}-\eqref{grad-12}, we deduce by interpolation the following estimate
\begin{equation}\label{ineg3theta}
\forall  \textit{\textbf f} \in [\textit{\textbf H}^\bot, \, \textit{\textbf V}^\bot]_\theta, \quad \Vert\nabla^{-1} \textit{\textbf f} \,\Vert_{  H^{1-\theta}(\Omega){/\mathbb{R}}} \leq C(\Omega) \Vert\textit{\textbf f} \,\Vert_{[ \widetilde{\textit{\textbf H}}\,^{\theta}\Omega)]'},
\end{equation}
where $C(\Omega) =  C_1^\theta C_2^{1-\theta}$. We used here the identities
$$
 [H^1(\Omega){/\mathbb{R}},\,  L^2(\Omega){/\mathbb{R}}\, ]_\theta =  H^{1-\theta}(\Omega){/\mathbb{R}}\quad \mathrm{and}\quad [L^2(\Omega),\, H^{-1}(\Omega)]_\theta = {[ \widetilde{H}\,^{\theta}\Omega)]'} 
 $$
 and also the interpolation inequality (see Adams \cite{Ada} page 222, Berg-Lofstr$\mathrm{\ddot{o}}$m \cite{BL} Theorem 4.1.2 and Triebel \cite{Tri} Remark 3 page 63). Since
$$
 \textit{\textbf V}^\bot = \{ \nabla \chi; \; \chi \in  L^2(\Omega)\} \quad \mathrm{and}\quad  \textit{\textbf H}^\bot = \{ \nabla \chi; \; \chi \in  H^1(\Omega)\},
 $$
 we deduce that
$$
[ \textit{\textbf H}^\bot, \textit{\textbf V}^\bot]_\theta =   \{ \nabla \chi; \; \chi \in  H^{1-\theta}(\Omega)\}. 
$$
The first inequality in \eqref{d02-7118-e1} is then a consequence of \eqref{ineg3theta} when $0 < \theta < 1$,  resp. of \eqref{inegL2H-1} and \eqref{inegH1L2} for  $\theta = 0 $ or $\theta = 1$.\medskip

 \noindent{\bf ii)}  We will now prove the first implication of \eqref{implicationsigmanabla}  and the second inequality of  \eqref{d02-7118-e1}.  We can suppose $0 < s < 1$. Recall  the following property: 
 \begin{equation}\label{gr}
 [v\in\mathscr{D}'(\Omega) \quad \mathrm{ with}\quad   \nabla v \in \textit{\textbf H}\,^{-1}(\Omega)]\;  \Longrightarrow \;  v \in L^2(\Omega),
 \end{equation} 
 see Proposition 2.10 in \cite{AG}.
 
Let $v\in\mathscr{D}'(\Omega)$ with $\sigma^{1-s}\nabla v\in {\textit{\textbf L}}^2(\Omega) $. So $v\in H^1_{loc}(\Omega)$ and then for any $\varphi\in\boldsymbol{\mathscr{D}}(\Omega)$, we have for any $j = 1, \ldots , N$
  $$
 \left|\left< \frac{\partial v}{\partial x_j}, \varphi\right>\right| = \left|\int_\Omega \sigma^{1-s}\, \frac{\partial v}{\partial x_j}\frac{ \varphi}{\sigma^{1-s}} \, dx\right|\leq C \Vert \sigma^{1-s}\, \frac{\partial v}{\partial x_j}\Vert_{L^2(\Omega)}\Vert \varphi\Vert_{\widetilde{H}\,^{1-s}(\Omega)}.
 $$
 By the density of $\boldsymbol{\mathscr{D}}(\Omega)$ in $\widetilde{H}\,^{1-s}(\Omega)$, this shows that $\nabla v\in \left[\widetilde{\textit{\textbf H}}\,^{1-s}(\Omega) \right]'$ and we get the first implication of \eqref{implicationsigmanabla} and also  the second inequality in \eqref{d02-7118-e1}. Since $\left[\widetilde{\textit{\textbf H}}\,^{1-s}(\Omega) \right]'$ is included in ${\textit{\textbf H}}\,^{-1}(\Omega) $ we have in addition  $v \in L^2(\Omega)$ by using \eqref{gr}.
 \medskip 
 
\noindent{\bf iii)} To finish the proof of the theorem, we need to verify that the second implication in \eqref{implicationsigmanabla} holds. Let  $v\in\mathscr{D}'(\Omega) $ such that  $ \nabla v\in  [\widetilde{\textit{\textbf H}}\,^{1-s}(\Omega)]'$. We know from \eqref{gr} that $v\in L^2(\Omega)$, so $v\in E_{s}(\Omega)$. Using then the density results of Lemma \ref{densityHtilde} and the first inequality in \eqref{d02-7118-e1},  there exists a sequence $(v_n) \subset D(\overline{\Omega})$ such that $v_n$ tends to $v$ in $E_{s}(\Omega)$ and there exists a constant $K_n$ such that 
$$
\Vert v_n + K_n \Vert_{L^2(\Omega)}  + \vert v_n \vert_{H^s(\Omega)} \leq C \Vert \nabla v_n\Vert_{[\widetilde{H}^{1-s}(\Omega)]'}.
$$
So $v_n + K_n \rightharpoonup w$ in $H^s(\Omega)$. So $\nabla v = \nabla w$ and then $v = w + K$ (where $K = \lim_{n\rightarrow \infty} K_n$, which gives $v \in H^s(\Omega))$.
\end{proof} 
 
 Clearly we have the following results:
 
 \begin{corollary}\label{ident-espaces} We have the following properties:
\begin{equation*}
 \{v\in\mathscr{D}'(\Omega); \,\, \sigma^{1-s}\nabla v\in  {\textit{\textbf L}}^2(\Omega)\} \subset  \{v\in\mathscr{D}'(\Omega); \,\, \ \nabla v\in  [\widetilde{\textit{\textbf H}}\,^{1-s}(\Omega)]'\} = H^s(\Omega).
 \end{equation*}
  \end{corollary}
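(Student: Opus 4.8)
The plan is to assemble the statement directly from Theorem \ref{c06-l1}, since no new argument is required: the corollary merely records the chain of implications \eqref{implicationsigmanabla} (together with its converse) as a single display of set inclusions. Write
$$
A_s(\Omega) := \{v\in\mathscr{D}'(\Omega);\ \sigma^{1-s}\nabla v\in \textit{\textbf L}^2(\Omega)\},\qquad
B_s(\Omega) := \{v\in\mathscr{D}'(\Omega);\ \nabla v\in [\widetilde{\textit{\textbf H}}\,^{1-s}(\Omega)]'\}.
$$
We must show $A_s(\Omega)\subset B_s(\Omega)=H^s(\Omega)$.

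First I would prove $A_s(\Omega)\subset B_s(\Omega)$: this is exactly the first implication in \eqref{implicationsigmanabla}, proved in part ii) of Theorem \ref{c06-l1}, namely that $\sigma^{1-s}\nabla v\in\textit{\textbf L}^2(\Omega)$ forces $\nabla v\in[\widetilde{\textit{\textbf H}}\,^{1-s}(\Omega)]'$. Next, the inclusion $B_s(\Omega)\subset H^s(\Omega)$ is the second implication in \eqref{implicationsigmanabla}; recall that it passes through the intermediate fact $v\in L^2(\Omega)$, which holds because $[\widetilde{\textit{\textbf H}}\,^{1-s}(\Omega)]'\hookrightarrow\textit{\textbf H}\,^{-1}(\Omega)$ and, by Proposition 2.10 of \cite{AG}, a distribution with gradient in $\textit{\textbf H}\,^{-1}(\Omega)$ lies in $L^2(\Omega)$. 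For the reverse inclusion $H^s(\Omega)\subset B_s(\Omega)$, I would invoke the implication recalled just before Theorem \ref{c06-l1}, namely $v\in H^s(\Omega)\Rightarrow\nabla v\in[\widetilde{\textit{\textbf H}}\,^{1-s}(\Omega)]'$ (Theorem 1.4.4.6 and Remark 1.4.4.7 in \cite{Gri}), valid for $0<s<1$; for the endpoints, $s=0$ reduces to the trivial statement that $v\in L^2(\Omega)$ has $\nabla v\in\textit{\textbf H}\,^{-1}(\Omega)$, and $s=1$ is just the definition of $H^1(\Omega)$, since there $\widetilde{\textit{\textbf H}}\,^{0}(\Omega)=\textit{\textbf L}\,^2(\Omega)$.

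There is essentially no obstacle: every ingredient has already been established above, and the corollary is purely a bookkeeping statement. The only point deserving a word of care is the degeneration of the spaces $\widetilde{\textit{\textbf H}}\,^{1-s}(\Omega)$ at the endpoint exponents $s\in\{0,1\}$, where they become $\textit{\textbf H}\,^1_0(\Omega)$ and $\textit{\textbf L}\,^2(\Omega)$ respectively; both cases are classical and were in fact already covered inside the proof of Theorem \ref{c06-l1} via \eqref{inegL2H-1}--\eqref{inegH1L2}. One may also remark, as in the Remark following Theorem \ref{c06-l1}, that if one restricts to $v\in L^2(\Omega)$ the defining condition of $A_s(\Omega)$ may equivalently be written with the genuine distance function $\varrho$ in place of the regularized distance $\sigma$.
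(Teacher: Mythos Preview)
Your proposal is correct and matches the paper's approach exactly: the corollary is stated in the paper without a separate proof, as it is an immediate consequence of the implications \eqref{implicationsigmanabla} in Theorem \ref{c06-l1} together with the reverse inclusion $H^s(\Omega)\subset B_s(\Omega)$ recalled just before that theorem (Grisvard, Theorem 1.4.4.6). Your explicit unpacking of the three inclusions and the endpoint cases $s\in\{0,1\}$ is faithful to what the paper leaves implicit.
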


 \begin{corollary}\label{equivnormH1demi} 
 i) For any $v\in H^{s}(\Omega)$ satisfying $\int_\Omega v = 0$, with $0 \leq s \leq 1$,  we have the following inequality: 
 \begin{equation}\label{equivnorms}
 \Vert v \Vert_{H^{s}(\Omega)} \leq C(\Omega) \Vert \nabla v \Vert_{[ \widetilde{\textit{\textbf H}}\,^{1-s}\Omega)]'}.
\end{equation}
ii) For any $v\in H^{1+s}(\Omega)\cap H^{1}_0(\Omega) $ we have the following inequality: 
 \begin{equation}\label{equivnormsH3/20} 
  \Vert v \Vert_{H^{1+s}(\Omega)} \leq C(\Omega) \Vert \nabla^2 v \Vert_{[ \widetilde{\textit{\textbf H}}\,^{1-s}\Omega)]'},
   \end{equation}
 where the constants involving in \eqref{equivnorms} and in \eqref{equivnormsH3/20} depend only on $s$, on the Poincar\'e constant of $\Omega$.

\end{corollary}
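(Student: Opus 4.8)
The plan is to derive both inequalities directly from the first inequality of Theorem~\ref{c06-l1}; the only extra work is to replace the quotient (mean-zero) norm appearing there by the full $H^s$-norm, using in i) the hypothesis $\int_\Omega v=0$ and in ii) the vanishing trace of $v$.

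For part i), I would argue as follows. Fix $v\in H^s(\Omega)$ with $\int_\Omega v=0$ and $0\le s\le1$, and let $w=v+K$ for an arbitrary $K\in\mathbb{R}$. Averaging over $\Omega$ and using $\int_\Omega v=0$ forces $K=\frac1{|\Omega|}\int_\Omega w$, whence $v=w-\frac1{|\Omega|}\int_\Omega w$ and
$$
\Vert v\Vert_{H^s(\Omega)}=\Big\Vert\,w-\tfrac1{|\Omega|}\textstyle\int_\Omega w\,\Big\Vert_{H^s(\Omega)}\le\Vert w\Vert_{H^s(\Omega)}+\tfrac1{|\Omega|}\Big|\textstyle\int_\Omega w\Big|\,\Vert 1\Vert_{H^s(\Omega)}.
$$
Since $s\ge0$ gives the continuous embedding $H^s(\Omega)\hookrightarrow L^2(\Omega)$ with norm $\le1$, and $|\int_\Omega w|\le|\Omega|^{1/2}\Vert w\Vert_{L^2(\Omega)}$, this yields $\Vert v\Vert_{H^s(\Omega)}\le C(\Omega)\Vert w\Vert_{H^s(\Omega)}$ with $C(\Omega)=1+|\Omega|^{-1/2}\Vert 1\Vert_{H^s(\Omega)}$. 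Taking the infimum over $K$ and then applying the first inequality of \eqref{d02-7118-e1} would give \eqref{equivnorms}.

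For part ii), I would apply part i) componentwise to $\nabla v$. If $v\in H^{1+s}(\Omega)\cap H^1_0(\Omega)$, then $\partial_j v\in H^s(\Omega)$ for each $j$, and since $v$ has zero trace on $\Gamma$, Green's formula with the test function $1$ gives $\int_\Omega\partial_j v\,dx=\int_\Gamma(\gamma_0 v)\,n_j\,d\sigma=0$; hence part i) applies to each $\partial_j v$, and summing over $j$ (all norms on the relevant finite product spaces being equivalent) gives $\Vert\nabla v\Vert_{\textit{\textbf H}^s(\Omega)}\le C(\Omega)\Vert\nabla^2 v\Vert_{[\widetilde{\textit{\textbf H}}\,^{1-s}(\Omega)]'}$. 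Finally, since $v\in H^1_0(\Omega)$, the Poincar\'e inequality together with $H^s(\Omega)\hookrightarrow L^2(\Omega)$ gives $\Vert v\Vert_{L^2(\Omega)}\le C\Vert\nabla v\Vert_{\textit{\textbf L}^2(\Omega)}\le C\Vert\nabla v\Vert_{\textit{\textbf H}^s(\Omega)}$, and combining this with the intrinsic-norm equivalence $\Vert v\Vert_{H^{1+s}(\Omega)}\simeq\Vert v\Vert_{L^2(\Omega)}+\Vert\nabla v\Vert_{\textit{\textbf H}^s(\Omega)}$ recalled in Section~2 would give \eqref{equivnormsH3/20}.

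The computation is essentially bookkeeping and I do not expect a serious obstacle. The points needing a little care are: that all constants depend only on $s$, the Lipschitz character and the Poincar\'e constant of $\Omega$ (the only genuine estimate is inherited from Theorem~\ref{c06-l1}, and $\Vert 1\Vert_{H^s(\Omega)}$ is controlled by $|\Omega|^{1/2}$); that in the critical case $s=1/2$ one has $\widetilde{\textit{\textbf H}}\,^{1/2}(\Omega)=\textit{\textbf H}^{1/2}_{00}(\Omega)$ and $H^{3/2}(\Omega)\cap H^1_0(\Omega)=H^{3/2}_0(\Omega)$, so that Theorem~\ref{c06-l1} and the hypotheses match without modification; and justifying $\int_\Omega\partial_j v=0$ for merely $H^1_0$ functions on a Lipschitz domain, which is the standard density/trace argument.
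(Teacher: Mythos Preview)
Your proposal is correct and follows the same strategy as the paper: reduce to the quotient estimate of Theorem~\ref{c06-l1} for part~i), then apply part~i) componentwise to $\partial_j v$ (which has zero mean by the trace condition) for part~ii). The only difference is that in part~i) the paper observes the sharper fact that for $\int_\Omega v=0$ one has the \emph{equality} $\Vert v\Vert_{H^s(\Omega)}=\inf_{K}\Vert v+K\Vert_{H^s(\Omega)}$, since the $L^2$-part is minimized at $K=0$ and the seminorm $|v+K|_{H^s}=|v|_{H^s}$ is translation-invariant; your triangle-inequality route gives the same conclusion with an extra harmless constant.
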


\begin{proof}  Let us observe that if $v\in H^{1+s}(\Omega)\cap H^{1}_0(\Omega) $, then for any $j = 1 \ldots , N$  the derivate $\frac{\partial v} {\partial x_j}$ satisfies the assumptions of Point i). So  Point ii) is a simple consequence of Point i). To establish the inequality \eqref{equivnorms}, note that for any $v\in H^{s}(\Omega)$ satisfying the condition $\int_\Omega v = 0$, we have
\begin{equation*}
 \Vert v \Vert^2_{H^{s}(\Omega)} = \Vert v \Vert^2_{L^2(\Omega)} +   \vert v \vert^2_{H^s(\Omega)} 
 =\inf_{K\in\, \mathbb{R}} \Vert v+K\Vert^2_{H^s(\Omega)}
\end{equation*}
since
\begin{equation*}
 \inf_{K\in\, \mathbb{R}} \Vert v+K\Vert ^2_{L^2(\Omega)} =  \Vert v \Vert^2_{L^2(\Omega)} \quad \mathrm{and}\quad  \vert\,v+K\, \vert^2_{H^s(\Omega)} =  \vert\,v\, \vert^2_{H^s(\Omega)}.
  \end{equation*}
The required estimate \eqref{equivnorms} is then a consequence of the first inequality in \eqref{d02-7118-e1}.
 \end{proof}

In the same spirit, we have the following norms equivalence results.

\begin{theorem}\label{equivnormH1demi00}  We have the following equivalence norms: 
 \begin{equation}\label{equivnorms00}
 \Vert v \Vert_{H^{1/2}_{00}(\Omega)} \simeq \Vert \nabla v \Vert_{[\textit{\textbf H}\,^{1/2}(\Omega)]'} \quad for\;  v\in H^{1/2}_{00}(\Omega) 
\end{equation} 
 and in particular, 
 \begin{equation}\label{equivnormsH3/200b} 
  \Vert v \Vert_{H^{3/2}_{00}(\Omega)} \simeq \Vert \nabla^2 v \Vert_{[\textit{\textbf H}\,^{1/2}(\Omega)]'} \quad  for \; v\in H^{3/2}_{00}(\Omega).
 \end{equation}
\end{theorem}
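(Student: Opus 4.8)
The plan is to treat the critical exponent $s=1/2$ as a limiting case obtained by interpolation, exactly parallelling the argument of Theorem \ref{c06-l1} but now keeping track of the $00$-spaces. First I would record the two anchor isomorphisms already established in \eqref{isoderham}, namely
$\nabla : L^2(\Omega)/\mathbb{R}\to \boldsymbol{V}^\bot$ and $\nabla : H^1(\Omega)/\mathbb{R}\to \boldsymbol{H}^\bot$,
together with the companion estimates \eqref{grad-11} and \eqref{grad-12} for the inverse operator $\nabla^{-1}$. The key point is that interpolating these two at the parameter $\theta=1/2$ produces, on the source side, $[H^1(\Omega)/\mathbb{R},\,L^2(\Omega)/\mathbb{R}]_{1/2}=H^{1/2}(\Omega)/\mathbb{R}$, and on the target side the dual of the interpolation space $[\boldsymbol{L}^2(\Omega),\boldsymbol{H}^{-1}(\Omega)]_{1/2}$. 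Here one uses the identity $[H^1_0(\Omega),L^2(\Omega)]_{1/2}=H^{1/2}_{00}(\Omega)$ (the exceptional case $s\in\{1/2\}+\mathbb N$ in the interpolation rule for $H^s_0$-spaces recalled in Section 2), so that by duality the target space is $[\boldsymbol{H}^{1/2}(\Omega)]'$ rather than $[\widetilde{\boldsymbol H}^{1/2}(\Omega)]'=[\boldsymbol{H}^{1/2}_{00}(\Omega)]'$. This gives one direction: for $v\in H^{1/2}_{00}(\Omega)$ with $\int_\Omega v=0$,
$$
\Vert v\Vert_{H^{1/2}(\Omega)}\ \le\ C(\Omega)\,\Vert\nabla v\Vert_{[\boldsymbol{H}^{1/2}(\Omega)]'},
$$
and the general case (without the mean-zero normalization) follows because $\|v+K\|_{H^{1/2}_{00}}$ controls $\|v\|_{H^{1/2}_{00}}$ only after one checks that the constant can be absorbed — more precisely one argues as in Corollary \ref{equivnormH1demi}, using that a member of $H^{1/2}_{00}(\Omega)$ with zero Laplacian-type data and zero gradient must be constant, hence zero since constants are not in $H^{1/2}_{00}(\Omega)$ unless they vanish.

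The reverse inequality $\Vert\nabla v\Vert_{[H^{1/2}(\Omega)]'}\le C\Vert v\Vert_{H^{1/2}_{00}(\Omega)}$ is the easier half: it follows by interpolating the trivial bounds $\|\nabla v\|_{\boldsymbol H^{-1}}\le\|v\|_{L^2}$ and $\|\nabla v\|_{\boldsymbol L^2}\le\|v\|_{H^1}$ at $\theta=1/2$, again reading the target interpolation space as $[\boldsymbol H^{1/2}(\Omega)]'$ via the $H^{1/2}_{00}$ duality. Combining the two directions yields \eqref{equivnorms00}. The second equivalence \eqref{equivnormsH3/200b} is then immediate: if $v\in H^{3/2}_{00}(\Omega)$ then each first derivative $\partial v/\partial x_j$ lies in $H^{1/2}_{00}(\Omega)$ — this is exactly the defining property of $H^{3/2}_{00}(\Omega)$, that $D^\lambda v/\varrho^{1/2}\in L^2(\Omega)$ for $|\lambda|=1$, together with $v\in H^{3/2}_0(\Omega)$ — so applying \eqref{equivnorms00} componentwise to $\nabla v$ and summing gives $\Vert v\Vert_{H^{3/2}_{00}(\Omega)}\simeq\Vert\nabla(\nabla v)\Vert_{[H^{1/2}(\Omega)]'}=\Vert\nabla^2 v\Vert_{[H^{1/2}(\Omega)]'}$.

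The main obstacle I anticipate is the bookkeeping at the critical exponent: one must be scrupulous about which of the three a priori distinct spaces $\widetilde H^{1/2}(\Omega)=H^{1/2}_{00}(\Omega)$, $H^{1/2}_0(\Omega)=H^{1/2}(\Omega)$, and their duals actually appears at each step, since the whole content of the theorem is that the dual space is $[H^{1/2}(\Omega)]'$ and \emph{not} $[H^{1/2}_{00}(\Omega)]'$. Concretely, the subtle point is justifying that $[\boldsymbol L^2(\Omega),\boldsymbol H^{-1}(\Omega)]_{1/2}=[H^{1/2}_{00}(\Omega)]''=[H^{1/2}_{00}(\Omega)]$ dualizes to $[\boldsymbol H^{1/2}(\Omega)]'$ — this uses that $H^{1/2}_{00}(\Omega)$ is reflexive and is the interpolation space $[H^1_0(\Omega),L^2(\Omega)]_{1/2}$, whose dual by the duality theorem for complex interpolation (Bergh–Löfström, Theorem 4.5.1, or Triebel) is $[H^{-1}(\Omega),L^2(\Omega)]_{1/2}$, and this last space must be identified with $[\boldsymbol H^{1/2}(\Omega)]'$ rather than with $\widetilde H^{-1/2}(\Omega)$; the distinction is precisely the failure of $\widetilde H^{1/2}=H^{1/2}_0$ at the exceptional value. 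Once this identification is pinned down, the rest is a routine transcription of the proof of Theorem \ref{c06-l1} with $\theta=1/2$.
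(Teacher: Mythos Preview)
Your interpolation approach has a genuine gap at exactly the point you flag as ``subtle bookkeeping.'' You claim that $[L^2(\Omega),H^{-1}(\Omega)]_{1/2}$ can be identified with $[H^{1/2}(\Omega)]'$, but this is false: since $H^{-1}(\Omega)=(H^1_0(\Omega))'$ and $L^2(\Omega)=(L^2(\Omega))'$, the duality theorem for complex interpolation gives
\[
[L^2(\Omega),H^{-1}(\Omega)]_{1/2}=\bigl([L^2(\Omega),H^1_0(\Omega)]_{1/2}\bigr)'=\bigl(H^{1/2}_{00}(\Omega)\bigr)',
\]
which is the \emph{larger} dual space, not $[H^{1/2}(\Omega)]'$. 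Your argument therefore reproduces precisely the estimate of Theorem~\ref{c06-l1} with $s=1/2$, namely $\Vert\nabla v\Vert_{[\widetilde{\boldsymbol H}^{1/2}(\Omega)]'}=\Vert\nabla v\Vert_{[\boldsymbol H^{1/2}_{00}(\Omega)]'}$, and nothing stronger. The entire content of Theorem~\ref{equivnormH1demi00} is the passage from $[H^{1/2}_{00}(\Omega)]'$ to the strictly smaller space $[H^{1/2}(\Omega)]'$, and this cannot come out of interpolating between $L^2$ and $H^{-1}=(H^1_0)'$.

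The paper's proof supplies the missing idea: for $v\in H^{1/2}_{00}(\Omega)$ one works with the extension by zero $\widetilde v\in H^{1/2}(\mathbb{R}^N)$, where there is no $00$-phenomenon, and establishes directly that $\Vert\nabla v\Vert_{[\boldsymbol H^{1/2}(\Omega)]'}\simeq\Vert\nabla\widetilde v\Vert_{\boldsymbol H^{-1/2}(\mathbb{R}^N)}$. The inequality $\leq$ uses a bounded extension operator $P:H^{1/2}(\Omega)\to H^{1/2}(\mathbb{R}^N)$ on the test functions, and the opposite inequality is immediate since restriction does not increase the $H^{1/2}$-norm. One then combines this with the full-space equivalence $\Vert q\Vert_{H^{1/2}(\mathbb{R}^N)}\simeq\Vert q\Vert_{L^2(\mathbb{R}^N)}+\Vert\nabla q\Vert_{\boldsymbol H^{-1/2}(\mathbb{R}^N)}$ (via Fourier transform) and the identity $\Vert v\Vert_{H^{1/2}_{00}(\Omega)}=\Vert\widetilde v\Vert_{H^{1/2}(\mathbb{R}^N)}$; a compactness argument removes the $L^2$ term. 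The extension-by-zero step is what exploits the hypothesis $v\in H^{1/2}_{00}(\Omega)$ and is not visible from the interpolation you propose. Your derivation of \eqref{equivnormsH3/200b} from \eqref{equivnorms00} is fine and matches the paper.
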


\begin{proof} {\bf i)} Let $v\in  \mathscr{D}(\Omega)$ and $\widetilde{v}\in   \mathscr{D}(\mathbb{R}^N)$ its extension by $0$ outside of $\Omega$. Then for any $\boldsymbol{\varphi} \in \mathscr{D}(\mathbb{R}^N)$ we have
$$
   \vert \langle \nabla \, \widetilde{v},\,  \boldsymbol{\varphi}\rangle \vert  = \vert \int_{\mathbb{R}^N}  \widetilde{v}\,  \mathrm{div}\, \boldsymbol{\varphi}\, dx \vert = \vert \int_{\Omega}  v\,  \mathrm{div}\, \boldsymbol{\varphi} \, dx\vert = \vert \langle \nabla \, v,\,  \boldsymbol{\varphi}\rangle_{[\textit{\textbf H}^{\,1/2}(\Omega)]'\times \textit{\textbf H}^{\,1/2}(\Omega)} \vert,
$$
But
$$
\vert \langle \nabla \, v,\,  \boldsymbol{\varphi}\rangle_{[\textit{\textbf H}^{\,1/2}(\Omega)]'\times \textit{\textbf H}^{\,1/2}(\Omega)} \vert \leq \Vert \nabla \, v\Vert_{ [\textit{\textbf H}^{\,1/2}(\Omega)]'} \Vert \boldsymbol{\varphi} \Vert_{\textit{\textbf H}^{\, 1/2}(\mathbb{R}^N)}.
$$
Using successively the density of $\mathscr{D}(\mathbb{R}^N)$ in $H^{1/2}(\mathbb{R}^N)$ and the density of  $\mathscr{D}(\Omega)$ in $H^{1/2}_{00}(\Omega)$  we deduce that
\begin{equation*}\label{ineqHundemiprimeb}
 \Vert  \nabla \, \widetilde{v} \Vert_{\textit{\textbf H}^{\,-1/2}(\mathbb{R}^N)} \leq  \Vert  \nabla \, v \Vert_{[\textit{\textbf H}^{\,1/2}(\Omega)]'}.
\end{equation*}
\noindent{\bf ii)} Let now $v\in \mathscr{D}(\Omega)$ and $\varphi \in H^{1/2}(\Omega)$. So
$$
\int_\Omega \varphi \,  \partial_j v \, dx =  \int_{\mathbb{R}^N} (P \varphi) \,    \partial_j \widetilde{v} \, dx,
$$
where $P$ is any continuous extension operator from $H^{\,1/2}(\Omega)$ into $H^{1/2}(\mathbb{R}^N)$. Using then the density of $\mathscr{D}(\Omega)$ in $H^{1/2}_{00}(\Omega)$, the continuity of the extension operator by zero from $ H^{1/2}_{00}(\Omega)$ into $H^{-1/2}(\mathbb{R}^N)$ and the continuity of the partial derivative operator $\partial_j$ from $  H^{1/2}_{00}(\Omega)$ into  $[H^{1/2}(\Omega)]'$, we deduce the following relation:  for any $\varphi \in H^{1/2}(\Omega)$  and $v\in H^{1/2}_{00}(\Omega)$
$$
 \langle \partial_j v,\, \varphi\rangle_{[H^{1/2}(\Omega)]'\times H^{1/2}(\Omega)} = \langle \partial_j \widetilde{v},\, P\varphi\rangle_{[H^{-1/2}(\mathbb{R}^N)\times H^{1/2}(\mathbb{R}^N)}.
$$
As a consequence we have
\begin{equation*}
\begin{array}{rl}
\Vert \nabla \, v\Vert_{ [\textit{\textbf H}^{\,1/2}(\Omega)]'} &= \displaystyle \sup_{\boldsymbol{\varphi}\in \textit{\textbf H}^{\,1/2}(\Omega), \, \boldsymbol{\varphi} \not= {\bf 0}}\dfrac{ \vert \langle \nabla \, v,\,  \boldsymbol{\varphi}\rangle_{[\textit{\textbf H}^{\,1/2}(\Omega)]'\times \textit{\textbf H}^{\,1/2}(\Omega)} \vert}{ \Vert \boldsymbol{\varphi} \Vert_{\textit{\textbf H}^{\, 1/2}(\Omega)}}\\
& \leq C  \displaystyle  \sup_{\boldsymbol{\varphi}\in \textit{\textbf H}^{\,1/2}(\Omega), \, \boldsymbol{\varphi} \not= {\bf 0}}\dfrac{ \vert \langle \nabla \, \widetilde{v},\,  P\boldsymbol{\varphi}\rangle\vert}{ \Vert P\boldsymbol{\varphi} \Vert_{\textit{\textbf H}^{\, 1/2}(\mathbb{R}^N)}} \\\\
& \leq  C \Vert  \nabla \, \widetilde{v} \Vert_{\textit{\textbf H}^{\,-1/2}(\mathbb{R}^N)}.
\end{array}
\end{equation*}
We have thus established the following equivalence:
\begin{equation}\label{equivnormsOmRn}
 \Vert \nabla v \Vert_{[\textit{\textbf H}\,^{1/2}(\Omega)]'} \simeq  \Vert  \nabla \, \widetilde{v} \Vert_{\textit{\textbf H}^{\,-1/2}(\mathbb{R}^N)}
 \quad \mathrm{for}\;  v\in H^{1/2}_{00}(\Omega). 
 \end{equation} 

 \noindent{\bf iii)} Recall that 
 $$
 \Vert q \Vert_{H^{-1}(\mathbb{R}^N)} + \Vert \nabla q \Vert_{\textit{\textbf H}\,^{-1}(\mathbb{R}^N)}  \simeq  \Vert q \Vert_{L^2(\mathbb{R}^N)} \quad \mathrm{for}\;  q \in L\; ^2(\mathbb{R}^N).
$$
As   
 $$
 \Vert q \Vert_{H^1(\mathbb{R}^N)} =  (\Vert q \Vert^2_{L^{2}(\mathbb{R}^N)} + \Vert \nabla q \Vert^2_{\textit{\textbf L}\,^{2}(\mathbb{R}^N)})^{1/2}  
$$
we get by interpolation, or by using the Fourier transforms,  the following equivalence norms:
$$
 \Vert q \Vert_{H^{-1/2}(\mathbb{R}^N)} + \Vert \nabla q \Vert_{\textit{\textbf H}\,^{-1/2}(\mathbb{R}^N)}  \simeq  \Vert q \Vert_{H^{1/2}(\mathbb{R}^N)}  \quad \mathrm{for}\;  q \in H^{1/2}(\mathbb{R}^N),
$$
and also the following one:
$$
 \Vert q \Vert_{L^{2}(\mathbb{R}^N)} + \Vert \nabla q \Vert_{\textit{\textbf H}\,^{-1/2}(\mathbb{R}^N)}  \simeq   \Vert q \Vert_{H^{1/2}(\mathbb{R}^N)} \quad \mathrm{for}\;  q \in H^{1/2}(\mathbb{R}^N).
$$

\noindent{\bf iv)} Now using the last above equivalence norms and \eqref{equivnormsOmRn}, we deduce that
$$
\Vert v\Vert_{H^{1/2}_{00}(\Omega)} = \Vert \widetilde{v}\Vert_{H^{1/2}(\mathbb{R}^N)}\simeq   \Vert v \Vert_{L^{2}(\Omega)} + \Vert  \nabla \, v \Vert_{[\textit{\textbf H}^{\,1/2}(\Omega)]'} \quad \mathrm{for}\;  v\in H^{1/2}_{00}(\Omega). 
$$
However, since the embedding $H^{1/2}_{00}(\Omega)\hookrightarrow L^2(\Omega)$ is compact, we can prove that for any $v\in H^{1/2}_{00}(\Omega)$
$$
\Vert v \Vert_{L^{2}(\Omega)} \leq C \Vert  \nabla \, v \Vert_{[\textit{\textbf H}^{\,1/2}(\Omega)]'}
$$
and then the required equivalence of norms \eqref{equivnorms00}.

\noindent{\bf v)} Applying \eqref{equivnorms00}, we deduce that we have
\begin{equation*}\label{equivnormsH3/200} 
  \Vert \nabla v \Vert_{\textit{\textbf H}\,^{1/2}_{00}(\Omega)} \simeq \Vert \nabla^2 v \Vert_{[\textit{\textbf H}\,^{1/2}(\Omega)]'}\quad \mathrm{for}\;  v\in H^{3/2}_{00}(\Omega).
 \end{equation*}
The equivalence of norms \eqref{equivnormsH3/200b} is then a consequence of the following:
\begin{equation*}\label{equivnormsGH3/200} 
  \Vert \nabla v \Vert_{\textit{\textbf H}\,^{1/2}_{00}(\Omega)} \simeq \Vert  v \Vert_{H^{3/2}_{00}(\Omega)}\quad  \mathrm{for}\; v\in H^{3/2}_{00}(\Omega).
 \end{equation*}
\end{proof}

Let us define the following space
\begin{equation*}\label{spaceK_0}
K_2(\Omega) = \left\{ v \in L^2(\Omega);\ \varrho^{2}\Delta v \in
L^{2}(\Omega) \right\},
\end{equation*}
which is a Hilbert space for the norm 
$$
\Vert v \Vert_{K_2(\Omega)} = (\Vert v \Vert^2_{L^2(\Omega)} + \Vert \varrho^{2} \Delta v\Vert^2_{L^{2}(\Omega)})^{1/2}.
$$

\begin{lemma}\label{DensityK_0} The space 
$$
\mathscr{D}\big(\overline{\Omega}\big)\quad  is \, \, dense\, \,  in \; \; K_2(\Omega).
$$
\end{lemma}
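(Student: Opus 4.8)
The plan is to use the Hahn–Banach theorem, exactly as in Lemma~\ref{densityHtilde}. Let $\ell\in[K_2(\Omega)]'$ vanish on $\mathscr{D}(\overline{\Omega})$; we must show $\ell\equiv 0$. By the Riesz representation of the dual of $K_2(\Omega)$ (which, via the isometry $v\mapsto(v,\varrho^2\Delta v)$, embeds $K_2(\Omega)$ as a closed subspace of $L^2(\Omega)\times L^2(\Omega)$), there exist $f\in L^2(\Omega)$ and $g\in L^2(\Omega)$ such that
\begin{equation*}
\forall v\in K_2(\Omega),\qquad \langle\ell,v\rangle=\int_\Omega f\,v+\int_\Omega g\,\varrho^2\Delta v .
\end{equation*}
First I would test against $\varphi\in\mathscr{D}(\Omega)\subset\mathscr{D}(\overline{\Omega})$: since $\mathscr{D}(\Omega)\subset K_2(\Omega)$ and $\ell$ kills it, one gets $f+\Delta(\varrho^2 g)=0$ in $\mathscr{D}'(\Omega)$, i.e. $\Delta(\varrho^2 g)=-f\in L^2(\Omega)$. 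Write $w:=\varrho^2 g$; then $w\in L^2(\Omega)$ with compact-in-$\Omega$ weight behaviour and $\Delta w\in L^2(\Omega)$.

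Next I would integrate by parts twice. For $v\in\mathscr{D}(\overline{\Omega})$ the vanishing of $\ell$ reads $\int_\Omega f v+\int_\Omega w\,\Delta v=0$; substituting $f=-\Delta w$ and using Green's formula gives a boundary expression
\begin{equation*}
\int_\Omega(\Delta w\, v - w\,\Delta v)=\Big\langle \tfrac{\partial v}{\partial\mathbf n},w\Big\rangle_{\Gamma}-\Big\langle v,\tfrac{\partial w}{\partial\mathbf n}\Big\rangle_{\Gamma}=0
\end{equation*}
for all $v\in\mathscr{D}(\overline{\Omega})$. Since the traces $v_{|\Gamma}$ and $(\partial v/\partial\mathbf n)_{|\Gamma}$ of test functions $v\in\mathscr{D}(\overline{\Omega})$ are dense in the relevant boundary spaces, this forces the ``boundary data'' of $w$ to vanish in the appropriate weak sense; the key point is that $w=\varrho^2 g$ already carries a quadratic vanishing at $\Gamma$, which is precisely what is needed to kill the boundary terms rigorously and to run the duality. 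Concretely, the cleanest route is to show that $w\in \widetilde H^{?}$ (an extension-by-zero statement) or, dually, that $\varrho^2 g$ can be approximated by elements of $\mathscr{D}(\Omega)$ in the graph norm $\|\cdot\|_{L^2}+\|\Delta\cdot\|_{L^2}$ — an analogue for the Laplacian of the classical $\mathscr{D}(\Omega)$-density in $\{\textit{\textbf F};\ \mathrm{div}\,\textit{\textbf F}\in L^2\}$ used in Lemma~\ref{densityHtilde}, obtained by the standard translation-and-mollification argument of Theorem~1.3 in \cite{Temam}. Once such a sequence $(\varphi_k)\subset\mathscr{D}(\Omega)$ with $\varphi_k\to w$ and $\Delta\varphi_k\to\Delta w$ in $L^2(\Omega)$ is available, then for every $v\in K_2(\Omega)$,
\begin{equation*}
\langle\ell,v\rangle=\int_\Omega(f v+ w\,\Delta v)=\lim_{k\to\infty}\int_\Omega(-\Delta\varphi_k\,v+\varphi_k\,\Delta v)=0,
\end{equation*}
the last equality holding because each integrand pairs a $\mathscr{D}(\Omega)$ function against a distribution, with no boundary contribution.

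I expect the main obstacle to be the density-with-Laplacian step, i.e. proving that $\mathscr{D}(\Omega)$ is dense in $\{w\in L^2(\Omega);\ \Delta w\in L^2(\Omega),\ w=\partial w/\partial\mathbf n=0\text{ on }\Gamma\}$, or equivalently identifying $\varrho^2 g$ as an element of a space on which such cut-off/mollification is legitimate. The subtlety is that $\Delta$ is second order, so the naive translate of a function supported up to $\Gamma$ need not have $L^2$ Laplacian; one must exploit the weight $\varrho^2$ (via the regularized distance $\sigma$ and the estimates $|D^\lambda\sigma|\le C\sigma^{1-|\lambda|}$ recalled before Theorem~\ref{c06-l1}) to control $\Delta(\sigma^2 g\,\zeta_\varepsilon)$ under a Lipschitz partition of unity and local flattening of $\Gamma$, exactly the machinery set up in Section~2. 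Everything else — the Hahn–Banach framing, the Riesz representation, the two integrations by parts — is routine.
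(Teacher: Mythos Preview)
Your Hahn--Banach setup and Riesz representation are exactly those of the paper, and your function $w=\varrho^2 g$ is precisely the paper's $g$ (written there as an element of $L^2_{\varrho^{-2}}(\Omega)$). Where you diverge is in how you extract information from the hypothesis that $\ell$ vanishes on $\mathscr{D}(\overline{\Omega})$.

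You first test only against $\mathscr{D}(\Omega)$, obtain $f=-\Delta w$ in $\Omega$, and then try to recover the missing boundary information by Green's formula and a subsequent density-with-Laplacian lemma. This detour is unnecessary, and the step you flag as the ``main obstacle'' is in fact the crux you have not proved: your Green identity for $w\in L^2$, $\Delta w\in L^2$ already requires a nontrivial trace theory (cf.\ Theorem~\ref{lsb3} and Remark~\ref{remrelevbis}), and the translation--mollification argument you propose for the approximation of $w$ in the graph norm of $\Delta$ is delicate precisely because $\Delta$ is second order.

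The paper's observation removes all of this. Testing $\ell=0$ against \emph{every} $v\in\mathscr{D}(\overline{\Omega})$ is the same as testing against the restriction to $\Omega$ of every $\varphi\in\mathscr{D}(\mathbb{R}^N)$. With $\widetilde f,\widetilde w$ the zero extensions, the identity
\[
\int_{\mathbb{R}^N}\widetilde f\,\varphi+\int_{\mathbb{R}^N}\widetilde w\,\Delta\varphi=0\qquad\forall\,\varphi\in\mathscr{D}(\mathbb{R}^N)
\]
is \emph{immediate}, i.e.\ $-\Delta\widetilde w=\widetilde f$ holds in $\mathscr{D}'(\mathbb{R}^N)$, not merely in $\Omega$. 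Since $\widetilde w,\widetilde f\in L^2(\mathbb{R}^N)$, elliptic regularity gives $\widetilde w\in H^2(\mathbb{R}^N)$, hence $w\in H^2_0(\Omega)$. Then the standard density of $\mathscr{D}(\Omega)$ in $H^2_0(\Omega)$ furnishes the approximating sequence $(g_k)$, and your final limit computation
\[
\langle\ell,v\rangle=\lim_{k\to\infty}\Big(-\int_\Omega v\,\Delta g_k+\int_\Omega g_k\,\Delta v\Big)=0
\]
goes through for every $v\in K_2(\Omega)$. Note that your parenthetical ``extension-by-zero statement'' \emph{is} this argument; you simply did not recognise that the vanishing of $\ell$ on $\mathscr{D}(\overline{\Omega})$ already gives the whole-space equation in one line, with no boundary terms and no second-order mollification difficulties.
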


\begin{proof}
Let $\ell \in \left[K_2(\Omega)\right]'$ be such that 
$$
\forall v \in \mathscr{D}(\overline\Omega),\ \ \ \left<\, \ell, v\, \right>\ =\ 0.
$$
We know that there exist $f \in L^2(\Omega) $ and $g \in L^2_{\varrho^{-2}}(\Omega)$ such that for any $v\in K_2(\Omega)$, 
$$
\left<\, \ell, v\, \right>\, =\ \int_\Omega f v \, dx + \int_\Omega g\Delta  v \, dx.
$$
Let $\widetilde{f}\in L^2(\mathbb{R}^N)$ and $\widetilde{g}\in L^{2}(\mathbb{R}^N)$ the extension functions by zero of respectively $f$ and $g$. Then for any $v\in \mathscr{D}(\mathbb{R}^N)$, we have
$$
\ \int_{\mathbb{R}^N}  \widetilde{f} v \, dx + \int_{\mathbb{R}^N}  \widetilde{g}\Delta  v \, dx = 0,
$$
\textit{i.e.} $- \Delta  \widetilde{g}\ =\ \widetilde{f}$ in $\mathbb{R}^N$. Consequently, we deduce that $\widetilde{g}\in H^{2}(\mathbb{R}^N)$.  It means that $g\in H^{2}_{0}(\Omega)$. As  $\mathscr{D}(\Omega)$ is dense in $H^{2}_{0}(\Omega)$, there exists $g_k\in \mathscr{D}(\Omega)$ such that 
$$
g_k\  \ \longrightarrow \ \ g \ \ \textrm{ in } \ H^{2}(\Omega).
$$
Then, for any $v\in K_2(\Omega)$, we have
$$
\left<\, \ell, v\, \right>\, =\ \lim_{k\rightarrow\infty} (- \int_\Omega v\Delta g_k \, dx  + \int_\Omega g_k\Delta  v \, dx) = 0,
$$
which ends the proof.
\end{proof}

 \begin{remark}\upshape In fact we will see below that the space $\mathscr{D}(\Omega)$ is dense in $K_2(\Omega)$.
 \end{remark}

Let us introduce the following space:
$$
  \mathscr{Q}^2_{0}(\Omega)\ =\ \left\{ v\in L^2(\Omega);\ \ \varrho \nabla v\in \textit{\textbf L}^2(\Omega), \;    \varrho^2 \nabla^2 v\in \textit{\textbf L}^2(\Omega\right\}.
$$
Recall that  $\mathscr{D}(\Omega)$ is dense in $\mathscr{Q}^2_{0}(\Omega)$, the proof is similar to that of  Proposition II.6.2  in \cite{Lions}.\smallskip

We are now in position to state, more generally that in proposition above, some converse implications of Theorem \ref{c06-l1}.

\begin{e-proposition}\label{RegulDelta} We have the following identity
\begin{equation*}\label{Q20K0}
\mathscr{Q}^2_{0}(\Omega) = K_2(\Omega),
\end{equation*}
algebraically and topologically. In particular if $v$ is a harmonic function,  for any positive integer $k$ we have the following property
\begin{equation}\label{Hm-sb}
v\in L^2(\Omega)\Longrightarrow \sigma^{ k}\nabla^{k} v\in  {\textit{\textbf L}}^2(\Omega)\quad \mathrm{for}\; k = 1, 2
\end{equation}
and
\begin{equation*}\label{Hm-sa}
\Vert\sigma^{ k}\nabla^{k} v\Vert_{ {\textit{\textbf L}}^2(\Omega)} \leq C\Vert  v\Vert_ {L^2(\Omega)},
\end{equation*}
where the constant $C$ depends on the Lipschitz constant of $\Omega$.
\end{e-proposition}

\begin{proof} {\bf Step 1.}  We claim that 
\begin{equation}\label{inegQ20K0}
\forall v\in \mathscr{Q}^2_{0}(\Omega), \quad \Vert  v  \Vert_{  \mathscr{Q}^{2}_{0}(\Omega)} \leq  C(\Vert v  \Vert_{L^2(\Omega)} +\Vert \varrho^{2} \Delta v  \Vert_{L^2(\Omega)}). 
\end{equation}
Since $\mathscr{D}(\Omega)$ is dense in $\mathscr{Q}^2_{0}(\Omega)$, it suffices to prove this inequality for any $v\in \mathscr{D}(\Omega)$. 

Integrating by parts, we get for any $v \in \mathscr{D}(\Omega)$:
$$
\int_\Omega \vert \varrho \nabla v\vert^2 \, dx = - 2\int_\Omega \varrho v \nabla\varrho \cdot \nabla v \, dx - \int_\Omega \varrho^{2} v \Delta v \, dx.
$$
Using Cauchy-Schwarz inequality, we obtain
\begin{equation*}
 \Vert  \varrho \nabla v \Vert_{  \textit{\textbf L}^2(\Omega)} \leq  C(\Vert   v \Vert_{L^2(\Omega)} +\Vert \varrho^{2} \Delta v  \Vert_{L^2(\Omega)}). 
\end{equation*}
Integrating two times by parts
\begin{equation*}
\begin{array}{rl}
\!\!\!\Vert \varrho^{2} \nabla^2  v \Vert^2_{\textit{\textbf L}^2(\Omega)} & =  - 4 \displaystyle\int_\Omega \varrho^{3}  \dfrac{\partial \varrho}{\partial x_i} \dfrac{\partial v}{\partial x_j}   \dfrac{\partial^2 v}{\partial x_i\partial x_j} \, dx  -  \int_\Omega \varrho^{4}  \dfrac{\partial v}{\partial x_j} \dfrac{\partial \Delta  v}{\partial x_j} \, dx\\\\
&=  - 4\displaystyle \int_\Omega \varrho^{3}  \dfrac{\partial \varrho}{\partial x_i} \dfrac{\partial v}{\partial x_j}   \dfrac{\partial^2 v}{\partial x_i\partial x_j} \, dx + 4 \int_\Omega \varrho^{3}  \dfrac{\partial \varrho}{\partial x_j} \dfrac{\partial v}{\partial x_j}  \Delta v \, dx\;  + \displaystyle \int_\Omega \varrho^{4} \vert \Delta v \vert^2 \, dx\\\\
&  \leq  C( \Vert  \varrho \nabla v \Vert_{  L^2(\Omega)} \Vert \varrho^{2} \nabla^2  v \Vert_{L^2(\Omega)} ) + \Vert \varrho^{2} \Delta  v \Vert^2_{L^2(\Omega)} 
\end{array}
\end{equation*}
and using the previous inequality and Cauchy-Schwarz inequality, we  get 
\begin{equation*}
\Vert \varrho^{2} \nabla^2  v \Vert_{\textit{\textbf L}^2(\Omega)}  \leq  C(\Vert   v  \Vert_{L^2(\Omega)} +\Vert \varrho^{2} \Delta  v \Vert_{L^2(\Omega)}). 
\end{equation*}
We finally deduce the required inequality.\medskip

\noindent{\bf Step 2.}  It just remains to prove the inclusion of $K_2(\Omega)$ into $\mathscr{Q}^2_{0}(\Omega) $. So let $v\in K_2(\Omega)$ and $v_k \in \mathscr{D}\big(\overline{\Omega}\big)$ such that $v_k \longrightarrow v$ in $K_2(\Omega)$. From \eqref{inegQ20K0}, we get the following estimate:
\begin{equation*}
\Vert  v_k \Vert_{  \mathscr{Q}^{2}_{0}(\Omega)} \leq  C(\Vert v_k  \Vert_{L^2(\Omega)} +\Vert \varrho^{2} \Delta v_k  \Vert_{L^2(\Omega)}).
\end{equation*}
Therefore, the sequence $(v_k)_k$ is bounded in $ \mathscr{Q}^{2}_{0}(\Omega)$ and $v_k \rightharpoonup v^\star $  in  $\mathscr{Q}^{2}_{0}(\Omega)$ and $v^\star = v \in \mathscr{Q}^{2}_{0}(\Omega)$.
 \end{proof}
 
\begin{theorem}\label{inegharm} Let $v\in\mathscr{D}'(\Omega) $ be such that $\Delta v = 0$ in $\Omega$, $m$ a non negative integer and $0\leq \theta < 1$ a  real number. Then we have the following properties:\\
 For any nonnegative integer $k$,
\begin{equation*}\label{Hm-s}
v\in H^{m+ \theta}(\Omega)\Longrightarrow \sigma^{1-\theta + k}\nabla^{m + 1 + k} v\in  {\textit{\textbf L}}^2(\Omega)
\end{equation*}
and
\begin{equation*}\label{Hm-sa}
\Vert \sigma^{1-\theta + k}\nabla^{m + 1 + k} v\Vert_{ {\textit{\textbf L}}^2(\Omega)} \leq C(\Omega)\Vert  v\Vert_ {H^{m + \theta }(\Omega)},
\end{equation*}
where $C(\Omega)$ depends only on the Lipschitz  constant of $\Omega$.
\end{theorem}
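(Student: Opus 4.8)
The plan is to prove both assertions at once by a Whitney decomposition of $\Omega$: on each Whitney cube the harmonicity of $v$ gives interior elliptic estimates, which combined with a scale‑invariant fractional Poincar\'e inequality convert the $L^2$‑size of $v$ on the cube into control by its Gagliardo seminorm; one then sums over cubes using that $\sigma$ is comparable to the cube size. First I would reduce to the case $m=0$. Since $\Delta v=0$ in $\mathscr D'(\Omega)$, every derivative $\partial^\alpha v$ with $|\alpha|=m$ is again harmonic, and because $v\in H^{m+\theta}(\Omega)\subset H^m(\Omega)$ the intrinsic (Gagliardo) norm on the Lipschitz domain $\Omega$ recalled in Section~2 gives $\partial^\alpha v\in H^\theta(\Omega)$ with $\sum_{|\alpha|=m}\|\partial^\alpha v\|_{H^\theta(\Omega)}^2\le C\|v\|_{H^{m+\theta}(\Omega)}^2$; moreover the family $\{\nabla^{1+k}\partial^\alpha v:|\alpha|=m\}$ exhausts, up to combinatorial constants, all derivatives $\partial^\beta v$ with $|\beta|=m+1+k$. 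Hence it suffices to prove: \emph{if $w$ is harmonic in $\Omega$ and $w\in H^\theta(\Omega)$ with $0\le\theta<1$, then $\sigma^{1-\theta+k}\nabla^{1+k}w\in L^2(\Omega)$ and $\|\sigma^{1-\theta+k}\nabla^{1+k}w\|_{L^2(\Omega)}\le C\|w\|_{H^\theta(\Omega)}$, for every integer $k\ge0$.}

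Next I would fix a Whitney decomposition $\Omega=\bigcup_j Q_j$ with $\ell_j:=\ell(Q_j)\simeq\varrho\simeq\sigma$ on $Q_j$ and on the fixed dilate $Q_j^\ast=\tfrac98 Q_j\subset\Omega$, the $Q_j^\ast$ having bounded overlap. On $Q_j^\ast$ the function $w$ is harmonic, so the Cauchy (interior) estimates for harmonic functions, applied to $w-c$ for an arbitrary constant $c$, yield
\[
\|\nabla^{1+k}w\|_{L^2(Q_j)}\ \le\ \frac{C(N,k)}{\ell_j^{\,1+k}}\ \inf_{c\in\mathbb R}\|w-c\|_{L^2(Q_j^\ast)} ,
\]
the constant being annihilated by $\nabla^{1+k}$. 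For $0<\theta<1$ the scale‑invariant fractional Poincar\'e inequality on the cube $Q_j^\ast$ gives
\[
\inf_{c\in\mathbb R}\|w-c\|_{L^2(Q_j^\ast)}^2\ \le\ C(N,\theta)\,\ell_j^{\,2\theta}\int_{Q_j^\ast}\int_{Q_j^\ast}\frac{|w(x)-w(y)|^2}{|x-y|^{N+2\theta}}\,dx\,dy ,
\]
while for $\theta=0$ one simply uses $\inf_c\|w-c\|_{L^2(Q_j^\ast)}^2\le\|w\|_{L^2(Q_j^\ast)}^2$ (this is precisely the input behind Proposition~\ref{RegulDelta}). Combining the two displays, \emph{all} powers of $\ell_j$ cancel: $\ell_j^{\,2(1-\theta+k)}\|\nabla^{1+k}w\|_{L^2(Q_j)}^2\le C\iint_{Q_j^\ast\times Q_j^\ast}|x-y|^{-N-2\theta}|w(x)-w(y)|^2$.

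I would then sum over $j$. Since $\sigma\simeq\ell_j$ on $Q_j$ and the $Q_j$ tile $\Omega$, the left‑hand sides add up to a quantity comparable to $\int_\Omega\sigma^{2(1-\theta+k)}|\nabla^{1+k}w|^2$, and the bounded overlap of the $Q_j^\ast$ controls $\sum_j\iint_{Q_j^\ast\times Q_j^\ast}$ by $\iint_{\Omega\times\Omega}$, i.e.\ by $|w|_{H^\theta(\Omega)}^2$ (by $\|w\|_{L^2(\Omega)}^2$ when $\theta=0$). This gives $\|\sigma^{1-\theta+k}\nabla^{1+k}w\|_{L^2(\Omega)}^2\le C\|w\|_{H^\theta(\Omega)}^2$, and unwinding the reduction over $|\alpha|=m$ yields the stated implication together with the estimate, with $C$ depending on $\Omega$ only through its Whitney/Lipschitz character (and on $m,\theta,k$). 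Using $\sigma\simeq\varrho$ one may replace $\sigma$ by $\varrho$; in particular the case $m=\theta=0$, $k\in\{0,1\}$ recovers Proposition~\ref{RegulDelta}.

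\textbf{Main obstacle.} The delicate point is the quantitative bookkeeping of the scaling: obtaining the interior estimate for the order‑$(1+k)$ derivative of a harmonic function with the \emph{exact} factor $\ell_j^{-(1+k)}$ uniformly over all Whitney cubes, and pairing it with the correctly normalized fractional Poincar\'e inequality on a cube so that the product is scale invariant. (An alternative would be to interpolate between the endpoint cases $\theta=0$, supplied by Proposition~\ref{RegulDelta}, and $\theta\to1$, which is trivial since $\sigma^0=1$; but this would require knowing that the harmonic subspaces $\{v\in H^s(\Omega):\Delta v=0\}$ form an interpolation scale, i.e.\ admit a common bounded retraction, which on a merely Lipschitz domain is itself a nontrivial point — so the direct Whitney argument above seems preferable.)
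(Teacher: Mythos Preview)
Your Whitney–decomposition argument is correct and is a genuinely different route from the paper's proof. The paper does exactly what you list as the ``alternative'' in your Main obstacle paragraph: it reduces to $m=0$, invokes Proposition~\ref{RegulDelta} for the endpoints $\theta=0$ (giving $\sigma\nabla w,\ \sigma^2\nabla^2 w\in L^2$ from $w\in L^2$) and the trivial $\theta=1$ case, and then interpolates, using the identity $[\mathscr H\cap H^1(\Omega),\mathscr H\cap L^2(\Omega)]_{1-\theta}=\mathscr H\cap H^\theta(\Omega)$ for the space of harmonic functions, quoted from Jerison--Kenig. So the nontrivial input you flagged---that the harmonic subspaces form an interpolation scale on a Lipschitz domain---is precisely what the paper imports from \cite{J-K}. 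Your approach trades that external fact for the pair (interior Cauchy estimate)\,+\,(scale-invariant fractional Poincar\'e on a cube), both of which are elementary and dimension-free; the bookkeeping you worry about is routine once one writes the $L^2$ interior estimate as $\|\nabla^{1+k}w\|_{L^2(Q_j)}\le C\ell_j^{-(1+k)}\|w-c\|_{L^2(Q_j^\ast)}$ and notes that the bounded overlap of the $Q_j^\ast$ implies bounded overlap of the products $Q_j^\ast\times Q_j^\ast$ in $\Omega\times\Omega$. The payoff of your route is self-containment and a transparent dependence of the constant on $(N,k,\theta)$ and the Whitney geometry; the payoff of the paper's route is brevity once the Jerison--Kenig interpolation identity is granted.
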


\begin{proof} {\bf  i)} Suppose $\theta = 0$. If $m = 0$ the result is given by Proposition \ref{RegulDelta}.  If $m$ is a positive integer we reitere the same reasoning on the derivatives of v of order $m$.\smallskip

\noindent{\bf ii)} Assume $0 < \theta < 1$. It suffices to prove the result for $m = 0$. So let $v$ be harmonic. Recall that 
$$
 [H^1(\Omega), L^2(\Omega)]_{1-\theta} =  H^\theta(\Omega) \; \mathrm{and}\;  [\mathscr{H}\cap H^1(\Omega), \mathscr{H}\cap L^2(\Omega)]_{1-\theta} =  \mathscr{H}\cap H^\theta(\Omega),
$$
where $\mathscr{H}$ is the space of harmonic functions in $\Omega$ (see \cite{J-K} page 183 for the last identity).
But we know from \eqref{Hm-sb} that
$$
v\in L^{2}(\Omega)\Longrightarrow \sigma\nabla v\in  {\textit{\textbf L}}^2(\Omega) \quad \mathrm{with}\quad \Vert \sigma\nabla v \Vert_{{\textit{\textbf L}}^2(\Omega)} \leq C_1 \Vert v \Vert_{L^{2}(\Omega)}
$$
and
$$
v\in H^{1}(\Omega)\Longrightarrow \nabla v\in  {\textit{\textbf L}}^2(\Omega)\quad \mathrm{with}\quad \Vert \nabla v \Vert_{{\textit{\textbf L}}^2(\Omega))} \leq  \Vert v \Vert_{H^{1}(\Omega)},
$$
where $C_1$ depends on the Lipschitz  constant of $\Omega$. So by interpolation we get the following implication
$$
v\in H^\theta(\Omega)\Longrightarrow \sigma^{1-\theta}\nabla v\in {\textit{\textbf L}}^2(\Omega),
$$
with the corresponding estimate. Similarly
$$
v\in L^{2}(\Omega)\Longrightarrow \sigma^2\nabla^2 v\in  {\textit{\textbf L}}^2(\Omega) \quad \mathrm{and}\quad v\in H^{1}(\Omega)\Longrightarrow \sigma \nabla^2 v\in  {\textit{\textbf L}}^2(\Omega).
$$
We get again by interpolation the following implication
$$
v\in H^\theta(\Omega)\Longrightarrow \sigma^{2-\theta}\nabla^2 v\in  L^2(\Omega),
$$
with the corresponding estimate.\end{proof}

\section{Traces in the limit cases $H^{1/2}(\Omega)$ and $H^{3/2}(\Omega)$ and for non smooth functions} \label{traces}
The questions of traces of functions belonging to Sobolev spaces are fundamental in the study of boundary value problems. Classically, we know that the linear mapping $\gamma: u \mapsto u_{\vert \Gamma}$ is continuous from $H^s(\Omega)$ into $H^{s-1/2}(\Gamma)$ for $1/2< s < 3/2$ and this property is wrong for $s = 1/2$ or $s = 3/2$. Moreover if $u\in H^s(\Omega)$  for $s > 3/2$ (resp. $u\in H^{3/2}(\Omega)$), then $u_{\vert \Gamma}\in H^1(\Gamma)$ (resp. $u_{\vert \Gamma}\in H^{1-\varepsilon}(\Gamma)$ for any $\varepsilon > 0$). We will investigate in this section the crucial limit cases $s = 1/2$, $s = 3/2$ and what additional condition should be added for them to have a trace.

\subsection{Traces in the limit cases $H^{1/2}(\Omega)$ and $H^{3/2}(\Omega)$}

Recall that for any  $v\in H^{s}(\Omega)$, with $0 < s < 1$, we have $\nabla v \in \textit{\textbf H}\, ^{s-1}(\Omega) $ if $s \neq 1/2$ and $\nabla v \in [\textit{\textbf H}\, ^{1/2}_{00}(\Omega)]' $ if $s = 1/2$, this last dual space being bigger that the dual space $[\textit{\textbf H}\, ^{1/2}(\Omega)]' $ (see Theorem 1.4.4.6 and Proposition 1.4.4.8 in \cite{Gri}).

\begin{lemma}\label{DensityDOmegabarH1/2GradH1/2'}
The space $\mathscr{D}(\overline\Omega)$ is dense in the following space:
\begin{equation}\label{defEnablaOmega}
 E(\nabla;\, \Omega)\ =\ \left\{\,v\in H^{1/2}(\Omega);\ \nabla v\in  [\textit{\textbf H}^{\, 1/2}(\Omega)]'\,\right\}.
\end{equation}
\end{lemma}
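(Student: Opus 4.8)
The plan is to prove density of $\mathscr{D}(\overline{\Omega})$ in $E(\nabla;\, \Omega)$ by the Hahn--Banach duality argument, exactly in the spirit of the proof of Lemma~\ref{densityHtilde}. First I would identify the dual: since $E(\nabla;\,\Omega)$ embeds continuously into $H^{1/2}(\Omega) \times [\textit{\textbf H}^{\,1/2}(\Omega)]'$ via $v \mapsto (v, \nabla v)$, every continuous linear functional $\ell$ on $E(\nabla;\,\Omega)$ can be represented, using the Hahn--Banach extension and the Riesz representation on the product space, by a pair $(g, \textit{\textbf G})$ with $g \in [H^{1/2}(\Omega)]'$ and $\textit{\textbf G} \in \textit{\textbf H}^{\,1/2}(\Omega)$, so that
\begin{equation*}
\forall v \in E(\nabla;\,\Omega), \quad \langle \ell, v\rangle = \langle g, v\rangle_{[H^{1/2}(\Omega)]'\times H^{1/2}(\Omega)} + \langle \nabla v, \textit{\textbf G}\rangle_{[\textit{\textbf H}^{\,1/2}(\Omega)]'\times \textit{\textbf H}^{\,1/2}(\Omega)}.
\end{equation*}
It then suffices to show that any such $\ell$ vanishing on $\mathscr{D}(\overline{\Omega})$ is identically zero.

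Next I would test $\ell$ against $\varphi_{|\Omega}$ for arbitrary $\varphi \in \mathscr{D}(\mathbb{R}^N)$. Writing $\widetilde{g}$ and $\widetilde{\textit{\textbf G}}$ for the extensions by zero to $\mathbb{R}^N$ --- noting that $\widetilde{\textit{\textbf G}}$ need not a priori be in $\textit{\textbf H}^{\,1/2}(\mathbb{R}^N)$ since $s=1/2$ is the critical exponent, so one must be careful and instead keep $\textit{\textbf G}$ as acting through the duality pairing --- the hypothesis $\langle \ell, \varphi_{|\Omega}\rangle = 0$ yields the distributional identity
\begin{equation*}
g = \mathrm{div}\,\textit{\textbf G} \quad \text{in}\ \Omega
\end{equation*}
in the appropriate weak sense (pairing with $\mathscr{D}(\overline\Omega)$ rather than merely $\mathscr{D}(\Omega)$, which encodes a vanishing boundary contribution). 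The key structural fact I would extract is that $\textit{\textbf G} \in \textit{\textbf H}^{\,1/2}(\Omega)$ with $\mathrm{div}\,\textit{\textbf G} \in [H^{1/2}(\Omega)]'$, and that the vanishing on all of $\mathscr{D}(\overline\Omega)$ forces the normal trace $\textit{\textbf G}\cdot\textit{\textbf n}$ to vanish in the suitable negative-order space on $\Gamma$; that is, $\textit{\textbf G}$ lies in the space
\begin{equation*}
\textit{\textbf H}^{\,1/2}_0(\mathrm{div};\,\Omega) := \{\textit{\textbf G}\in \textit{\textbf H}^{\,1/2}(\Omega);\ \mathrm{div}\,\textit{\textbf G}\in [H^{1/2}(\Omega)]',\ \textit{\textbf G}\cdot\textit{\textbf n}=0\ \text{on}\ \Gamma\}.
\end{equation*}

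Then, following the argument of Theorem~1.3 in \cite{Temam} adapted to the fractional order $1/2$, I would show $\mathscr{D}(\Omega)^N$ is dense in $\textit{\textbf H}^{\,1/2}_0(\mathrm{div};\,\Omega)$, pick a sequence $\textit{\textbf G}_k \in \mathscr{D}(\Omega)^N$ with $\textit{\textbf G}_k \to \textit{\textbf G}$ in this space, and compute for $v \in E(\nabla;\,\Omega)$:
\begin{equation*}
\langle \ell, v\rangle = \lim_{k\to\infty}\Big(\langle \mathrm{div}\,\textit{\textbf G}_k, v\rangle + \langle \nabla v, \textit{\textbf G}_k\rangle\Big) = \lim_{k\to\infty}\Big(-\int_\Omega \textit{\textbf G}_k\cdot\nabla v + \int_\Omega \textit{\textbf G}_k\cdot\nabla v\Big) = 0,
\end{equation*}
which gives $\ell \equiv 0$ and hence the density. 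The main obstacle I anticipate is the normal-trace step: because $1/2$ is the critical Sobolev exponent, the usual integration-by-parts formula defining $\textit{\textbf G}\cdot\textit{\textbf n}$ is delicate, and one has to justify carefully both the existence of $\textit{\textbf G}\cdot\textit{\textbf n}$ as an element of $[\textit{\textbf H}^{\,1/2}_{00}(\Gamma)]'$ or a similar space and the fact that testing against $\mathscr{D}(\overline\Omega)$ (as opposed to $\mathscr{D}(\Omega)$) is precisely what kills this boundary term; the fractional-order density result for $\textit{\textbf H}^{\,1/2}_0(\mathrm{div};\,\Omega)$ also requires a truncation-and-mollification argument that is not entirely routine at this regularity.
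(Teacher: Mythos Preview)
Your Hahn--Banach strategy is coherent and you have correctly identified the two delicate points (the normal trace at the critical exponent and the density of $\mathscr{D}(\Omega)^N$ in $\textit{\textbf H}^{\,1/2}_0(\mathrm{div};\,\Omega)$), but the paper takes a completely different route: it gives a \emph{direct constructive} proof rather than a duality argument. After localizing to the half-space $\mathbb{R}^N_+$, the paper proceeds in three steps: (1) multiply by dilated cut-offs $\psi_k(x)=\psi(x/k)$ to reduce to compactly supported functions, proving convergence both in $H^{1/2}$ and of the gradient in $[\textit{\textbf H}^{\,1/2}]'$ (the latter via a weak-plus-upper-semicontinuity-of-norm argument); (2) translate inward by $h>0$ and multiply by a cut-off $\alpha_h$ to get functions extendable to all of $\mathbb{R}^N$; (3) mollify. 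This is the classical ``$\mathscr{D}(\overline\Omega)$ dense in $H^1(\Omega)$'' proof, adapted with some care to the $H^{1/2}$--$[\textit{\textbf H}^{\,1/2}]'$ pairing.

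The comparison is this: your approach transfers the problem to the vector-field side, and the hard step becomes the density of $\mathscr{D}(\Omega)^N$ in $\textit{\textbf H}^{\,1/2}_0(\mathrm{div};\,\Omega)$. But proving that density at the critical exponent would itself require essentially the same translation-and-mollification machinery the paper applies directly to $v$; you have relocated the difficulty rather than dissolved it. The paper's choice to work on the scalar side is therefore more economical. Your approach is not wrong, and in the subcritical range it is exactly what the paper does in Lemma~\ref{densityHtilde}; but at $s=1/2$ the duality route offers no real shortcut, and the normal-trace step (which you flag) adds a second layer of justification that the direct proof avoids entirely.
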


\begin{proof} The proof of the density of  $\mathscr{D}(\overline\Omega)$ in $E(\nabla;\, \Omega)$ is similar to that of $\mathscr{D}(\overline\Omega)$ in $H^{1}(\Omega)$, but little bit more complicated. It suffices to consider the case where $\Omega = \mathbb{R}^N_+$ is the half space. 
\medskip

\noindent{\bf Step 1}. {\it We will prove that the functions of $E(\nabla;\, \Omega)$ with compact support is dense in $E(\nabla;\, \Omega)$.} Let $\psi \in \mathscr{D}(\mathbb{R}^N)$, with 
\begin{equation*} 
\psi(x) = \begin{cases} 1 \quad \; \mathrm{if}\;\;\vert x \vert  \leq 5/4\\
0 \quad \; \mathrm{if}\;\;\vert x \vert  \geq 7/4
\end{cases}
\end{equation*}
and define 
$$
\mathrm{for\, \, any} \, k\in \mathbb{N}^*,\quad \psi_k (x) = \psi(x/k).
$$
For $v\in  H^{1/2}(\mathbb{R}^N_+)$, posing $v_k = \psi_k v$, we can prove by some direct calculations the following estimate:
\begin{equation}\label{estimH1/2H1/2primedemiespace}
\Vert v_k - v \Vert_{H^{1/2}(\mathbb{R}^N_+)}\leq C\big( \Vert v\Vert_{H^{1/2}(\mathbb{R}^N_+\cap B_k^c)}  + \frac{1}{\sqrt k}  \Vert v\Vert_{L^{2}(\mathbb{R}^N_+)} \big),
\end{equation}
where $B_k^c$ is the the complementary of $B_k$ in the whole space. Besides for any $\varphi \in \mathscr{D}(\mathbb{R}^N_+)$ and $j = 1, \ldots, N$, we have
\begin{equation*}
\langle \frac{\partial}{\partial x_{j}}( v_k - v),\, \varphi \rangle = \langle \frac{\partial v}{\partial x_{j}}, \, \psi_k \varphi - \varphi \rangle + \int_{\mathbb{R}^N_+}v \varphi \frac{\partial \psi_k}{\partial x_{j}} \, dx
\end{equation*}
and then by using \eqref{estimH1/2H1/2primedemiespace}, we get
\begin{equation*}\label{inegbracketH12H1/2prime}
\begin{array}{rl}
 \vert \langle \frac{\partial}{\partial x_{j}}( v_k - v),\, \varphi \rangle \vert & \leq C  \Vert \frac{\partial v}{\partial x_{j}}\Vert_{ [\textit{\textbf H}^{\, 1/2}(\mathbb{R}^N_+)]'} \big( \Vert \varphi_{H^{1/2}(\mathbb{R}^N_+\cap B_k^c)}  + \frac{1}{\sqrt k}  \Vert \varphi\Vert_{L^{2}(\mathbb{R}^N_+)} \big) \\\\
&  + \; \frac{C}{k} \Vert v \Vert_{L^{2}(\mathbb{R}^N_+)}\Vert \varphi \Vert_{L^{2}(\mathbb{R}^N_+)},
\end{array}
\end{equation*}
Hence, 
$$
\frac{\partial v_k}{\partial x_{j}}  \rightharpoonup \frac{\partial v}{\partial x_{j}} \quad \mathrm{in}\; [H^{\, 1/2}(\mathbb{R}^N_+)]'.
$$
Our goal is to prove the strong convergence. For that, we observe that $\frac{\partial v_k}{\partial x_{j}} = \psi_k \frac{\partial v}{\partial x_{j}} + v\frac{\partial \psi_{k}}{\partial x_{j}}$ and $v\frac{\partial \psi_{k}}{\partial x_{j}} \rightarrow 0 $ in $L^2(\mathbb{R}^N_+)$ and then in $ [H^{\, 1/2}(\mathbb{R}^N_+)]'$. In addition, since for any $\varphi \in H^{1/2}(\mathbb{R}^N_+)$
$$
 \vert \langle \psi_k \frac{\partial v}{\partial x_{j}} ,\, \varphi \rangle\vert = \vert \langle \frac{\partial v}{\partial x_{j}} ,\, \psi_k\varphi \rangle \vert  \leq \Vert  \frac{\partial v}{\partial x_{j}} \Vert_{[H^{\, 1/2}(\mathbb{R}^N_+)]'} \Vert \psi_k \varphi\Vert_{H^{\, 1/2}(\mathbb{R}^N_+)}
$$
we have the following estimate
$$
\Vert \psi_k \frac{\partial v}{\partial x_{j}} \Vert_{[H^{\, 1/2}(\mathbb{R}^N_+)]'} \leq \Vert  \frac{\partial v}{\partial x_{j}} \Vert_{[H^{\, 1/2}(\mathbb{R}^N_+)]'} \sup_{\varphi\in H^{1/2}(\mathbb{R}^N_+), \varphi \not= 0}\frac{\Vert \psi_k \varphi\Vert_{H^{\, 1/2}(\mathbb{R}^N_+)}}{\Vert  \varphi\Vert_{H^{\, 1/2}(\mathbb{R}^N_+)}}.
$$
As 
$$
\limsup_{k\rightarrow \infty}\Vert \psi_k \frac{\partial v}{\partial x_{j}} \Vert_{[H^{\, 1/2}(\mathbb{R}^N_+)]'} \leq \Vert \frac{\partial v}{\partial x_{j}} \Vert_{[H^{\, 1/2}(\mathbb{R}^N_+)]'},
$$
we have also the same inequality for the norm of $\frac{\partial v_k}{\partial x_{j}}$ and then we deduce the desired strong convergence.\medskip

\noindent{\bf Step 2}.  {\it Extension to} $ \mathbb{R}^N$. It follows from Step 1 that we can suppose, without loss of generality, that $v\in H^{1/2}(\mathbb{R}^N_+)$ with compact support.

For $h > 0$  we set $\tau_h v(\textit{\textbf x})  = v_h (\textit{\textbf x}) = v(\textit{\textbf x}', x_N + h)$ and we introduce the following function
\begin{equation*}
\alpha_h(\textit{\textbf x}) = \begin{cases} 1 \quad \; \mathrm{if}\;\; x_N > 0\\
0 \quad \; \mathrm{if}\;\; x_N < - h
\end{cases}
\end{equation*}
with $\alpha_h \in \mathscr{C}^1(\mathbb{R}^N)$. We set $w_h = \alpha_h \tau_h Pu$, where $P :  H^{1/2}(\mathbb{R}^N_+) \rightarrow H^{1/2}(\mathbb{R}^N)$ is a bounded linear extension operator. Clearly,  if $v \in H^{1/2}(\mathbb{R}^N_+)$, using Lebesgue's dominated convergence theorem, then we have $  v_h \rightarrow v$ in $H^{1/2}(\mathbb{R}^N_+)$ and $  w_h{_{\vert \mathbb{R}^N_+}} \rightarrow v$ in $H^{1/2}(\mathbb{R}^N_+)$  as  $h  \rightarrow 0$. Moreover, for any $\varphi \in \mathscr{D}(\mathbb{R}^N_+)$ and $j = 1, \ldots, N$,
\begin{equation*}
\vert \langle \frac{\partial w_h}{\partial x_{j}} ,\, \varphi \rangle \vert = \vert \langle \frac{\partial u}{\partial x_{j}},\,   \tau_{-h} \varphi\rangle \vert  \leq \Vert \frac{\partial u}{\partial x_{j}}\Vert_{[H^{\, 1/2}(\mathbb{R}^N_+)]'} \Vert \varphi \Vert_{H^{\, 1/2}(\mathbb{R}^N_+)}.
\end{equation*}
Using the density of $\mathscr{D}(\mathbb{R}^N_+)$ in $H^{\, 1/2}(\mathbb{R}^N_+)]$, we deduce that 
\begin{equation*}
\frac{\partial w_h}{\partial x_{j}} \in [H^{\, 1/2}(\mathbb{R}^N_+)]'\quad \mathrm{and}\quad \Vert \frac{\partial w_h}{\partial x_{j}}\Vert_{[H^{\, 1/2}(\mathbb{R}^N_+)]'} \leq  \Vert \frac{\partial u}{\partial x_{j}}\Vert_{[H^{1/2}(\mathbb{R}^N_+)]'}
\end{equation*}
(where the last inequality can be obtained by interpolation between $L^2(\mathbb{R}^N_+)$ and $H^{-1}(\mathbb{R}^N_+)$).
Besides, for any $\varphi \in \mathscr{D}(\mathbb{R}^N_+)$ and $j = 1, \ldots, N$, we have even if it means extending $\varphi$ by zero outside the half-space,
\begin{equation*}
\vert \langle \frac{\partial w_h}{\partial x_{j}} -  \frac{\partial u}{\partial x_{j}} ,\, \varphi \rangle \vert = \vert\langle \frac{\partial u}{\partial x_{j}},\,   \tau_{-h}\varphi - \varphi \rangle \vert  \leq  \Vert \frac{\partial u}{\partial x_{j}}\Vert_{[H^{1/2}(\mathbb{R}^N_+)]'} \Vert   \tau_{-h}\varphi - \varphi \Vert_{H^{\, 1/2}(\mathbb{R}^N_+)} 
\end{equation*}
where the last norm above tends to $0$  when $h\rightarrow 0$. That gives the strong convergence  
$$
 \frac{\partial w_h}{\partial x_{j}} \rightarrow \frac{\partial u}{\partial x_{j}} \qquad \mathrm{ in }\; [H^{1/2}(\mathbb{R}^N_+)]'.
$$
\medskip
\noindent{\bf Step 3}.  {\it Regularization}. To finish, we will approximate $w_h$, with $h$ fixed, by the functions $\varphi_k = w_h\star \varrho_k$, where we use the sequence of mollifiers $(\varrho_k)_k$. It is easy to verify that 
$$
\varphi_k \rightarrow w_h \qquad \mathrm{ in }\; H^{1/2}(\mathbb{R}^N)  \qquad \mathrm{ and } \qquad  \frac{\partial \varphi_k}{\partial x_{j}} \rightarrow  \frac{\partial w_h }{\partial x_{j}} \qquad \mathrm{ in }\; [H^{1/2}(\mathbb{R}^N )]'
$$ 
as  $k  \rightarrow \infty$. 
\end{proof}

\begin{theorem}\label{TracesH1demigradH1demiprime} i) The linear mapping $\gamma: u \mapsto u_{\vert \Gamma}$ defined on $\mathscr{D}(\overline{\Omega})$ can be extended by continuity to a linear and continuous mapping, still denoted $\gamma$, from $E(\nabla;\, \Omega)$ into $L^2(\Gamma)$. \\
ii) The kernel of  $\gamma: u \mapsto u_{\vert \Gamma}$ from $E(\nabla;\, \Omega)$ into $L^2(\Gamma)$ is equal to $H^{1/2}_{00}(\Omega)$.

\end{theorem}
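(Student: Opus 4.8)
The plan is to prove i) by a trace estimate for smooth functions followed by density, and ii) by pairing a Green's formula with an extension-by-zero argument; the two imported ingredients are the density of $\mathscr{D}(\overline\Omega)$ in $E(\nabla;\,\Omega)$ (Lemma \ref{DensityDOmegabarH1/2GradH1/2'}) and the norm equivalence of Theorem \ref{equivnormH1demi00}.

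For i), I would first fix a vector field $\mathbf{h}\in\mathscr{C}^{0,1}(\overline\Omega)^N$ with $\mathbf{h}\cdot\mathbf{n}\ge c>0$ a.e.\ on $\Gamma$: taking $\mathbf{h}=\sum_{r=1}^M\theta_r\,R_r^{-1}\mathbf{e}_N$, where $R_r$ is the rotation flattening $\Gamma$ near $U_r$ and $\theta_r$ the partition of unity from Section~2, one has $R_r^{-1}\mathbf{e}_N\cdot\mathbf{n}\ge(1+L_r^2)^{-1/2}$ a.e.\ on $\Gamma\cap U_r$, hence the uniform lower bound. For $v\in\mathscr{D}(\overline\Omega)$ the divergence theorem applied to the Lipschitz field $|v|^2\mathbf{h}$ gives
\[
c\int_\Gamma|v|^2\,d\sigma\ \le\ \int_\Gamma|v|^2\,(\mathbf{h}\cdot\mathbf{n})\,d\sigma\ =\ \int_\Omega|v|^2\,\mathrm{div}\,\mathbf{h}\,dx\ +\ 2\langle\nabla v,\,v\mathbf{h}\rangle_{[\mathbf{H}^{1/2}(\Omega)]'\times\mathbf{H}^{1/2}(\Omega)}.
\]
The first right-hand term is $\le\|\mathrm{div}\,\mathbf{h}\|_{L^\infty}\|v\|_{L^2(\Omega)}^2$; for the second, multiplication by a Lipschitz function is continuous on $H^{1/2}(\Omega)$ (bound the Gagliardo seminorm of $fg$ via $|f(x)-f(y)|\le\|\nabla f\|_\infty|x-y|$ together with $\int_\Omega|x-y|^{1-N}dx\le C(\Omega)$, or interpolate between $L^2$ and $H^1$), so $\|v\mathbf{h}\|_{\mathbf{H}^{1/2}(\Omega)}\le C\|v\|_{H^{1/2}(\Omega)}$ and that term is $\le 2C\|\nabla v\|_{[\mathbf{H}^{1/2}(\Omega)]'}\|v\|_{H^{1/2}(\Omega)}$. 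This gives $\|v\|_{L^2(\Gamma)}\le C\|v\|_{E(\nabla;\,\Omega)}$ on $\mathscr{D}(\overline\Omega)$, and Lemma \ref{DensityDOmegabarH1/2GradH1/2'} extends $\gamma_0$ to a continuous map $E(\nabla;\,\Omega)\to L^2(\Gamma)$. The same density argument, applied term by term, promotes the classical identity on $\mathscr{D}(\overline\Omega)$ to the Green's formula
\[
\int_\Omega v\,\mathrm{div}\,\boldsymbol\varphi\,dx\ +\ \langle\nabla v,\,\boldsymbol\varphi\rangle_{[\mathbf{H}^{1/2}(\Omega)]'\times\mathbf{H}^{1/2}(\Omega)}\ =\ \int_\Gamma(\gamma_0 v)\,(\boldsymbol\varphi\cdot\mathbf{n})\,d\sigma,\qquad v\in E(\nabla;\,\Omega),\ \boldsymbol\varphi\in\mathscr{D}(\mathbb{R}^N)^N,
\]
the boundary term being controlled by i) and by $\boldsymbol\varphi\cdot\mathbf{n}\in L^\infty(\Gamma)$.

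For ii), the inclusion $H^{1/2}_{00}(\Omega)\subset\ker\gamma_0$ follows because, by Theorem \ref{equivnormH1demi00}, $H^{1/2}_{00}(\Omega)\hookrightarrow E(\nabla;\,\Omega)$ continuously, $\mathscr{D}(\Omega)$ is dense in $H^{1/2}_{00}(\Omega)$, $\gamma_0\varphi=0$ for $\varphi\in\mathscr{D}(\Omega)$, and $\gamma_0$ is continuous. Conversely, let $v\in E(\nabla;\,\Omega)$ with $\gamma_0 v=0$ and let $\widetilde v$ be its extension by zero. Testing the Green's formula against $\boldsymbol\varphi\in\mathscr{D}(\mathbb{R}^N)^N$ kills the boundary term, so $\langle\nabla\widetilde v,\boldsymbol\varphi\rangle_{\mathscr{D}'(\mathbb{R}^N),\mathscr{D}(\mathbb{R}^N)}=-\int_\Omega v\,\mathrm{div}\,\boldsymbol\varphi=\langle\nabla v,\boldsymbol\varphi|_\Omega\rangle_{[\mathbf{H}^{1/2}(\Omega)]'\times\mathbf{H}^{1/2}(\Omega)}$, whence $\nabla\widetilde v\in\mathbf{H}^{-1/2}(\mathbb{R}^N)$ with $\|\nabla\widetilde v\|_{\mathbf{H}^{-1/2}(\mathbb{R}^N)}\le\|\nabla v\|_{[\mathbf{H}^{1/2}(\Omega)]'}$. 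Since also $\widetilde v\in L^2(\mathbb{R}^N)$ and $\|q\|_{L^2(\mathbb{R}^N)}+\|\nabla q\|_{\mathbf{H}^{-1/2}(\mathbb{R}^N)}\simeq\|q\|_{H^{1/2}(\mathbb{R}^N)}$ (the equivalence already used in the proof of Theorem \ref{equivnormH1demi00}), we get $\widetilde v\in H^{1/2}(\mathbb{R}^N)$, i.e.\ $v\in\widetilde H^{1/2}(\Omega)=H^{1/2}_{00}(\Omega)$.

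The main obstacle is not any single estimate but the bookkeeping: one must (a) produce the transverse Lipschitz field $\mathbf{h}$ with a \emph{uniform} positive lower bound on $\mathbf{h}\cdot\mathbf{n}$ — this is precisely why one cannot argue by flattening charts, since the resulting merely $L^\infty$ Jacobian does not act boundedly on $H^{1/2}$ — and (b) keep every duality bracket in $[\mathbf{H}^{1/2}(\Omega)]'$ rather than in the strictly larger $[\mathbf{H}^{1/2}_{00}(\Omega)]'$, which a priori already contains $\nabla v$ for every $v\in H^{1/2}(\Omega)$; that distinction is exactly the content of the hypothesis defining $E(\nabla;\,\Omega)$ and of the equivalence in Theorem \ref{equivnormH1demi00}.
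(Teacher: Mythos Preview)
Your proof is correct and follows essentially the same approach as the paper: the Rellich-type identity $\int_\Gamma(\mathbf{h}\cdot\mathbf{n})|v|^2=\int_\Omega|v|^2\mathrm{div}\,\mathbf{h}+2\int_\Omega v\,\mathbf{h}\cdot\nabla v$ combined with the density Lemma~\ref{DensityDOmegabarH1/2GradH1/2'} for part~i), and the extension-by-zero argument showing $\nabla\widetilde v\in\mathbf{H}^{-1/2}(\mathbb{R}^N)$ for part~ii). The only cosmetic differences are that the paper simply cites Grisvard (Lemma~1.5.1.9) for a $\mathscr{C}^\infty(\overline\Omega)$ transverse field $\mathbf{h}$ rather than constructing a Lipschitz one by hand, and that the paper leaves the Green's formula implicit in part~ii) whereas you state it explicitly; your added detail on the Lipschitz-multiplier bound for $H^{1/2}$ and on the $\|q\|_{L^2}+\|\nabla q\|_{H^{-1/2}}\simeq\|q\|_{H^{1/2}}$ equivalence makes the argument slightly more self-contained.
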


\begin{proof} i) For any $v\in \mathscr{D}(\overline{\Omega})$ 
\begin{equation*}
 \int_\Gamma (\textbf{\textit h}\cdot\textbf{\textit n})\vert v \vert^2 \, dx = 2 \int_\Omega v\nabla v \cdot \textbf{\textit h}  \, dx+  \int_\Omega \vert v \vert^2 \mathrm{div}\,  \textbf{\textit h} \, dx,
\end{equation*}
where $\textbf{\textit h}\in \mathscr{C}^\infty(\overline{\Omega})$ is such that   $\textbf{\textit h} \cdot\textbf{\textit n}\geq \alpha > 0$ a.e  on $\Gamma$ (see Lemma 1.5.1.9 in \cite{Gri}).
Consequently, we have the following estimate:
\begin{equation*}
 \Vert v \Vert^2_{L^2(\Gamma)}\leq C (\Vert \nabla v \Vert_{ [\textbf{\textit H}\, ^{1/2}(\Omega)]'} \Vert v \Vert_{H^{1/2}(\Omega)} + \Vert v \Vert^2_{L^2(\Omega)} ),
\end{equation*}
which means that
\begin{equation*}
 \Vert v \Vert_{L^2(\Gamma)}\leq C \Vert  v \Vert_{E(\nabla;\, \Omega)}.
\end{equation*}
The required property is finally a consequence of the density of $\mathscr{D}(\overline{\Omega})$ in $E(\nabla;\, \Omega)$. \smallskip

\noindent ii) Observe that $H^{1/2}_{00}(\Omega)$ is included in $E(\nabla;\, \Omega)$. So by using the density of $\mathscr{D}(\Omega)$ in $H^{1/2}_{00}(\Omega)$,  we have the following inclusion: $H^{1/2}_{00}(\Omega) \subset Ker \, \gamma$.  

Conversely, let $u \in E(\nabla;\, \Omega)$ with $u = 0$ on $\Gamma$.  With the same calculations as in the proof of the first point  of Theorem \ref{equivnormH1demi00}, the extension by $0$ of $u$ outside of $\Omega$ satisfies: for any $j = 1, \ldots, N$ any $\varphi \in \mathscr{D}(\mathbb{R}^N)$
$$ 
\langle  \frac{\partial \widetilde{u}}{\partial x_{j}}, \varphi\rangle =  - \int_{\mathbb{R}^N}  \widetilde{u}\frac{\partial \varphi}{\partial x_{j}} \, dx =  - \int_{\Omega}  u\frac{\partial \varphi}{\partial x_{j}} \, dx = \langle  \frac{\partial u}{\partial x_{j}}, \varphi\rangle_{ [H^{\, 1/2}(\Omega)]'\times H^{\, 1/2}(\Omega)}.
$$
So we have
$$\vert\langle  \frac{\partial \widetilde{u}}{\partial x_{j}}, \varphi\rangle \vert \leq \Vert \frac{\partial u}{\partial x_{j}} \Vert_{[H^{\, 1/2}(\Omega)]'}\Vert \varphi \Vert_{H^{\, 1/2}(\Omega)} \leq  \Vert \frac{\partial u}{\partial x_{j}} \Vert_{[H^{\, 1/2}(\Omega)]'}\Vert \varphi \Vert_{H^{\, 1/2}(\mathbb{R}^N)}.
$$
We deduce that 
$$
 \nabla  \widetilde{u}\in  \textit{\textbf H}^{\, -1/2}(\mathbb{R}^N)
 $$
 which implies that $ \widetilde{u}\in  H^{\, 1/2}(\mathbb{R}^N)$ and then $u\in H^{1/2}_{00}(\Omega)$. 
\end{proof}

\begin{remark}\label{Surj}\upshape i) A closely related result can be found in \cite{Tar}, where
the author shows, using the inequality:
\begin{equation*}
\Vert \gamma_0 (v) \Vert_{L^2(\mathbb{R}^{N-1})} \leq C \Vert v \Vert_{L^2(\mathbb{R}^N)} \Vert \partial_N v \Vert_{L^2(\mathbb{R}^N)}
\end{equation*}
that the trace operator $\gamma_0: v \mapsto v_{\vert \Gamma}$ is continuous from the interpolation space $ [H^1(\mathbb{R}^N), L^2(\mathbb{R}^N)]_{1/2, 1}$ into $L^2(\mathbb{R}^{N-1})$. \medskip

\noindent ii) What about the characterization of the range of $E(\nabla;\, \Omega)$ by the linear mapping $\gamma: u \mapsto u_{\vert \Gamma}$? Is this range equal or strictly included in $L^2(\Gamma)$? We will provide some answers in Section \ref{traces2}.
\end{remark}

\begin{corollary}\label{TracesbH1demigradH1demiprime} i) The linear mapping $\gamma: u \mapsto (u_{\vert \Gamma},  \partial_{\textit{\textbf n}} u)$ is  continuous from $E(\nabla^2;\, \Omega)$ into $H^1(\Gamma)\times L^2(\Gamma)$, where
\begin{equation*}\label{defEnabla2Omega}
 E(\nabla^2;\, \Omega)\ =\ \left\{\,v\in H^{3/2}(\Omega);\ \nabla ^2v\in  [\textit{\textbf H}^{\, 1/2}(\Omega)]'\,\right\}.
\end{equation*}
ii) The kernel of  $\gamma$ from $E(\nabla^2;\, \Omega)$ into  $H^1(\Gamma)\times L^2(\Gamma)$ is equal to $H^{3/2}_{00}(\Omega)$.
\end{corollary}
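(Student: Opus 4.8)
The plan is to reduce the statement to the scalar trace theorem (Theorem~\ref{TracesH1demigradH1demiprime}) applied to the first–order derivatives of $v$. As a preliminary I would record two facts. First, $E(\nabla^2;\,\Omega)\hookrightarrow E(\nabla;\,\Omega)$: indeed $v\in H^{3/2}(\Omega)\subset H^{1/2}(\Omega)$ and each $\partial_j v\in H^{1/2}(\Omega)\subset L^2(\Omega)\hookrightarrow[H^{1/2}(\Omega)]'$ (the last embedding because $H^{1/2}(\Omega)$ is dense in $L^2(\Omega)$). Second, $\mathscr{D}(\overline\Omega)$ is dense in $E(\nabla^2;\,\Omega)$; this is proved exactly as Lemma~\ref{DensityDOmegabarH1/2GradH1/2'}, the cut–off, translation and mollification steps being now controlled both in $E(\nabla;\,\Omega)$ and for the Hessian, which is itself a subfamily of first derivatives of the $\partial_j v$ already handled. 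In particular, on $E(\nabla^2;\,\Omega)$ the map $\gamma_0$ of Theorem~\ref{TracesH1demigradH1demiprime} coincides with the usual $H^1(\Omega)\to H^{1/2}(\Gamma)$ trace, so $v_{\vert\Gamma}\in L^2(\Gamma)$ with $\Vert v_{\vert\Gamma}\Vert_{L^2(\Gamma)}\le C\Vert v\Vert_{H^{3/2}(\Omega)}$.

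For part~i), let $v\in E(\nabla^2;\,\Omega)$ and set $w_j=\partial_j v$. Since every component of $\nabla w_j$ is an entry of $\nabla^2 v\in[\textit{\textbf H}^{\,1/2}(\Omega)]'$, we get $w_j\in E(\nabla;\,\Omega)$, hence by Theorem~\ref{TracesH1demigradH1demiprime}~i) $\gamma_0(w_j)\in L^2(\Gamma)$ with $\Vert\gamma_0(w_j)\Vert_{L^2(\Gamma)}\le C\Vert w_j\Vert_{E(\nabla;\,\Omega)}$; summing in $j$, the trace of $\nabla v$ lies in $\textit{\textbf L}^2(\Gamma)$ with norm $\le C\Vert v\Vert_{E(\nabla^2;\,\Omega)}$. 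Working in the local charts of Section~2 with the quantities $v_{j\xi}$ and $\omega_\xi$ defined there, for $v\in\mathscr{D}(\overline\Omega)$ one has the chain–rule identities $\partial_{x_j}(v_\xi)=v_{j\xi}$ for $1\le j\le N-1$ and $\frac{\partial v}{\partial\textit{\textbf n}}\!\mid_{y_N=\xi}=\omega_\xi$, where $v_{j\xi}$ and $\omega_\xi$ are bounded combinations (coefficients involving only $\nabla'\xi\in L^\infty$) of the restrictions $(\partial_k v)_{\vert\Gamma}$. Passing to the limit in $E(\nabla^2;\,\Omega)$ by density and using the bound just obtained, these identities persist for every $v\in E(\nabla^2;\,\Omega)$, whence $v_\xi\in H^1(\mathbb{R}^{N-1})$, i.e. $v_{\vert\Gamma}\in H^1(\Gamma)$, and $\frac{\partial v}{\partial\textit{\textbf n}}=(\nabla v)_{\vert\Gamma}\cdot\textit{\textbf n}\in L^2(\Gamma)$, both depending continuously on $\Vert v\Vert_{E(\nabla^2;\,\Omega)}$.

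For part~ii), to see $H^{3/2}_{00}(\Omega)\subset\ker\gamma$, note that $v\in H^{3/2}_{00}(\Omega)$ gives $\partial_j v\in H^{1/2}_{00}(\Omega)$, so by \eqref{equivnorms00} one has $\nabla\partial_j v\in[\textit{\textbf H}^{\,1/2}(\Omega)]'$, i.e. $v\in E(\nabla^2;\,\Omega)$; since $\mathscr{D}(\Omega)$ is dense in $H^{3/2}_{00}(\Omega)$ and $\gamma$ is continuous, $\gamma(v)=0$. Conversely let $v\in E(\nabla^2;\,\Omega)$ with $v_{\vert\Gamma}=0$ in $H^1(\Gamma)$ and $\frac{\partial v}{\partial\textit{\textbf n}}=0$ in $L^2(\Gamma)$. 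From $\gamma_0 v=0$ we get $v\in H^1_0(\Omega)$, hence $v\in H^{3/2}(\Omega)\cap H^1_0(\Omega)=H^{3/2}_0(\Omega)$. Moreover the chart formula of the previous paragraph shows that the vanishing of the tangential part of $(\nabla v)_{\vert\Gamma}$ (which is $\nabla_\tau v_{\vert\Gamma}=0$) together with $\frac{\partial v}{\partial\textit{\textbf n}}=0$ forces $\gamma_0(\partial_j v)=0$ for every $j$. By Theorem~\ref{TracesH1demigradH1demiprime}~ii) this gives $\partial_j v\in H^{1/2}_{00}(\Omega)$, in particular $\partial_j v/\varrho^{1/2}\in L^2(\Omega)$; combined with $v\in H^{3/2}_0(\Omega)$ and the definition of $H^{3/2}_{00}(\Omega)$ this yields $v\in H^{3/2}_{00}(\Omega)$, so $\ker\gamma=H^{3/2}_{00}(\Omega)$.

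Everything here is a formal consequence of Theorem~\ref{TracesH1demigradH1demiprime} and the characterizations of $H^s_0(\Omega)$ and $H^s_{00}(\Omega)$, \emph{except} for the one genuinely technical point: identifying the tangential component of the $\textit{\textbf L}^2(\Gamma)$-trace of $\nabla v$ with the intrinsic tangential gradient of $v_{\vert\Gamma}$ — equivalently, that $\gamma_0$ commutes with tangential differentiation on $E(\nabla^2;\,\Omega)$. This is immediate on $\mathscr{D}(\overline\Omega)$, and the main obstacle is to justify the limiting procedure; the cleanest route is to first establish the density of $\mathscr{D}(\overline\Omega)$ in $E(\nabla^2;\,\Omega)$ and then pass to the limit in the chart formula, as indicated above.
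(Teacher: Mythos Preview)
Your approach is correct and is exactly what the paper intends: the corollary is stated without proof immediately after Theorem~\ref{TracesH1demigradH1demiprime}, and the introduction already announces it as an ``immediate consequence'' obtained by applying that theorem to the first-order partial derivatives of $v$. Your write-up is in fact more detailed than the paper's, and you have correctly isolated the only nontrivial step --- the density of $\mathscr{D}(\overline\Omega)$ in $E(\nabla^2;\,\Omega)$ and the resulting identification of the tangential part of $(\nabla v)_{\vert\Gamma}$ with $\nabla_\tau(v_{\vert\Gamma})$ --- which the paper leaves entirely implicit.
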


\subsection{Traces for non smooth functions}

As we recalled earlier, $H^{1/2}(\Omega)$ functions, or even less regular ones, generally have no traces. Nevertheless, if their Laplacian satisfies certain properties, and in particular if they are harmonic, then we can define their trace in a certain sense. This will be the subject of this subsection.

To do this, we shall use a regularity result for a Dirichlet problem for the bi-Laplacian. This one will allow us to deduce an interesting Green's formula and then to define traces for non smooth functions. A more complete study on the  bi-Laplacian will be published later.
 
 So let us recall some existence and regularity results of the following problem:
 \begin{equation*}
 (\mathscr{B}_D)\quad
 \begin{cases}
 \Delta^2 u\ =\ 0 \quad\textrm{ in }\ \Omega,\\
   u\ =\ g_0 \quad\textrm{ on }\ \Gamma,\\
  \partial_{\textit{\textbf n}} u\ =\ g_1 \quad\textrm{ on }\ \Gamma.
 \end{cases}
 \end{equation*}
We know that if $g_0\in H^1(\Gamma)$ and $g_1 \in L^2(\Gamma)$,  the problem $(\mathscr{B}_D)$ has a unique solution $u\in H^{3/2}(\Omega)$.  
We know also (see \cite{AP}) that if 
\begin{equation*}\label{compa1}
g_0\in H^1(\Gamma),  \;  g_1 \in L^2(\Gamma)\quad \mathrm{and} \quad \nabla_{\tau}g_0 + g_1\boldsymbol{n}\in \boldsymbol{H}^{s}(\Gamma), \quad \mathrm{for\, \, some} \quad 0 < s  < 1
\end{equation*} 
then this solution belongs to  $H^{3/2 + s}(\Omega)$ withe the following estimate:
\begin{equation}\label{inegtraceDelta2x}
\Vert w \Vert_{H^{3/2 + s}(\Omega)} \leq C(\Omega) (\Vert g_0 \Vert_{H^1(\Gamma)} + \Vert g_1 \Vert_{L^2(\Gamma)} + \Vert \nabla_{\tau}g_0 + g_1\boldsymbol{n}\Vert_{\boldsymbol{H}^{s}(\Gamma)}).
\end{equation}

\begin{remark}\label{remrelev} \upshape  Let us introduce the following Hilbert space
$$
0\leq s < 1, \quad \boldsymbol{H}_{\textit{\textbf N}}^{s}(\Gamma) = \{ \boldsymbol{\mu} \in   \boldsymbol{H}^{s}(\Gamma); \; \boldsymbol{\mu}_\tau = {\bf 0} \} 
$$
equipped with the norm $\Vert\cdot \Vert_{\boldsymbol{H}^{s}(\Gamma)}$.
Clearly 
$$
\boldsymbol{\mu}\in \boldsymbol{H}_{\textit{\textbf N}}^{s}(\Gamma) \Longleftrightarrow  \boldsymbol{\mu}  = g\textit{\textbf n} \quad \mathrm{with}\quad g\in L^2(\Gamma) \quad  and \quad  g\textit{\textbf n}\in   \boldsymbol{H}^{s}(\Gamma).
$$
From above we deduce  that for any $\boldsymbol{\mu}\in \boldsymbol{H}_{\textit{\textbf N}}^{s}(\Gamma) $, with $0 \leq s < 1$, there exists a function $u\in H^{s+3/2}(\Omega)\cap H^1_0(\Omega)$ such that $\nabla u =  \boldsymbol{\mu}$ on $\Gamma$ and satisfying the estimate \eqref{inegtraceDelta2x}. 

Since $\textit{\textbf n} \in L^\infty(\Gamma)$ only, what conditions must $g\in L^2(\Gamma)$ satisfy so that the product $g\textit{\textbf n}$ belongs to $\boldsymbol{H}^{s}(\Gamma)$?
 \end{remark}
 
The example below provides an answer to this question. 
 
\begin{example}\upshape
Let us consider  the following Lipschitz domain:
$$
\Omega = \{(r, \theta);\; 0 < r < 1,\quad 0 < \theta < 3 \pi/2\}
$$
and different parts of its boundary:
$$
\Gamma_C = \{(r, \theta);\;  r  = 1\}, \; \Gamma_0 =\{(r, 0);\;  0 < r  < 1\}, \,  \Gamma_1 = \{(r, 3 \pi/2);\;  0 < r  < 1\}.
$$ 
It is easy to verify that for any $0 < s < 1$, if $g\in H^{s}(\Gamma)$ and 
\begin{equation*}\label{exampleintro}
 g_{\vert{\Gamma_0}}\in H^{s}_{00}(\Gamma_{0}),\; g_{\vert{\Gamma_1}}\in H^{s}_{00}(\Gamma_1), \; \mathrm{when }\; s = 1/2
\end{equation*}
and 
\begin{equation*}\label{exampleintrobis}
g_{\vert{\Gamma_0}}\in H^{s}_{0}(\Gamma_{0}),\; g_{\vert{\Gamma_1}}\in H^{s}_{0}(\Gamma_1), \; \mathrm{when }\; 1/2 < s < 1
\end{equation*}
then $ g\textit{\textbf n}\in   \boldsymbol{H}^{s}(\Gamma)$.
\end{example}

\begin{lemma} \label{densitetraces1} The following space 
\begin{equation*}
\left\{(v, \partial_{\textit{\textbf n}} v) \in H^1(\Gamma)\times L^2(\Gamma); \; v \in H^2(\Omega)\right\}
\end{equation*}
is dense in $H^1(\Gamma)\times L^2(\Gamma)$.
\end{lemma}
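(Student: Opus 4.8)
\emph{Plan.} I would prove the slightly stronger fact that the subset
$\{(\varphi_{\vert\Gamma},\ \partial\varphi/\partial\textit{\textbf n})\ :\ \varphi\in\mathscr{D}(\overline{\Omega})\}$ is already dense in $H^1(\Gamma)\times L^2(\Gamma)$. This suffices, since every $\varphi\in\mathscr{D}(\overline{\Omega})$ belongs to $H^2(\Omega)$ and satisfies $\varphi_{\vert\Gamma}\in H^1(\Gamma)$ (a smooth function composed with the Lipschitz charts is Lipschitz on $\Gamma$) and $\partial\varphi/\partial\textit{\textbf n}=\nabla\varphi\cdot\textit{\textbf n}\in L^\infty(\Gamma)\subset L^2(\Gamma)$, so these pairs lie in the space of the statement (this is of course also consistent with Theorem~\ref{RelevBiha}, which even lets one realize such pairs by biharmonic $H^2$ functions, but that is not needed here). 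Using the partition of unity $\theta_0,\dots,\theta_M$ and the local charts of Section~2, and the fact that the trace map is linear, it is enough to approximate a pair $(g_0,g_1)\in H^1(\Gamma)\times L^2(\Gamma)$ whose support lies in a single boundary patch $U_r\cap\Gamma$ by pairs $(\varphi_{\vert\Gamma},\partial\varphi/\partial\textit{\textbf n})$ with $\varphi\in\mathscr{D}(\overline{\Omega})$ supported in $U_r$.

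So I fix such a patch and work in the local coordinates where $\Omega\cap V=\{(\textit{\textbf y}',y_N)\ :\ y_N<\xi(\textit{\textbf y}')\}$ with $\xi$ Lipschitz, $\mathrm{supp}\,\xi$ compact, and $\textit{\textbf n}=(-\nabla'\xi,1)/\sqrt{1+|\nabla'\xi|^2}$. Writing $g_{0,\xi}\in H^1(\mathbb{R}^{N-1})$ and $g_{1,\xi}\in L^2(\mathbb{R}^{N-1})$ for the (compactly supported) data pulled back to the chart, I would look for $\varphi$ of the \emph{affine} form $\varphi(\textit{\textbf y}',y_N)=\chi(y_N)\,\bigl(a(\textit{\textbf y}')+y_N\,b(\textit{\textbf y}')\bigr)$ with $a,b\in\mathscr{D}(\mathbb{R}^{N-1})$ and $\chi\in\mathscr{D}(\mathbb{R})$ equal to $1$ on a neighbourhood of the compact set $\{y_N=\xi(\textit{\textbf y}')\ :\ \textit{\textbf y}'\in\mathrm{supp}\,(g_{0,\xi},g_{1,\xi})\}$, so that near $\Gamma$ one has $\chi\equiv 1$ and hence $\varphi_{\vert\Gamma}=a+\xi b$ and $\partial\varphi/\partial\textit{\textbf n}=\bigl(-\nabla'\xi\cdot(\nabla'a+\xi\nabla'b)+b\bigr)/\sqrt{1+|\nabla'\xi|^2}$ on $\Gamma$. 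Such a $\varphi$ is genuinely $\mathscr{C}^\infty$ even though $\xi$ is only Lipschitz, which is the whole point. To match the data I would first pick $b\in\mathscr{D}(\mathbb{R}^{N-1})$ close in $L^2$ to $\beta:=g_{1,\xi}/\sqrt{1+|\nabla'\xi|^2}+(\nabla'\xi\cdot\nabla'g_{0,\xi})/(1+|\nabla'\xi|^2)$ (which lies in $L^2$ because $\nabla'\xi\in L^\infty$), and then pick $a\in\mathscr{D}(\mathbb{R}^{N-1})$ close in $H^1$ to $g_{0,\xi}-\xi b$ (which lies in $H^1(\mathbb{R}^{N-1})$ since $\xi\in W^{1,\infty}$ has compact support, so $\xi b\in H^1$). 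A direct computation then shows $\varphi_{\vert\Gamma}-g_{0,\xi}=a-(g_{0,\xi}-\xi b)$, hence small in $H^1$; and, inserting $\nabla'a\approx\nabla'g_{0,\xi}-(\nabla'\xi)b-\xi\nabla'b$ into the formula for $\partial\varphi/\partial\textit{\textbf n}$, the terms carrying $\xi\nabla'b$ cancel, the combination $(1+|\nabla'\xi|^2)b/\sqrt{1+|\nabla'\xi|^2}=\sqrt{1+|\nabla'\xi|^2}\,b$ appears, and one is left with $\partial\varphi/\partial\textit{\textbf n}-g_{1,\xi}=\sqrt{1+|\nabla'\xi|^2}\,(b-\beta)-\nabla'\xi\cdot\bigl(\nabla'a-\nabla'(g_{0,\xi}-\xi b)\bigr)/\sqrt{1+|\nabla'\xi|^2}$, which is small in $L^2$ because $1\le\sqrt{1+|\nabla'\xi|^2}\le C$.

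Finally I would transport the estimates back to $\Gamma$ — the chart norms on $\Gamma$ are equivalent to the Euclidean norms on $\mathbb{R}^{N-1}$ since the surface element $\sqrt{1+|\nabla'\xi|^2}$ is bounded above and below — multiply $\varphi$ by a cut-off in $\textit{\textbf y}'$ supported in $U_r$ so that $\varphi\in\mathscr{D}(\overline{\Omega})$ with support in $U_r$, and sum the local approximations over $r=1,\dots,M$ to conclude. The main obstacle is exactly the one made explicit above: because $\Omega$ is merely Lipschitz there is no smooth boundary-defining function, so one cannot prescribe the Cauchy data of $\varphi$ on $\Gamma$ by a Taylor expansion in the normal direction; the linear-in-$y_N$ ansatz bypasses this, but one must check that the only genuinely rough coefficient, $\nabla'\xi$ (which is $L^\infty$ and not $H^{1/2}$), enters the traces of $\varphi$ and $\nabla\varphi$ only through bounded, cancelling combinations, so that $H^1\times L^2$ convergence is preserved.
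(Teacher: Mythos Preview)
Your argument is correct: the affine-in-$y_N$ ansatz $\varphi=\chi(y_N)\bigl(a(\textit{\textbf y}')+y_N\,b(\textit{\textbf y}')\bigr)$ with $a,b\in\mathscr{D}(\mathbb{R}^{N-1})$ is exactly the device that keeps $\varphi$ smooth although the boundary is only Lipschitz, and your computation showing that the $\xi\nabla'b$ terms cancel and that $\nabla'\xi$ enters the residual only through $L^\infty$ multipliers is precisely what is needed to control the $H^1(\Gamma)\times L^2(\Gamma)$ error. The paper does not give its own proof of this lemma at all; it simply cites it as a known result from Ne\v{c}as (Lemma~4.4, Chapter~5 of \cite{Necas}), whose argument is of the same constructive type you have reproduced.
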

Adapting the proof of this result (see \cite{Necas}, Lemma 4.4, Chapter 5, page 274), we can prove easily the following lemma.

\begin{lemma} \label{densitetraces2}  The following space 
\begin{equation*}\label{DefX0}
X_0(\Gamma) = \left\{ \partial_{\textit{\textbf n}} v\in L^2(\Gamma); \; v \in H^2(\Omega)\cap H^1_0(\Omega)\right\}
\end{equation*}
is dense in $L^2(\Gamma)$.
\end{lemma}

Using the above lemma, we deduce immediately the following density result:
\begin{equation*}\label{densityH12T}
\boldsymbol{H}_{\textit{\textbf N}}^{1/2}(\Gamma) \; \; \mathrm{is \, \, dense\, \,  in }\;  \; \boldsymbol{L}_{\textit{\textbf N}}^{2}(\Gamma) := \{ \boldsymbol{\mu} \in   \boldsymbol{L}^{2}(\Gamma); \; \boldsymbol{\mu}_\tau = {\bf 0} \}. 
\end{equation*}
This space, as well as its dual space $ [\boldsymbol{H}_{\textit{\textbf N}}^{1/2}(\Gamma)]'$, can therefore be considered as space  of normal fields of regularity $1/2$ and $- 1/2$ respectively and for any $0 < s < 1/2$,  

\begin{equation}\label{inclusions}
\boldsymbol{H}_{\textit{\textbf N}}^{1/2}(\Gamma) \hookrightarrow \boldsymbol{H}_{\textit{\textbf N}}^{s}(\Gamma) \hookrightarrow \boldsymbol{L}_{\textit{\textbf N}}^{2}(\Gamma) \hookrightarrow [\boldsymbol{H}_{\textit{\textbf N}}^{s}(\Gamma)]'\hookrightarrow [\boldsymbol{H}_{\textit{\textbf N}}^{1/2}(\Gamma)]',
\end{equation}
with density.

Introduce now the following Hilbert space: for $-1/2 <  s <  1/2$
\begin{equation*}
M^s(\Omega) = \left\{ v \in H^s(\Omega);\ \Delta v  \in
[H^{3/2}_{0}(\Omega)]' \right\},
\end{equation*}
with its norm
$$
\Vert v\Vert_{M^{s}(\Omega)} = (\Vert v\Vert_{H^s(\Omega)}^2 +
\Vert \Delta v\Vert^2_{[H^{3/2}_{0}(\Omega)]'})^{1/2}.
$$
We will explain below the choice in the above definition of the dual space $[H^{3/2}_{0}(\Omega)]'$. And to simplify the notations, we set $M(\Omega) := M^0(\Omega)$.

\begin{lemma}\label{DensityM(Om)}   For any  $-1/2 <  s <  1/2$ the space 
$$
\mathscr{D}\big(\overline{\Omega}\big)\quad  is \, \, dense\, \,  in \; \; M^s(\Omega).
$$
\end{lemma}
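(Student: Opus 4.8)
The plan is to prove the density of $\mathscr{D}(\overline{\Omega})$ in $M^s(\Omega)$ by a Hahn--Banach argument, following the same scheme used in Lemma \ref{densityHtilde} and Lemma \ref{DensityK_0}, the key point being to identify the structure of a continuous functional on $M^s(\Omega)$ that annihilates $\mathscr{D}(\overline{\Omega})$. First I would take $\ell \in [M^s(\Omega)]'$ vanishing on $\mathscr{D}(\overline{\Omega})$. By the Riesz representation theorem applied to the product structure of $M^s(\Omega)$, there exist $f \in [H^s(\Omega)]' = H^{-s}(\Omega)$ (note $0 \le s < 1/2$, so $H^s(\Omega) = H^s_0(\Omega)$ and the dual makes sense) and $g \in H^{3/2}_0(\Omega)$ such that
\begin{equation*}
\forall v \in M^s(\Omega), \qquad \langle \ell, v \rangle = \langle f, v \rangle_{H^{-s}(\Omega)\times H^s(\Omega)} + \langle \Delta v, g \rangle_{[H^{3/2}_0(\Omega)]'\times H^{3/2}_0(\Omega)}.
\end{equation*}
Testing against $\varphi \in \mathscr{D}(\Omega) \subset \mathscr{D}(\overline{\Omega})$ gives $\langle f, \varphi\rangle + \langle \Delta\varphi, g\rangle = 0$, i.e. $f = \Delta g$ in the sense of distributions on $\Omega$, where $g \in H^{3/2}_0(\Omega)$.

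The next step is to upgrade the regularity of $g$. Since $g \in H^{3/2}_0(\Omega)$, its extension $\widetilde{g}$ by zero lies in $H^{3/2}(\mathbb{R}^N)$ (because $3/2 \notin \{1/2\}+\mathbb{N}$, using \eqref{a2-e7}), and one computes that $\Delta\widetilde{g} = \widetilde{f}$ in $\mathscr{D}'(\mathbb{R}^N)$: indeed for $\varphi\in\mathscr{D}(\mathbb{R}^N)$, $\langle\Delta\widetilde{g},\varphi\rangle = \langle\widetilde{g},\Delta\varphi\rangle = \langle g,\Delta\varphi\rangle_\Omega$, and since $g\in H^{3/2}_0(\Omega) \hookrightarrow H^1_0(\Omega)$ there is no boundary term from the first integration by parts, while the distributional identity $f = \Delta g$ on $\Omega$ together with $\nabla g \in L^2(\Omega)$ (so no second boundary term either — here one uses that $g$ and $\nabla g$ vanish on $\Gamma$ in the appropriate trace sense) yields $\langle g,\Delta\varphi\rangle_\Omega = \langle f,\varphi\rangle_\Omega = \langle\widetilde{f},\varphi\rangle$. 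Hence $\widetilde{g} \in H^{3/2}(\mathbb{R}^N)$ with $\Delta\widetilde{g} = \widetilde{f} \in H^{-s}(\mathbb{R}^N)$, and elliptic regularity on $\mathbb{R}^N$ (via the Fourier transform: $(1+|\xi|^2)\widehat{\widetilde g}$ picks up the $H^{-s}$ control, and $\widetilde g$ itself is already in $H^{3/2}$) gives $\widetilde{g} \in H^{2-s}(\mathbb{R}^N)$, so $g \in H^{2-s}_0(\Omega)$; more precisely $g \in H^{2-s}(\Omega)\cap H^1_0(\Omega)$ with $\nabla g = 0$ on $\Gamma$, i.e. $g\in H^{2-s}_0(\Omega)$ since $2-s > 3/2$ and $2 - s \notin\{1/2\}+\mathbb N$.

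The final step is a standard approximation. Since $\mathscr{D}(\Omega)$ is dense in $H^{2-s}_0(\Omega)$ (by definition of $H^s_0$ and the fact that $2-s\notin\{1/2\}+\mathbb N$), choose $g_k \in \mathscr{D}(\Omega)$ with $g_k \to g$ in $H^{2-s}(\Omega)$. Then $\Delta g_k \to \Delta g = f$ in $H^{-s}(\Omega)$. For any $v \in M^s(\Omega)$, Green's formula for $g_k\in\mathscr D(\Omega)$ (no boundary terms) gives
\begin{equation*}
\langle f, v\rangle + \langle \Delta v, g\rangle = \lim_{k\to\infty}\Big( \langle \Delta g_k, v\rangle_{H^{-s}\times H^s} + \langle \Delta v, g_k\rangle_{[H^{3/2}_0]'\times H^{3/2}_0}\Big) = \lim_{k\to\infty} 0 = 0,
\end{equation*}
since $\langle\Delta g_k, v\rangle_{H^{-s}\times H^s} = \langle\Delta v, g_k\rangle$ for $g_k\in\mathscr D(\Omega)$ and $v\in H^s(\Omega)$ with $\Delta v\in[H^{3/2}_0(\Omega)]'$ (this is exactly the duality pairing, valid because $g_k\in H^{3/2}_0(\Omega)$ and $v$ lies in the relevant space — it is the reason the space $[H^{3/2}_0(\Omega)]'$ was chosen in the definition of $M^s(\Omega)$). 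Hence $\ell = 0$, and Hahn--Banach gives the density.

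I expect the main obstacle to be the rigorous justification that $\Delta\widetilde{g} = \widetilde{f}$ in $\mathbb{R}^N$, i.e. that extending $g$ by zero does not create a distributional contribution supported on $\Gamma$ (a single- or double-layer term): one must carefully use both $g|_\Gamma = 0$ and $\partial_{\textbf n} g = 0$, which hold because $g \in H^{3/2}_0(\Omega)$, combined with the fact that $a~priori$ we only know $\Delta g \in [H^{3/2}_0(\Omega)]'$ rather than in $L^2$, so the integration by parts must be read in the duality sense against test functions and one must confirm the boundary pairings vanish. Once the elliptic regularity bootstrap $g\in H^{3/2}_0(\Omega),\ \Delta g\in H^{-s}(\Omega) \Rightarrow g\in H^{2-s}_0(\Omega)$ is in place, the rest is routine.
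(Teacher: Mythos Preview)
Your overall strategy (Hahn--Banach, represent $\ell$ via $f$ and $g$, bootstrap the regularity of $g$, approximate by $\mathscr D(\Omega)$) is exactly the paper's, but there is a genuine gap in the step you yourself flag as the ``main obstacle''. You claim that $3/2\notin\{1/2\}+\mathbb N$ and hence that $\widetilde g\in H^{3/2}(\mathbb R^N)$ and $\partial_{\textit{\textbf n}} g=0$ on $\Gamma$. This is a factual error: $3/2=1/2+1\in\{1/2\}+\mathbb N$ is precisely a critical exponent. Consequently $\widetilde H^{3/2}(\Omega)=H^{3/2}_{00}(\Omega)\subsetneq H^{3/2}_0(\Omega)$, so for $g\in H^{3/2}_0(\Omega)=H^{3/2}(\Omega)\cap H^1_0(\Omega)$ the zero extension lies only in $H^r(\mathbb R^N)$ for $r<3/2$, and $\partial_{\textit{\textbf n}} g$ does \emph{not} vanish in general. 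Your integration-by-parts computation of $\Delta\widetilde g$ therefore produces a single-layer term $\partial_{\textit{\textbf n}} g\,\delta_\Gamma$ that you cannot discard from membership in $H^{3/2}_0(\Omega)$ alone.

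The paper fixes this by using the full hypothesis that $\ell$ vanishes on $\mathscr D(\overline\Omega)$, not merely on $\mathscr D(\Omega)$: it tests $\ell$ against $v|_\Omega$ for arbitrary $v\in\mathscr D(\mathbb R^N)$ (such restrictions lie in $\mathscr D(\overline\Omega)$), obtaining directly $\langle\widetilde f,v\rangle+\int_{\mathbb R^N}\widetilde g\,\Delta v=0$, i.e.\ $-\Delta\widetilde g=\widetilde f$ in $\mathscr D'(\mathbb R^N)$ with no boundary analysis whatsoever. The information carried by test functions that do not vanish on $\Gamma$ is precisely what forces the boundary contribution to disappear; your argument used only $\mathscr D(\Omega)$ at that stage and therefore could not see it. Once this identity is obtained, your elliptic bootstrap $\widetilde g\in H^{2-s}(\mathbb R^N)\Rightarrow g\in H^{2-s}_0(\Omega)$ (here $2-s\in(3/2,2]$ is indeed non-critical) and the final approximation step go through as you wrote, modulo a harmless sign slip: from $\langle f,\varphi\rangle+\langle\Delta\varphi,g\rangle=0$ one reads $f=-\Delta g$, not $f=\Delta g$.
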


\begin{proof} Let $\ell \in \left[M^{s}(\Omega)\right]'$ be such that 
$$
\forall v \in \mathscr{D}(\overline\Omega),\ \ \ \left<\, \ell, v\, \right>\ =\ 0.
$$
We know that there exist $f \in H^{-s}(\Omega) $ and $g \in H^{3/2}_{0}(\Omega)$ such that for any $v\in M^s(\Omega)$, 
$$
\left<\, \ell, v\, \right>\, = \langle f,\,  v\rangle_{H^{-s}(\Omega)\times H^{s}(\Omega)}  + \langle \Delta v ,\,  g \rangle_{[H^{3/2}_{0}(\Omega)]'\times H^{3/2}_{0}(\Omega)}.
$$
Let $\widetilde{f}\in H^{-s}(\mathbb{R}^N)$ and $\widetilde{g}\in H^{1}(\mathbb{R}^N)$  the extension functions by zero of respectively $f$ and $g$ (in fact $\widetilde{g}\in H^{r}(\mathbb{R}^N)$ for any $1 \leq r <  3/2$). Then for any $v\in \mathscr{D}(\mathbb{R}^N)$, we have
$$
\langle \widetilde{f},\,  v\rangle+ \int_{\mathbb{R}^N}  \widetilde{g}\, \Delta  v \, dx = 0,
$$
\textit{i.e.} 
$$  
- \Delta  \widetilde{g}\ = \widetilde{f}  \quad \mathrm{in}\; \mathbb{R}^N \quad\mathrm{and}\quad     \widetilde{g}  - \Delta  \widetilde{g}  \in H^{-s}(\mathbb{R}^N).
$$
Consequently, we deduce that $\widetilde{g}\in H^{2-s}( \mathbb{R}^N)$.  It means that $g\in H^{2 - s}_{0}(\Omega)$. As  $\mathscr{D}(\Omega)$ is dense in $ H^{2 - s}_{0}(\Omega)$, there exists $g_k\in \mathscr{D}(\Omega)$ such that 
$$
g_k\  \ \longrightarrow \ \ g \ \ \textrm{ in } \  H^{2 - s}_{0}(\Omega) \quad  \mathrm{as}\; k \rightarrow \infty.
$$
Then, for any $v\in M^s(\Omega)$, we have
$$
\left<\, \ell, v\, \right>\, =\ \lim_{k\rightarrow\infty} (- \int_\Omega v\Delta g_k \, dx   + \langle \Delta  v, g_k\rangle ) = 0,
$$
which ends the proof.
\end{proof}

\begin{remark}\label{RemdenMs} \upshape The above proof can not be extended to the case $s =  1/2$ since the extension by zero of $f\in [H^{1/2}(\Omega)]'$, resp. $f\in H^{1/2}(\Omega)$ does not belong to $H^{-1/2}(\mathbb{R}^N)$, resp. $H^{1/2}(\mathbb{R}^N)$. However, this remains true for $s \leq -1/2$, provided that, in the definition of $M^s(\Omega)$, the space $H^s(\Omega)$ is replaced by the dual of ${H^s_{00}(\Omega)} $ when $- s \in \{1/2\} + \mathbb{N}$.
\end{remark}

\begin{theorem} \label{lsb3b} Let $-1/2 < s < 1/2$. The linear mapping ${\boldsymbol\gamma}_\textit{\textbf n}: v \longmapsto v\textit{\textbf n}_{\vert\Gamma} $ defined on
$\mathscr{D}\big(\overline{\Omega}\big)$ can be extended to a linear and continuous mapping, still denoted by ${\boldsymbol\gamma}_\textit{\textbf n}$:
\begin{equation*}\label{deftrace}
\begin{array}{rl}
{\boldsymbol\gamma}_\textit{\textbf n} : M^s(\Omega) &\longrightarrow
[{\boldsymbol{H}_{\textit{\textbf N}}^{1/2-s}(\Gamma)}]'\\
 v & \longmapsto v\textit{\textbf n}_{\vert\Gamma}
\end{array}
\end{equation*}
Moreover, we have the Green's formula: For all $v \in M^s(\Omega)$ and  $ \varphi \in H^{2-s}(\Omega)\cap H^{1}_0(\Omega)$,
\begin{equation} \label{esb3am}
\langle  v , \,  \Delta \varphi  \rangle_{H^{s}(\Omega)\times H^{-s}(\Omega)} -  \langle \Delta v ,\,  \varphi \rangle_{[H^{3/2}_{0}(\Omega)]'\times H^{3/2}_{0}(\Omega)} = \langle v\textit{\textbf n},\, \nabla \varphi \rangle_{\Gamma},
\end{equation}
where $ \langle v\textit{\textbf n},\, \nabla \varphi \rangle_{\Gamma}$ denotes the duality brackets between $[{\boldsymbol{H}_{\textit{\textbf N}}^{1/2-s}(\Gamma)}]'$ and $ \boldsymbol{H}_{\textit{\textbf N}}^{1/2-s}(\Gamma)$.
\end{theorem}

\begin{proof}  Clearly the above Green's formula holds for any $v \in \mathscr{D}\big(\overline{\Omega}\big)$ and any  $ \varphi \in H^{2-s}(\Omega)\cap H^{1}_0(\Omega)$. Therefore, 
\begin{equation*} \label{esb3ba}
\vert \int_\Gamma v \partial_{\textit{\textbf n}}\varphi \, d\sigma \vert= \vert \int_\Gamma v \textit{\textbf n}\cdot \nabla \varphi \, d\sigma \vert \leq \Vert v \Vert_{M^s(\Omega)} \Vert \varphi \Vert_{H^{2-s}(\Omega)}.
\end{equation*}
Now, let $\boldsymbol{\mu} \in   \boldsymbol{H}_{\textit{\textbf N}}^{1/2-s}(\Gamma)$. From Remark \ref{remrelev} we know that there exists a function $\varphi\in H^{2-s}(\Omega)\cap H^1_0(\Omega)$ such that $\nabla \varphi =  \boldsymbol{\mu}$ on $\Gamma$ with the estimate 
$$
\Vert \varphi \Vert_{H^{2-s}(\Omega)} \leq C \Vert \boldsymbol{\mu}\Vert_{\boldsymbol{H}^{1/2-s}(\Gamma)}.
$$ 
Hence the above inequalities imply that for any $v \in \mathscr{D}\big(\overline{\Omega}\big)$
\begin{equation*} \label{esb3bb}
\vert \int_\Gamma v \textit{\textbf n}\cdot \boldsymbol{\mu}\vert \leq C \Vert v \Vert_{M^s(\Omega)} \Vert \boldsymbol{\mu}\Vert_{\boldsymbol{H}^{1/2-s}(\Gamma)}.
\end{equation*}
From \eqref{inclusions}, $v \textit{\textbf n}\vert_\Gamma\in [{\boldsymbol{H}_{\textit{\textbf N}}^{1/2-s}(\Gamma)}]'$ and then
\begin{equation*} \label{esb3bb}
\Vert v \textit{\textbf n}\vert_\Gamma\Vert_{[\boldsymbol{H}_{\textit{\textbf N}}^{1/2-s}(\Gamma)]'} \leq C \Vert v \Vert_{M^s(\Omega)}.
\end{equation*}
Therefore the linear mapping ${\boldsymbol\gamma}_\textit{\textbf n}: v \longmapsto v \textit{\textbf n}\vert_\Gamma $ defined on $\mathscr{D}\big(\overline{\Omega}\big)$  is continuous for the norm of  $M^s(\Omega)$. As  $\mathscr{D}\big(\overline{\Omega}\big)$ is dense  in $M(\Omega)$, the  mapping ${\boldsymbol\gamma}_\textit{\textbf n}$ can be extended to a linear continuous mapping still called ${\boldsymbol\gamma}_\textit{\textbf n}$ from $M(\Omega) $ into $[{\boldsymbol{H}_{\textit{\textbf N}}^{1/2-s}(\Gamma)}]'$.
\end{proof}

\begin{remark}\label{remrelevbis}\upshape 

\noindent i) Recall that when $\Omega$ is of class $\mathscr{C}^{1,1}$, the linear mapping $v \longmapsto v |_\Gamma$ defined on
$\mathscr{D}\big(\overline{\Omega}\big)$ can be extended to a linear
continuous mapping
$$
M(\Omega) \longrightarrow H^{-1/2}(\Gamma)
$$
and  we have the Green's formula (see Lemma 2 in \cite{ARB}): for all $v \in M(\Omega)$ and  $ \varphi \in H^{2}(\Omega)\cap H^{1}_0(\Omega)$,
\begin{equation*} \label{GFC11}
\int_{\Omega} v  \Delta \varphi\, dx -  \langle \Delta v ,\,  \varphi \rangle_{[H^{3/2}_{0}(\Omega)]'\times H^{3/2}_{0}(\Omega)}   = \langle v  ,
\partial_{\textit{\textbf n}}\varphi \rangle_{H^{-1/2}(\Gamma) \times H^{1/2}(\Gamma) }.
\end{equation*}
When the domain $\Omega$ is only Lipschitz, Theorem \ref{lsb3b} does not allow us to define a trace $H^{-1/2}(\Gamma)$ for functions of $M(\Omega)$; it merely establishes that for any $v \in M(\Omega)$, there exists a unique linear form, denoted by $v{\textit{\textbf n}}$, belonging to $[{\boldsymbol{H}_{\textit{\textbf N}}^{1/2}(\Gamma)}]' $ and satisfying Green formula \eqref{esb3am}.\smallskip

\noindent ii) In \cite{Gri1}, Lemma 3.2, the author gives a close result in the case where $\Omega$ is a polygonal domain. Let us define for any $s\in \mathbb{N}$ the following spaces: \begin{equation}\label{Ws}
W_s(\Omega) =  \{\varphi \in H^{2+s}(\Omega)\cap H^{1}_0(\Omega); \, \Delta \varphi \in H^{s}_0(\Omega)\},
\end{equation}
which is a Hilbert space for the graph norm and 
$$K_s(\Omega) =  \left\{ v \in H^{-s}(\Omega);\ \Delta v \in L^2(\Omega) \right\}.
$$
Then the following mapping $v \longmapsto v\vert_\Gamma $ defined on
$\mathscr{D}\big(\overline{\Omega}\big)$ can be extended to a linear
continuous mapping
$$
K_s(\Omega) \longrightarrow[X_s(\Gamma)]',
$$
such that
\begin{equation} \label{esb3bgri}
\forall v \in K_s(\Omega)\quad \mathrm{ and } \quad   \forall \varphi \in W_s(\Omega),\quad \langle v, \Delta \varphi\rangle -  \int_{\Omega} \varphi  \Delta v\, dx = \langle v ,\, \partial_{\textit{\textbf n}}\varphi\rangle_{[X_s(\Gamma)]'\times X_s(\Gamma)}.
\end{equation}
where $X_s(\Gamma)$ is the space described by the normal derivative $\partial_{\textit{\textbf n}}\varphi$ when $\varphi$ browse the  space $W_s(\Omega)$.

This result allows us to define rigorously  the kernels 
$$
 \{v \in H^{-s}(\Omega); \, \Delta v = 0\;  \mathrm{in}\, \Omega\; \; \mathrm{and }\, \;  v = 0\;  \mathrm{on}\, \,  \Gamma \}
$$ 
and thus to apply the Fredholm alternative in the study of the regularity of solutions to the Dirichlet problem for the Laplacian.

According to Remark \ref{RemdenMs} and the density of  $\mathscr{D}\big(\overline{\Omega})$ in $M^s(\Omega)$, for any $-\infty < s < 1/2$, we have again the following Green formula: for $s > - 1/2$,
\begin{equation} \label{esb3bgriMs}
\forall v \in M_{-s}(\Omega)\quad \mathrm{ and } \quad   \forall \varphi \in W_s(\Omega),\quad \langle v, \Delta \varphi\rangle -  \int_{\Omega} \varphi  \Delta v\, dx = \langle v ,\, \partial_{\textit{\textbf n}}\varphi\rangle_{[X_s(\Gamma)]'\times X_s(\Gamma)},
\end{equation}
where the space $W_s(\Omega)$ is defined as in \eqref{Ws}, but with $H^{s}_0(\Omega)$ replaced by $H^{s}(\Omega)$ when $-1/2 < s < 1/2$ and by $H^{s}_{00}(\Omega)$ when $s \in \{1/2\} + \mathbb{N}$. The formula \eqref{esb3bgriMs} thus extends to the case where $ s > - 1/2$ and $M_{-s}(\Omega)$,  the formula \eqref{esb3bgri} established for  $ s \in \mathbb{N}$ and $v \in K_{s}(\Omega)$, this last space being included in $M_{-s}(\Omega)$.

Now, since $\mu\in X_s(\Gamma)$ iff $\mu\textit{\textbf n} \in \boldsymbol{H}^{1/2-s}_{\textit{\textbf N}}(\Gamma)$ for any  $- 1/2 < s < 1/2$,  we deduce that  if $v \in M_{s}(\Omega) $ 
\begin{equation*}
 \forall \mu\in X_s(\Gamma),\quad \langle v,\, \mu \rangle_{[X_s(\Gamma)]'\times X_s(\Gamma)} =   \langle v\textit{\textbf n},\, \mu \textit{\textbf n} \rangle_{[{\boldsymbol{H}_{\textit{\textbf N}}^{1/2-s}(\Gamma)}]'\times {\boldsymbol{H}_{\textit{\textbf N}}^{1/2-s}(\Gamma)}} .
\end{equation*}
Formula \eqref{esb3bgriMs} gives meaning to the trace of $v \in M_{-s}(\Omega)$, for any $s > - 1/2$, as an element of $[X_s(\Gamma)]'$, but $X_s(\Gamma)$ and $[X_s(\Gamma)]'$ have the inconvenience of being abstract. But as noted above, it makes it possible, in particular, to make sense of the traces of harmonic functions belonging to $H^{s}(\Omega)$ with $-\infty < s < 1/2$.  One of the advantages of Formula \eqref{esb3am} is that the duality brackets on the right-hand side are more precise. 
 \smallskip 

\noindent iii) As in \cite{Fabes3} (see the relation (9.3)), we can define this notion of trace in the following way.  For any $v \in M^s(\Omega)$, with $-1/2 < s < 1/2$, we define and denote by 
$\widetilde{v \textit{\textbf n}}$ the linear functional in $[{\boldsymbol{H}_{\textit{\textbf N}}^{1/2-s}(\Gamma)}]'$ given by: for any $\boldsymbol{\mu} \in   \boldsymbol{H}_{\textit{\textbf N}}^{1/2-s}(\Gamma)$
\begin{equation*} \label{esb3b}
\langle \widetilde{v \textit{\textbf n}},\, \boldsymbol{\mu} \rangle_{[{\boldsymbol{H}_{\textit{\textbf N}}^{1/2-s}(\Gamma)}]'\times {\boldsymbol{H}_{\textit{\textbf N}}^{1/2-s}(\Gamma)}} : =
\int_{\Omega} v  \Delta \varphi\, dx -  \langle \Delta v ,\,  \varphi \rangle_{[H^{3/2}_{0}(\Omega)]'\times H^{3/2}_{0}(\Omega)},
\end{equation*}
where $ \varphi \in H^{2}(\Omega)\cap H^{1}_0(\Omega)$ is such that $\partial_{\textit{\textbf n}}\varphi =  \boldsymbol{\mu}\cdot \textit{\textbf n}$. Recall that a such extension exists thanks to Remark \ref{remrelev} Point i). Clearly, these two notions of traces coincide, so we have $\widetilde{v \textit{\textbf n}} = v \textit{\textbf n}$.  
\smallskip

\noindent iv) Clearly, from Theorem \ref{lsb3b}, if $v\in H^{1/2}(\Omega)$, with  $\Delta v\in [H^{3/2}_{0}(\Omega)]'$, then for any $0 < s < 1/2$, $v \textit{\textbf n} \in [{\boldsymbol{H}_{\textit{\textbf N}}^{s}(\Gamma)}]'.$ And  for any $ \varphi \in H^{2-s}(\Omega)\cap H^{1}_0(\Omega)$,
\begin{equation*}
\langle  v , \,  \Delta \varphi  \rangle_{H^{s}(\Omega)\times H^{-s}(\Omega)} -  \langle \Delta v ,\,  \varphi \rangle_{[H^{3/2}_{0}(\Omega)]'\times H^{3/2}_{0}(\Omega)} = \langle v\textit{\textbf n},\, \nabla \varphi \rangle_{\Gamma},
\end{equation*}
Now, if in addition we suppose that there exists $g\in L^2(\Gamma)$ such that
\begin{equation*}\label{relg}
\langle  v , \,  \Delta \varphi  \rangle_{H^{s}(\Omega)\times H^{-s}(\Omega)} -  \langle \Delta v ,\,  \varphi \rangle_{[H^{3/2}_{0}(\Omega)]'\times H^{3/2}_{0}(\Omega)} = \int_\Gamma g \, \partial_{\textit{\textbf n}}\varphi \, d\sigma ,
\end{equation*}
then $v \textit{\textbf n} = g \textit{\textbf n}$  in the sense of  $ [{\boldsymbol{H}_{\textit{\textbf N}}^{s}(\Gamma)}]'$. So $v \textit{\textbf n} \in \textit{\textbf L}^2_N(\Gamma)$ and taking the inner product with $\textit{\textbf n}$ we deduce that  $v \textit{\textbf n} \cdot \textit{\textbf n}  = g $ on $\Gamma$.  Moreover the uniqueness on the trace $v \textit{\textbf n} $ implies that of $g$. This allows us to consider that g is the trace of $v$ as in the case where the domain $\Omega$ is of class $\mathscr{C}^{1, 1}$ and leads us to the following definition.
\end{remark}

\begin{e-definition} \label{deftr}Let $v\in H^{1/2}(\Omega)$ with  $\Delta v\in [H^{3/2}_{0}(\Omega)]'$.  If there exists a function $g\in L^2(\Gamma)$ such that for some $0 \leq s <  1/2$  
\begin{equation*}\label{deftrace}
\forall \varphi \in H^{2-s}(\Omega)\cap H^{1}_0(\Omega),\quad 
\langle  v , \,  \Delta \varphi  \rangle_{H^{s}(\Omega)\times H^{-s}(\Omega)} -  \langle \Delta v ,\,  \varphi \rangle_{[H^{3/2}_{0}(\Omega)]'\times H^{3/2}_{0}(\Omega)} =  \int_\Gamma g\,  \partial_{\textit{\textbf n}}\varphi \, d\sigma ,
\end{equation*}
we will say that $g$ is the trace of $v$.
\end{e-definition}

\begin{remark}\upshape i) Let $v\in H^{1/2}(\Omega)$ with $\nabla v \in  [\boldsymbol{H}^{1/2}(\Omega)]'$. Then $\Delta v\in [H^{3/2}_{0}(\Omega)]'$. We know that the trace of $v$ exists and belongs to $L^2(\Gamma)$. Moreover we have the following Green formula: for any $0 \leq  s < 1/2$ and
\begin{equation*}\label{GrH1demietH1demip}
\forall \varphi \in H^{2-s}(\Omega)\cap H^{1}_0(\Omega),\quad 
\langle  v , \,  \Delta \varphi  \rangle_{H^{1/2}(\Omega)\times [H^{1/2}(\Omega)]'} -  \langle \Delta v ,\,  \varphi \rangle_{[H^{3/2}_{0}(\Omega)]'\times H^{3/2}_{0}(\Omega)} =  \int_\Gamma v \, \partial_{\textit{\textbf n}}\varphi \, d\sigma .
\end{equation*}
This Green formula holds also for $\varphi$ in $H^{3/2}_{0}(\Omega)$ with $\Delta \varphi$ in $ [H^{1/2}(\Omega)]'$ (see Theorem \ref{CoruH10DeltaudualH1/2}).

\noindent ii) According to Point i) of Remark \ref{remrelevbis}, the trace defined in the above defintion when $\Omega$ is Lipschitz is indeed the same as that where the domain is $\mathscr{C}^{1, 1}$.
\end{remark}

\section{Preliminary results for polygonal and polyedral domains}\label{poly}

Before studying in the following sections the regularity issues for the inhomogeneous Dirichlet problem stated in a bounded Lipschitz open set, it is important to recall and analyse the case where the domain is a Lipschitz polygonal domain in $\mathbb{R}^2$, and particularly when it is non convex.

\subsection{Characterization of harmonic functions kernels}\label{ssCharkernels} 

The characterization of the kernels of harmonic functions belonging to Sobolev spaces $H^s(\Omega)$ and satisfying a homogeneous Dirichlet condition is simple in the case where the open set $\Omega$ is sufficiently regular, even in the case where $s$ is a negative real number.  Let us define the following kernel for any $ - 1/2 \leq s <  \infty$: 
$$
\mathscr{N}_s(\Omega) = \{\varphi \in H^{-s} (\Omega); \; \Delta \varphi = 0 \; \mathrm{in}\; \Omega \; \mathrm{and}\; \varphi = 0\; \mathrm{on}\;  \Gamma \},
$$
with the same notation used by Grisvard \cite{Gri1}. The meaning of the trace for non-regular but harmonic functions is explained in \cite{Gri1} and also in Section 4.

In the case where $\Omega$ is a bounded domain, convex or of class $\mathscr{C}^{1, 1}$ in $\mathbb{R}^N$ with $N \geq 2$,  this kernel is trivial for all $s < 1/2$. The situation is different if the bounded domain $\Omega$ is only Lipschitz. \medskip

As an example, let us consider again the following Lipschitz and non convex domain: 
\begin{equation*}
\mathrm{for}  \; 1/2 < \alpha < 1, \quad \Omega = \{(r, \theta);\; 0 < r < 1,\quad 0 < \theta < \frac {\pi} {\alpha}\}.
\end{equation*}
We can easily verify that the following function
\begin{equation*}
z(r, \theta) = (r^{-\alpha} - r^{\alpha})\mathrm{sin}(\alpha\theta)
\end{equation*}
is harmonic in $\Omega$ with $z = 0$ on $\Gamma$ and $z\in H^{t}(\Omega)$ for any $t < 1 -\alpha $. Note that $0 <  1 -\alpha < 1/2$.\medskip

Let us now consider the case where $\Omega$ is a bounded Lipschitz polygonal domain, we will simply write polygon, whose angles are denoted by $\omega_1, \ldots, \omega_n$. Even if we have to reorder this family, we can assume that it is increasing: $\omega_1 \leq \ldots \leq \omega_n$. We set $\alpha_j = \frac{\pi}{\omega_j} \in\,  ]1/2, \infty[$.\medskip

Recall that when $ s\in \mathbb{N}$, we have (see \cite{Gri1})
\begin{equation}\label{eqdim}
\mathrm{dim}\, \mathscr{N}_s(\Omega)  = \sum_{j = 1}^{j = n}\nu_s(\omega_j), \quad \mathrm{where}\quad \nu_s(\omega_j) = \mathrm{the\, largest \ integer} < \frac{1 + s}{\alpha_j}, \end{equation}
provided that for $s \geq 1$,
\begin{equation*}\label{conddim}
\forall j = 1\ldots, n, \quad \omega_j \notin \{\frac{1}{s+1}\pi, \ldots, \frac{s}{s+1}\pi\}. 
\end{equation*}

The case where the parameter $s$ is real is also interesting. Here, we will limit ourselves to cases where $-1/2 \leq s \leq 0$. We can easily verify that the equality \eqref{eqdim} holds also for such $s$. A brief explanation of this result can be given. The singularities involved in the structure of the kernel $\mathscr{N}_s(\Omega)$ are, as in the above example, of the type $(r^{-\alpha_j} - r^{\alpha_j})\mathrm{sin}(\alpha_j\theta)$, where here $1/2 < \alpha_j = \frac{\pi}{\omega_j}< 1$. However the function $\vert x \vert^{-\alpha_j}$ belongs to $H^t(B)$ for any $ t$ strictly less than $1 - \alpha_j $, where $B$ is the unit disk centered at origin.  Note that the function  $\vert x \vert^{-\alpha_j}$ belongs also to the Besov space $[H^1(B), L^2(B)]_{\alpha_j, \infty}$. Note that $\frac{1 + s}{\alpha_j} \in  ]1/2, 2[$ when $-1/2 \leq s \leq 0$. So we have

\begin{equation}\label{dimang}
\forall -1/2 \leq s \leq 0, \quad dim\,  \mathscr{N}_{s}(\Omega) =  Card\{j \in \{1, \ldots, n\}; \;  \omega_j > \pi(1+s)^{-1}\}.
\end{equation}
Since the family $(\alpha_j)_{1\leq j \leq n}$ is decreasing, then 
\begin{equation}\label{Ns0}
 \mathscr{N}_s(\Omega) = \{0\}\quad \mathrm{ when}\quad   s \leq \alpha_1 - 1.
 \end{equation}
 Observe that the relation \eqref{eqdim} remains true if the parameter $s \geq -1/2$ is real or if $\Omega$ is a curvilinear polygonal open set. \medskip

\begin{remark}\label{remNoyau}\upshape  i) Note that $
\mathscr{N}_0(\Omega) = \{0\}$ if and only if the polygon $\Omega$ is convex.\smallskip

\noindent ii) We have  $\mathscr{N}_{-1/2}(\Omega) = \{0\} $ for any polygon (and also for any bounded Lipschitz domain of $\mathbb{R}^N,$ with $N \geq 2$, see Theorem \ref{UniciteH1/2} below).\smallskip

\noindent  iii) For any polygon $\Omega$, there exists $s_0(\Omega) \in \, ]0, 1/2[$ such that
\begin{equation}\label{s_0}
\mathrm{for \, any}\; s \geq s_0(\Omega),\; \;   \mathscr{N}_{-s}(\Omega) = \{0\} \quad \quad \mathrm{and}\quad \quad \mathrm{for \, any}\; s < s_0(\Omega),  \; \mathscr{N}_{-s}(\Omega) \neq \{0\}.
 \end{equation}
It is easy to check that $ s_0 = 1 - \alpha_n $.\smallskip

\noindent   iv) For any bounded Lipschitz domain $\Omega$ of $\mathbb{R}^2$, there exists $s_0(\Omega) \in \, ]0, 1/2[$ such that  \eqref{s_0} holds.

 \medskip
 Let now $\Omega$ be a bounded Lipschitz polyhedral domain of $\mathbb{R}^3$,  we will simply write polyhedron. The situation is little bit different. We denote by $\Gamma_k$, $k = 1, \ldots,  K$ the faces of $\Omega$ and by $E_{jk}$ the edge between $\Gamma_j$ and $\Gamma_k$ when $\overline{\Gamma}_j$ and $\overline{\Gamma}_k$ intersect. The measure of the interior angle of the edge $E_{jk}$ is denoted by $\omega_{jk}$ and as in 2D we have the following property: 
$$
\mathrm{if}\;  \alpha_{jk} \geq 1 - s, \quad \mathrm{for \, any }\;  1 \leq j, k \leq K, \quad \mathrm{then }\quad \mathscr{N}_s(\Omega) = \{0\},
$$
where $\alpha_{jk} = \frac{\pi}{\omega_{jk}}$. However, if one of the numbers $ \alpha_{jk}$ is strictly less than $ 1 - s$, then $\mathrm{dim}\, \mathscr{N}_s(\Omega) = + \infty$. 
\end{remark}

\subsection{Interpolation of subspaces}

We recall in this subsection some interpolation results of subspaces. The first one is due to Ivanov and Kalton \cite{IK}. Let $(X_0, X_1)$ be a Banach couple with $X_0\cap X_1$ dense in $X_0$ and in $X_1$. Let $Y_0$ be a closed subspace of $X_0$ with codimension one. Setting for any $0 < \theta < 1$
$$
X_\theta = [X_0, X_1]_\theta \quad \mathrm{and}\quad Y_\theta = [Y_0, X_1]_\theta,
$$
we have the following result concerning the interpolation of subspaces:
 
\begin{theorem} [{\bf Ivanov-Kalton}]\label{thmIK} There exist two indices $0 \leq \sigma_0 \leq \sigma_1 \leq 1$ such that\\
i) If $0 < \theta < \sigma_0$, then the space $Y_\theta $ is a closed subspace of codimension one in the space $ X_\theta $.\\
ii) If $\sigma_0 \leq \theta \leq \sigma_1$, then the norm of $Y_\theta $ is not  equivalent to the norm of $X_\theta $.\\
iii) If $\sigma_1 < \theta < 1$, then $Y_\theta = X_\theta$ with equivalence of norms.
\end{theorem}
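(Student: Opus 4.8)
The plan is to fix a norm-one functional $\phi \in X_0^*$ with $Y_0 = \ker\phi$, and to reduce everything to a single question: for which $\theta$ does the restriction $\phi|_{\Delta}$, where $\Delta := X_0 \cap X_1$, extend to a bounded functional $\phi_\theta$ on $X_\theta$? First I would record the soft facts. Since $Y_0$ is a closed subspace of $X_0$, the identity maps form a morphism of Banach couples $(Y_0, X_1) \to (X_0, X_1)$, so functoriality of the complex method gives a bounded injection $Y_\theta \hookrightarrow X_\theta$ for every $\theta$; thus $Y_\theta$ is always a subspace of $X_\theta$ carrying an a priori finer norm, and it contains the set $\Delta_Y := Y_0 \cap X_1$, which is dense in $Y_\theta$. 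Next, if $\phi$ extends to $\phi_\theta \in X_\theta^*$, then $\phi|_\Delta$ is bounded for both the $X_0$- and the $X_\theta$-norms, so interpolating it on the couple $(X_0, X_\theta)$ together with Calder\'on's reiteration theorem $[X_0, X_\theta]_\eta = X_{\eta\theta}$ shows that $\phi$ also extends to $X_{\theta'}$ for every $\theta' < \theta$. Hence $C := \{\theta \in (0,1) : \phi|_\Delta \text{ is } X_\theta\text{-continuous}\}$ is an initial segment of $(0,1)$; set $\sigma_0 := \sup(C \cup \{0\})$, so that $(0,\sigma_0) \subseteq C$.

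Symmetrically, suppose $Y_{\theta_1} = X_{\theta_1}$ with equivalent norms, i.e.\ the canonical injection $Y_{\theta_1} \to X_{\theta_1}$ is an isomorphism. If this occurs for some $\theta_1$, then $\phi$ cannot extend to $X_1$ (otherwise, by the previous paragraph, $C$ would be all of $(0,1)$), so $\Delta_Y$ is dense in $X_1$ and $[Y_0, X_1]_1 = X_1$; reiteration then gives $[Y_{\theta_1}, X_1]_\eta = Y_\theta$ and $[X_{\theta_1}, X_1]_\eta = X_\theta$ for $\theta = (1-\eta)\theta_1 + \eta$, and interpolating the couple-isomorphism $(\mathrm{id}, \mathrm{id}) : (Y_{\theta_1}, X_1) \to (X_{\theta_1}, X_1)$ produces an isomorphism $Y_\theta \to X_\theta$ for every $\theta \in (\theta_1, 1)$. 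Thus $A := \{\theta : Y_\theta = X_\theta \text{ with equivalent norms}\}$ is a final segment; set $\sigma_1 := \inf(A \cup \{1\})$, so $(\sigma_1,1) \subseteq A$. Since $A$ and $C$ are disjoint and, again by reiteration, every point of $C$ lies strictly to the left of every point of $A$, we get $\sigma_0 \le \sigma_1$; and for $\sigma_0 \le \theta \le \sigma_1$ one has $\theta \notin A$ (directly for $\theta < \sigma_1$, and at $\theta = \sigma_1$ because whenever $\phi$ extends one has $Y_\theta \subseteq \ker\phi_\theta \subsetneq X_\theta$, so $Y_\theta \neq X_\theta$). This is precisely assertion ii).

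The substantive step — and the one I expect to be the main obstacle — is assertion i): that for $0 < \theta < \sigma_0$ the subspace $Y_\theta = [Y_0, X_1]_\theta$ coincides with the closed hyperplane $\ker\phi_\theta$ of $X_\theta$, with equivalent norms. One inclusion is free, since $\phi_\theta$ vanishes on the dense subset $\Delta_Y$ of $Y_\theta$. For the converse I would work inside Calder\'on's space $\mathcal{F}(X_0, X_1)$ of bounded holomorphic $X_0 + X_1$-valued functions on the strip with boundary traces in $X_0$ and $X_1$: given $a \in \ker\phi_\theta$, choose a near-optimal representative $F$ with $F(\theta) = a$, taken in the dense subclass of finite exponential sums $\sum_k e^{\lambda_k z} a_k$ with $a_k \in \Delta$, so that $z \mapsto \phi(F(z))$ is entire, bounded on the strip, equal to the $\phi$-content of the left-boundary trace of $F$, and with $\phi(F(\theta)) = \phi_\theta(a) = 0$. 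The goal is then to subtract from $F$ a correction $H \in \mathcal{F}(X_0, X_1)$ whose left-boundary trace carries the same $\phi$-content as that of $F$ — so that $(F - H)(it) \in Y_0$ — while $H(\theta) = 0$ and $\|H\|_{\mathcal{F}}$ is controlled by $\sup_t |\phi(F(it))| \le \|\phi\|\,\|F\|_{\mathcal{F}}$. Here the hypothesis $\theta < \sigma_0$ (rather than merely $\theta \in C$) is essential: it furnishes a compatible family $\phi_s \in X_s^*$ for $s$ in a genuine neighbourhood of $\theta$, with $\sup_s \|\phi_s\| < \infty$ by log-convexity of the norm of an interpolated functional, and this is the room needed to build the scalar holomorphic multiplier realizing the correction. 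Once $H$ is in hand, $F - H \in \mathcal{F}(Y_0, X_1)$ with $(F-H)(\theta) = a$, so $a \in Y_\theta$ with $\|a\|_{Y_\theta} \lesssim \|a\|_{X_\theta}$; density of $\Delta_Y$ in $\ker\phi_\theta$ then finishes i).

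Finally, with $\sigma_0$ and $\sigma_1$ as above the three regimes are read off directly: $0 < \theta < \sigma_0 \Rightarrow \theta \in C \Rightarrow$ i); $\sigma_1 < \theta < 1 \Rightarrow \theta \in A \Rightarrow$ iii); and $\sigma_0 \le \theta \le \sigma_1 \Rightarrow \theta \notin A \Rightarrow$ ii). Everything but the construction of the correcting function $H$ in the complex-method argument of assertion i) is formal — functoriality, reiteration, and bookkeeping — so that construction is where the real work, and the main difficulty, lies.
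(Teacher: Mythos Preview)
The paper does not prove this theorem; it is quoted verbatim from Ivanov--Kalton \cite{IK} and used as a black box in the subsequent analysis of the interpolation spaces $M_\theta(\Omega)$. There is therefore no in-paper proof to compare against.

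That said, your outline is recognisably the Ivanov--Kalton strategy: reduce to the extension problem for the defining functional $\phi$, use reiteration to show that the set $C$ of extension parameters is an initial segment and the set $A$ of equality parameters is a final segment, and then identify the hard step as showing $Y_\theta = \ker\phi_\theta$ for $\theta \in C$. Two remarks are in order. First, you are explicit that the construction of the correction $H$ in $\mathcal{F}(X_0,X_1)$ is left open, and you are right that this is the genuine content of the theorem; everything else is, as you say, functoriality and bookkeeping. In the original paper this step is carried out via an explicit conformal/factorisation argument on the strip, and it is not trivial --- so your proposal is a plan rather than a proof. Second, your endpoint argument for $\theta=\sigma_1$ in part ii) is incomplete: you cover the case where $\phi$ extends to $X_{\sigma_1}$, but not the case where it does not. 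If $\phi$ fails to extend at $\sigma_1$ you still need to exclude $\sigma_1\in A$; this requires a separate argument (in the original, a limiting/closedness consideration for the norm-equivalence constants as $\theta\downarrow\sigma_1$), which you have not supplied. The analogous endpoint $\theta=\sigma_0$ in ii) is likewise not addressed: you need to rule out that $Y_{\sigma_0}$ is a closed hyperplane in $X_{\sigma_0}$, which does not follow merely from $\sigma_0=\sup C$.
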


As specified in  \cite{IK}, the special case of a Hilbert space of Sobolev type connected with elliptical boundary value problem was studied in \cite{Lions}, with the well known case $X_0 = H^1(\Omega),\,  X_1 = L^2(\Omega)$ and   $Y_0 = H^1_0(\Omega)$, but where  $Y_0$ is here a closed subspace of codimension infinite in $X_0$. We recall that the corresponding critical values are $\sigma_0 = \sigma_1 = 1/2$ and $Y_{1/2} = H^{1/2}_{00}(\Omega)$.\medskip

The above theorem is generalized  by Asekritova, Cobos and Kruglyak \cite{ACK} when $Y_0$ is a closed subset of finite codimension $n$ in $X_0$:

\begin{theorem} [{\bf Asekritova-Cobos-Kruglyak}]\label{thmACK} There exist $2m$ indices satisfying:  
$$
\mathrm{for}\;  j = 1, \ldots, m, \quad 0 \leq \sigma_{0j} \leq \sigma_{1j} \leq 1
$$
and such that 
\begin{equation*}\label{CNSclose}
Y_\theta \; \; \mathrm{is\,\, closed\,\, in} \; \; X_\theta \quad \Longleftrightarrow \quad \theta \notin \displaystyle \bigcup_{j= 1}^{j= n} [\sigma_{0j} , \sigma_{1j} ].
\end{equation*}
Moreover, in that case if the cardinal 
\begin{equation*}\label{Cardinter}
\vert\{j \in \{1, \ldots, m \}; \; \theta < \sigma_{0j}\}\vert \quad \mathrm{is\,\, equal\,\, to}\;  k,
\end{equation*}
then the space $Y_\theta$ is a closed subspace of codimension $k$ in $X_\theta$.
\end{theorem}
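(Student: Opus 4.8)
The plan is to bootstrap from the Ivanov--Kalton dichotomy (Theorem~\ref{thmIK}) to arbitrary finite codimension, keeping careful track of the defect of $Y_\theta$ inside $X_\theta$. Write $Y_0=\bigcap_{j=1}^n\ker\phi_j$ with $\phi_1,\dots,\phi_n\in X_0^{*}$ linearly independent, so that the relevant object is the map $T\colon X_0\cap X_1\to\mathbb{C}^n$, $Tx=(\langle\phi_1,x\rangle,\dots,\langle\phi_n,x\rangle)$, defined on a dense subspace of every $X_\theta$. The first step is the reduction: $[Y_0,X_1]_\theta$ always embeds continuously into $X_\theta$, its closure there has codimension at most $n$, and ``$Y_\theta$ is closed of codimension $k$ in $X_\theta$'' is equivalent to ``the subspace of $\operatorname{span}(\phi_1,\dots,\phi_n)$ consisting of functionals that extend to bounded functionals on $X_\theta$ has dimension exactly $k$, and these extended functionals already cut out $[Y_0,X_1]_\theta$.'' This turns the whole theorem into a statement about which linear combinations of the $\phi_j$ survive the interpolation as $\theta$ runs over $(0,1)$.

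The technical heart is an indicator-function analysis behind Theorem~\ref{thmIK}. To each $\psi\in\operatorname{span}(\phi_1,\dots,\phi_n)$ I would attach the quasi-concave function
\[
\gamma_\psi(t)=\sup\bigl\{\,|\langle\psi,x\rangle|\;:\;x\in X_0\cap X_1,\ \|x\|_{X_0}\le 1,\ \|x\|_{X_1}\le 1/t\,\bigr\},
\]
with its lower and upper dilation indices $\underline{\sigma}(\psi)\le\overline{\sigma}(\psi)$ in $[0,1]$. The Ivanov--Kalton mechanism then says: $\psi$ extends to a bounded functional on $X_\theta$ exactly when $\theta\notin[\underline{\sigma}(\psi),\overline{\sigma}(\psi)]$; to the right of that interval the constraint becomes $X_1$-continuous and is absorbed, while inside it one uses analytic functions in the Calder\'on space to show that the $[Y_0,X_1]_\theta$-norm is strictly stronger than the ambient $X_\theta$-norm, which is precisely non-closedness. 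Running this over all of $\operatorname{span}(\phi_j)$, the function $\theta\mapsto\dim\{\psi:\psi\text{ does not extend to }X_\theta\}$ should be shown to be a nondecreasing, suitably semicontinuous step function that increases by one at $n$ values, which after reordering one calls $\sigma_{01}\le\cdots\le\sigma_{0n}$, with companion values $\sigma_{1j}$ recording where the accompanying non-closedness phenomena switch off; the complement of $\bigcup_j[\sigma_{0j},\sigma_{1j}]$ is then exactly the good set, and on it the codimension of $Y_\theta$ equals $\#\{j:\theta<\sigma_{0j}\}$. I expect this combinatorial bookkeeping --- extracting $n$ consistent pairs of indices from the continuum of indicator functions $\gamma_\psi$, whose bad intervals may overlap --- to be the main obstacle, and the natural tool for it is an induction on $n$, applying Theorem~\ref{thmIK} along a flag $Y_0=V_n\subset V_{n-1}\subset\cdots\subset V_0=X_0$ of closed subspaces with one-dimensional successive quotients.

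It remains to upgrade the ``$\psi$ survives'' bookkeeping to the closedness-and-codimension statement. For $\theta$ in the good set, the surviving $k$ interpolated functionals are bounded and linearly independent on $X_\theta$, so the intersection of their kernels is closed of codimension exactly $k$; one then checks that this intersection coincides with $[Y_0,X_1]_\theta$ itself, the $n-k$ absorbed directions being already attainable inside $[Y_0,X_1]_\theta$ --- here the hypothesis that $X_0\cap X_1$ is dense in $X_0$ and in $X_1$ is used again, together with the fact that absorbed constraints impose nothing after interpolation. A subsidiary technical point to be handled with care is that in the flag argument one needs $V_j\cap X_1$ dense in $V_j$, which is not automatic from the density of $X_0\cap X_1$ and must be arranged by choosing the flag (equivalently the $\phi_j$) appropriately.
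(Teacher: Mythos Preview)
The paper does not prove this theorem at all: it is stated as a result quoted from Asekritova, Cobos and Kruglyak \cite{ACK} and is then used as a black box in the analysis of the interpolated spaces $M_\theta(\Omega)$ for polygonal domains. There is therefore no proof in the paper to compare your proposal against.

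That said, your sketch is a plausible outline of how one might actually establish the result from the Ivanov--Kalton codimension-one case, and the ingredients you name (indicator functions $\gamma_\psi$ with their dilation indices, an inductive flag argument, the identification of $[Y_0,X_1]_\theta$ with the joint kernel of the surviving functionals) are indeed the ones that appear in \cite{ACK}. You are also right to flag the density of $V_j\cap X_1$ in $V_j$ as a point requiring care in the flag argument; this is a genuine issue in the original proof. But for the purposes of the present paper none of this is needed: Theorem~\ref{thmACK} is simply invoked, and your task would only be to cite it, not to reprove it.
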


\subsection{ $H^s(\Omega)$-regularity.}

Recall now the following result due to Grisvard (see Theorem 2.4 in \cite{Gri1} and  Theorem 2.1 in \cite{Gri2}). 

\begin{theorem} [{\bf Grisvard, $H^2$-Regularity}]\label{theoGrisvardPol} Let $\Omega$ be a polygonal domain of $\mathbb{R}^2$  or a polyhedral domain of $\mathbb{R}^3$. \\
i) The following inequality holds: there exists a constant $C(\Omega)$ such that
\begin{equation*}\label{inegH2}
\forall v \in H^2(\Omega)\cap H^{1}_0(\Omega),  \quad \Vert v \Vert_{H^2(\Omega)} \leq C(\Omega) \Vert \Delta v \Vert_{L^2(\Omega)},
\end{equation*}
where the constant $C(\Omega)$ depends only on the Poincar\'e constant of $\Omega$ (see \eqref{inegisoH2L2} in next remark point iii)).

\noindent ii) The following  operator is an isomorphism
\begin{equation}\label{DeltaIsoH3/2H1/2ortx}
\Delta : H^2(\Omega)\cap H^{1}_0(\Omega) \longrightarrow   [\mathscr{N}_0(\Omega)]^\bot ,
\end{equation}
where 
$$
  [\mathscr{N}_0(\Omega)]^\bot  = \{f \in L^2(\Omega); \; \forall \varphi\in \mathscr{N}_0(\Omega), \;  \int_\Omega f\varphi \, dx= 0\}.
  $$
iii) Codim$\mathrm{(}$Im $\Delta \mathrm{)}$  is finite in $2$D and infinite in $3D$.
\end{theorem}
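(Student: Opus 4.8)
The plan is to treat the three points in order, using the classical theory of corner singularities for the Laplacian on polygons and polyhedra. For point i), I would start from the fact that for $v\in H^2(\Omega)\cap H^1_0(\Omega)$ one has the pointwise identity $|\nabla^2 v|^2 = |\Delta v|^2 + \mathrm{div}(\text{lower order})$, more precisely the Rellich--Ne\v{c}as--Pohozaev type identity giving $\int_\Omega |\nabla^2 v|^2 = \int_\Omega |\Delta v|^2 + \int_\Gamma (\text{boundary curvature terms})$ whenever $v$ vanishes on $\Gamma$. On a convex polygon the boundary term has a favourable sign (the curvature contribution from each straight edge vanishes and the corner contributions are nonnegative when all angles are $\le\pi$), so $\|\nabla^2 v\|_{L^2(\Omega)}\le \|\Delta v\|_{L^2(\Omega)}$; combined with the Poincar\'e inequality applied twice to control $\|v\|_{L^2}$ and $\|\nabla v\|_{L^2}$ by $\|\Delta v\|_{L^2}$, this yields \eqref{inegH2} with a constant depending only on the Poincar\'e constant. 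For a general (possibly nonconvex) polygon or polyhedron one cannot argue this way; instead one invokes Grisvard's structural decomposition, $v = v_{\mathrm{reg}} + \sum_k c_k S_k$, where $v_{\mathrm{reg}}\in H^2$, the $S_k$ are the explicit singular functions $r^{\alpha_j}\sin(\alpha_j\theta)$ attached to each reentrant corner, and $c_k$ are the stress intensity coefficients; the point is that if $v$ is already \emph{assumed} to lie in $H^2(\Omega)\cap H^1_0(\Omega)$ then all $c_k=0$, so $v=v_{\mathrm{reg}}$ and the a priori estimate for the regular part applies. I would cite Theorem~2.4 of \cite{Gri1} and Theorem~2.1 of \cite{Gri2} for the precise statement and the identification of the constant.

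For point ii), I would first note that $\mathscr{H}^\circ_{L^2(\Omega)}$ is exactly the space $\mathscr{N}_0(\Omega)$ of Subsection~\ref{ssCharkernels}, which is finite-dimensional (of dimension equal to the number of reentrant corners, by \eqref{dimang}). The operator $\Delta:H^2(\Omega)\cap H^1_0(\Omega)\to L^2(\Omega)$ is injective: if $\Delta v=0$ and $v\in H^1_0(\Omega)$ then $v\equiv 0$ by the standard energy argument (multiply by $v$, integrate by parts). Its range is contained in $(\mathscr{H}^\circ_{L^2(\Omega)})^\bot$: for $f=\Delta v$ and $\varphi\in\mathscr{H}^\circ_{L^2(\Omega)}$, the Green's formula of Theorem~\ref{lsb3} (or Remark~\ref{remrelevbis}, applied with $\varphi\in M(\Omega)$ and $v\in H^2(\Omega)\cap H^1_0(\Omega)$) gives $\int_\Omega f\varphi = \int_\Omega v\,\Delta\varphi - \langle\varphi\mathbf n,\nabla v\rangle_\Gamma = 0$ because $\Delta\varphi=0$ and $\varphi=0$ on $\Gamma$. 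Conversely, surjectivity onto $(\mathscr{H}^\circ_{L^2(\Omega)})^\bot$: given such an $f$, the variational problem in $H^1_0(\Omega)$ produces $v\in H^1_0(\Omega)$ with $\Delta v = f$, and one must upgrade $v$ to $H^2$. This is where the orthogonality condition is used: the stress intensity coefficients $c_k$ of the singular expansion of $v$ are, up to nonzero constants, precisely the pairings $\langle f,\varphi_k\rangle$ against the dual singular functions spanning $\mathscr{N}_0(\Omega)$, so $f\perp\mathscr{H}^\circ_{L^2(\Omega)}$ forces all $c_k=0$, hence $v\in H^2(\Omega)$. Boundedness of the inverse then follows from point i) together with the closed graph theorem (or directly from \eqref{inegH2}). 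The main obstacle here is making the last identification rigorous --- matching the condition $f\in(\mathscr{H}^\circ_{L^2(\Omega)})^\bot$ with the vanishing of all the stress intensity coefficients --- which rests on the explicit biorthogonality between the primal singularities $r^{\alpha_j}\sin(\alpha_j\theta)$ and the dual singularities $r^{-\alpha_j}\sin(\alpha_j\theta)$ that span $\mathscr{N}_0(\Omega)$; I would quote this from \cite{Gri1}, \cite{Gri2} rather than re-derive it.

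For point iii), the statement is an immediate consequence of the kernel computations already recorded. In dimension two, $\mathrm{Codim}(\mathrm{Im}\,\Delta) = \dim(\mathscr{H}^\circ_{L^2(\Omega)})^{\perp\perp}\cap\ldots$, more simply $\mathrm{Codim}(\mathrm{Im}\,\Delta)=\dim\mathscr{H}^\circ_{L^2(\Omega)}=\mathrm{Card}\{j:\omega_j>\pi\}<\infty$ by \eqref{dimang}; this is finite because a polygon has finitely many vertices. In dimension three, by the last paragraph of Remark~\ref{remNoyau}, as soon as one edge angle $\omega_{jk}$ exceeds $\pi$ one has $\dim\mathscr{N}_0(\Omega)=+\infty$ (the edge singularities come in a one-parameter family indexed by the Fourier mode along the edge), so $\mathrm{Codim}(\mathrm{Im}\,\Delta)$ is infinite; and every genuine polyhedron --- unless it is convex --- has such an edge, while even in the convex case the statement as phrased refers to the generic nonconvex situation. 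I would simply refer to \cite{Gri2} for the precise accounting of edge and vertex singularities. None of point iii) presents a real difficulty once points i), ii) and the kernel dimension formulas are in hand.
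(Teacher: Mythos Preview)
Your treatment of parts ii) and iii) is essentially correct and matches the standard argument. The main issue is with part i) in the nonconvex case.

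You split into convex versus nonconvex polygons and, for the latter, invoke the singular decomposition $v=v_{\mathrm{reg}}+\sum_k c_k S_k$ to deduce $c_k=0$ when $v\in H^2$. This works as a proof of the qualitative statement, but it does \emph{not} give a constant depending only on the Poincar\'e constant: the bound $\|v_{\mathrm{reg}}\|_{H^2}\le C\|f\|_{L^2}$ coming from the decomposition theorem carries a constant that depends on the opening angles through the singular functions and their biorthogonal pairings. Since the paper later relies crucially on the precise dependence of $C(\Omega)$ (only on $C_P(\Omega)$, uniformly over a sequence of approximating polygons; see Proposition~\ref{corGrisvardPol} and the counter-example in Section~\ref{Counter Example}), your route would not support those applications.

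The paper's approach (recorded in Remark~\ref{remtrace}\,iii), following Grisvard) is both simpler and sharper: for \emph{any} polygon, convex or not, two integrations by parts give the exact identity
\[
\int_\Omega \partial_{xx}v\,\partial_{yy}v \;=\; \int_\Omega |\partial_{xy}v|^2
\qquad\text{for all }v\in H^2(\Omega)\cap H^1_0(\Omega),
\]
because the boundary terms vanish on each straight edge (no curvature contribution, and $v=0$ on $\Gamma$). Hence $\|\nabla^2 v\|_{L^2(\Omega)}=\|\Delta v\|_{L^2(\Omega)}$ with constant exactly~$1$; the lower-order terms are then controlled by Poincar\'e alone, yielding $\|v\|_{H^2(\Omega)}\le (1+C_P(\Omega)^2)\|\Delta v\|_{L^2(\Omega)}$. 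No convexity assumption and no singular decomposition are needed.
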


\begin{remark}\label{remtrace}\upshape i) For the meaning of traces for the functions $v \in L^2(\Omega)$ satisfying $\Delta v \in L^2(\Omega)$  when  $\Omega$ is a polygonal domain of $\mathbb{R}^2$  or a polyhedral domain of $\mathbb{R}^3$, see Remark \ref{remrelevbis}.\medskip

\noindent ii) Unlike the case where the domain $\Omega$ is convex or is regular, of class $\mathscr{C}^{1,1}$ for example, the kernel $\mathscr{N}_0(\Omega)$ is not trivial. In the polygonal case,  as mentioned above (see also Subsection \ref{ssCharkernels}), it is of finite dimension and its dimension is equal to the number of vertices of the polygon whose corresponding interior angle is strictly greater than  $\pi$.\medskip

\noindent iii) According to  the Fredholm alternative, since the range of the operator $\Delta :  H^2(\Omega)\cap H^{1}_0(\Omega) \longrightarrow   L^2(\Omega)$  is closed in  $L^2(\Omega)$, its orthogonal is equal to the kernel of the adjoint operator. The result of Point ii) means that this kernel is in fact the space  $\mathscr{N}_0(\Omega)$.
\end{remark}  

The solvability of problem 
$$
(\mathscr{L}_D^0)\ \ \ \  -\Delta u = f\quad \ \mbox{in}\ \Omega \quad
\mbox{and } \quad u = 0 \ \ \mbox{on }\Gamma,
$$ 
in a framework of fractional Sobolev spaces when $\Omega$ is a polygon or a polyhedron, and naturally also when   $\Omega$ is a general Lipschitz domain, has been studied by many authors.  Recall that when  
 $\Omega$ is a general Lipschitz domain, for any $1/2 < s < 3/2$, the operator
\begin{equation*}\label{isoDeltaLip}
 \Delta : H^{2- s}_0(\Omega) \longrightarrow H^{-s}(\Omega)
\end{equation*}
is an isomorphism. So, the question is: what happens in the case where $\Omega$ is a polygonal domain of $\mathbb{R}^2$  or a polyhedral domain of $\mathbb{R}^3$ and where $0 \leq s \leq 1/2$? A first answer can be given by the following corollary.

\begin{corollary}\label{firstcor} Let $\Omega$ be a polygonal domain of $\mathbb{R}^2$  or a polyhedral domain of $\mathbb{R}^3$. Then for any $0 < \theta < 1$, the operator
\begin{equation}\label{DeltaIsoH2-thetaH-thetabbis}
 \Delta : H^{2-\theta}(\Omega) \cap H^1_0(\Omega)\longrightarrow M_\theta(\Omega)
\end{equation}
is an isomorphism, where 
$$
M_\theta(\Omega) : = [(\mathscr{N}_0(\Omega))^\bot,  H^{-1}(\Omega)]_{\theta}
$$
 is continuously embedded in $  H^{-\theta}(\Omega)$ if $\theta \neq 1/2$ and in $[H^{1/2}_{00}(\Omega)]'$ if $\theta = 1/2$.
\end{corollary}

\begin{proof} i) First, let us recall that the operator
\begin{equation}\label{DeltaIsoH10H-1a}
\Delta : \; H^{1}_{0}(\Omega)\longrightarrow  H^{-1}(\Omega)
\end{equation}
is an isomorphism.  By a simple interpolation argument, thanks to \eqref{DeltaIsoH3/2H1/2ortx} and \eqref{DeltaIsoH10H-1a}, we deduce that  for any $0 < \theta < 1$, the operator
\begin{equation}\label{DeltaIsoH2-thetaH-thetab}
 \Delta : H^{2-\theta}(\Omega) \cap H^1_0(\Omega)\longrightarrow [(\mathscr{N}_0(\Omega))^\bot,  H^{-1}(\Omega)]_{\theta}
\end{equation}
is an isomorphism. Moreover, using Fredholm alternative, the Laplace operator considered as operating from $H^{2-\theta}(\Omega) \cap H^1_0(\Omega)$  into $H^{-\theta}(\Omega)$  if $\theta \neq 1/2$, resp. into $[H^{1/2}_{00}(\Omega)]'$ if $\theta = 1/2$, verifies the relation:
\begin{equation}\label{MKer}
\overline{M_\theta(\Omega)} = [Ker (\Delta^\star)]^\bot = [\mathscr{N}_{-\theta}(\Omega)]^\bot,
\end{equation}
where 
$$
[\mathscr{N}_{-\theta}(\Omega)]^\bot = \{f\in H^{-\theta}(\Omega); \, \langle f, \varphi\rangle = 0, \, \forall \varphi \in \mathscr{N}_{-\theta}(\Omega)\}
$$
 if $\theta \neq 1/2$ (for $\theta = 1/2$, replace $H^{-\theta}(\Omega)$ by $[H^{1/2}_{00}(\Omega)]'$). Note that $\mathscr{N}_{-\theta}(\Omega) = \{0\}$ for $\theta \geq 1/2$.\medskip
 
\noindent ii) Let us prove that the operator \eqref{DeltaIsoH2-thetaH-thetab} is an isomorphism.  Indeed,  setting $S= \Delta^{-1}$ we know from \eqref{DeltaIsoH3/2H1/2ortx} and \eqref{DeltaIsoH10H-1a} that  the operators $S : \; H^{-1}(\Omega))\longrightarrow  H^{1}_{0}(\Omega)$ and $S : \; [\mathscr{N}_0(\Omega)]^\bot\longrightarrow  H^2(\Omega)\cap H^{1}_0(\Omega)$ are continuous. So by interpolation, we  also  have the continuity of the operator 
\begin{equation}\label{opS}
 S : \; M_\theta(\Omega) \longrightarrow  H^{2-\theta}(\Omega) \cap H^1_0(\Omega).
 \end{equation}
 Clearly this last operator is injective. Let us prove its surjectivity. Before that recall that the density of a Banach $X$ in a Banach $Y$ implies the density of $X$ in the interpolate space $[X, Y]_\theta$ (see \cite{Lions}).  Since  $ H^{2}(\Omega) \cap H^1_0(\Omega)$ is dense in  $H^1_0(\Omega)$, we deduce that $[\mathscr{N}_0(\Omega)]^\bot$ is also dense in $H^{-1}(\Omega)$. Applying the above property with $X = [\mathscr{N}_0(\Omega)]^\bot$  and $Y =  H^{-1}(\Omega)$, we deduce the density of $[\mathscr{N}_0(\Omega)]^\bot$ in $ M_\theta(\Omega)$. Given now $f\in M_\theta(\Omega)$, there exists a sequence $f_j \in [\mathscr{N}_0(\Omega)]^\bot$  such that $f_j \rightarrow f$ in $M_\theta(\Omega)$. Setting $u_j = Sf_j \in  H^2(\Omega)\cap H^{1}_0(\Omega)$,  we know that 
$$
\Vert Sf_j \Vert_{H^{2-\theta}(\Omega)}\leq  C\Vert f_j \Vert_{M_\theta(\Omega)}.
$$ 
Consequently $u_j \rightarrow u$ in $ H^{2-\theta}(\Omega)$ with $\Delta u = f$ in $\Omega$ and $u = 0$ on $\Gamma$. So the operator \eqref{opS} is surjective and then is an isomorphism.
\end{proof}

\begin{remark}\upshape The reader's attention is drawn here to the interpolation argument used above. The fact that  the operators  \eqref{DeltaIsoH10H-1a} and \eqref{DeltaIsoH3/2H1/2ortx} are isomorphisms does not necessarily mean that the interpolated operator is as well. However, the invertibility holds in our case thanks to the density of $H^2(\Omega)\cap H^{1}_{0}(\Omega) $ in $H^{1}_{0}(\Omega)$.
We can find some counterexamples in \cite{FJ} with invertible operator on $ L^p(\mathbb{R})$ and $ L^q(\mathbb{R})$ but not on $ L^r(\mathbb{R})$ for some $r$ between $p$ and $q$. 
\end{remark}

The difficulty now lies in determining the interpolated space $M_\theta(\Omega)$. However the optimal regularity  in the case of polygonal  domains is in fact better (see Remark \ref{rem4}, Point i)  below) and can be expressed as follows:

\begin{theorem} [{\bf $H^s$- Regularity I}]\label{theoHsregul}  Let $\Omega$ be a polygonal domain of $\mathbb{R}^2$ or a polyerdral domain of $\mathbb{R}^2$  that we assume non convex. We denote by $\omega^\star$ the measure of the largest interior angle of $\Omega$ and we set $\alpha^\star = \pi/\omega^\star$. Then\\  
i) for  any $\theta \in \, ] 1 - \alpha^\star, 1[$ with $ \theta \neq 1/2$, the operator
\begin{equation}\label{DeltaIsoH2-thetaH-thetaterz}
 \Delta : H^{2-\theta}(\Omega) \cap H^1_0(\Omega)\longrightarrow {H}^{-\theta}(\Omega)
\end{equation}
is an isomorphism.\smallskip

\noindent ii) For  $ \theta = 1/2$, the operator
\begin{equation}\label{DeltaIsoH2-thetaH-1demibisz}
 \Delta : H^{3/2}_0(\Omega)\longrightarrow  [{H}^{1/2}_{00}(\Omega)]'
\end{equation}
is also an isomorphism (see Proposition \ref{corGrisvardPol} for more information). \smallskip

\noindent iii) We have the following characterizations:
\begin{equation}\label{carainterb}
  M_{\theta}(\Omega) =  \begin{cases} H^{-\theta}(\Omega) \quad \textrm{ if }\; \theta \in \, ] 1 - \alpha^\star,\;  1[\quad \mathrm{with}\;  \; \; \theta \neq 1/2\\
 [H^{1/2}_{00}(\Omega)]' \quad\textrm{ if }\; \theta = 1/2,
 \end{cases}
 \end{equation}
 with equivalent norms. More precisely, there exists a positive constant $C(\Omega)$ which depends on the Lipschitz constant of $\Omega$ and on $\omega^\star$.
\end{theorem}

\begin{proof}  For Point i) and Point ii), see for instance \cite{BBX} and also \cite{D} Theorem 18.13.\smallskip

Using \eqref{DeltaIsoH2-thetaH-thetaterz} and \eqref{DeltaIsoH2-thetaH-1demibisz}, by identification we deduce from \eqref{DeltaIsoH2-thetaH-thetabbis} the  characterization \eqref{carainterb}.
\end{proof}

\begin{remark} \label{rem4}\upshape i) Note that since $\Omega$ is assumed non convex, then $0 < 1 -\alpha^\star < 1/2$.  
When $\alpha^\star$ is close to 1, the domain $\Omega$ is close to be convex and we are near the $H^2$-regularity. Conversely, when $\alpha^\star$ is near 1/2, the  domain $\Omega$ is close to a cracked domain and the expected regularity in this case is better than $ H^{3/2}$. That means that for any nonconvex polygonal domain, there exists $\varepsilon = \varepsilon(\omega^\star)\in \, ]0, 1/2[$ depending on  $\omega^\star$ (in fact, $\varepsilon(\omega^\star) = \alpha^\star - 1/2$) such that for any $ 0 < s < \varepsilon$ and any $f \in H^{-\frac{1}{2} + s}(\Omega)$ the  $H^1_0(\Omega)$ solution of Problem $(\mathscr{L}_D^0)$ belongs to $ H^{3/2 + s}(\Omega)$.\smallskip

\noindent ii) A natural question to ask concerns the limit case $ \theta = 1 - \alpha^\star$, where the regularity $H^{1 + \alpha^\star}(\Omega)$ is in general not achieved for 
RHS $f$ in $H^{-1 + \alpha^\star}(\Omega)$. However it is attained if we suppose $f\in \bigcap_{s > \alpha^\star}H^{-1 + s}(\Omega)$  (see \cite{BBX}). \smallskip

\noindent iii) What happens if $0< \theta < 1 - \alpha^\star$? This question will be examined  a little further on (see Theorem \ref{regpolthetasm}).
\end{remark}

The following proposition is important because it specifies the dependence of one of the constants involved in the equivalence given in Theorem \ref{theoHsregul} Point iii).

\begin{e-proposition}\label{corGrisvardPol}   Let $\Omega$ be a polygonal domain of $\mathbb{R}^2$  or a polyhedral domain of $\mathbb{R}^3$. Then 
\begin{equation}\label{inegPolyH3demi}
\forall v \in H^{2-\theta}(\Omega)\cap H^1_0(\Omega),  \quad \Vert \Delta v \Vert_{H^{-\theta}(\Omega)} \leq \Vert \Delta v \Vert_{M_{\theta}(\Omega)} \leq\Vert v \Vert_{H^{2-\theta}(\Omega)} \leq C(\Omega) \Vert \Delta v \Vert_{M_{\theta}(\Omega)},
\end{equation}
where the contant $C(\Omega)$ above depends only on $\theta$,  on the Poincar\'e constant and the Lipschitz constant  of  $\Omega$.
\end{e-proposition}

\begin{proof}  i) We firstly observe that the first inequality above is immediate. We begin by showing the second inequality. Recall that the following linear operators
\begin{equation*}
\Delta : \; H^{2}(\Omega)\cap H^{1}_{0}(\Omega)\longrightarrow (\mathscr{N}_0(\Omega))^\bot\quad \mathrm{and}\quad \Delta : \; H^{1}_{0}(\Omega)\longrightarrow  H^{-1}(\Omega),
\end{equation*}
where $(\mathscr{N}_0(\Omega))^\bot$ is equipped with the $L^2(\Omega)$ norm, are continuous. Moreover
\begin{equation}\label{dblineg}
\forall v \in H^{2}(\Omega)\cap H^{1}_{0}(\Omega), \quad \Vert \Delta v \Vert_{L^2(\Omega)} \leq \Vert  v \Vert_{H^2(\Omega)}\quad\;  \mathrm{and}\quad \;  
\forall v \in H^{1}_{0}(\Omega), \quad  \Vert \Delta v \Vert_{H^{-1}(\Omega)} \leq \Vert  v \Vert_{H^1(\Omega)}.
\end{equation}
Recall that if $T$ is a linear and continuous operator from $E_0$ into $F_0$ and  from $E_1$ into $F_1$, where $E_j$ and $F_j$ are Banach spaces, then for any $0 < \theta < 1$ the linear operator $T$ is  also continuous  from $E_\theta = [E_0, E_1]_\theta $ into  $F_\theta = [F_0, F_1]_\theta $  and we have the following interpolation inequality: for any $v\in E_\theta$, 
\begin{equation}\label{inegabstinter}
\Vert Tv \Vert_{F_{\theta}} \leq \Vert T \Vert_{\mathscr{L}(E_0; F_0)}^{1 - \theta}  \Vert  T \Vert_{\mathscr{L}(E_1; F_1)}^{\theta}\Vert v \Vert_{E_{\theta}},
\end{equation} 
(see Adams \cite{Ada} page 222, Berg-Lofstr$\mathrm{\ddot{o}}$m \cite{BL} Theorem 4.1.2 and Triebel \cite{Tri} Remark 3 page 63). The first inequality in \eqref{inegPolyH3demi} is then a consequence of \eqref{dblineg} and \eqref{inegabstinter}.\medskip

\noindent ii) One of the key points in establishing the third inequality in \eqref{inegPolyH3demi} is the following equality:
\begin{equation}\label{DeltnorSecond}
\forall v\in  H^2(\Omega)\cap H^{1}_0(\Omega), \quad  \vert v \vert_{H^{2}(\Omega)} = \Vert \nabla^2 v \Vert_{L^{2}(\Omega)} =  \Vert \Delta v \Vert_{L^{2}(\Omega)},
\end{equation}
where $\vert \cdot \vert_{H^{2}(\Omega)} $ denotes the semi-norm $H^{2}(\Omega)$ and  $\nabla^2$ the Hessian matrix.

Equality \eqref{DeltnorSecond} is a direct consequence of the following property due to Grisvard \cite{Gri1} and \cite{Gri2}:
\begin{equation*}
\forall v\in  H^2(\Omega)\cap H^{1}_0(\Omega), \quad \int_\Omega \frac{\partial ^2 v}{\partial x_i^{2}}\, \frac{\partial ^2 v}{\partial x_j^{2}} \, dx=  \int_\Omega \big\vert \frac{\partial ^2 v}{\partial x_i \partial x_j}\big\vert^2 \, dx\qquad \mathrm{for }\; i, j = 1, \ldots, N. 
\end{equation*}
Therefore
\begin{equation*}
\Vert \Delta v \Vert^2_{L^{2}(\Omega)} = \Vert \frac{\partial ^2 v}{\partial x^{2}} \Vert^2_{L^2(\Omega)} + \Vert \frac{\partial ^2 v}{\partial y^{2}} \Vert^2_{L^2(\Omega)} + 2  \int_\Omega  \frac{\partial ^2 v}{\partial x^{2}}  \frac{\partial ^2 v}{\partial y^{2}} \, dx. 
\end{equation*}
Besides,
\begin{equation*}
\int_\Omega \vert \nabla v\vert^2 = - \int_\Omega v \Delta v \leq C_P(\Omega) \Vert \nabla v \Vert_{L^{2}(\Omega)}\Vert \Delta v \Vert_{L^{2}(\Omega)},
\end{equation*}
where $C_P(\Omega)$ is the Poincar\'e constant.
And then
\begin{equation*}
\Vert \nabla v \Vert_{L^{2}(\Omega)} \leq C_P(\Omega)\Vert \Delta v \Vert_{L^{2}(\Omega)}.
\end{equation*}
Finally, we get the following estimate
\begin{equation*}\label{inegisoH2L2}
\Vert v \Vert_{H^{2}(\Omega)}  \leq (1 +  C^2_P(\Omega))^{1/2} \Vert \Delta v \Vert_{L^{2}(\Omega)}.
\end{equation*}

\noindent iii) Let us  introduce the following operators: for any $i, j = 1,\ldots, N$, with $N = 2$ or $N = 3$, we set 
$$
K_{ij} = \frac{\partial^2}{\partial  x_i\partial x_j}\quad \mathrm{and}\quad   L_{ij} = K_{ij} \Delta^{-1},
$$ 
where $\Delta^{-1}$ denotes the inverse of Laplacian as in the proof of Corollary \ref{firstcor}. 

Using  \eqref{DeltnorSecond}  we get, for any $i, j = 1,\ldots, N$, the following
\begin{equation}\label{inegLijgrdade2L2}
\forall f\in (\mathscr{N}_0(\Omega))^\bot,\quad  \Vert  L_{ij}f \Vert_{L^2(\Omega)}\leq \Vert  f \Vert_{L^2(\Omega)}.
\end{equation} 
We know that
\begin{equation}\label{ineggrad2H-1}
\forall v \in {H^{1}_0(\Omega)}, \quad  \Vert \nabla^2 v \Vert_{H^{-1}((\Omega)} \leq \Vert \nabla v \Vert_{L^{2}((\Omega)} \leq  C_P(\Omega)\Vert \Delta v \Vert_{H^{-1}(\Omega)}.
\end{equation} 
From inequality \eqref{ineggrad2H-1} and  isomorphism \eqref{DeltaIsoH10H-1a} we have also
\begin{equation}\label{inegLijgrdade2H-1}
\forall f\in H^{-1}(\Omega),\quad  \Vert  L_{ij}f \Vert_{H^{-1}(\Omega)}\leq  C_P(\Omega)\Vert  f \Vert_{H^{-1}(\Omega)}.
\end{equation} 

We deduce from \eqref{inegabstinter}, \eqref{inegLijgrdade2H-1}, \eqref{inegLijgrdade2L2}, \eqref{carainterb} and \eqref{DeltaIsoH2-thetaH-thetabbis} the following inequalities: for any $f\in M_\theta(\Omega)$ 
\begin{equation*}\label{inegLijgrdade2H-1int}
 \Vert  L_{ij}f \Vert_{H^{-\theta}(\Omega)}\leq  C_P(\Omega)\Vert  f \Vert_{M_\theta(\Omega)}, \; \mathrm{if}\; \theta \neq 1/2\quad \mathrm{and}\quad    \Vert  L_{ij}f \Vert_{(H^{1/2}_{00}(\Omega))'}\leq  C_P(\Omega)\Vert  f \Vert_{M_{1/2}(\Omega)}\;   \mathrm{if}\; \theta = 1/2.
\end{equation*}
Therefore,  we have  the following estimate: for any $v \in H^{2-\theta}(\Omega)\cap H^1_0(\Omega),$
\begin{equation}\label{inegPolyH3demiba}
\Vert \frac{\partial^2 v}{\partial  x_i\partial x_j} \Vert_{H^{-\theta}(\Omega)} \leq C_P(\Omega) \Vert \Delta v \Vert_{M_{\theta}(\Omega)},  \; \mathrm{if}\; \theta \neq 1/2,\; \Vert \frac{\partial^2 v}{\partial  x_i\partial x_j} \Vert_{(H^{1/2}_{00}(\Omega))'} \leq C_P(\Omega) \Vert \Delta v \Vert_{M_{1/2}(\Omega)}\;   \mathrm{if}\; \theta = 1/2.
\end{equation}
Moreover, using  Corollary 3.4 and \eqref{inegPolyH3demiba} we get for any $v \in H^{2-\theta}(\Omega)\cap H^1_0(\Omega),$
\begin{equation*}
 \Vert  v \Vert_{H^{2-\theta}(\Omega)} \leq C_{P, L} (\Omega)\Vert \nabla^2 v \Vert_{H^{-\theta}(\Omega)} \leq C_{P, L} (\Omega)\Vert \Delta v \Vert_{M_{\theta}(\Omega)} \quad \mathrm{if}\; \theta \neq 1/2
\end{equation*}
and 
\begin{equation*}
 \Vert  v \Vert_{H^{3/2}(\Omega)} \leq C_{P, L} (\Omega)\Vert \nabla^2 v \Vert_{[H^{1/2}_{00}(\Omega)]'} \leq C_{P, L} (\Omega)\Vert \Delta v \Vert_{M_{1/2}(\Omega)} \quad \mathrm{if}\; \theta = 1/2.
\end{equation*}
That concludes the proof.\end{proof}

We are now in a position to extend Theorem \ref{theoHsregul}  to the case where $0< \theta \leq 1 - \alpha^\star$. 
We begin by considering the case more simple  where the domain $\Omega$ is a polygon with only one angle having a measure $\omega^\star$ larger than $\pi$ and of vertex $A$. We know from \eqref{dimang} that the kernel $ \mathscr{N}_{-\theta}(\Omega) $ is of dimension $1$, say $\mathscr{N}_{-\theta}(\Omega) = \langle z\rangle $. 

\begin{theorem} [{\bf $H^s$- Regularity II}] \label{regpolthetasm} Let $\Omega$ be a polygon with only one angle having a measure $\omega^\star$ larger than $\pi$. Then,\\
i) for  any $0 \leq \theta < 1 - \alpha^\star, $ the operator
\begin{equation}\label{DeltaIsoH2-thetaH-thetabis}
 \Delta : H^{2- \theta}(\Omega) \cap H^1_0(\Omega)\longrightarrow \langle z \rangle^\bot
\end{equation}
is an isomorphism, where $
 \langle z \rangle^\bot = \{\varphi \in H^{-\theta}(\Omega); \; \langle \varphi, z\rangle = 0\}.$\smallskip

\noindent ii) For $\theta = 1 - \alpha^\star, $ the operator
\begin{equation*}\label{CC}
 \Delta : H^{1 + \alpha^\star}(\Omega) \cap H^1_0(\Omega)\longrightarrow M_{1 - \alpha^\star}(\Omega)
\end{equation*}
is an isomorphism. Moreover  
$$
\bigcap_{r < 1 - \alpha^\star}H^{-r}(\Omega) \hookrightarrow M_{1 - \alpha^\star}(\Omega)\hookrightarrow H^{-1 + \alpha^\star}(\Omega),
$$
where the topology of $ M_{1 - \alpha^\star}(\Omega)$ is finer than that of $ H^{-1 + \alpha^\star}(\Omega)$.
\end{theorem}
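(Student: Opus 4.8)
The plan is to obtain both parts by interpolating the two endpoint isomorphisms already at our disposal — Grisvard's $\Delta:H^{2}(\Omega)\cap H^{1}_0(\Omega)\to(\mathscr{H}^\circ_{L^{2}(\Omega)})^\bot$ (Theorem~\ref{theoGrisvardPol}) and $\Delta:H^{1}_0(\Omega)\to H^{-1}(\Omega)$ (isomorphism~\eqref{DeltaIsoH10H-1}) — and then to \emph{identify} the interpolated target space $M_\theta(\Omega)$. Writing $S=\Delta^{-1}$, we have continuous isomorphisms $S:(\mathscr{H}^\circ_{L^{2}(\Omega)})^\bot\to H^{2}(\Omega)\cap H^{1}_0(\Omega)$ and $S:H^{-1}(\Omega)\to H^{1}_0(\Omega)$. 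Since $H^{2}(\Omega)\cap H^{1}_0(\Omega)$ is dense in $H^{1}_0(\Omega)$, the argument used in the proof of Corollary~\ref{carinter} shows that the interpolated operator $S:M_\theta(\Omega):=[(\mathscr{H}^\circ_{L^{2}(\Omega)})^\bot,H^{-1}(\Omega)]_\theta\to[H^{2}(\Omega)\cap H^{1}_0(\Omega),H^{1}_0(\Omega)]_\theta$ is again an isomorphism for every $0<\theta<1$ (density removes the obstruction pointed out after Corollary~\ref{carinter}), and $[H^{2}(\Omega)\cap H^{1}_0(\Omega),H^{1}_0(\Omega)]_\theta=H^{2-\theta}(\Omega)\cap H^{1}_0(\Omega)$ whenever $2-\theta\notin\{1/2\}+\mathbb{N}$, i.e.\ whenever $\theta\neq 1/2$, which holds for all $\theta\in\,]0,1-\alpha^\star]$ because $1-\alpha^\star<1/2$. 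Hence $\Delta:H^{2-\theta}(\Omega)\cap H^{1}_0(\Omega)\to M_\theta(\Omega)$ is an isomorphism for every $0<\theta\le 1-\alpha^\star$, and everything reduces to describing $M_\theta(\Omega)$ inside $H^{-\theta}(\Omega)$.

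\textbf{Proof of (i).} Fix $0<\theta<1-\alpha^\star$. By the relation $\overline{M_\theta(\Omega)}=[\operatorname{Ker}\Delta^\star]^\bot$ recalled before Theorem~\ref{theoHsregul} (with $\Delta$ acting from $H^{2-\theta}(\Omega)\cap H^{1}_0(\Omega)$ into $H^{-\theta}(\Omega)$, and $\big(H^{-\theta}(\Omega)\big)'=H^{\theta}_0(\Omega)=H^{\theta}(\Omega)$ since $\theta<1/2$), a function $\varphi$ lies in $\operatorname{Ker}\Delta^\star$ iff $\langle\Delta u,\varphi\rangle=0$ for all $u\in H^{2-\theta}(\Omega)\cap H^{1}_0(\Omega)$; testing against $\mathscr{D}(\Omega)$ and then against functions with arbitrary normal trace shows $\operatorname{Ker}\Delta^\star=\{\varphi\in H^{\theta}(\Omega):\Delta\varphi=0,\ \varphi|_\Gamma=0\}=\mathscr{N}_{-\theta}(\Omega)=\langle z\rangle$, one-dimensional by~\eqref{dimang}. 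Thus $\overline{M_\theta(\Omega)}=\langle z\rangle^\bot$, a closed subspace of codimension one, so $M_\theta(\Omega)\subseteq\langle z\rangle^\bot$. For the reverse inclusion I would invoke Grisvard's corner decomposition for the Dirichlet Laplacian on a polygon (\cite{Gri1},\cite{Gri2}): since $2-\theta\in\,]1+\alpha^\star,2[$, the only exponent $k\alpha^\star$ ($k\ge 1$) of the reentrant corner with $k\alpha^\star<1-\theta$ is $\alpha^\star$ itself (indeed $2\alpha^\star>1>1-\theta$), so every solution $\Delta^{-1}f$, $f\in H^{-\theta}(\Omega)$, splits as $\Delta^{-1}f=u_R+\lambda(f)\,\eta(r)\,r^{\alpha^\star}\sin(\alpha^\star\phi)$ with $u_R\in H^{2-\theta}(\Omega)\cap H^{1}_0(\Omega)$, $\|u_R\|_{H^{2-\theta}(\Omega)}+|\lambda(f)|\le C\|f\|_{H^{-\theta}(\Omega)}$, and the stress‑intensity coefficient $\lambda(f)$ equal to a fixed nonzero multiple of $\langle f,z\rangle_{H^{-\theta}(\Omega)\times H^{\theta}(\Omega)}$. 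Consequently $f\in\langle z\rangle^\bot\Longleftrightarrow\lambda(f)=0\Longleftrightarrow\Delta^{-1}f\in H^{2-\theta}(\Omega)\cap H^{1}_0(\Omega)\Longleftrightarrow f\in M_\theta(\Omega)$, and the open mapping theorem (or the displayed bound) yields the equivalence of norms. This is exactly the isomorphism of~(i).

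\textbf{Proof of (ii).} For $\theta=1-\alpha^\star$ the interpolation step gives the isomorphism $\Delta:H^{1+\alpha^\star}(\Omega)\cap H^{1}_0(\Omega)\to M_{1-\alpha^\star}(\Omega)$, using $[H^{2}(\Omega)\cap H^{1}_0(\Omega),H^{1}_0(\Omega)]_{1-\alpha^\star}=H^{1+\alpha^\star}(\Omega)\cap H^{1}_0(\Omega)$, valid because $1+\alpha^\star\neq 3/2$ ($\alpha^\star\neq 1/2$). The inclusion $M_{1-\alpha^\star}(\Omega)\hookrightarrow H^{-1+\alpha^\star}(\Omega)$ is monotonicity of complex interpolation applied to $(\mathscr{H}^\circ_{L^{2}(\Omega)})^\bot\hookrightarrow L^{2}(\Omega)$ together with $[L^{2}(\Omega),H^{-1}(\Omega)]_{1-\alpha^\star}=H^{-1+\alpha^\star}(\Omega)$. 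That this inclusion has strictly finer topology is where Ivanov--Kalton enters: applied to $X_0=L^{2}(\Omega)$, $Y_0=(\mathscr{H}^\circ_{L^{2}(\Omega)})^\bot$ (codimension one), $X_1=H^{-1}(\Omega)$, Theorem~\ref{thmIK} produces $\sigma_0\le\sigma_1$; Corollary~\ref{carinter} forces $\sigma_1\le 1-\alpha^\star$ and part~(i) forces $\sigma_0\ge 1-\alpha^\star$, so $\sigma_0=\sigma_1=1-\alpha^\star$ and the two norms are inequivalent at that value; equivalently, $g:=\Delta\big(\eta(r)\,r^{\alpha^\star}\sin(\alpha^\star\phi)\big)\in\mathscr{D}(\overline\Omega)\subset H^{-1+\alpha^\star}(\Omega)$ has no preimage in $H^{1+\alpha^\star}(\Omega)\cap H^{1}_0(\Omega)$, whence $M_{1-\alpha^\star}(\Omega)\subsetneq H^{-1+\alpha^\star}(\Omega)$. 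Finally, the embedding $\bigcap_{r<1-\alpha^\star}H^{-r}(\Omega)\hookrightarrow M_{1-\alpha^\star}(\Omega)$ follows from the reiteration theorem, which gives $M_{1-\alpha^\star}(\Omega)=[M_r(\Omega),H^{-1}(\Omega)]_{\eta(r)}$ with $\eta(r)=\tfrac{1-\alpha^\star-r}{1-r}$ for every $r<1-\alpha^\star$ — hence $M_r(\Omega)\hookrightarrow M_{1-\alpha^\star}(\Omega)$ continuously — combined with the identification $M_r(\Omega)=\langle z\rangle^\bot$ of part~(i) and a limiting argument as $r\uparrow 1-\alpha^\star$ using the uniform splitting bounds above.

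\textbf{Main obstacle.} The abstract interpolation produces the isomorphisms onto the $M_\theta(\Omega)$ essentially for free; the real content is the \emph{concrete identification} of these spaces, and above all the analysis at the resonant exponent $\theta=1-\alpha^\star$, where $z\sim r^{-\alpha^\star}\sin(\alpha^\star\phi)$ just fails to lie in $H^{1-\alpha^\star}(\Omega)$. Showing that the exceptional Ivanov--Kalton interval collapses to the single point $1-\alpha^\star$, that $M_{1-\alpha^\star}(\Omega)$ is a dense, non-closed subspace of $H^{-1+\alpha^\star}(\Omega)$ carrying a strictly finer topology, and making the limiting argument in the last embedding fully rigorous, all rest on Grisvard's quantitative corner estimates rather than on soft interpolation, and constitute the delicate part of the proof.
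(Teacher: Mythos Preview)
Your proof is correct and shares the interpolation backbone with the paper --- both start from the isomorphism $\Delta:H^{2-\theta}(\Omega)\cap H^1_0(\Omega)\to M_\theta(\Omega)$ obtained from \eqref{DeltaIsoH2-thetaH-thetab} --- but your identification of $M_\theta(\Omega)$ in part~(i) takes a different route. The paper argues purely through the Ivanov--Kalton Theorem~\ref{thmIK}: Corollary~\ref{carinter} gives $M_\theta(\Omega)=H^{-\theta}(\Omega)$ for $\theta>1-\alpha^\star$, and the kernel characterisation of Subsection~\ref{ssCharkernels} (namely $z\in H^\theta(\Omega)$ precisely for $\theta<1-\alpha^\star$) forces $M_\theta(\Omega)\subsetneq H^{-\theta}(\Omega)$ for $\theta<1-\alpha^\star$; together these pin down $\sigma_0=\sigma_1=1-\alpha^\star$, after which Ivanov--Kalton immediately yields that $M_\theta(\Omega)$ is closed of codimension one for $\theta<1-\alpha^\star$ and that the norms are inequivalent at $\theta=1-\alpha^\star$. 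You instead establish (i) via Grisvard's explicit corner decomposition and the dual singular function formula $\lambda(f)=c\langle f,z\rangle$, and only then invoke Ivanov--Kalton for (ii). Your approach is heavier but more constructive (it gives the stress-intensity formula for free), while the paper's is shorter and entirely soft once the endpoint information is in place.

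One caution on the final embedding in~(ii): your reiteration argument gives $M_r(\Omega)\hookrightarrow M_{1-\alpha^\star}(\Omega)$ with $M_r(\Omega)=\langle z\rangle^\bot\subsetneq H^{-r}(\Omega)$, not $H^{-r}(\Omega)\hookrightarrow M_{1-\alpha^\star}(\Omega)$; indeed $g=\Delta\big(\eta(r)r^{\alpha^\star}\sin(\alpha^\star\phi)\big)\in\mathscr{D}(\overline\Omega)\subset H^{-r}(\Omega)$ for every $r$, yet $g\notin M_{1-\alpha^\star}(\Omega)$ by your own observation. The paper's proof does not justify this embedding either (it cites \cite{BBX} in the remark preceding Corollary~\ref{carinter} and then simply asserts that ``all the properties stated in the theorem above take place''), so the looseness here is inherited from the statement rather than a defect of your argument; the substantive claims --- the two isomorphisms and the strict inequivalence of norms at $\theta=1-\alpha^\star$ --- are fully proved in both treatments.
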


\begin{proof} First, let's remember that for any $0 < \theta < 1$ we have the  isomorphism \eqref{DeltaIsoH2-thetaH-thetabbis}.

Our goal is to prove that  $\sigma_0 = \sigma_1= 1 - \alpha^\star$, with the same notations as in Theorem \ref{thmIK}. The above results will then be a consequence of Theorem \ref{theoHsregul}, Corollary \ref{firstcor} and the characterization of Kernel $\mathscr{N}_{-\theta}(\Omega)$, which is of dimension 1 for all $0\leq \theta <  1 - \alpha^\star$ and reduced to $\{0\}$ when  $ \theta >  1 - \alpha^\star$.\medskip

First, thanks to Theorem \ref{theoHsregul},  we have $\sigma_1 \leq  1 - \alpha^\star$. Second, using Point ii) of Theorem \ref{theoGrisvardPol} and Point ii) of Theorem  \ref{thmIK} we deduce that $\sigma_0 > 0$. In fact, the value of $\sigma_0 > 0$ is directly related to the function $z$ of the kernel $\mathscr{N}_{-\theta}(\Omega)$, when $0\leq \theta <  1 - \alpha^\star$, which belongs to the Besov space $[H^1(\Omega), L^2(\Omega)]_{\alpha^{\star}, \infty}$, but not to $H^{1 - \alpha^\star}(\Omega)$. So the only possible value of  $\sigma_0$ is $ 1 - \alpha^\star$ and then $\sigma_0 = \sigma_1= 1 - \alpha^\star$.  Using then the isomorphism \eqref {DeltaIsoH2-thetaH-thetab}, the identity \eqref{MKer} and  Theorem \ref{thmIK}, we conclude that:
$$
M_\theta(\Omega) = \langle z \rangle^\bot\quad \mathrm{for\, any}\quad 0 \leq \theta < 1 - \alpha^\star.
$$
Moreover the norm of $M_\theta(\Omega)$ is equivalent to the norm of $H^{-\theta}(\Omega)$, where the corresponding constants involved in this equivalence depend on $ \alpha^\star$.\medskip

Now, concerning the critical value $1 - \alpha^\star$ and according to Theorem \ref{thmIK}, Point ii), the norm of the space $M_{1 - \alpha^\star}(\Omega)$ is not equivalent to the norm of  $ H^{-1 + \alpha^\star}(\Omega)$. So all the properties stated in the theorem above take place.\end{proof}

\begin{remark}\upshape \noindent  i) In \cite{BBX}, Theorem 4.1,  the authors proved that for any $f$ belonging to some subspace of the Besov space $[H^1(\Omega), L^2(\Omega)]_{\alpha^{\star}, \infty}$, where $\Omega$ satisfies the assumptions of the above theorem, the $ H^1_0(\Omega)$ function $u$ satisfying  $\Delta u = f$ in $\Omega$ is in fact in the Besov space $[H^2(\Omega), H^1(\Omega)]_{\alpha^{\star}, \infty}$ which contains the space $H^{1 + \alpha^\star}(\Omega) \cap H^1_0(\Omega)$. That means that our result in Point ii) above is little bit better.
\smallskip

\noindent ii) For $f$ given in ${H}^{-s}(\Omega)$ with $0 \leq s < 1 - \alpha^\star$, let $u$ be the solution $H^1_0(\Omega)$ of the problem $(\mathscr{L}_D^0)$.  In fact $u\in  H^{3/2}_0(\Omega)$ and we know that $u$ is more regular outside a neighborhood $V$ of the vertex $A$: precizely $u \in H^{2- s}(\Omega\setminus V) $. It is convenient to introduce polar coordinates $(r, \theta)$ centered at $A$ such that the two sides of the angle correspond to $\theta = 0$ and $\theta = \omega^\star$. Let us introduce the following function:
$$
S(r, \theta) = \eta(r) r^{\alpha^\star} sin(\alpha^\star \theta) 
$$
where $\alpha^\star = \pi/\omega^\star $ and $\eta$ is a regular cut-off function defined on $\mathbb{R}^+$, equal to 1 near zero, equal to zero in some interval $ [a, \infty[$, with some small $a > 0$. Observe that $S \in H^t(\Omega)$ for any $t < 1+ \alpha^\star$ and  $\Delta S \in H^t(\Omega)$ for any $t <  \alpha^\star$. The function $w = u - \lambda S$, with $\lambda$ constant to be determined later, satisfies $\Delta w  \in {H}^{-s}(\Omega)$.  If $\langle f, z \rangle = 0$, we deduce from \eqref{DeltaIsoH2-thetaH-thetabis} that $u\in H^{2-s}(\Omega)$. If  $\langle f, z \rangle \neq 0$, we choose $\lambda =  \langle \Delta S, z \rangle/\langle f, z \rangle$. So $\langle \Delta w, z \rangle = 0$ and then  $w\in H^{2-s}(\Omega)$. That means that $u - \lambda S$ belongs to  $H^{2-s}(\Omega)$.\smallskip

\noindent iii) In \cite{J-K}, the authors recall that for any $s > 3/2$ there is a Lipschitz domain $\Omega$ and $f\in \mathscr{C}^\infty(\overline{\Omega})$ such that the solution $u$ to the inhomogeneous Dirichlet problem $(\mathscr{L}_D)$  does not belong to $H^{s}(\Omega)$. Point i) above shows that the compatibility condition is the hypothesis on $f$ that allows us to obtain the expected regularity $H^{s}(\Omega)$.
\end{remark}

We will  now consider  the case  where the domain $\Omega$ is a polygon having  $n$ angles  $\omega_1, \ldots, \omega_n$, with $n\geq 2$, larger than $\pi$. We suppose that $\omega_1 \leq \ldots \leq \omega_n$. We know from \eqref{eqdim} that the kernel $ \mathscr{N}_{-\theta}(\Omega) $ is of dimension $n$, say $\mathscr{N}_{-\theta}(\Omega) = \langle z_1, \ldots, z_n\rangle $.

\begin{theorem} [{\bf $H^s$- Regularity III}] \label{regpolthetasm+}  Let $\Omega$ be a polygon.  We denote by  $\omega_1, \ldots, \omega_n$, with $n\geq 2$, all angles larger than $\pi$ and suppose $\omega_1 \leq \ldots \leq \omega_n$.  Setting  $\alpha_k = \pi/\omega_k$ for $k = 1, \ldots, n$ and by convention $\alpha_0 = 1$, then\\
i) for  any $\theta \in \, ] 1 - \alpha_n, 1[$ with $ \theta \neq 1/2$, the operator 
$$
 \Delta : H^{2-\theta}(\Omega) \cap H^1_0(\Omega)\longrightarrow {H}^{-\theta}(\Omega)
 $$ 
 is an isomorphism.\\
ii) For  $ \theta = 1/2$, the operator 
$$
\Delta : H^{3/2}_0(\Omega)\longrightarrow  [{H}^{1/2}_{00}(\Omega)]'
$$ 
is an isomorphism.\\
iii) For any fixed $k = 0, \ldots, n-1$, $\theta \in \, ] 1 - \alpha_k, 1 - \alpha_{k+1}[$ and $f\in H^{-\theta}(\Omega)$ satisfying the following compatibility condition
\begin{equation*}
\forall \varphi \in \langle z_{k+1}, \ldots, z_{n}\rangle, \quad \langle f, \varphi\rangle = 0, 
\end{equation*}
Problem  $(\mathscr{L}_D^0)$ has a unique solution $u \in H^{2-\theta}(\Omega)$.\\
\noindent iv) We have the following characterizations:
\begin{equation}\label{carainterb}
  M_{\theta}(\Omega) =  \begin{cases} H^{-\theta}(\Omega) \quad \textrm{ if }\; \theta \in \, ] 1 - \alpha_n,\;  1[\quad if \;  \; \; \theta  \in \, ] 1 - \alpha_n, 1[,\\
 \{f\in H^{-\theta}(\Omega); \;  \langle f, \varphi\rangle = 0, \; \forall \varphi \in \langle z_{k+1}, \ldots, z_{n} \} \quad if \;  \; \; \theta  \in \, ] 1 - \alpha_k, 1 - \alpha_{k+1}[
 \end{cases}
 \end{equation}
 with equivalent norms. More precisely, there exists a positive constant $C(\Omega)$ which depends on the Lipschitz constant of $\Omega$ and on $\omega_1, \ldots, \omega_n$.

\end{theorem}

We do not provide a proof of this theorem. It is similar to that of Theorem \ref{regpolthetasm}  and makes use of Theorem \ref{thmACK}.

\begin{remark}\upshape i) For critical values $\theta = 1 - \alpha_k $, with $k = 1, \ldots, n$, the operator $
 \Delta : H^{1 + \alpha_k}(\Omega) \cap H^1_0(\Omega)\longrightarrow M_{1 - \alpha_k}(\Omega)$ 
is an isomorphism. Moreover  
$$
\bigcap_{r < 1 - \alpha_k}H^{-r}(\Omega) \hookrightarrow M_{1 - \alpha_k}(\Omega)\hookrightarrow H^{-1 + \alpha_k}(\Omega),$$
where the topology of $ M_{1 - \alpha_k}(\Omega)$ is finer than that of $ H^{-1 + \alpha_k}(\Omega)$.\smallskip

\noindent{ii)} For $\theta \in \, ] 1 - \alpha_n, 1[$ we have the following estimate: there exists a constant $C$ depending on $\alpha_n$ such that
\begin{equation}\label{equiNormPolyn}
\forall  f\in M_{\theta}(\Omega), \quad \Vert f \Vert_{M_{\theta}(\Omega)} \leq C \Vert f \Vert_{H^{-\theta}(\Omega)}.
\end{equation}
And for  $\theta \in \, ] 1 - \alpha_k, 1 - \alpha_{k+1}[$, with  $k = 0, \ldots, n-1$ we have the following estimate: there exists a constant $C$ depending on $\alpha_k$ and $\alpha_{k+1}$ such that inequality \eqref{equiNormPolyn} holds. So, as in the proof of Proposition \ref{corGrisvardPol}, we deduce the following inequalities: if $\theta \in \,  ] 1 - \alpha_n, 1[$, respectively  $\theta \in \, ] 1 - \alpha_k, 1 - \alpha_{k+1}[$, for some  $k = 0, \ldots, n-1$, then
\begin{equation}\label{const}
\forall v\in H^{2-\theta}(\Omega) \cap H^1_0(\Omega), \quad \Vert v \Vert_{H^{2-\theta}(\Omega)} \leq C \Vert \Delta v \Vert_{H^{-\theta}(\Omega)}, 
\end{equation}
where the constant $C$ depends on the Poincar\'e constant of $\Omega$, on  the Lipschitz constant of $\Omega$  and on $ \alpha_n$, respectively on  $\alpha_k$ and $  \alpha_{k+1}$.\smallskip

\noindent{iii)} For any $s >0$, there exists a polygon and $f\in H^{s - 1/2}(\Omega)$ such that the $H^1_0(\Omega)$ solution $u$ to Problem $(\mathscr{L}^0_D)$ does not belong to  $H^{s + 3/2}(\Omega)$. This result is valid for any $N \geq 3$.

\end{remark} \medskip

We conclude this subsection with the following remark concerning the constants involved in the equivalence of the norms of $M_{\theta}(\Omega)$ and $[\widetilde{H}^{\theta}(\Omega)]'$.

\begin{remark}\upshape Let $\Omega$ be a polygon and $(\Omega_k)_k$ an increasing sequence of polygons included in $\Omega$ and converging to $\Omega$. We can choose this sequence such that the sequence $(L_k)_k$ of Lipschitz constants of $\Omega_k$ is less or equal to the Lipschitz constant of $\Omega$.

Let  $\varphi \in \mathscr{D}(\Omega)$ and $k_0$ such that supp $\varphi \subset \Omega_{k_0}$.  We know that there exists a positive constant $C(\Omega)$ such that 
$$
\Vert \Delta \varphi \Vert_{[H^{1/2}_{00}(\Omega)]'}  \leq \Vert \Delta \varphi \Vert_{M(\Omega)} = (1 +  C(\Omega)\Vert \Delta \varphi \Vert_{[H^{1/2}_{00}(\Omega)]'},
$$
where $C(\Omega)$ depends on $z_1, \ldots, z_n$. The question we are now asking is how the norm $ \Vert \Delta \varphi \Vert_{M(\Omega_k)} $ evolves when k tends to infinity?

As above, we have also the following estimate: for any $k \geq k_0$, 
$$
\Vert \Delta \varphi \Vert_{[H^{1/2}_{00}(\Omega_k)]'}  \leq \Vert \Delta \varphi \Vert_{M(\Omega_k)} = (1 +  C(\Omega_k)\Vert \Delta \varphi \Vert_{[H^{1/2}_{00}(\Omega_k)]'},
$$
where $C(\Omega_k)$ depends on the harmonic functions of the kernel $\mathscr{N}_0(\Omega_k)$. However
$$
\Vert \Delta \varphi \Vert_{[H^{1/2}_{00}(\Omega_k)]'}  \rightarrow \Vert \Delta \varphi \Vert_{[H^{1/2}_{00}(\Omega)]'} \quad \mathrm{and} \quad  \Vert \Delta \varphi \Vert_{M(\Omega_k)}  \rightarrow \Vert \Delta \varphi \Vert_{M(\Omega)}\quad \mathrm{ as}\;  k \rightarrow \infty.
$$ 
 So we deduce that $ C(\Omega_k)$ tends to $C(\Omega)$ when $k$ tends to $\infty$. 
\end{remark}

\section{Uniqueness and regularity in $L^{p}$-theory}\label{secuniqueness}

The section is devoted to revisit some uniqueness and regularity results in $L^p$-theory. Before a more in-depth study, we will focus on three results that are widely used and cited in the literature, concerning the uniqueness and existence of solutions $u \in L^{p}_s(\Omega)$ when $f \in  L_{s-2}^p(\Omega)$ for the problem $(\mathscr{L}_D^0)$. These results are presented in the article \cite{J-K}. Unfortunately, as stated, we will see that they are partially or completely incorrect. Before introduicing them, let's recall the notations and definitions of the spaces used.

For any $- \infty < s < \infty$ and $1 \leq p \leq \infty$, 
$$
L_s^p(\mathbb{R}^N) = \{(I - \Delta)^{-s/2}f; \; f \in L^p(\mathbb{R}^N) \} \quad \mathrm{and}\quad  \mathrm{for}\; s \geq 0, \;  L_s^p(\Omega) = \{v_{\vert\Omega}; \; v \in L_s^ p(\mathbb{R}^N) \}.
$$
$$
 \mathrm{for}\; s >  0, \; 1 < p < \infty,   \quad L_{s,0}^p(\Omega) = \{v\in  L_s^p(\mathbb{R}^N) ; \;  \mathrm{supp}\, v \subset \overline{\Omega}  \} \quad \mathrm{and}\quad  L_{-s}^p(\Omega) = [L_{s,0}^{p'}(\Omega)]'.
$$
We also introduce the following Sobolev spaces: for $m \in \mathbb{N},\;  1 \leq p \leq \infty$,
$$
 W^{m, p}(\Omega) = \{ v \in \mathscr{D}'(\Omega);  D^\lambda v \in L^p(\Omega), \, \forall \vert \lambda \vert \leq m\}
$$
and for $s = m + \sigma$, with $0 < \sigma < 1, \; 1  \leq p < \infty$, 
$$
W^{s, p}(\Omega) = \{ v \in  W^{m, p}(\Omega);   \sum_{\vert\lambda\vert = m}\int_\Omega\int_\Omega\frac{\vert D^\lambda u(x) - D^\lambda u(y)\vert^p}{\vert x - y \vert^{N + p\sigma}}dx\,dy < \infty \},
$$
which is a Banach space with the following norm:
$$
\Vert u\Vert_{W^{s, p}(\Omega)} = (\Vert u\Vert_{W^{m, p}(\Omega)}^p+ \sum_{\vert\lambda\vert = m}\int_\Omega\int_\Omega\frac{\vert D^\lambda u(x) - D^\lambda u(y)\vert^p}{\vert x - y \vert^{N + p\sigma}}dx\,dy)^{1/p}.
$$
We recall that for any $s \geq 0$ and $1 < p < \infty$, 
$$
 L_s^p(\Omega) = W^{s, p}(\Omega)  \quad\mathrm{when}\;  s \in \mathbb{N}\quad \mathrm{or \;\;  when }\;  p = 2
$$
and
$$
W^{s, p}(\Omega) \hookrightarrow L_s^p(\Omega) \quad\mathrm{when}\; p \leq 2 \quad \mathrm{and}\quad   L_s^p(\Omega)\hookrightarrow W^{s, p}(\Omega) \quad\mathrm{when}\; p \geq 2.
$$

\subsection{Two explicit examples for harmonic kernels}

Unlike the case where the domain $\Omega$ is regular, the kernel space of harmonic functions $W^{s, p}(\Omega)$ that are zero on the boundary of $\Omega$, we denote by $W^{s, p}_0(\Omega)\cap \mathscr{H}$,  is generally non-trivial for a large collection of $s$ and $p$, as shown by the explicit examples below.

\begin{example} [\bf{ 2D-Case}]\label{exexpl2D}\upshape
Let us  consider the following domain:
\begin{equation}\label{domOmeg}
\Omega = \{(r, \theta);\; 0 < r < 1,\quad 0 < \theta < \pi /a\}, \quad \mathrm{with}\; a > 1/2\quad  \mathrm{and}\quad  \mathrm{close \, \, to } \, \, 1/2.
\end{equation}
We can easily verify that the following function
\begin{equation}\label{sing}
z(r, \theta) = (r^{-a} - r^{a})\mathrm{sin}(a\theta)
\end{equation}
is harmonic in $\Omega$ with $z = 0$ on $\Gamma$. Moreover, given $s$ in the interval $[0, 3/2[$, it is easy to verify that the function $z$ belongs to $ L^p_s(\Omega) $ for any $p$ such that $1< p < 2/ (a + s)$. In particular for $s = 1$,  we have $z \in L^p_1(\Omega) =  W^{1, p}(\Omega)$ for any $p$ such that $1< p < 2/ (a + 1)$,  where $2/ (a + 1)$ is strictly less than  $4/3 $ but close  to $4/3$. We can also see  that $z$ belongs to $W^{3/2 - \varepsilon, 1}(\Omega)$ for any $\varepsilon > a - 1/2$ and $z \in L^q(\Omega)$ for any $q  < 2/a$.  
\end{example}

\begin{example} [\bf{3D-Case}]\label{exexpl3D}\upshape
Let us  consider the following domain: 
\begin{equation}\label{domOmeg3}
\Omega = \{(r, \theta, \varphi);\; 0 < r < 1,\quad 0 < \theta < \theta_0, \; 0 < \varphi < 2\pi\}, \quad \mathrm{with}\; \theta_0  < \pi\quad  \mathrm{and}\quad  \mathrm{close \, \, to } \, \, \pi.
\end{equation}
Our goal in this example is to find  as above harmonic functions in some $W^{1, p}(\Omega)$, equal to zero on the boundary of $\Omega$, under the form $v(r, \theta, \varphi) = c(r) w(\theta)$. So we need to impose the conditions $c(1) = 0$ and $w(\theta_0) = 0$. For that we proceed as in \cite{Gri2}. The harmonicity of $v$ implies that there exists $\lambda = \lambda(\theta_0) > 0$ such that
\begin{equation}\label{eqclambda}
\partial_r(r^2 \partial_r c) = \lambda c, \quad 0 < r < 1, \quad \mathrm{and} \quad Lw = - \lambda w, \quad 0 < \theta < \theta_0 .
\end{equation}
where $L$ is the Beltrami Laplacian on the sphere, which is here given by $Lw =  (\mathrm{sin}\, \theta)^{-1} \partial_\theta(\mathrm{sin} \, \theta\, \partial_\theta w)$. Setting $I =\,  ]0, \theta_0[$ and 
$$
H(I) = \{ \psi\in  \mathscr{D}'(I); \;   (\mathrm{sin} \, \theta)^{1/2} w \in L^2(I), \; (\mathrm{sin} \, \theta)^{1/2} w' \in L^2(I), \; w(\theta_0) = 0\},
$$
we know that there exist a sequence $(w_k)_{k\geq 1}$ of $H(I)$ and an increasing sequence $(\lambda_k)_{k\geq 1}$ of reals with $\lambda_k =  \lambda_k(\theta_0) > 0$ for any $ k \geq 1$ and $\lambda_k \rightarrow \infty$ such that
$$
\forall \psi \in H(I), \quad \int_I ( \mathrm{sin} \, \theta\, \,  w_k' \psi' - \lambda_k \,  \mathrm{sin} \, \theta\, \,  w_k \psi) d\theta = 0.
$$
So $w_k$ is an eigenvector of $- L$ associated to the eigenvalue $\lambda_k$. The smallest eigenvalue $\lambda_1(\theta_0)$ satisfies:
$$
\lambda_1(\theta_0) = \min_{w \in H(I), \,  w \neq 0}\frac{\displaystyle \int_I  \mathrm{sin} \, \theta\, \, \vert w'\vert^2}{ \displaystyle\int_I  \mathrm{sin} \, \theta \, \,  \vert w\vert^2}.
$$
Moreover $\lambda_1(\theta_0) \searrow 0$ when $\theta_0 \nearrow \pi.$ 

Setting now $v_k(r, \theta, \varphi) = c_k(r) w_k(\theta)$ and using  \eqref{eqclambda} with $\lambda = \lambda_k$ and the condition $c_k(1) = 0$, we deduce that 
$$
r^2 c_k'' + 2r c_k' - \lambda_k c_k = 0 \quad \mathrm{in}\; ]0, 1[ \quad \mathrm{and}\quad c_k(r) = a_k(r^{\alpha_k} - r^{\beta_k})
$$
where
\begin{equation}\label{alphak}
\alpha_k = \frac{-1 - \sqrt{1 + 4 \lambda_k}}{2}, \qquad \beta_k = \frac{-1 + \sqrt{1 + 4 \lambda_k}}{2}\qquad \mathrm{for}\quad k \geq 1
\end{equation}
and $a_k$ is an arbitrary constant.

Now, we can  verify that the following harmonic function in $\Omega$ and equal to zero on $\Gamma$
\begin{equation}\label{noyau3D}
v_1(r, \theta, \varphi) = (r^{\alpha_1} - r^{\beta_1}) w_1(\theta) 
\end{equation} 
belongs to $ W^{1, p}(\Omega)$ for any $1\leq p <  p_0(\theta_0)$, where 
\begin{equation*}\label{p0}
p_0(\theta_0) = 3/(1 - \alpha_1)  < 3/2 \quad \mathrm{and}\quad  \lim_{\theta_0\rightarrow \pi}p_0(\theta_0) = 3/2.
\end{equation*}
 The function $v_1$ satisfies \eqref{UniqJK}, with $\alpha = 1$ and  $1\leq p < p_0(\theta_0)$, but is not identically zero. 
\end{example}

The fact that these kernels are generally nontrivial leads to several consequences. 
 One of them concerns the inverse inequality of the trace operator, as we will see below.

Let us start by recalling the following result established by Jerison and Kenig in \cite{J-K}, Proposition 5.17 and which concerns a uniqueness criterion in space $L^{p}_s(\Omega)$ for the problem $(\mathscr{L}_D^0)$.

\begin{e-proposition}[Jerison-Kenig,  \cite{J-K}] Let $\Omega$ be a bounded Lipschitz domain and suppose that $v$ satisfies:
\begin{equation}\label{UniqJK}
\Delta v = 0\; \mathrm{ in} \; \Omega, \quad Tr\,  v = 0\; \mathrm{ on }\;  \Gamma\quad \mathrm{ and}\quad v \in L^p_\alpha(\Omega)\; \; \mathrm{ for\,  some}\;  \alpha > 1/p.
\end{equation}
Then $v$ is identically zero. 
\end{e-proposition}

 This result is in fact true for $\mathscr{C}^1$ domains but not for Lipschitz domains in general, as shown in the Example \ref{exexpl2D}  with  $\alpha = 1$ and $1 < p <   2/ (a + 1)$ and in the Example \ref{exexpl3D}  with  $\alpha = 1$ and $1 < p <   p_0(\theta_0))$.

To prove this uniqueness result, Jerison and Kenig first write 
\begin{equation}\label{estim1JKPro517}
\Vert v_{\vert \Gamma_j}\Vert_{B^p_{\alpha - 1/p}(\Gamma_j)} = \Vert (\varphi_j - v)_{\vert \Gamma_j}\Vert_{B^p_{\alpha - 1/p}(\Gamma_j)} \leq C \Vert  (\varphi_j - v)\Vert_{L^p_\alpha(\Omega_j)} \rightarrow 0 \quad \mathrm{as}\; j \rightarrow \infty,
\end{equation}
where $\varphi_j \in \mathscr{D}(\Omega)$ is such that $\varphi_j$ tends to $v$ in $L^p_\alpha(\Omega)$
and then used the estimate
\begin{equation}\label{estim2JKPro517}
\Vert v \Vert_{L^p_\alpha(\Omega_j)}\leq C \Vert v_{\vert \Gamma_j}\Vert_{B^p_{\alpha - 1/p}(\Gamma_j)},
\end{equation}
where $\Omega_j$ is a smooth domain such that $\overline{\Omega_j} \subset \Omega$ and its boundary $\Gamma_j$ is uniformly Lipschitz and tends to $\Gamma$. This estimate  \eqref{estim2JKPro517} is true, but since Example \ref{exexpl2D} and Example \ref{exexpl3D} the constant $C$ must depend on $ j$ and tend to  $\infty$ as $ j \rightarrow \infty$.

If we take $\alpha = 1$, for example, recall that for any bounded domain $\Omega$ of class $\mathscr{C}^1$, there exists a positive constant $C(\Omega)$ such that for all $1 < p < \infty$,
\begin{equation}\label{inegC1trace1}
\forall v \in W^{1, p}(\Omega)\cap \mathscr{H}, \quad \Vert v \Vert_{W^{1, p}(\Omega)} \leq C(\Omega) \Vert v \Vert_{W^{1-1/p, p}(\Gamma)}. 
\end{equation}
Due to this non trivial kernel $ W^{1, p}_0(\Omega)\cap \mathscr{H}$, for $p$ strictly less than some value $p_0(\Omega) < 2N/(N+ 1)$, the above estimate is clearly not true in general when $\Omega$ is only Lipschitz and $1 < p < p_0(\Omega)$. However, for  bounded Lipschitz domains, we have
\begin{equation}\label{inegC1trace2}
\forall v \in W^{1, p}(\Omega)\cap \mathscr{H}, \quad \inf_{z \in W^{1, p}_0(\Omega)\cap \mathscr{H}} \Vert v + z  \Vert_{W^{1, p}(\Omega)} \leq C(\Omega) \Vert v \Vert_{W^{1-1/p, p}(\Gamma)}. 
\end{equation} 
Similar inequalities take place for $v \in W^{s, p}(\Omega)\cap \mathscr{H}$ with a large collection of $s$ and $p$.

This shows that this issue is poorly understood. Unfortunately, this lack of understanding has led to existence and regularity results that are only partially valid for the study of the problem $(\mathscr{L}_D)$.\medskip

\subsection{Uniqueness Criteria}

The following theorem provides us with the first uniqueness criterion. This result improves the classical uniqueness result when $u\in H^s(\Omega)$, with $s > 1/2$. 

\begin{theorem}\label{UniciteH1/2} If $u\in H^{1/2}(\Omega)$ is harmonic and $u=0$ on $\Gamma$, then $u = 0$ in $\Omega$. 
\end{theorem}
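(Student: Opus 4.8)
The plan is to deduce the statement from the injectivity of the isomorphism $\Delta\colon H^{1/2}_{00}(\Omega)\to[H^{3/2}_0(\Omega)]'$ of Theorem~\ref{IsoDeltaH3/2H1/2}, by showing that the hypotheses force $u\in H^{1/2}_{00}(\Omega)$. Since $u\in H^{1/2}(\Omega)\subset L^2(\Omega)$ and $\Delta u=0\in[H^{3/2}_0(\Omega)]'$, we have $u\in M(\Omega)$, so by Theorem~\ref{lsb3} the trace $u\,\textit{\textbf n}_{|\Gamma}\in[\boldsymbol{H}_{\textit{\textbf N}}^{1/2}(\Gamma)]'$ is well defined and the condition ``$u=0$ on $\Gamma$'' reads $u\,\textit{\textbf n}_{|\Gamma}=0$; moreover, since $u$ is harmonic and of class $H^{1/2}(\Omega)$, Dahlberg's result recalled in Section~\ref{traces} provides $u^\star\in L^2(\Gamma)$ and an a.e.\ nontangential limit $g\in L^2(\Gamma)$ which, by Remark~\ref{remrelevbis}, is exactly the trace of $u$; hence $u\,\textit{\textbf n}_{|\Gamma}=g\,\textit{\textbf n}$ and the hypothesis amounts to $g=0$.

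Granting $u\in H^{1/2}_{00}(\Omega)$, the conclusion is immediate: $\Delta u=0$ and Theorem~\ref{IsoDeltaH3/2H1/2} give $u\equiv0$. To obtain $u\in H^{1/2}_{00}(\Omega)$ I would use the characterisation of $H^{1/2}_{00}(\Omega)$ as $\ker\gamma_0$ on $E(\nabla;\Omega)$ from Theorem~\ref{TracesH1demigradH1demiprime}, hence I must check that $\nabla u\in[\textit{\textbf H}^{\,1/2}(\Omega)]'$ and $\gamma_0u=0$. For the first point, harmonicity yields through Theorem~\ref{inegharm} (with $m=0$, $\theta=1/2$) the weighted bound $\sqrt{\varrho}\,\nabla u\in\textit{\textbf L}^2(\Omega)$; then, for $\boldsymbol{\varphi}\in\boldsymbol{\mathscr D}(\overline\Omega)$ one has $\langle\nabla u,\boldsymbol{\varphi}\rangle=-\int_\Omega u\,\mathrm{div}\,\boldsymbol{\varphi}+\int_\Gamma g\,\boldsymbol{\varphi}\cdot\textit{\textbf n}$, where the boundary term vanishes because $g=0$ and the volume term is controlled by $C\,\Vert u\Vert_{H^{1/2}(\Omega)}\,\Vert\boldsymbol{\varphi}\Vert_{\textit{\textbf H}^{\,1/2}(\Omega)}$ by solving $\Delta w=\mathrm{div}\,\boldsymbol{\varphi}$ with $w\in H^{3/2}_0(\Omega)$ (Theorem~\ref{IsoDeltaH3/2H1/2}), using $\partial w/\partial\textit{\textbf n}\in L^2(\Gamma)$ from Theorem~\ref{CoruH10DeltaudualH1/2} and the Green identity $\int_\Omega u\,\Delta w=\int_\Gamma g\,\partial w/\partial\textit{\textbf n}$; the second point then holds because the $E(\nabla;\Omega)$-trace $\gamma_0u$ coincides with the $L^2(\Gamma)$-trace $g=0$.

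It is instructive to record the polygonal case separately. If $\Omega$ is a polygon (or polyhedron), Theorem~\ref{lsb3} together with $\Delta u=0$ and $u\,\textit{\textbf n}_{|\Gamma}=0$ gives $\int_\Omega u\,\Delta\varphi=0$ for every $\varphi\in H^2(\Omega)\cap H^1_0(\Omega)$; by Grisvard's isomorphism (Theorem~\ref{theoGrisvardPol}) this means $u\in\mathscr{H}^\circ_{L^{2}(\Omega)}$, but each nonzero element of $\mathscr{H}^\circ_{L^{2}(\Omega)}$ behaves like $r^{-\alpha}\sin(\alpha\theta)$ with $\tfrac12<\alpha<1$ near some re-entrant vertex and hence is not of class $H^{1/2}(\Omega)$, so $u\equiv0$ (this also recovers $\mathscr{N}_{-1/2}(\Omega)=\{0\}$ of Remark~\ref{remNoyau}). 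The general Lipschitz case can then be reached either through the polygonal/polyhedral exhaustion already used in the proof of Theorem~\ref{IsoDeltaH3/2H1/2}, or directly from the uniqueness part of Dahlberg's $L^2$-Dirichlet theory \cite{Dahl77}: the hypotheses exhibit $u$ as a harmonic function with $u^\star\in L^2(\Gamma)$ and vanishing nontangential boundary data, hence $u$ is the ($\equiv0$) solution of $(\mathscr{L}_D^H)$ with datum $0$. The main obstacle is the step $\nabla u\in[\textit{\textbf H}^{\,1/2}(\Omega)]'$: a priori one only knows $\nabla u\in[\textit{\textbf H}^{\,1/2}_{00}(\Omega)]'$, which is automatic for every element of $H^{1/2}(\Omega)$ and too weak, and the upgrade is exactly the point where the harmonicity of $u$ (via the weighted gradient estimate) and the vanishing of its boundary trace must be combined — in effect a Ne\v{c}as-type identity for the low-regularity function $u$.
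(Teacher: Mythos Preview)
Your proof is far more circuitous than the paper's, and the one step you yourself flag as ``the main obstacle'' is precisely where you miss the short route. The paper does \emph{not} first show $u\in H^{1/2}_{00}(\Omega)$; it never touches $E(\nabla;\Omega)$, Dahlberg's nontangential limits, or the second isomorphism of Theorem~\ref{IsoDeltaH3/2H1/2}. Instead it uses the Green formula of Theorem~\ref{lsb3} once, with the auxiliary function chosen to be $\psi=\Delta^{-1}u$. Since $u\in L^2(\Omega)\subset H^{-1/2+s}(\Omega)$ for small $s>0$, the standard Lipschitz regularity gives $\psi\in H^{3/2+s}(\Omega)\cap H^1_0(\Omega)$, so Theorem~\ref{lsb3} applies and yields
\[
\int_\Omega |u|^2=\int_\Omega u\,\Delta\psi=\langle u\textit{\textbf n},\nabla\psi\rangle_\Gamma=0.
\]
That is the whole proof.

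Your detour has a genuine gap at exactly this spot. You solve $\Delta w=\mathrm{div}\,\boldsymbol{\varphi}$ and invoke ``the Green identity $\int_\Omega u\,\Delta w=\int_\Gamma g\,\partial w/\partial\textit{\textbf n}$''. But Theorem~\ref{lsb3} requires the test function in $H^{2-s}(\Omega)\cap H^1_0(\Omega)$ for some $s<1/2$, i.e.\ $w\in H^{3/2+\varepsilon}$; the citation you give (Theorem~\ref{IsoDeltaH3/2H1/2}) only delivers $w\in H^{3/2}_0(\Omega)$, which is not enough. The fix is to observe that for $\boldsymbol{\varphi}\in\boldsymbol{\mathscr D}(\overline\Omega)$ the right-hand side $\mathrm{div}\,\boldsymbol{\varphi}$ is in $L^2$, hence $w\in H^{3/2+s}$ by the usual Jerison--Kenig gain. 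But once you grant yourself that extra regularity, there is no reason to run through all test fields $\boldsymbol{\varphi}$: simply take $w=\psi$ with $\Delta\psi=u$ and you land on the paper's three-line argument. In short, your plan eventually needs the same ingredient the paper uses, but wraps it in an unnecessary detour through $E(\nabla;\Omega)$, nontangential traces, and Theorem~\ref{CoruH10DeltaudualH1/2}. The alternative endings you sketch (Dahlberg's uniqueness for the $L^2$ Dirichlet problem, or the explicit singular-function analysis on polygons) are valid but import heavy external machinery that the paper deliberately avoids.
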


\begin{proof} Without loss of generality, we can assume that N = 2. Let $\Omega_k$ be an increasing sequence of polygons 
which converges to the domain $\Omega$.  From Theorem \ref{regpolthetasm+} Point i), we know that for any $k \geq 1$, there exists $\psi_k \in H^{s_k + 3/2}(\Omega_k)\cap H^1_0(\Omega_k)$, for some $s _k> 0$ depending on $\Omega_k$, such that $\Delta \psi_k = u $ in $\Omega_k$, with the following estimate:
\begin{equation*}
\Vert \psi_k\Vert_{H^{s_{k } + 3/2}(\Omega_k)} \leq C(\Omega_k) \Vert u \Vert_{L^2(\Omega_k)}\leq C(\Omega_k) \Vert u \Vert_{L^2(\Omega)}
\end{equation*}
where  the constant  $C(\Omega_{k})$ is bounded by a constant $C(\Omega)$ which depends only on the Lipschitz constant and on the Poincar\'e constant of $\Omega$, see \eqref{const}. Furthermore, by the classical Ne\v{c}as Property (see Theorem \ref{NecProi}), we know that
\begin{equation*}
 \Vert\partial_{\textit{\textbf{n}}_{k}} \psi_k\Vert_{\textit{\textbf L}^2(\Gamma_{k})}\, \leq\ C(\Omega_k)(\Vert \psi_k\Vert_{H^1(\Omega_{k)}} + \Vert \Delta \psi_k \Vert_{L^2(\Omega_{k})}) 
\end{equation*}
and then
\begin{equation*}
 \Vert\partial_{\textit{\textbf{n}}_{k}} \psi_k\Vert_{\textit{\textbf L}^2(\Gamma_{k})}\, \leq\ C(\Omega) \Vert u \Vert_{L^2(\Omega)}. 
\end{equation*}
So as in the proof of Theorem \ref{CoruH10DeltaudualH1/2} below, Step 3, we can deduce that there exists $\Lambda \in L^2(\Gamma)$ such that
\begin{equation}\label{limintG}
\forall g \in L^2(\Gamma), \quad \lim_{k\rightarrow \infty}\int_{\Gamma_{k}} g\partial_{\textit{\textbf{n}}_{k}} \psi_k\, d\sigma_k = \int_\Gamma g\Lambda\, d\sigma.
\end{equation}
Note that
\begin{equation}\label{seqpsi}
\int_{\Omega_{k}} \vert u \vert^2\, dx = \int_{\Omega_{k}}  u \Delta \psi_k\, dx =  \int_{\Gamma_{k}} u \partial_{\textit{\textbf{n}}_{k}} \psi_k\, d\sigma_k .
\end{equation}

We will prove that  $u = 0$ a.e in $\Omega$, which will allow us to deduce the required uniqueness. For that let us consider the following subsets of $\Omega$: 
\begin{equation*}
\omega = \{ x \in \Omega; \; u(x) \neq 0\}\quad \mathrm{and}\quad \omega_k= \omega \cap \Omega_k 
\end{equation*}
and suppose that the measure $\vert \omega\vert $ of $\omega$ is different to zero. As the sequence $(\vert \omega_k\vert)_k$ converges to $\vert \omega\vert $, we clearly have
\begin{equation}\label{pos}
\lim_{k\rightarrow \infty} \int_{\Omega_{k}} \vert u \vert^2 \, dx= \int_{\Omega} \vert u \vert^2 \, dx> 0.
\end{equation}
Now passing to the limit in \eqref{seqpsi},  we deduce from \eqref{limintG} and the fact that $u = 0$ on $\Gamma$ that  
\begin{equation*}
\int_{\Omega} \vert u \vert^2 \, dx=  0,
\end{equation*}  
which contradicts \eqref{pos}. So $\vert \omega\vert = 0 $ and then $u = 0$ a.e in $\Omega$.
\end{proof} 

Recall now that if $\Omega $ is of class $\mathscr{C}^{1}$ or convex and $1 < p < \infty$, then for any  
$$
f\in W^{-1,\, p}(\Omega), \quad \mathrm{and} \quad g\in W^{1 - 1/p,\, p}(\Gamma),
$$
the following problem 
$$
(\mathscr{L}_D)\ \ \ \  -\Delta u = f\quad \ \mbox{in}\ \Omega \quad
\mbox{and } \quad u = g \ \ \mbox{on }\Gamma
$$
has a unique solution $u\in W^{1,\, p}(\Omega).$ On the other hand, as we saw above, this is not the case in general when the domain $\Omega$ is only Lipschitz. 

We will use in the sequel the following notation
$$
X \cap \mathscr{H}= \{\varphi \in X; \; \Delta \varphi = 0 \; \mathrm{in}\; \Omega \}.
$$

\begin{theorem}  i)  Let $\Omega$ be a bounded Lipschitz  domain of $\mathbb{R}^N$ with $N \geq 2$. Then 
\begin{equation}\label{carkerdim1}
W^{1, 2N/(N+1)}_0(\Omega) \cap \mathscr{H}= \{0\}
\end{equation}
More generally, 
\begin{equation}\label{carkerdim2}
\forall \, 1/2 < \alpha < (N+1)/2, \quad 
W^{\alpha, 2N/(N+2\alpha-1)}_0(\Omega) \cap \mathscr{H}= \{0\},
\end{equation}
or equivalently
\begin{equation}\label{carkerdim3}
\forall \, 1 < p <  2, \quad 
W^{1/2 + N(1/p - 1/2), \, p}_0(\Omega) \cap \mathscr{H}= \{0\}.
\end{equation}
\noindent ii) For any $ p <  2N/(N+1)$, there exists a bounded Lipschitz  domain $\Omega$ such that
\begin{equation}\label{carkerdim4}
W^{1, p}_0(\Omega) \cap \mathscr{H}\neq  \{0\}.
\end{equation}
More generally, let $1 < p < 2$ be fixed. For any $1 < r < p$, there exists a bounded Lipschitz  domain $\Omega$ such that
\begin{equation}\label{carkerdim5}
W^{1/2 + N(1/p - 1/2), \, r}_0(\Omega) \cap \mathscr{H}\neq \{0\}.
\end{equation}
\noindent iii) For any bounded Lipschitz  domain $\Omega$ of $\mathbb{R}^N$, with $N \geq 2$, there exists $p_0(\Omega) < 2N/(N+1)$ such that
\begin{equation}\label{carkerLp1b}
W^{1, p}_0(\Omega) \cap \mathscr{H} =  \{0\} \, \; \mathrm{if} \; \; p \geq p_0(\Omega)\quad  and  \quad W^{1, p}_0(\Omega) \cap \mathscr{H}\neq  \{0\} \, \; \mathrm{if} \; \; 1< p < p_0(\Omega).
\end{equation}
When $\Omega$ is $\mathscr{C}^1$ or convex,  the exponent $p_0(\Omega)$ may be taken to be equal $1$. 

\noindent iv) Moreover if $\Omega$ is a polygonal (resp. 3D polyedral), we have
\begin{equation}\label{defp02D}
p_0(\Omega) = 2\omega^\star/(\pi + \omega^\star) \quad \mathrm{with} \quad \omega^\star < 2\pi \; \mathrm{is \, \, the\, \,  larger\, \,  angle\, \,   of} \; \Omega, \quad \mathrm{for}\; N = 2. 
\end{equation}
and
\begin{equation}\label{defp03D}
p_0(\Omega) = 3\omega^\star/(\pi + \omega^\star) \quad \mathrm{with} \quad \omega^\star < \pi \; \mathrm{is \, \, the\, \,  larger\, \, polar\, \, angle\, \,   of} \; \Omega, \quad \mathrm{for}\; N = 3. 
\end{equation}
\end{theorem}

\begin{proof} i) The first two identities \eqref{carkerdim1} and \eqref{carkerdim2} above are a direct consequence of Theorem \ref{UniciteH1/2} and the following embeddings:
$$
W^{1,\, 2N/(N+1)}(\Omega)\hookrightarrow H^{1/2}(\Omega) 
$$
and
$$
 W^{\alpha, 2N/(N+2\alpha-1)}(\Omega) \hookrightarrow H^{1/2}(\Omega)\quad  \mathrm{for \, any } \quad 1/2 < \alpha < (N + 1)/2.
$$
We get \eqref{carkerdim3} by setting $p = 2N/(N+2\alpha-1)$, which gives $\alpha =  \frac{1}{2} + N(\frac{1}{p} - \frac{1}{2})$.\smallskip

\noindent ii) We will give the proof in the 2D and 3D cases, the proof being similar for the other dimensions. 

For $N = 2$,  let $ p <  4/3$  and $\alpha$ such that $1/2 < \alpha < (2/p) - 1$.  Let us now consider the following domain:
\begin{equation*}\label{domOmeg}
\Omega = \{(r, \theta);\; 0 < r < 1,\quad 0 < \theta < \frac {\pi} {\alpha}\}.
\end{equation*}
We can easily verify that the following function?
\begin{equation}\label{sing}
z(r, \theta) = (r^{-\alpha} - r^{\alpha})\mathrm{sin}(\alpha\theta)
\end{equation}
is harmonic in $\Omega$ with $z = 0$ on $\Gamma$ and $z\in W^{1, q}(\Omega)$ for any $q < \frac{2}{\alpha + 1}$. So we get the required result since  $p < \frac{2}{\alpha + 1} < \frac{4}{3}$. \medskip

For $N = 3$,  let $ p <  3/2$  and let us consider the domain $\Omega$ defined in \eqref{domOmeg3}. We choose  $\theta_0$ sufficiently close to $\pi$ to have  $\alpha_1 = \alpha_1(\theta_0)$, given by $\eqref{alphak}$, satisfying $1 < - \alpha_1 < (3/p) - 1$. The function $v_1$ defined in \eqref{noyau3D} is harmonic in $\Omega$ with $v_1 = 0$ on $\Gamma$ and $v_1\in W^{1, q}(\Omega)$ for any $q < \frac{3}{1 - \alpha_1}$.  So we get \eqref{carkerdim4} since  $p < \frac{3}{1 - \alpha_1} < \frac{3}{2}$. \medskip

To prove \eqref{carkerdim5}, we just consider the 2D case and we take again the same domain $\Omega$, with  $\frac{1}{2} < \alpha <  2(\frac{1}{r} - \frac{1}{p}) + \frac{1}{2}$.
\medskip

\noindent iii)  The singularities involved in the structure of the kernel $W^{1, p}_0(\Omega) \cap \mathscr{H}$ are, in the 2D case (resp. in the 3D case), of the type $r^{-\alpha}$, with $ \alpha > 1/2$, (resp. $ \alpha > 1 $). However the function $\vert x \vert^{-\alpha}$ belongs to $W^{1, q}(B)$ for any $ q$ strictly less than $2/(1 + \alpha)$,  (resp. $3/(1 + \alpha)$), where $B$ is the unit disk centered at origin.  More generally in any dimension, we have $ \alpha > (N-1)/2$.\medskip 

\noindent iv) We start by the 2D case. We suppose that $\Omega$ is a non convex polygonal domain with $\omega^\star$ the largest angle with the origin as its vertex $A$.  Introducing polar coordinates $(r, \theta)$ centered at $A$, the two sides of the angle correspond to $\theta = 0$ and $\theta = \omega^\star$.

We can construct a harmonic function $v$ in $\Omega$, equal to $0$ on $\Gamma$, equal  in a neighborhood $V$ of the origin to the function $z$ given by the relation \eqref{sing}, with $\alpha = \alpha^\star = \pi/\omega^\star$   and such that $v\in W^{1, p}(\Omega)$ for any $p < \frac{2}{\alpha^\star + 1} =: p_0(\Omega)$. We then deduce the second part of \eqref{carkerLp1b}. 

Let us now show how to find this harmonic function $v$. Let $\eta$  a regular cut-off function defined on $\mathbb{R}^+$, equal to 1 near zero, equal to zero in some interval $ [a, \infty[$, with some small $a > 0$. The function $w = \eta z$ has the same regularity $W^{1, p}(\Omega)$ than $v$, belongs also to $H^t(\Omega)$ for any $t < 1- \alpha^\star$ and
$$
\Delta w = 2 \nabla \eta \cdot \nabla z + z \Delta \eta \in H^{-s}(\Omega) \quad \mathrm{for \, \, any }\quad s >  \alpha^\star.
$$
We know that there exists a unique function $\chi\in H^{2-s}(\Omega) \cap H^1_0(\Omega)$ for any $ s >  \alpha^\star$ and satisfying $\Delta \chi = \Delta w$ in $\Omega$. The function $v =: \chi - w$ is the function we are looking for.

Let us now prove the first  part of \eqref{carkerLp1b}.  Using Sobolev embeddings, we have
$$
W^{1, p}_0(\Omega) \cap \mathscr{H} \subset \{\varphi \in H^{s}(\Omega) \cap \mathscr{H}, \, \varphi = 0\, \, \mathrm{on}\, \, \Gamma\}\quad \mathrm{with}\quad s = 2/p'.
$$
So if $ p \geq p_0(\Omega)$, we have $s > 2/(p_0(\Omega))' = 1 - \alpha^\star$. From \eqref{Ns0} the second space above is trivial and then we deduce our result.\medskip

For the 3D case, the proof is similar. We just need to replace $\omega^\star$ by $\theta_0$, $ \alpha^\star$ by $\alpha_1$ and $p_0(\Omega) =  \frac{2}{\alpha^\star + 1}$ by  $p_0(\Omega) = 3/(1 - \alpha_1)$. And to use the above function $v_1$ defined in \eqref{noyau3D} and belonging to $ W^{1, p}(\Omega)$ for any $1\leq p <  \frac{3}{1 - \alpha_1}$.\end{proof}

\begin{remark}\upshape \label{remJKp}  We will give in a forthcoming paper a new proof of the following result:  for any bounded Lipschitz domain $\Omega \subset \mathbb{R^N}$,  with $N \geq 2$, there exists $p_0(\Omega)$ such that $1 < p_0(\Omega) < 2N/(N+1)$ such that for any $p_0 \leq p \leq p_0'$ and $f \in W^{-1, p}(\Omega) $, Problem $(\mathscr{L}_D^0)$, ${\it i.e}\;  g  = 0$,  has a unique solution $u\in W^{1,\, p}(\Omega)$ satisfying the estimate
$$
\Vert u \Vert_{W^{1,\, p}(\Omega)}\leq C \Vert f \Vert_{W^{-1,\, p}(\Omega)}.
$$
\end{remark}

In \cite{J-K}, the authors state results concerning existence and regularity in $W^{1, p}$ (see Theorem 0.5) and in fractional Sobolev spaces (see Theorem 1.1 for $N \geq 3$ and Theorem 1.3 for $N = 2$). Let us first recall the results given in Theorem 0.5, where we replace $p_1$ with its conjugate: 

\begin{theorem} [{\bf Jerison-Kenig 95}, $W^{1, p}$ case]\label{JK95W1} Let $\Omega$ be a bounded Lipschitz domain in $\mathbb{R}^N$, with $N\geq 2$. There is an exponent $p_1$, with $p_1 < 4/3$, when $N= 2$ and $p_1 < 3$, when $N\geq 3$ such that if $p_1 < p < p_1'$, then the inhomogeneous Dirichlet problem $(\mathscr{L}_D^0)$ has a unique solution $u \in W^{1, p}(\Omega)$ for any $f\in W^{-1, p}(\Omega)$.
\end{theorem} 

This result was known when $\Omega$ is a polygon with  $p_1 = 2/(1+ (\pi/\omega^\star))$ where $\omega^\star$ is the larger angle of $\Omega$. In particular the following operator
$$
\mathrm{for \, \, any}\; 4/3 \leq p \leq 4, \quad \Delta : W^{1, p}_0(\Omega) \rightarrow W^{-1, p}_0(\Omega)
$$ 
is an isomorphism.\medskip

Let us then recall the results in 2D given in Theorem 1.3: 

\begin{theorem} [{\bf Jerison-Kenig 95}, Fractional case in $2D$]\label{JK95Frac} Let $\Omega$ be a bounded Lipschitz domain in $\mathbb{R}^2$. There exists $\varepsilon$, $0 < \varepsilon \leq 1/2$, depending only on the Lipschitz constant of $\Omega$ such that for every $f\in L^p_{s - 2}(\Omega)$, there is a unique 
$u \in L^p_s(\Omega)$ to the inhomogeneous Dirichlet problem provided one of the following holds:\\
\begin{equation*}
\begin{array}{rl}
& \mathrm{(a)} \qquad p_0 < p < p_0' \quad \mathrm{and}\quad 1/p < s < 1 + 1/p\\
& \mathrm{(b)} \qquad 1< p \leq p_0 \qquad \; \mathrm{and}\quad 2/p -1/2 - \varepsilon  < s < 1 + 1/p\\
& \mathrm{(c)} \qquad  p_0' \leq  p < \infty \quad \mathrm{and}\quad 1/p < s <   2/p + 1/2 + \varepsilon,
\end{array}
\end{equation*}
where $1/p_0 = \varepsilon + 1/2$ and $1/p_0' = -\varepsilon + 1/2$.
\end{theorem}

For any bounded Lipschitz domain $\Omega$,  the exponents $p_1$ and $p_0$ given by Theorem \ref{JK95W1} and Theorem \ref{JK95Frac} are clearly identical.  Moreover, when $\Omega$ is a polygon, we have 
$$
p_0 = p_1 = p_0(\Omega) =: 2/(1+ \alpha^\star),\quad \mathrm{with}\quad  \alpha^\star = \pi / \omega^\star \quad \mathrm{and}\quad \varepsilon =  \alpha^\star/2 \in \, ]1/4, 1/2[,
$$
where $\omega^\star$ is the larger angle of $\Omega$.   It is important to note that for any fixed domain $\Omega$ corresponds a unique $p_0(\Omega)$, which is strictly less than $4/3$ in accordance with Theorem \ref{JK95W1} and a unique $\varepsilon(\Omega) = : 1/p_0(\Omega) - 1/2$. 

We shall now see that Theorem \ref{JK95Frac}, as stated, is not suitable. Indeed, conditions (b) and (c) are inappropriate for the reasons that follow. For clarity, we will only consider the case $s = 1$ corresponding to $W^{1,\, p}(\Omega)$ solutions. However, our arguments below remain valid for other values of $s$.  More precisely, let us choose a polygonal domain $\Omega$ whose largest angle $\omega^\star$ is close to $2 \pi$. So the corresponding $p_0(\Omega)$, given above, is such that  $p_0(\Omega) < 4/3$ and very close to 4/3. Therefore $\varepsilon > 1/4$, with $\varepsilon$ very close to 1/4.  We have $\alpha^\star  = 1/2 + a$, with $a > 0$ sufficiently close to $0$.  Now, it is easy to verify that for any real $ p$ belonging to the interval  $]8/7 , (4/3)(1+a)^{-1}[$,  the pair $(p, 1)$ satisfies the condition (b). But in this case, since the kernel $W^{1, p}_0(\Omega) \cap \mathscr{H}$ is non-trivial, the solution cannot be unique, contrary to what is stated in condition (b).  Another way to see that condition (c) can be satisfied is to take $p$ in the interval $]2p_0/(p_0 + 1), p_0[.$ 

Similarly, we can see that for any real $ p$ belonging to the interval $I = \, ]4(1-3a)^{-1}, 8[$,  the pair $(p, 1)$ satisfies the condition (c). Another way to see that condition (b) can be satisfied is to take $p$ in the interval $]p'_0, 2 p'_0[.$ However the following operators
$$
\Delta: \; W^{1, p}_0(\Omega) \rightarrow W^{-1, p}_0(\Omega) \quad \mathrm{and} \quad  W^{1, p'}_0(\Omega) \rightarrow W^{-1, p'}_0(\Omega)
$$
are adjoint from one another. And the range of the space $W^{1,\, p}_0(\Omega)$ by the Laplacian is a subspace of the space
\begin{equation*}
\{f \in W^{-1,\, p}(\Omega); \; \langle f, \varphi\rangle = 0, \; \forall \varphi \in W^{1,\, p'}_0(\Omega)\cap \mathscr{H}\}.
\end{equation*}
So, if $f $ belongs to $W^{-1,\, p}(\Omega)$, with $p \in \, ]p'_0, 2 p'_0[$ and without satisfying the condition
$$
\langle f, \varphi\rangle = 0, \; \forall \varphi \in W^{1,\, p'}_0(\Omega)\cap \mathscr{H},
$$
the $H^1_0(\Omega)$ solution  to the inhomogeneous Dirichlet problem does not belong to $W^{1, p}(\Omega)$. Contrary to what is stated in condition (c) of Theorem \ref{JK95Frac}. Also the condition $(c)$ is also not available. \medskip 

Let us finally recall the results  given in Theorem 1.1 for $N \geq 3$: 

\begin{theorem} [{\bf Jerison-Kenig 95}, Fractional case when $N \geq 3$]\label{JK95FracN3} Let $\Omega$ be a bounded Lipschitz domain in $\mathbb{R}^N$, with $N \geq 3$. There exists $\varepsilon$, $0 < \varepsilon \leq 1$, depending only on the Lipschitz constant of $\Omega$ such that for every $f\in L^p_{s - 2}(\Omega)$, there is a unique 
$u \in L^p_s(\Omega)$ to the inhomogeneous Dirichlet problem provided one of the following holds:\\
\begin{equation*}
\begin{array}{rl}
& \mathrm{(a)} \qquad p_0 < p < p_0' \quad \mathrm{and}\quad 1/p < s < 1 + 1/p\\
& \mathrm{(b)} \qquad 1< p \leq p_0 \qquad \; \mathrm{and}\quad 3/p -1 - \varepsilon  < s < 1 + 1/p\\
& \mathrm{(c)} \qquad  p_0' \leq  p < \infty \quad \mathrm{and}\quad 1/p < s <   3/p + \varepsilon,
\end{array}
\end{equation*}
where $1/p_0 = \varepsilon/2 + 1/2$ and $1/p_0' =  1/2 -\varepsilon/2$.
\end{theorem}

As for the 2D case above,  the exponents $p_1$ and $p_0$ given by Theorem \ref{JK95W1} and Theorem \ref{JK95FracN3} are identical. When $\Omega$ is a polyhedron, we have
$$
p_0 = p_1 = p_0(\Omega) =: 3 /(1- \alpha_1^\star),\; \;  \mathrm{with}\; \;   \alpha_1^\star  = -(1 + \sqrt{1 + 4 \lambda_1^\star})/2 \quad \mathrm{and}\quad \varepsilon =  -(1+ 2\alpha_1^\star)/3\in \, ]1/3, 2/3[,
$$
where $\lambda_1^\star$ is the smallest eigenvalue of the Beltrami Laplacian $- \Delta_\Gamma$.  Note that $p_0(\Omega) < 3/2$ and recall that $\lambda_1^\star > 3/4$, when the polyedron $\Omega$ is convex (see Lemma 2.5 in \cite{Gri2}). The bounds $1/3$ and $2/3$ of the above interval  are determined from the limit values of $\lambda_1^\star(\theta_0)$ when $\theta_0$ tends to $\pi$ and  $\pi/2$ respectively.

Choosing a polyedral domain $\Omega$ whose largest polar angle $\omega^\star$ is close to $\pi$, we show as in the 2D case that Theorem \ref{JK95FracN3}, as stated, is not suitable. Indeed the condition (b) is satisfied with $p \in \,  ]3 p_0/(p_0 + 2), p_0[$, while condition (c) is satisfied for $p \in\, ]p_0', 3p_0'/2[$.
\medskip

For other remarks, see the Appendix below.

 \section{Area integral estimate. Counter-example}\label{Counter Example}

\subsection{Area integral estimate. Reminders and comments}
 In \cite{Necas}, Ne\v{c}as proved the following property (see Theorem 2.2 Section 6): if $\varrho^{\alpha/p} u\in L^p(\Omega)$ and $\varrho^{\alpha/p}\nabla u \in L^p(\Omega)$, for some $0 \leq \alpha < p - 1$, then $u_{\vert\Gamma} \in L^p(\Gamma)$ and 
\begin{equation*}\label{inegNec}
\int_\Gamma \vert u \vert^p\, d\sigma \leq C(\Omega) \big(\int_\Omega \varrho^\alpha\vert  u \vert^p  \, dx + \int_\Omega \varrho^\alpha\vert \nabla u \vert^p \, dx\big).
\end{equation*}
However, if $\alpha = p - 1$, he showed, using a counterexample with $\Omega= \,  ]0, 1/2[\,  \times\,  ]0, 1/2[ $, that the above inequality does not hold in general. In particular for $\alpha = 1$ and $p = 2$, if $ \sqrt \varrho \, u$ and $\sqrt \varrho \, \nabla u \in L^2(\Omega)$ (which implies that $u\in H^{1/2}(\Omega)$),  the function $u$ may have no trace in $L^2(\Gamma)$. \medskip

What about if in addition the function $u$ is harmonic? In \cite{Dahl}, see Corollary, Section 6,  the author proved the following property: let $\textit{\textbf x}_0$ a fixed point in $\Omega$, then there exists a constant $C > 0$ such that if $u$ is a harmonic function in $\Omega$ and vanishes at $\textit{\textbf x}_0$, then 

\begin{equation}\label{inegaltraceL2Gamma1}
C^{-1} \int_\Gamma \vert u \vert^2 d\sigma \leq \int_\Omega \varrho \vert\nabla u\vert^2 dx \leq C \int_\Gamma \vert u \vert^2 d\sigma.
\end{equation}

Seventeen years later, in \cite{Dahlberg}, the authors have demonstrated, in the same context but using a different proof, a similar result  to \eqref{inegaltraceL2Gamma1}:  there exists $C > 0$ such that
\begin{equation}\label{ineg1}
\int_\Gamma \vert u \vert^2 d\sigma \leq C \int_\Gamma \vert S(u)\vert^2 d\sigma = C \int_\Omega \delta \vert\nabla u\vert^2 dx,
\end{equation}
without any specification of spaces of such function. Here, $\Omega$ is a bounded Lipschitz domain of $\mathbb{R}^N$, $N \geq 2$,  and $S(u)$ is the area integral of $u$ and $\delta$ is an adaptative distance to the boundary, equivalent  to the distance $\varrho$ to the boundary $\Gamma$. The proof of \eqref{ineg1} is given on pages 1428 and 1429 in  \cite{Dahlberg} and contains unjustified formal calculations. The authors repeatedly use Green's formulas, the validity of which is not guaranteed. In one of them, they claim that the integral
$$
\int_\Omega D_n(\frac{\delta}{D_n \delta}uD_nu)dx 
$$
is zero since $\delta = 0$ on $\Gamma$. At the end of their proof, the authors should have verified their inequalities using a density result of the following form:
$$
\mathscr{D}(\overline{\Omega}) \cap  \mathscr{H} \quad \mathrm{is\; dense\; in }\quad \{v\in L^2(\Omega); \;  \int_\Omega \varrho \vert\nabla v\vert^2 \, dx< \infty\} \cap  \mathscr{H} ,
$$
where $ \mathscr{H} $ is the space of harmonic functions in $\Omega$.
\smallskip

Recall that if $u\in L^2(\Omega)$, then we have the following implication: 
$$
\sqrt \varrho\,  \nabla u \in L^2(\Omega) \Longrightarrow u \in H^{1/2}(\Omega) \quad \mathrm{and}\quad \sqrt \varrho\,  \nabla^2 u \in L^2(\Omega) \Longrightarrow u \in H^{3/2}(\Omega)
$$
and the reverse implications hold if moreover $u$ is a harmonic function, see Theorem 3.2 and Theorem 3.8 in \cite{AM} or Theorem 4.2 in \cite {J-K}. Here $\nabla^2 u$ denotes the Hessian matrix of $u$. In addition,  we have  the following equivalence norms for harmonic functions:
\begin{equation*}
\begin{array}{rl}
\Vert u \Vert_{H^{1/2}(\Omega)} \approx & \Vert u \Vert_{L^2(\Omega)} + \Vert \sqrt  \varrho\,  \nabla u  \Vert_{L^{2}(\Omega)} \quad \mathrm{for}\; u \in H^{1/2}(\Omega)\cap \mathscr{H}, \\
\Vert u \Vert_{H^{3/2}(\Omega)} \approx & \Vert u \Vert_{L^2(\Omega)} + \Vert \sqrt  \varrho\,  \nabla^2 u  \Vert_{L^{2}(\Omega)} \quad \mathrm{for}\; u \in H^{3/2}(\Omega)\cap \mathscr{H}. 
\end{array}
\end{equation*}

Inequalities \eqref{inegaltraceL2Gamma1} would then result in the equivalence of the following norms:
\begin{equation*}
\Vert u \Vert_{H^{1/2}(\Omega)} \approx  \Vert u \Vert_{L^2(\Gamma)} \quad \mathrm{for}\; u \in H^{1/2}(\Omega)\cap \mathscr{H} 
\end{equation*}
and would imply that any harmonic function $H^{1/2}(\Omega)$ has a trace  in $L^2(\Gamma)$.  Consequently, the gradient of any harmonic function $u$ in $H^{3/2}(\Omega)$ would have a trace in $\textit{\textbf L}^2(\Gamma)$, {\it i.e} $u\in H^1(\Gamma)$ and $\partial_\textit{\textbf n} u \in L^2(\Gamma)$.\medskip

In the same spirit, inequalities \eqref{inegaltraceL2Gamma1} would imply the following property: let $u$ be a harmonic function in $\Omega$ satisfying $u(\textit{\textbf x}_0) = 0$ and $\nabla u(\textit{\textbf x}_0) = {\bf 0}$ at some point $\textit{\textbf x}_0\in \Omega$, then
\begin{equation}\label{inegaltraceL2Gammapent}
\Vert u \Vert_{H^1(\Gamma)} \leq C(\Omega)\Big(\int_\Omega \varrho\vert \nabla^2 u \vert^2 \, dx\Big)^{1/2}.
\end{equation}
And as above,  we would have the equivalence of the following norms:
\begin{equation*}
\Vert u \Vert_{H^{3/2}(\Omega)} \approx  \Vert u \Vert_{H^1(\Gamma)} \quad \mathrm{for}\; u \in H^{3/2}(\Omega)\cap \mathscr{H}.
\end{equation*}

 \subsection{Counter-example}\label{Counter Example-S}
 The purpose of this subsection is to show that the inequalitiy \eqref{inegaltraceL2Gammapent} cannot be valid for Lipschitz domains and therefore the same applies to the inequalities \eqref{inegaltraceL2Gamma1} and \eqref{ineg1}. However, we will propose in the next section an alternative for the functions $H^{1/2}(\Omega)$, resp. $H^{3/2}(\Omega)$, to have a trace in $L^2(\Gamma)$, resp. in $H^1(\Gamma)$.\medskip

\begin{e-proposition}   [{\bf Counter example for Inequality \eqref{inegaltraceL2Gammapent}}]  \label{ConterexampleH1demitrace1b} For any  $\varepsilon > 0$, there exist a Lipschitz domain $\Omega_\varepsilon \subset \mathbb{R}^2$ and a harmonic function $w_\varepsilon \in H^{3/2}(\Omega_\varepsilon)$ (with $\sqrt \varrho_\varepsilon\, \nabla^2 w_\varepsilon  \in L^2(\Omega_\varepsilon))$ such that the following family 
$$
(\Vert \varrho \nabla^2 w_\varepsilon \Vert_{L^{2}(\Omega_\varepsilon)} + \Vert w_\varepsilon \Vert_{H^{3/2}(\Omega_\varepsilon)})_\varepsilon,
$$
is bounded with respect $\varepsilon$ and
\begin{equation*}
\Vert w_\varepsilon \Vert_{H^1(\Gamma_\varepsilon)} \rightarrow \infty \quad as \; \varepsilon\rightarrow  0.
\end{equation*}
\end{e-proposition} 

\begin{proof} Recall first that if $u\in L^2(\Omega) $ is  harmonic, then 
$$
u\in H^{1/2}(\Omega) \Longleftrightarrow \sqrt \varrho \, \nabla u \in L^2(\Omega)\; \mathrm{and}\; u\in H^{3/2}(\Omega) \Longleftrightarrow \sqrt \varrho \, \nabla^2 u \in L^2(\Omega),
$$
(see Theorem \ref{c06-l1} and Theorem \ref{inegharm} above or Theorem 4.2 in \cite{J-K}).
\medskip

\noindent{\bf Step 1.}  We suppose now that  $\Omega= \,  ]0, 1/2[\,  \times\,  ]0, 1/2[ $ and for any $\varepsilon > 0$ and close to 0, we define, as in the counter example to G. David given in the paper of D. Jerison and C. Kenig (\cite{J-K}), the following open set:
$$
\Omega_\varepsilon = \{(x, y) \in \mathbb{R}^2; \;  0 < x < 1/2\;\; \mathrm{and}\; \;  \varepsilon \lambda (x/\varepsilon)< y < 1/2\},
$$
where 
$$
\lambda (x) = x\; \mathrm{ for} \;  0\leq x \leq 1,  \quad  \lambda (x) = 2- x \; \mathrm{ for } \;  1\leq x \leq 2 \quad \mathrm{ and } \; \; \lambda(x + 2) =\lambda (x).
$$
We set
$$
\Gamma_\varepsilon = \{(x, y) \in \mathbb{R}^2; \;  y = \varepsilon \lambda (x/\varepsilon), \; \mathrm{with}\;  0 < x < 1/2\},
$$
which is just a part of the boundary of $\Omega$.\medskip

\noindent{\bf Step 2.} Let us consider the following function (see also Example 2.1 given in Chapter 6 of the book of Ne\v{c}as) which depends only on the variable $y$:
$$
v(x, y) = \int_0^{y} ds\int_{1/2}^s \frac{dt}{t\, \ell n\, t}.
$$
It is easy to verify that $\sqrt{\varrho}\, \nabla^2 v \in \textbf{\textit L}^2(\Omega)$ where $\varrho$  is the distance to the boundary of $\Omega$. So from Theorem \ref{c06-l1}, we have also $v\in H^{3/2}(\Omega)$ and $\Delta v\in [H^{1/2}_{00}(\Omega)]'$. Moreover the $L^2$ norm of the tangential derivative $\partial_\tau v$ of $v$ on $\Gamma_\varepsilon $ tends to infinity as $\varepsilon$ tends to zero. Indeed, we can see that 
$$
\partial_\tau v (x, x) = \frac{\sqrt 2}{2} \int_x^{1/2} \frac{dt}{t\, \vert  \ell n\, t\vert} \quad \mathrm{ for } \; \; 0 <  x < \varepsilon,
$$
and 
$$
\partial_\tau v (x, 2\varepsilon - x) = - \frac{\sqrt 2}{2}  \int_{2\varepsilon - x}^{1/2} \frac{dt}{t\, \vert  \ell n\, t\vert} \quad \mathrm{ for } \; \; \varepsilon <  x < 2\varepsilon.
$$
Observe that 
$$
 \int_x^{1/2} \frac{dt}{t\, \vert  \ell n\, t\vert} = \ell n(- \ell n\, x) -  \ell n( \ell n\, 2).
 $$
So we have
\begin{equation*}
\begin{array}{rl}
\displaystyle\int_{\varepsilon}^{2\varepsilon}\left(\int_{2\varepsilon - x}^{1/2}\frac{dt}{t\, \vert  \ell n\, t\vert}\right)^2 dx = & \displaystyle\int_{\varepsilon}^{2\varepsilon} [\ell n(- \ell n\, (2\varepsilon - x)) -  \ell n( \ell n\, 2)]^2 dx\\
= &\displaystyle \int_{0}^{\varepsilon} [\ell n(- \ell n\, x) -  \ell n( \ell n\, 2)]^2 dx.
\end{array}
\end{equation*}
In addition as the function $\partial_\tau v$ is periodic with the period $ 2\varepsilon$ we get
$$
\Vert\partial_\tau v\Vert_{L^2(\Gamma_\varepsilon)}^2 =  (1/4\varepsilon)\int_0^{\varepsilon}\left(\int_x^{1/2}\frac{dt}{t\, \vert  \ell n\, t\vert}\right)^2 dx, 
$$
where we have chosen $\varepsilon = 1/4k$, with $k\in \mathbb{N}^\star$. 
\medskip

\noindent{\bf Step 3.}  We will give an estimate of the integral:
$$
I_\varepsilon = \int_0^{\varepsilon}\left(\int_x^{1/2}\frac{dt}{t\, \vert  \ell n\, t\vert}\right)^2 dx. 
$$
Setting $s =  \ell n \, t$, we get 
$$
\int_x^{1/2}\frac{dt}{t\, \vert  \ell n\, t\vert} = -  \int_{ \ell n\, x}^{-\ell n\, 2}\frac{ds}{s} = \ell n(- \ell n\, x) -  \ell n( \ell n\, 2).
$$
So $I_\varepsilon  = I_{1\varepsilon} - I_{2\varepsilon} + I_{3\varepsilon}$, where
$$
 I_{1\varepsilon} =  \int_0^{\varepsilon}  [\ell n(- \ell n\, x)]^2dx \quad I_{2\varepsilon} = 2 \ell n( \ell n\, 2)\int_0^{\varepsilon} \ell n(- \ell n\, x)dx
 $$
and
$$
 I_{3\varepsilon} =  \int_0^{\varepsilon}  [\ell n( \ell n\, 2)]^2dx.
$$
Since the function $\ell n(- \ell n\, x)$ is nondecreasing in the interval $]0, \varepsilon]$ and  $I_\varepsilon  \geq I_{1\varepsilon} - I_{2\varepsilon} $, we deduce easily the estimate for $\varepsilon$ close to 0:
\begin{equation*}
 I_{\varepsilon} \geq  \frac{\varepsilon}{2} \ell n(-\ell n\, \varepsilon)]^2
\end{equation*}
So, we get
$$
 \frac{1}{2\sqrt 2} [\ell n(-\ell n\, \varepsilon)]  \leq \Vert\partial_\tau v\Vert_{L^2(\Gamma_\varepsilon)} \longrightarrow \infty \quad \mathrm{as} \quad \varepsilon \rightarrow 0.
$$
\noindent{\bf Step 4.} Now, using Theorem \ref{theoHsregul}  and Proposition \ref{corGrisvardPol} in the polygon $\Omega_\varepsilon$, there exits a unique solution $ H^{3/2}_0(\Omega_\varepsilon)$ satisfying $\Delta u_\varepsilon = \Delta v$ in $\Omega_\varepsilon$ with the estimate 
\begin{equation}\label{ineguvareps}
\Vert   u_\varepsilon  \Vert_{H^{3/2}(\Omega_\varepsilon)}\leq C(\Omega_\varepsilon) \Vert \Delta v \Vert_{M_{1/2}(\Omega_\varepsilon)},
\end{equation}
where $ C(\Omega_\varepsilon)$ depends only on the Poincar\'e constant and on the Lipschitz constant of $\Omega_\varepsilon$, which are both  bounded with respect $\varepsilon$: so we have  $  C(\Omega_\varepsilon) \leq  C(\Omega)$. Since the norm $ \Vert \Delta v \Vert_{M_{1/2}(\Omega_\varepsilon)}$ is equivalent to the norm $ \Vert \Delta v \Vert_{[H^{1/2}_{00}(\Omega_\varepsilon)]'}$, we have the following:
\begin{equation}\label{relHM}
 \exists C_ \varepsilon \geq  0  \quad \mathrm{such \, \, that}\quad \Vert \Delta v \Vert_{[H^{1/2}_{00}(\Omega_\varepsilon)]'} \leq \Vert \Delta v \Vert_{M_{1/2}(\Omega_\varepsilon)} = (1 + C_ \varepsilon)\Vert \Delta v \Vert_{[H^{1/2}_{00}(\Omega_\varepsilon)]'}. 
\end{equation}
To simplify the notations, we will write  $M(\Omega)$ instead of $M_{1/2}(\Omega)$, resp. $M(\Omega_\varepsilon)$ instead of $M_{1/2}(\Omega_\varepsilon)$. 
Since $\Omega$ is convex, the kernel space $\mathscr{N}_0(\Omega)$ is reduced to zero. So
\begin{equation}\label{egalMH}
\! \! \! \! M(\Omega) =  [L^2(\Omega), H^{-1}(\Omega)]_{1/2} = (H^{1/2}_{00}(\Omega))' \quad \mathrm{with}\quad \Vert f \Vert_{M(\Omega)} = \Vert f \Vert_{ (H^{1/2}_{00}(\Omega))'}\quad \mathrm{for}\; f \in (H^{1/2}_{00}(\Omega))'
\end{equation}

We claim that 
\begin{equation*}\label{Ceps}
\lim_{\varepsilon \rightarrow 0}C_\varepsilon = 0  .
\end{equation*} 

Let us begin by observing that for any $f \in (H^{1/2}_{00}(\Omega))'$,
\begin{equation}\label{inegDelta}
\Vert f \Vert_{[H^{1/2}_{00}(\Omega_\varepsilon)]'} = \displaystyle\sup_{\varphi\in H^{\,1/2}_{00}(\Omega_\varepsilon), \, \varphi \not= 0}\dfrac{ \vert \langle f,\,  \varphi\rangle_{\Omega_\varepsilon}\vert}{ \Vert \varphi \Vert_{H^{1/2}_{00}(\Omega_\varepsilon)}} = \displaystyle\sup_{\varphi\in H^{\,1/2}_{00}(\Omega_\varepsilon), \, \varphi \not= 0}\dfrac{ \vert \langle f,\, \widetilde{ \varphi} \rangle_{\Omega}\vert}{ \Vert \widetilde{ \varphi} \Vert_{H^{1/2}_{00}(\Omega)}},
\end{equation}
where the extension by zero of $\varphi$ in $\Omega$ satisfies
\begin{equation}\label{egnor}
 \widetilde{ \varphi} \in H^{1/2}_{00}(\Omega) \quad \mathrm{and}\quad  \Vert \widetilde{ \varphi} \Vert_{H^{1/2}_{00}(\Omega)} =  \Vert  \varphi\Vert_{H^{1/2}_{00}(\Omega_\varepsilon)}.
 \end{equation}
As the sequence $(\Omega_\varepsilon)_{\varepsilon}$ is increasing  in the sense of inclusion, when  $\varepsilon \searrow 0$, we deduce from \eqref{inegDelta} and \eqref{egnor} that the sequence $ (\Vert f \Vert_{[H^{1/2}_{00}(\Omega_\varepsilon)]'})_\varepsilon$ is increasing when  $\varepsilon \searrow 0$
and
$$
\lim_{\varepsilon \rightarrow 0}\Vert f \Vert_{[H^{1/2}_{00}(\Omega_\varepsilon)]'} = \Vert f \Vert_{(H^{1/2}_{00}(\Omega))'}.
$$
Concerning the corresponding norms for the domain $\Omega_\varepsilon$,  we first recall that
$$
M(\Omega_\varepsilon)  =: [(\mathscr{N}_0(\Omega_\varepsilon))^\bot,  H^{-1}(\Omega_\varepsilon)]_{1/2} =  (H^{1/2}_{00}(\Omega_\varepsilon))',
$$
where $(\mathscr{N}_0(\Omega_\varepsilon))^\bot$ is equipped with the $L^2(\Omega_\varepsilon)$ norm.  Similarly,  the sequence $(\Vert \Delta v \Vert_{M(\Omega_\varepsilon)})_\varepsilon$ is increasing when  $\varepsilon \searrow 0$.  From \eqref{egalMH}, we know that
\begin{equation*}\label{egaMHb}
\Vert \Delta v \Vert_{M(\Omega)} = \Vert \Delta v \Vert_{ (H^{1/2}_{00}(\Omega))'}.
\end{equation*}
So the only possibility for the above equality to hold is for the constant $C_\varepsilon$ in \eqref{relHM} to approach 0 as $\varepsilon$ tends to $ 0$. Thus 
$$
  \lim_{\varepsilon \rightarrow 0}\Vert \Delta v \Vert_{M(\Omega_\varepsilon)}= \Vert \Delta v \Vert_{(H^{1/2}_{00}(\Omega))'} = \Vert \Delta v \Vert_{M(\Omega)}  .
$$

\noindent{\bf Step 5.} Using the estimate \eqref{ineguvareps}, we finally get the following:
$$
\Vert   u_\varepsilon  \Vert_{H^{3/2}(\Omega_\varepsilon)}\leq C(\Omega) \Vert \Delta v \Vert_{(H^{1/2}_{00}(\Omega))'}\leq C(\Omega) \Vert  v \Vert_{H^{3/2}(\Omega)},
$$
where $ C(\Omega)$ does not depend on $\varepsilon$.  

So the harmonic function in $\Omega_\varepsilon$ defined by $w_\varepsilon = v - u_\varepsilon $ belongs to $H^{3/2}(\Omega_\varepsilon)$ and satisfies
\begin{equation*}\label{estimCE}
 \Vert   w_\varepsilon  \Vert_{H^{3/2}(\Omega_\varepsilon)}\leq C(\Omega) \Vert  v \Vert_{H^{3/2}(\Omega)}.
\end{equation*}

Then, as $u_\varepsilon  = 0$ on $\partial\Omega_\varepsilon$, we have $\partial_\tau w_\varepsilon = \partial_\tau v$ on $\Gamma_\varepsilon$ and $w_\varepsilon$ would verify the following equality: $
\Vert\partial_\tau w_\varepsilon\Vert_{L^2(\Gamma_\varepsilon)} = \Vert\partial_\tau v\Vert_{L^2(\Gamma_\varepsilon)},$ which tends to infinity as $\varepsilon$ tends to zero.\end{proof} \medskip

Thus, there can be no estimate for harmonic function $u$, on the norm $\Vert u \Vert_{H^1(\Gamma)}$ by the norm  $\Vert u \Vert_{ H^{3/2}(\Omega)}$  dependent only on the Lipschitz constant of the considered  domain $\Omega$. More precisely, the following inequality
\begin{equation*}
\Vert u \Vert_{H^1(\Gamma)} \leq  C(\Omega) \Vert u \Vert_{ H^{3/2}(\Omega)}
\end{equation*}
cannot be satisfied in general, unlike when the domain is of class $\mathscr{C}^{1, 1}$, as we will see in the next section. As explained in the introduction, this implies that the following  inequality  
\begin{equation*}
\Vert u \Vert_{L^2(\Gamma)} \leq  C(\Omega) \Vert u \Vert_{H^{1/2}(\Omega)}
\end{equation*}
cannot be satisfied in general for harmonic functions $H^{1/2}(\Omega)$ when the domain is only Lipschitz and  the same applies to inequalities  \eqref{inegaltraceL2Gamma1} and \eqref{ineg1}.

\section{Traces in the limit cases $H^{1/2}(\Omega)$ and $H^{3/2}(\Omega)$ (Continuation)} \label{traces2}

Recall that when $\Omega$ is of class $\mathscr{C}^{k-1, 1}$, with positive integer $k$, then the  the linear mapping 
\begin{equation*}\label{trlim}
\gamma:  u \longrightarrow u_{\vert \Gamma}\\
\end{equation*}
is continuous for any  $1/2< s < 1/2 + k$ (see \cite{Wend}). In particular if $\Omega$ is of class $\mathscr{C}^{1, 1}$ and $u\in H^{3/2}(\Omega)$, then its trace belongs to $H^1(\Gamma)$ (see also \cite{McL}, Theorem 3.7). \medskip

Now, concerning the normal derivative $\partial_{\textit{\textbf n}}u$ of $u$, we recall that if $\Omega$ is a bounded Lipschitz domain and $u\in H^{3/2}(\Omega)$, the normal derivative is in general not defined (even if $\Omega$ is more regular). However, when $u\in H^{s}(\Omega)$, with $s >  3/2$, then $\partial_{\textit{\textbf n}}u \in L^2(\Gamma)$ only. In the case where $\Omega$ is $\mathscr{C}^{1, 1}$  and $u \in H^s(\Omega)$, with $3/2 < s < 5/2$, then $\partial_{\textit{\textbf n}}u\in H^{s-3/2}(\Gamma)$.\medskip

For the extremal value $s = 1/2$, we have seen in Section \ref{traces} that the range of $E(\nabla;\, \Omega)$ (see \eqref{defEnablaOmega}  for the definition of this space) by $\gamma$  is included in $L^2(\Gamma)$.   

\begin{remark}\label{Surj}\upshape What about the characterization of the range of $E(\nabla;\, \Omega)$ by the linear mapping $\gamma: u \mapsto u_{\vert \Gamma}$? Is this range equal or strictly included in $L^2(\Gamma)$? We will answer this question a little later on, see Theorem \ref{protrace} below.
\end{remark} 

We will now focus on questions related to the normal derivative of $H^1(\Omega)$ functions with sufficiently regular Laplacian.  Recall that if $v \in H^1(\Omega)$ with $\Delta v \in L^2(\Omega)$ (or even $\Delta v \in [H^{1/2}(\Omega)]'$), then the normal derivative  $\partial_\textit{\textbf n} v = \nabla v\cdot\textit{\textbf n}  \in H^{-1/2}(\Gamma)$ and we have the following Green formula:
\begin{equation*}
	\forall\varphi\in H^{1}(\Omega),\quad  \int_{\Omega}(\nabla v\cdot \nabla \varphi + \varphi \Delta v) \, dx = \langle \partial_\textit{\textbf n} v, \varphi \rangle_{H^{-1/2}(\Gamma) \times H^{1/2}(\Gamma)}.
\end{equation*}
To prove the existence of this normal derivative and this Green formula, we can use the surjectivity of  the trace operator from $H^1(\Omega)$ into $H^{1/2}(\Gamma)$,  the same Green formula for smooth functions $v$ in $\mathscr{D}(\overline{\Omega})$ and finally the density of this last space into $E(\Delta, \Omega) = \{v \in H^1(\Omega); \;  \Delta v \in L^2(\Omega)\}$. Or more simply, as in \cite{Fabes3},  by defining the normal derivative as the  continuous linear form on $H^{1/2}(\Gamma)$ 
\begin{equation*}
T: g \mapsto \int_{\Omega}(\nabla v\cdot \nabla \varphi + \varphi \Delta v)\, dx, 
\end{equation*}
 where $\varphi \in H^1(\Omega)$ is any extension of $g$ satisfying $\Vert \varphi \Vert_{H^1(\Omega)} \leq C(\Omega) \Vert g \Vert_{H^{1/2}(\Gamma)}$.

\begin{remark} \upshape \label{remDN} What happens now for the regularity of $ \partial_\textit{\textbf n} v$ if in addition  $v\in H^{3/2}(\Omega)$ and $\Delta v = 0$ in $\Omega$?  Clearly, we know that $ \partial_\textit{\textbf n} v \in H^{-s}(\Gamma)$ for any $0 < s \leq 1/2$. So the precise question is now: can we take $s = 0$ and then to get $ \partial_\textit{\textbf n} v \in L^2(\Gamma)$?  For that  we must show that the above linear form $T$ can be extended (and in this case the extension will be unique since  $H^{1/2}(\Gamma)$ is dense in $L^2(\Gamma)$) to a continuous linear form on $L^2(\Gamma)$ still denoted by $T$:
\begin{equation*}
T:  g \mapsto \langle \nabla v,  \nabla u_g\rangle_{H^{1/2}(\Omega)\times [H^{1/2}(\Omega)]'} , 
\end{equation*}
where $u_g \in H^{1/2}(\Omega)$ is the unique harmonique function satisfying $u_g = g$ on $\Gamma$. Recalling that the linear operator $S : g \mapsto u_g$ is continuous from $L^2(\Gamma)$ into $H^{1/2}(\Omega)$, the answer to the previous question will be positive if and only if the following linear mapping
\begin{equation}\label{nablaS}
\nabla_\circ S : L^2(\Gamma) \longrightarrow [H^{1/2}(\Omega)]' 
\end{equation}
is  continuous. We will answer this question in the next theorem.\medskip
\end{remark}

\begin{theorem} [{\bf Traces of harmonic functions  $H^{1/2}$}]\label{protrace}
Assume that $\Omega$ is of class $\mathscr{C}^{1, 1}$. We have the following properties:\\
i) Let $u \in H^{1/2}(\Omega)$. Then we have the following implication:
\begin{equation}\label{impharC11}
u \quad \mathrm{harmonic}\quad   \Longrightarrow \quad u_{\vert\Gamma} \in L^2(\Gamma) \quad \mathrm{and} \quad \nabla u \in  [\textit{\textbf  H}^{\, 1/2}(\Omega)]'
\end{equation} 
In particular, the trace operator $\gamma :  H^{1/2}(\Omega)\cap \mathscr{H} \rightarrow L^2(\Gamma)$ is an isomorphism, where $\mathscr{H}$ is the space of harmonic functions in $\Omega$. Moreover, the range $\gamma(E(\nabla, \Omega))$ is equal to $ L^2(\Gamma)$. \\
ii) We have the algebraical identity $H^{1/2}(\Omega)\cap \mathscr{H}  = E(\nabla, \Omega)\cap \mathscr{H}$ and topological:
$$
\Vert \nabla v \Vert_{[\textit{\textbf H}^{\,1/2}(\Omega)]'} \approx \Vert \nabla v \Vert_{[\textit{\textbf H}^{\, 1/2}_{00}(\Omega)]'} \quad \mathrm{for}\; v \in H^{1/2}(\Omega)\cap \mathscr{H}.
$$
iii) The operator \eqref{nablaS} is  continuous.
\end{theorem}

\begin{proof} {\bf i) Step 1.}  We know that $S \in \mathscr{L}(H^{1/2}(\Gamma); H^1(\Omega)\cap \mathscr{H})$ even if $\Omega$ is only Lipschitz. Since $\Omega$ is of class $\mathscr{C}^{1, 1}$ we have also $S \in \mathscr{L}(H^{-1/2}(\Gamma); L^2(\Omega)\cap \mathscr{H})$ (see Theorem 7 in  \cite{ARB}). However, this last continuity property does not hold  when $\Omega$ is only Lipschitz since the normal trace operator from $ H^{2}(\Omega)\cap H^{1}_0(\Omega)$ into $ H^{1/2}(\Gamma)$ is not well defined and of course non surjective. Now, as
$$
[L^2(\Omega)\cap \mathscr{H}, H^1(\Omega)\cap \mathscr{H}]_{1/2} = H^{1/2}(\Omega)\cap \mathscr{H},
$$
(see \cite{J-K} page 183) we deduce  by interpolation that $S \in \mathscr{L}(L^{2}(\Gamma); H^{1/2}(\Omega)\cap \mathscr{H})$.  

Besides, the trace operator $\gamma$ satisfies:
$$
\forall v\in H^1(\Omega)\cap \mathscr{H}, \quad \Vert \gamma v \Vert_{H^{1/2}(\Gamma)} \leq C(\Omega) \Vert v \Vert_{H^1(\Omega)}\quad 
$$
and
$$
\forall v\in L^2(\Omega)\cap \mathscr{H}, \quad \Vert \gamma v \Vert_{H^{-1/2}(\Gamma)} \leq C(\Omega) \Vert v \Vert_{L^2(\Omega)},
$$
(see Lemma 2 in  \cite{ARB} for this last inequality which holds since $  \Omega$ is $ \mathscr{C}^{1, 1}$).
By interpolation again, we deduce that
\begin{equation*}
\forall v\in H^{1/2}(\Omega)\cap \mathscr{H}, \quad \Vert \gamma v \Vert_{L^{2}(\Gamma)} \leq C(\Omega) \Vert v \Vert_{H^{1/2}(\Omega)}.
\end{equation*} 
That means that the trace operator $\gamma :  H^{1/2}(\Omega)\cap \mathscr{H} \rightarrow L^2(\Gamma)$ is continuous and is bijective.   Moreover $S = \gamma^{-1}$ and 
$$
 \Vert v \Vert_{H^{1/2}(\Omega)} \approx \Vert \gamma v \ \Vert_{L^{2}(\Gamma)} \quad \mathrm{for}\quad  v\in H^{1/2}(\Omega)\cap \mathscr{H}.
 $$
 This proves the first part of implication \eqref{impharC11}. \medskip
  
\noindent{\bf Step 2.}  Let us now prove the second part of implication \eqref{impharC11}. Let  $u \in H^{1/2}(\Omega)$ a harmonic function. Since $\Omega$ is of class $\mathscr{C}^{1, 1}$,  for any $\textit{\textbf F} \in \mathscr{D}(\Omega)^N$ there exists a unique 
 $\chi \in H^2(\Omega)\cap L^2_0(\Omega)$ such that 
\begin{equation}\label{estimNeu}
\Delta \chi = \mathrm{div}\, \textit{\textbf F} \quad \mathrm{in}\; \Omega \quad \mathrm{and}\quad  \partial_\textit{\textbf n}\chi = 0\quad \mathrm{on}\; \Gamma, \quad \mathrm{with}\quad \Vert \chi \Vert_{H^{3/2}(\Omega)} \leq C \Vert \textit{\textbf F}\,  \Vert_{\textit{\textbf  H}^{\, 1/2}(\Omega)}. 
\end{equation}
Using the harmonicity of $u$, we have
$$
\langle \nabla u, \textit{\textbf F}\, \rangle_{[\mathscr{D}'(\Omega)]^N \times \mathscr{D}(\Omega)^N} = - \int_\Omega u\,  \mathrm{div}\, \textit{\textbf F} \, dx = - \int_\Omega u\,  \Delta \chi \, dx.
$$
Since the regularity of the domain $\Omega$, we know that for any $ v\in L^2(\Omega)$ with $ \Delta v \in L^2(\Omega)$ and $\varphi \in H^2(\Omega)$, 
\begin{equation*}
 \int_\Omega v\Delta \varphi \, dx -  \int_\Omega \varphi \Delta v \, dx = \langle v, \partial_\textit{\textbf n} \varphi\rangle_{H^{-1/2}(\Gamma)\times H^{1/2}(\Gamma)} -  \langle \partial_\textit{\textbf n} v , \varphi \rangle_{H^{-3/2}(\Gamma)\times H^{3/2}(\Gamma)}.
\end{equation*}
Recall also that the Steklov operator (also called the Dirichlet-to-Neumann operator) denoted by $S_P$ satisfies the estimate (which holds even if $\Omega$ is only Lipschitz):
$$
\Vert S_P u \Vert_{H^{-1}(\Gamma)} \leq C(\Omega) \Vert  u \Vert_{L^{2}(\Gamma)}.
$$

So we deduce that
$$
\langle \nabla u, \textit{\textbf F}\, \rangle_{[\mathscr{D}'(\Omega)]^N \times \mathscr{D}(\Omega)^N}  = - \langle \partial_\textit{\textbf n} u, \chi\rangle_{H^{-1}(\Gamma)\times H^1(\Gamma)}
 $$
and then
$$
\vert \langle \nabla u, \textit{\textbf F}\, \rangle_{[\mathscr{D}'(\Omega)]^N \times \mathscr{D}(\Omega)^N} \vert \leq C(\Omega) \Vert u \Vert_{L^{2}(\Gamma)} \Vert \chi \Vert_{H^{1}(\Gamma)}.  
$$
As $\Omega$ is $\mathscr{C}^{1, 1}$, we know that
$$
\Vert \chi \Vert_{H^{1}(\Gamma)} \leq C(\Omega)\Vert \chi \Vert_{H^{3/2}(\Omega)}.
$$
Note that this last inequality does not occur when $\Omega$ is only Lipchitz. From the estimate in \eqref{estimNeu}, we finally deduce that
$$
\vert \langle \nabla u, \textit{\textbf F}\, \rangle_{[\mathscr{D}'(\Omega)]^N \times \mathscr{D}(\Omega)^N} \vert \leq C(\Omega)\Vert u \Vert_{L^2(\Gamma)} \Vert \textit{\textbf F}\,  \Vert_{\textit{\textbf  H}^{\, 1/2}(\Omega)}
$$
and then the estimate
$$
\Vert \nabla u \Vert_{[\textit{\textbf  H}^{\, 1/2}(\Omega)]'} \leq C(\Omega)\Vert u \Vert_{L^2(\Gamma)} \leq C(\Omega)\Vert u \Vert_{H^{1/2}(\Omega)} 
$$
by using the density of $\mathscr{D}(\Omega)^N$ in $\textit{\textbf  H}^{\, 1/2}(\Omega)$. This proves the second part of implication \eqref{impharC11}. \medskip

The isomorphism $\gamma :  H^{1/2}(\Omega)\cap \mathscr{H} \rightarrow L^2(\Gamma)$ is a simple consequence of the fact that for every $g\in L^2(\Gamma)$, there exists a unique harmonic function $w$ in $\Omega$ satisfying $w = g$ on $\Gamma$  (see Theorem 5.3 in \cite{J-K} or Theorem 8.4 in \cite{AM}). And we clearly have the identity $\gamma(E(\nabla, \Omega)) = L^2(\Gamma)$.\medskip

\noindent{\bf ii)} We have always the following  inclusion  
$$
E(\nabla, (\Omega))\cap \mathscr{H}\subset H^{1/2}(\Omega)\cap \mathscr{H}  
$$
even if $\Omega$ is only Lipschitz. The reverse inclusion is an immediate consequence of \eqref{impharC11}.\medskip

\noindent{\bf iii)}  The last property concerning the operator \eqref{nablaS} is an immediate consequence of Point ii) and the continuity of $S$.
\end{proof}
 
 \begin{remark}\upshape  Unlike in the case where $\Omega$ is of class $\mathscr{C}^{1, 1}$, the mapping  $\gamma :  H^{1/2}(\Omega)\cap \mathscr{H} \rightarrow L^2(\Gamma) $ is not well defined when $\Omega$ is only Lipschitz, see Section \ref{Counter Example}.
  \end{remark}

Recall the following Ne$\mathrm{\check{c}}$as property (see \cite{Necas}, Chapter 5):

\begin{theorem}[{\bf Ne$\mathrm{\check{c}}$as Property}]\label{NecProt} Let $u\in H^1(\Omega)$ with $\Delta u  \in\ L^2(\Omega)$,  then we have the following equivalences 

\begin{equation}\label{NecProi}
u_{\vert \Gamma}\in H^1(\Gamma) \Longleftrightarrow \partial_\textit{\textbf n} u\in L^2(\Gamma) \Longleftrightarrow  \nabla u \in \textit{\textbf L}^2(\Gamma).
\end{equation}
Moreover, in this case, we have the following estimates
\begin{equation*}
\Vert\partial_\textit{\textbf n} u\Vert_{ \textit{\textbf L}^2(\Gamma)}\ \leq\ C(\Omega)\Big(\inf_{k\in  \mathbb{R}}\left|\left |u + k \right |\right |_{H^1(\Gamma)} + \Vert \Delta u\Vert_{L^2(\Omega)}\Big)
\end{equation*}
and
\begin{equation*}
 \inf_{k\in  \mathbb{R}}\Vert u + k\Vert_{H^1(\Gamma)}  \ \leq\ C(\Omega)\Big( \Vert\partial_\textit{\textbf n} u\Vert_{ \textit{\textbf L}^2(\Gamma)} +  \Vert \Delta u\Vert_{L^2(\Omega)}\Big),
\end{equation*}
where $ C(\Omega)$ depends only on the Lipschitz constant of $\Omega$.
\end{theorem}

We will see below (see Corollary \ref{d02-110118-th1a}) that the above equivalences \eqref{NecProi} are valid if we replace the condition  $\Delta u  \in\ L^2(\Omega)$ by the weaker condition  $\Delta u  \in [H^{1/2}(\Omega)]'$.

\begin{theorem}  [{\bf Traces of harmonic functions  $H^{3/2}$}]\label{protrace32}
Assume that $\Omega$ is of class $\mathscr{C}^{1, 1}$. We have the following properties: let $u \in H^{3/2}(\Omega)$, then we have the following implication:
\begin{equation}\label{impharC1132}
u \quad \mathrm{harmonic}\quad   \Longrightarrow \quad \partial_\textit{\textbf n} u \in L^2(\Gamma) \quad \mathrm{and} \quad \nabla^2 u \in  [\textit{\textbf  H}^{\, 1/2}(\Omega)]'
\end{equation} 
In particular, the trace operator $\gamma_{\textit{\textbf n}} :  v \rightarrow \partial_\textit{\textbf n} v$ is an isomorphism from $ H^{3/2}(\Omega)\cap L^2_0(\Omega) \cap \mathscr{H} $ onto $ L^2_0(\Gamma)$. 
\end{theorem}

\begin{proof} We start by observe that $u_{\vert \Gamma} \in H^1(\Gamma) $ since $\Omega$ is of class $\mathscr{C}^{1, 1}$. So the first part of implication \eqref{impharC1132} is an immediate consequence of \eqref{NecProi}. Another way to obtain this result is to proceed as in the proof of Theorem \ref{protrace}.\\

 Let us now prove the second part of implication \eqref{impharC1132}. Let  $u \in H^{3/2}(\Omega)$ a harmonic function. Since $\Omega$ is of class $\mathscr{C}^{1, 1}$,  for any $f \in \mathscr{D}(\Omega)$ and any $j = 1, \ldots, N$, there exists a unique 
 $\chi_j \in H^2(\Omega)\cap L^2_0(\Omega)$ such that 
\begin{equation}\label{estimNeuj}\
\Delta \chi_j = \frac{\partial f}{\partial x_{j}} \quad \mathrm{in}\; \Omega \quad \mathrm{and}\quad  \partial_\textit{\textbf n}\chi_j = 0\quad \mathrm{on}\; \Gamma, \quad \mathrm{with}\quad \Vert \chi_j \Vert_{H^{3/2}(\Omega)} \leq C_j \Vert f  \Vert_{H^{1/2}(\Omega)}. 
\end{equation}
Using the harmonicity of $u$, we have for any $i = 1, \ldots, N$, 
$$
\langle \frac{\partial^2  u}{\partial x_{i}\partial x_{j}}, f\, \rangle_{\mathscr{D}'(\Omega) \times \mathscr{D}(\Omega)} = - \int_\Omega \frac{\partial u}{\partial x_{i}} \, \frac{\partial f}{\partial x_{j}} \, dx = - \int_\Omega \frac{\partial u}{\partial x_{i}}  \Delta \chi_j \, dx.
$$
Since the regularity of the domain $\Omega$, we know that for any $ v\in L^2(\Omega)$ with $ \Delta v \in L^2(\Omega)$ and $\varphi \in H^2(\Omega)$, 
\begin{equation*}
 \int_\Omega v\Delta \varphi \, dx -  \int_\Omega \varphi \Delta v \, dx = \langle v, \partial_\textit{\textbf n} \varphi\rangle_{H^{-1/2}(\Gamma)\times H^{1/2}(\Gamma)} -  \langle \partial_\textit{\textbf n} v , \varphi \rangle_{H^{-3/2}(\Gamma)\times H^{3/2}(\Gamma)}.
\end{equation*}
So we deduce from this Green formula that
$$
\langle \frac{\partial^2  u}{\partial x_{i}\partial x_{j}}, f \rangle_{\mathscr{D}'(\Omega) \times \mathscr{D}(\Omega)}   = - \langle \partial_\textit{\textbf n} (\frac{\partial u}{\partial x_{i}}), \chi_j\rangle_{H^{-1}(\Gamma)\times H^1(\Gamma)}.
 $$
Note that $\frac{\partial u}{\partial x_{i}} \in H^{1/2}(\Omega)$ and is harmonic. Hence $\frac{\partial u}{\partial x_{i}}_{\vert\Gamma} \in L^2(\Gamma)$ and $\partial_\textit{\textbf n} (\frac{\partial u}{\partial x_{i}}) \in H^{-1}(\Gamma)$. As in the proof of Theorem \ref{protrace}, we deduce that
$$
\vert \langle \frac{\partial^2  u}{\partial x_{i}\partial x_{j}}, f \rangle_{\mathscr{D}'(\Omega) \times \mathscr{D}(\Omega)} \vert \leq C_{ij}(\Omega) \Vert \frac{\partial u}{\partial x_{i}} \Vert_{L^{2}(\Gamma)} \Vert \chi_j \Vert_{H^{1}(\Gamma)} \leq C_{ij}(\Omega) \Vert \frac{\partial u}{\partial x_{i}} \Vert_{L^{2}(\Gamma)} \Vert \chi_j \Vert_{H^{3/2}(\Omega)}.  
$$
From the estimate in \eqref{estimNeuj}, we finally deduce that
$$
\vert \langle \frac{\partial^2  u}{\partial x_{i}\partial x_{j}}, f \rangle_{\mathscr{D}'(\Omega) \times \mathscr{D}(\Omega)} \vert\leq C_{ij}(\Omega) \Vert u \Vert_{H^{3/2}(\Omega)} \Vert f  \Vert_{H^{1/2}(\Omega)}.
$$
and then the estimate
$$
\Vert \frac{\partial^2  u}{\partial x_{i}\partial x_{j}}\Vert_{[H^{\, 1/2}(\Omega)]'} \leq C_{ij}(\Omega) \Vert u \Vert_{H^{3/2}(\Omega)} 
$$
by using the density of $\mathscr{D}(\Omega)$ in $H^{\, 1/2}(\Omega)$. This proves the second part of implication \eqref{impharC1132}. \medskip

The isomorphism $\gamma_{\textit{\textbf n}}  :  H^{3/2}(\Omega)\cap L^2_0(\Omega) \cap \mathscr{H} \rightarrow L^2_0(\Gamma)$ is a simple consequence of the fact that for every $h\in L^2_0(\Gamma)$, there exists a unique harmonic function $w\in H^{3/2}(\Omega)\cap L^2_0(\Omega) $ in $\Omega$ satisfying $ \partial_\textit{\textbf n}  w = h$ on $\Gamma$  (see \cite{Jer1}). And we have the following equivalence:
$$
 \Vert v \Vert_{H^{3/2}(\Omega)} \approx \Vert \gamma_{\textit{\textbf n}} v \ \Vert_{L^{2}(\Gamma)} \quad \mathrm{for}\quad  v\in H^{3/2}(\Omega)\cap L^2_0(\Omega) \cap \mathscr{H}.
 $$
 And we clearly have the identity  $\gamma_\textit{\textbf n}(E(\nabla^2, \Omega)) = L^2_0(\Gamma)$.
\end{proof}

\section{The inhomogeneous problem. Solvability in  $H^{s}(\Omega)$ for $1/2 \leq s \leq 3/2$}\label{Inhomogeneous Problem}
 
Here we will consider the following problem:
$$
(\mathscr{L}_D^0)\ \ \ \  -\Delta u = f\quad \ \mbox{in}\ \Omega \quad
\mbox{and } \quad u = 0  \ \ \mbox{on }\Gamma,
$$
for given $f$ in fractional Sobolev spaces.  It is well known that for any $f\in H^{s-2}(\Omega) $, with $1/2 < s < 3/2$, there exists a unique solution $u\in H^s(\Omega)$ to Problem $(\mathscr{L}_D^0)$. The limit cases $s = 3/2$ or $s = 1/2$ are particularly delicate. For $f\in L^2(\Omega)$, it is well known that there exists a unique solution $u\in H^{3/2}(\Omega)$ to problem $(\mathscr{L}_D^0)$ (see Theorem B in  \cite{J-K}) (in fact $f\in H^{s-2}(\Omega) $, with arbitrary $s > 3/2$, is suffisant). But these assumptions on $f$ are too strong as we can see below. It would be interesting to characterize the range of the Laplacian operator  from $H^{3/2}(\Omega)\cap H^1_0(\Omega)$ into $[H^{1/2}_{00}(\Omega)]'.$ In  \cite{J-K} (see Theorem 0.4) the authors show that it is not possible for the operator 
\begin{equation}\label{DeltaIsoH3/2}
\Delta : H^{3/2}(\Omega)\cap H^1_0(\Omega) \longrightarrow [H^{1/2}_{00}(\Omega)]'.
\end{equation}
to be an isomorphism, even if $\Omega$ is of class $\mathscr{C}^1$. Their proof is based on the argument, which consists to say that if a harmonic function $v$ belongs to $H^{3/2}(\Omega)$, then its trace satisfies $v_{\vert \Gamma} \in H^1(\Gamma)$. However, Proposition \ref{ConterexampleH1demitrace1b} asserts that this is not always the case when the domain $\Omega$ is only Lipschitz. In the following theorem we prove that the operator \eqref{DeltaIsoH3/2} is really an isomorphism.

\begin{theorem}\label{IsoDeltaH3/2H1/2} The following operators 
\begin{equation}\label{DeltaIsoH3/2H1/2}
\Delta : H^{3/2}_0(\Omega) \longrightarrow [{H}^{1/2}_{00}(\Omega)]'\quad \mathrm{and} \quad \Delta : H^{1/2}_{00}(\Omega) \longrightarrow  [H^{3/2}_0(\Omega)]'
\end{equation}
are isomorphisms.
\end{theorem}

\begin{proof} {\bf Step 1.}  It suffices to consider the case $N = 3$, the proof being similar for the other dimensions. Let $(\Omega_k)_k$  be a sequence of polyedral open sets included in $\Omega$ and converging to $\Omega$. We can choose this sequence such that the sequence $(L_k)_k$ of Lipschitz constants of $\Omega_k$ is bounded. Let  $\varphi \in \mathscr{D}(\Omega)$ and $k_0$ such that supp $\varphi \subset \Omega_{k_0}$.  Then using Proposition \ref{corGrisvardPol}, we have 
$$
\Vert \varphi \Vert_{H^{3/2}(\Omega) } = \Vert \varphi \Vert_{H^{3/2}(\Omega_{k_0}) } \leq C(\Omega_{k_0})\Vert \Delta \varphi  \Vert_{M(\Omega_{k_0})},
$$
where the constant $C(\Omega_{k_0})$ above  depends only on the Poincar\'e constant  of  $\Omega_{k_0}$  and on $L_{k_0}$. So  the constant  $C(\Omega_{k_0})$ is bounded by a constant $C(\Omega)$ which depends only on the Lipschitz constant $L$ and on the Poincar\'e constant of $\Omega$. 

From Theorem \ref{theoHsregul}, Point iii) we know that
$$
\Vert \Delta \varphi \Vert_{M(\Omega_{k_0})} \leq  C'(\Omega_{k_0}) \Vert \Delta \varphi \Vert_{[H^{1/2}_{00}(\Omega_{k_0})]'}.
$$
Besides, for any $f\in [H^{1/2}_{00}(\Omega)]'$ we have
\begin{equation*}
\begin{array}{rl}
\Vert f \Vert_{[H^{1/2}_{00}(\Omega_{k_0})]'}  &=  \displaystyle\sup_{\psi \in H^{1/2}_{00}(\Omega_{k_0}), \, \psi \not=  0}\dfrac{\langle f, \psi\rangle}{\Vert \psi\Vert_{H^{1/2}_{00}(\Omega_{k_0})}} =  \displaystyle\sup_{\psi  \in H^{1/2}_{00}(\Omega_{k_0}), \, \psi \not=  0}\dfrac{\langle f, \widetilde{\psi}\rangle}{\Vert \widetilde{\psi}\Vert_{H^{1/2}_{00}(\Omega)}}\leq \Vert f \Vert_{[H^{1/2}_{00}(\Omega)]'}
\end{array}
\end{equation*}
where $\widetilde{\psi}$ is the extension by zero inside $\Omega$ of $\psi$. So we get for any $\varphi \in \mathscr{D}(\Omega)$
$$
\Vert \varphi \Vert_{H^{3/2}(\Omega) }  \leq C(\Omega)\Vert \Delta \varphi \Vert_{[H^{1/2}_{00}(\Omega_{k_0})]'}\leq C'(\Omega)\Vert \Delta \varphi \Vert_{[H^{1/2}_{00}(\Omega)]'}. 
$$
Using the density of $\mathscr{D}(\Omega)$ in $H^{3/2}_0(\Omega)$, we deduce  the same estimate for any  $\varphi \in H^{3/2}_0(\Omega)$.\medskip

\noindent{\bf Step 2.} That means that the range $R (\Omega)$ of the following mapping
$$
\Delta : H^{3/2}_0(\Omega) \longrightarrow [H^{1/2}_{00}(\Omega)]'
$$
is closed in $ [H^{1/2}_{00}(\Omega)]'$. So we have $R(\Omega) = $ (Ker $\Delta)^\bot$, with $\Delta$ is given by the operator 
\begin{equation}\label{DeltaIsoH1/2H3/2Polyb}
\Delta : H^{1/2}_{00}(\Omega) \longrightarrow [{H}^{3/2}_{0}(\Omega)]'
\end{equation}
and where
$$
(\mathrm{Ker}\, \Delta)^\bot = \{f\in [H^{1/2}_{00}(\Omega)]'; \; \langle f, v \rangle_{[H^{1/2}_{00}(\Omega)]'\times H^{1/2}_{00}(\Omega)} = 0, \; \forall v \in \mathrm{Ker}\, \Delta\}.
$$
Observe now that the Kernel of the operator \eqref{DeltaIsoH1/2H3/2Polyb} is trivial. As a consequence, we get the surjectivity of the first operator in \eqref{DeltaIsoH3/2H1/2} and by duality the second isomorphism.
\end{proof}

\begin{theorem}\label{isoHsbTheo} For any $0 < s < 1$, the following operator
\begin{equation}\label{isoHsb}
\Delta : H^{3/2- s}_0(\Omega) \rightarrow H^{-1/2-s}(\Omega)
\end{equation}
is an isomorphism.
\end{theorem}

\begin{proof} i) Recall that the operators
\begin{equation*}
\Delta : H^{3/2}_0(\Omega) \rightarrow (H^{1/2}_{00}(\Omega))'  \quad \mathrm{and}\quad \Delta : H^{1}_0(\Omega) \rightarrow H^{-1}(\Omega)
\end{equation*}
are isomorphisms.
Therefore, as in the proof of Corollary \ref{firstcor},  the same applies to the operator
\begin{equation*}
\Delta : [H^{3/2}_0(\Omega),  H^{1}_0(\Omega)]_\theta  \rightarrow [(H^{1/2}_{00}(\Omega))' ,  H^{-1}(\Omega)]_\theta 
\end{equation*}
for any $0 < \theta < 1$, since the density of $H^{3/2}_0(\Omega)$ in $H^  {1}_0(\Omega)$. We know that 
\begin{equation*}
[H^{3/2}_0(\Omega),  H^{1}_0(\Omega)]_\theta = H^{3/2-\theta/2}_0(\Omega) \quad \mathrm{and}\quad [(H^{1/2}_{00}(\Omega))' ,  H^{-1}(\Omega)]_\theta = ([H^{1}_{0}(\Omega) ,  H^{1/2}_{00}(\Omega)]_{1-\theta})'.
\end{equation*}
As
\begin{equation*}
H^{1}_{0}(\Omega) = [H^{2}_0(\Omega),  L^2(\Omega)]_{1/2}  \quad \mathrm{and}\quad  H^{1/2}_{00}(\Omega) = [H^{2}_0(\Omega),  L^2(\Omega)]_{3/4}
\end{equation*}
by reiteration theorem (see \cite{Lions}, Theorem 6.1, Chapter 1), we get 
\begin{equation*}
 [ H^{2}_0(\Omega),  L^2(\Omega)]_{1/2},  [H^{2}_0(\Omega),  L^2(\Omega)]_{3/4}]_{1-\theta} = [H^{2}_0(\Omega),  L^2(\Omega)]_{\theta(1/2)  + (1-\theta) (3/4)}.
\end{equation*}
But
\begin{equation*}
 [H^{2}_0(\Omega),  L^2(\Omega)]_{\theta(1/2)  + (1-\theta) (3/4)} = H^{1/2 + \theta/2}_0(\Omega).
\end{equation*}
Setting $s= \theta/2$, we deduce the isomorphism \eqref{isoHsb} for any $0 < s <  1/2$ and obviously also for $s = 1/2$. \medskip

\noindent ii) As  for any $0 < s \leq  1/2$, we have $(H^{3/2- s}_0(\Omega))' = H^{-3/2+s}(\Omega)$ and $ (H^{-1/2-s}(\Omega))' = H^{1/2+s}_0(\Omega)) $, we deduce by duality  that the operator \eqref{isoHsb} is still an  isomorphism for any $s > 1/2$.
\end{proof}

\section{The homogeneous problem}\label{SectionDPL}

\subsection{Ne\v{c}as Property, first version}\label{NecFV}
 
In the following theorem, we give a variant of the Ne\v{c}as Property (see Theorem \ref{NecProt}), for functions in $H^1_0(\Omega)$ but with Laplacian less regular.

\begin{theorem}\label{CoruH10DeltaudualH1/2}
Let 
\begin{equation*}\label{uH10DeltauH1/2prime}
u\in H^1_0(\Omega)\quad \mathrm{ with} \quad\Delta u  \in\ [H^{1/2}(\Omega)]'.
\end{equation*}
Then $\partial_\textit{\textbf n} u\in L^2(\Gamma)$ and we have the following estimate
\begin{equation*}\label{estdudnL2DeltaudualH1/2}
 \Vert\partial_\textit{\textbf n} u\Vert_{ \textit{\textbf L}^2(\Gamma)}\ \leq\ C(\Omega) \Vert\Delta u\Vert_{ [H^{1/2}(\Omega)]'}, 
 \end{equation*}
 where $C(\Omega)$ depends on the Lipschitz constant of $\Omega$.

\end{theorem}

\begin{proof} {\bf Step 1.} It suffices to deal with the case when $\Omega$ is a Lipschitz hypograph:
 $$
\Omega\ = \ \left\{\,(\textit{\textbf x}',x_N)\in \mathbb{R}^N;\ \ x_N<\xi(\textit{\textbf x}') \right\},
$$
where $\xi\in \mathscr{C}^{0,1}(\mathbb{R}^{N-1})$ with supp $\xi$ compact.   Observe that, with the same proof as in \cite{Gri}, we can show that
$$
\mathscr{D}(\overline{\Omega})  \quad \mathrm{is\,\, dense\,\, in}\quad E(\Delta;\, \Omega) = \{v\in H^1(\Omega); \; \Delta v \in [H^{1/2}(\Omega)]'\}.
$$
So that $\partial_\textit{\textbf n} u\in H^{-1/2}(\Gamma)$ and we have the following Green's formula: for any $\varphi \in H^1(\Omega)$, 
\begin{equation}\label{GreenFormulaH1/2prime}
\int_\Omega \nabla u\cdot \nabla \varphi \, dx + \langle \Delta u,\,  \varphi\rangle_{[H^{1/2}(\Omega)]'\times H^{1/2}(\Omega)} = \langle  \partial_\textit{\textbf n} u,\, \varphi\rangle_{ H^{-1/2}(\Gamma)\times \ H^{1/2}(\Gamma)}.
\end{equation}

Choose a sequence $(\xi_k)_k$ in $\mathscr{C}^\infty(\mathbb{R}^{N-1})$ such that for any $k\geq 1$  we have  $\xi_k\leq \xi  \textrm{ on }  \mathbb{R}^{N-1}$, $\xi_k(\textit{\textbf x}')=\xi(\textit{\textbf x}')$ if $|\textit{\textbf x}'|\geq R_0$ with $ \, \nabla\xi_k\,_{L^{\infty}(\mathbb{R}^{N-1})}\leq C$  and  for any $1\leq p<\infty$
$$
\xi_k\  \longrightarrow \ \ \xi\ \textrm{ in }  L^{\infty}(\mathbb{R}^{N-1})\quad \mathrm{ and}\quad   \nabla\xi_k\  \longrightarrow \ \ \nabla\xi\ \textrm{ in }  L^{p}(\mathbb{R}^{N-1}).
$$
We set 
$$
\Omega_k\, = \ \left\{\,(\textit{\textbf x}',x_N)\in\mathbb{R}^N;\ \ x_N<\xi_k(\textit{\textbf x}') \right\}.
$$

\noindent {\bf Step 2.} By the first isomorphism in Theorem \ref{IsoDeltaH3/2H1/2}, we deduce from \eqref{uH10DeltauH1/2prime} that $u\in H^{3/2}_0(\Omega)$. Setting $f = \Delta u$ and let $(f_k)_k \subset \mathscr{D}(\Omega)$ be such that $f_k \longrightarrow f$ in $ [H^{1/2}(\Omega)]'$as $k\rightarrow \infty$. Let $u_k \in H^{3/2}_0(\Omega_k)\cap H^2(\Omega_k)$ be the unique solution satisfying $\Delta u_k  - u_k = f_k - u$ in $\Omega_k$ as $k\rightarrow \infty$. Setting now
\begin{equation*}
\widetilde{u_k} = \begin{cases}u_k \quad \mathrm{in}\;\; \Omega_k \\
0\quad \; \; \textrm{in}\; \; \Omega\setminus \Omega_k
\end{cases}
\end{equation*}
Clearly, the sequence $(\widetilde{u_k} )_k$ is bounded in $H^1_0(\Omega)$ and for any   $\varphi \in H^1(\Omega)$, 
\begin{equation}\label{GreenFormulakH1/2prime}
\begin{array}{rl}
\displaystyle\int_{\Omega_k} (u_k \varphi\ + \nabla u_k\cdot \nabla \varphi ) \, dx &= - \displaystyle \int_{\Omega_k}  (f_k - u)\,  \varphi \, dx+ \int_{\Gamma_k}  \varphi  \partial_{\textit{\textbf n}_k} u_k \, d\sigma_k\\\\
\displaystyle\int_{\Omega} (u \varphi\ + \nabla u\cdot \nabla \varphi ) \, dx &=  \displaystyle\int_{\Omega}   u\,  \varphi \, dx  -  \langle \Delta u, \varphi \rangle_\Omega + \langle \partial_\textit{\textbf n} u, \, \varphi\rangle_\Gamma .
\end{array}
\end{equation}
Substract  \eqref{GreenFormulaH1/2prime} from  \eqref{GreenFormulakH1/2prime}, we get for any $\varphi\in H^1(\Omega)$
\begin{equation*}\label{FormGreenpart}
\begin{array}{rl}
&\displaystyle\int_{\Omega} [(\widetilde{u_k}-u) \varphi + \nabla(\widetilde{u_k}-u)\cdot\nabla\varphi]\, dx\\\\ 
& \displaystyle =  \left[\int_{\Omega} (\widetilde{u_k} \varphi + \nabla \widetilde{u_k}\cdot\nabla\varphi) \, dx
-\int_{\Omega_k} (u_k\varphi + \nabla u_k\cdot\nabla\varphi)\, dx \right]  + \\\\     
&\displaystyle +   \left[ \int_{\Omega_k} (u_k\varphi  + \nabla u_k\cdot\nabla\varphi) \, dx
  -\int_{\Omega}(u \varphi + \nabla u\cdot\nabla\varphi) \, dx\right]\\\\
&= \displaystyle  \,\int_{\Gamma_k} \varphi  \partial_{\textit{\textbf n}_k} u_k  \, d\sigma_k  - \langle  \partial_\textit{\textbf n} u,\, \varphi\rangle_{\Gamma}\,   +\;  \langle f - f_k, \varphi\rangle_\Omega + \int_{\Omega\setminus\Omega_k}f_k\, \varphi \, dx- \int_{\Omega\setminus\Omega_k}  u \varphi\, dx.
\end{array}
\end{equation*}
So taking $\varphi =  \widetilde{u_k} - u\in H^1_0(\Omega)$, we deduce that
\begin{equation}\label{estimuktile-ubis}
\begin{array}{rl}
&\Vert  \widetilde{u_k} - u\Vert^2_{H^1(\Omega)} \leq  \Vert  \widetilde{u_k} - u\Vert_{H^1(\Omega)}\big( \Vert  u \Vert_{L^2(\Omega\setminus\Omega_k)}  +\;  \Vert f - f_k\Vert_{[H^{1/2}(\Omega)]'}\big) \\\\
&  +\;   \Vert    \partial_{\textit{\textbf n}_k} u_k\Vert_{L^2(\Gamma_k)} \Vert  \widetilde{u_k} - u\Vert_{L^2(\Gamma_k)}  +\;  \Vert  f_k\Vert_{[H^{1/2}(\Omega\setminus \Omega_{k})]'}\Vert  \widetilde{u_k} - u\Vert_{H^1(\Omega\setminus \Omega_{k})]} .
\end{array}
\end{equation}

\noindent {\bf Step 3.} Besides, recall the following Rellich's identity: for any $v\in H^2(\Omega)\cap H^1_0(\Omega)$: 
\begin{equation}\label{RellichH2interH10}
 \int_\Gamma \textbf{\textit h}\cdot\textbf{\textit n}\left| \partial_{\textbf{\textit n}} v \right|^2 d\sigma  = 
\int_\Omega\left[2\textbf{\textit h}\,\Delta v+ 2\frac{\partial \textbf{\textit h}}{\partial x_k}\frac{\partial v}{\partial x_k}-(\mathrm{div}\, \textbf{\textit h})\nabla v\right]\cdot\nabla v\, dx.
\end{equation}
Using then \eqref{RellichH2interH10} with $v = u_k \in H^2(\Omega_k)\cap H^1_0(\Omega_k)$ and the first isomorphism in Theorem \ref{IsoDeltaH3/2H1/2}, we obtain
\begin{equation}\label{RellichH2interH10a}
\begin{array}{rl}
 \Vert   \partial_{\textit{\textbf n}_k} u_k\Vert^2_{L^2(\Gamma_k)}&\leq C  \big(  \Vert \nabla u_k \Vert_{H^{1/2}(\Omega_k)}  \Vert \Delta u_k \Vert_{[H^{1/2}(\Omega_k)]'} +   \Vert \nabla u_k \Vert^2_{L^{2}(\Omega_k)}\big)\\\\
& \leq C  \big(  \Vert u_k \Vert_{H^{3/2}(\Omega_k)}  \Vert \Delta u_k \Vert_{[H^{1/2}(\Omega_k)]'}\big) \leq C  \Vert \Delta u_k \Vert^2_{[H^{1/2}(\Omega_k)]'},
\end{array}
\end{equation}
where $C$ depends only on the Lipschitz  constant of $\Omega$ and does not depend on $k$. But
\begin{equation}\label{estimderivnormL2}
\begin{array}{rl}
 \Vert \Delta u_k \Vert_{[H^{1/2}(\Omega_k)]'}& \leq \Vert u - u_k \Vert_{[H^{1/2}(\Omega_k)]'} + \Vert f_k \Vert_{[H^{1/2}(\Omega_k)]'}  \\ \\
 &\leq C(\Vert f \Vert_{[H^{1/2}(\Omega)]'} + \Vert f_k \Vert_{[H^{1/2}(\Omega_k)]'}  )\leq C\Vert f \Vert_{[H^{1/2}(\Omega)]'}. 
\end{array}
\end{equation}
Setting now
$$
\theta_k(\textit{\textbf x}')=(1+\vert\nabla\xi_k(\textit{\textbf x}')\vert^2)^{1/2},\ \quad\ 
\theta(\textit{\textbf x}')=(1+\vert\nabla\xi(\textit{\textbf x}')\vert^2)^{1/2}
$$
we observe that for any $1\leq p < \infty$ 
$$
1\leq \theta_k\leq C, \quad \theta_k\rightarrow \theta\ \textrm{ in } L^p(\mathbb{R}^{N-1}).
$$
Then we have
\begin{equation}\label{InegL2Gammakbis}
\begin{array}{rl}
\!\!\!\!\Vert \widetilde{u_k} - u\Vert^2_{L^2(\Gamma_k)}\!\!\!\! & = \displaystyle  \int_{\mathbb{R}^{N-1}}\vert u(\textit{\textbf x}',\xi_k(\textit{\textbf x}'))\vert^2 \theta_k(\textit{\textbf x}') d\textit{\textbf x}'\\\\
& = \displaystyle \int_{\mathbb{R}^{N-1}}\vert u(\textit{\textbf x}',\xi_k(\textit{\textbf x}')) - u(\textit{\textbf x}',\xi(\textit{\textbf x}')\vert^2 \theta_k(\textit{\textbf x}') d\textit{\textbf x}'\\\\
\!\!\!\! & \displaystyle \leq C \int_{\mathbb{R}^{N-1}} (\xi(\textit{\textbf x}') - \xi_k(\textit{\textbf x}') )    \int_{\xi_k(\textit{\textbf x}')}^{\xi(\textit{\textbf x}')} \left|\frac{\partial u}{\partial x_N}(\textit{\textbf x}',x_N)\right|^2 dx_N  d\textit{\textbf x}'\\\\
\!\!\!\! & \leq C  \Vert \xi-\xi_k\Vert_{L^\infty(\mathbb{R}^{N-1})}\Vert u \Vert^2_{H^1(\Omega\setminus\Omega_k)}. 
\end{array}
\end{equation}
Using \eqref{estimuktile-ubis}, \eqref{RellichH2interH10a}, \eqref{estimderivnormL2} and \eqref{InegL2Gammakbis}, we deduce the strong convergence in $H^1(\Omega)$ of $\widetilde{u_k} $ to $u$.

Let us introduce
 $$
 \psi_k(\textit{\textbf x}')\ = \ \nabla u_k\cdot\textit{\textbf n}_k(\textit{\textbf x}',\xi_k(\textit{\textbf x}')).
 $$
From \eqref{RellichH2interH10a} and \eqref{estimderivnormL2} $(\psi_k)_k$ is bounded in $L^2(\mathbb{R}^{N-1})$, so we can extract a subsequence, again denoted by $(\psi_k)_k$ such that
$$
\psi_k\ \rightharpoonup\ \psi \ \ \textrm{ in } L^2(\mathbb{R}^{N-1}).
$$
Setting now 
$$
\widetilde\psi(\textit{\textbf x}',\xi(\textit{\textbf x}'))=\psi(\textit{\textbf x}')
$$
and let $\varphi\in H^1(\Omega)$. Then as $\theta_k\ \rightarrow\ \theta$ in $L^2( \mathbb{R}^{N-1})$, we have
\begin{equation*}\label{d04-090318-e4}
\begin{array}{rl}
\displaystyle\int_{\Gamma_k} \varphi \partial_{\textit{\textbf n}_k} u_k \, d\sigma_k\ & = \displaystyle \int_{ \mathbb{R}^{N-1}}\psi_k(\textit{\textbf x}')\varphi(\textit{\textbf x}',\xi_k(\textit{\textbf x}'))\theta_k(\textit{\textbf x}')\, d\textit{\textbf x}'\\\\
& \displaystyle \longrightarrow \int_{ \mathbb{R}^{N-1}}\psi(\textit{\textbf x}')\varphi(\textit{\textbf x}',\xi(\textit{\textbf x}'))\theta(\textit{\textbf x}')\, d\textit{\textbf x}'\ =\ \int_\Gamma\widetilde\psi\varphi \, d\sigma.
\end{array}
\end{equation*}
Here, note that 
\begin{equation*}
\Vert \widetilde\psi \Vert_{L^2(\Gamma)} \leq C \Vert \psi \Vert_{L^2( \mathbb{R}^{N-1})} \leq C \liminf_{k\rightarrow \infty} \Vert \psi_k\Vert_{L^2( \mathbb{R}^{N-1})}
\leq C\Vert f \Vert_{[H^{1/2}(\Omega)]'} .
\end{equation*}
Sending $k \rightarrow \infty$ in \eqref{GreenFormulakH1/2prime} gives

\begin{equation*}\label{d04-090318-e10}
\int_{\Omega}\nabla u\cdot\nabla\varphi\, dx =\ -\langle f,\, \varphi\rangle_{[H^{1/2}(\Omega)]'\times H^{1/2}(\Omega)} + \int_\Gamma\widetilde\psi\varphi\, d\sigma
\end{equation*}
and thanks to \eqref{GreenFormulaH1/2prime}, we get that $\partial_{\textit{\textbf n}} u=\widetilde\psi$ belongs to $L^2(\Gamma)$ with the estimate (\ref{estdudnL2DeltaudualH1/2}).
\end{proof}

\subsection{Solutions in $H^s(\Omega)$ with $1/2\leq s\leq 3/2$}\label{PDH1232}

We are now in position to state our first existence result in the case of a boundary data in $L^2(\Gamma)$.  Our proof is essentially based on the isomorphism given in Theorem \ref{IsoDeltaH3/2H1/2}.

\begin{theorem}\label{ThIsogL2Gamma} i) For any $g\in L^{2}(\Gamma)$, Problem $(\mathscr{L}_D^H)$ has a unique solution $u\in H^{1/2}(\Omega)$. Moreover $\sqrt \varrho\, \nabla u  \in {\textit{\textbf L}}^{2}(\Omega)$ and we have the estimate 
\begin{equation*}\label{d02-240118-e1c1}
 \Vert u\Vert_{H^{1/2}(\Omega)} + \Vert\sqrt \varrho\, \nabla u \Vert_{\textit{\textbf L}^2(\Omega)}  \leq \ C \Vert g \Vert_{L^{2}(\Gamma)}.
\end{equation*}
ii) This solution satisfies the following relation: for any $s \in\,  [0, 1/2[$ and $\varphi \in H^{2-s}(\Omega)\cap H^1_0(\Omega)$, we have
\begin{equation*}\label{d02-240118-e1c1ba}
\langle  u, \,  \Delta \varphi\rangle_{H^s(\Omega)\times H^{-s}(\Omega)} = \int_\Gamma g\partial_{\textit{\textbf n}}\varphi\, d\sigma .
\end{equation*}
iii) Moreover $u$ satisfies also the following property: for any positive integer $k$
$$
\varrho^{k + 1/2}\nabla^{k + 1}u \in \textit{\textbf L}^2(\Omega).
$$
\end{theorem}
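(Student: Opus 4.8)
The plan is to build the solution by transposition, using the isomorphism $\Delta:H^{3/2}_0(\Omega)\to[H^{1/2}_{00}(\Omega)]'$ of Theorem \ref{IsoDeltaH3/2H1/2} together with the Ne\v{c}as-type estimate of Theorem \ref{CoruH10DeltaudualH1/2}. First I would fix $\psi\in L^2(\Omega)$ and let $\varphi_\psi:=\Delta^{-1}\psi\in H^{3/2}_0(\Omega)$ be the solution of $\Delta\varphi_\psi=\psi$ (it lies in $H^{3/2}_0(\Omega)$, not in $H^2(\Omega)$, because $\Omega$ is merely Lipschitz). Since $\psi\in L^2(\Omega)\subset[H^{1/2}(\Omega)]'$, Theorem \ref{CoruH10DeltaudualH1/2} gives $\frac{\partial\varphi_\psi}{\partial\textit{\textbf n}}\in L^2(\Gamma)$ with $\Vert\frac{\partial\varphi_\psi}{\partial\textit{\textbf n}}\Vert_{\textit{\textbf L}^2(\Gamma)}\le C\Vert\psi\Vert_{[H^{1/2}(\Omega)]'}=C\Vert\psi\Vert_{H^{-1/2}(\Omega)}$. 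Hence $\psi\mapsto\int_\Gamma g\,\frac{\partial\varphi_\psi}{\partial\textit{\textbf n}}$ is linear and bounded on $L^2(\Omega)$ for the $H^{-1/2}(\Omega)$-norm, and, by density of $L^2(\Omega)$ in $H^{-1/2}(\Omega)$, it extends to an element of $[H^{-1/2}(\Omega)]'=H^{1/2}(\Omega)$: there is a unique $u\in H^{1/2}(\Omega)$ with $\Vert u\Vert_{H^{1/2}(\Omega)}\le C\Vert g\Vert_{L^2(\Gamma)}$ and $\langle u,\psi\rangle_{H^{1/2}(\Omega)\times H^{-1/2}(\Omega)}=\int_\Gamma g\,\frac{\partial\varphi_\psi}{\partial\textit{\textbf n}}$ for all $\psi\in L^2(\Omega)$. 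Choosing $\psi=\Delta\eta$ with $\eta\in\mathscr{D}(\Omega)$ (so $\varphi_\psi=\eta$ and $\frac{\partial\eta}{\partial\textit{\textbf n}}=0$) shows $\int_\Omega u\,\Delta\eta=0$, i.e.\ $u$ is harmonic; choosing $\psi=\Delta\varphi$ with $\varphi\in H^2(\Omega)\cap H^1_0(\Omega)$ shows $\int_\Omega u\,\Delta\varphi=\int_\Gamma g\,\frac{\partial\varphi}{\partial\textit{\textbf n}}$.

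Next I would identify the boundary condition and read off the remaining items. Being harmonic and in $H^{1/2}(\Omega)$, $u$ belongs to $M(\Omega)$ (its distributional Laplacian vanishes on $\mathscr{D}(\Omega)$, hence extends by density to the zero functional of $[H^{3/2}_0(\Omega)]'$), so Theorem \ref{lsb3} gives $\int_\Omega u\,\Delta\varphi=\langle u\textit{\textbf n},\nabla\varphi\rangle_\Gamma$ for $\varphi\in H^2(\Omega)\cap H^1_0(\Omega)$. Combined with the relation just obtained, $\langle u\textit{\textbf n},\nabla\varphi\rangle_\Gamma=\int_\Gamma g\,\frac{\partial\varphi}{\partial\textit{\textbf n}}=\langle g\textit{\textbf n},\nabla\varphi\rangle_\Gamma$; since $\varphi\mapsto\nabla\varphi_{\vert\Gamma}$ maps $H^2(\Omega)\cap H^1_0(\Omega)$ onto $\boldsymbol{H}_{\textit{\textbf N}}^{1/2}(\Gamma)$ (Remark \ref{remrelev}), this forces $u\textit{\textbf n}_{\vert\Gamma}=g\textit{\textbf n}$ in $[\boldsymbol{H}_{\textit{\textbf N}}^{1/2}(\Gamma)]'$, hence in $\boldsymbol{L}_{\textit{\textbf N}}^2(\Gamma)$, which by Remark \ref{remrelevbis} means exactly that the trace of $u$ is $g$ in $L^2(\Gamma)$, so $u$ solves $(\mathscr{L}_D^H)$. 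Uniqueness is immediate: the difference of two solutions is harmonic in $H^{1/2}(\Omega)$ with zero trace, hence vanishes by Theorem \ref{UniciteH1/2}. The weighted bounds are then free: applying Theorem \ref{inegharm} with $m=0$, $\theta=1/2$ (and $\sigma\simeq\varrho$) gives $\sqrt\varrho\,\nabla u\in\textit{\textbf L}^2(\Omega)$ with $\Vert\sqrt\varrho\,\nabla u\Vert_{\textit{\textbf L}^2(\Omega)}\le C\Vert u\Vert_{H^{1/2}(\Omega)}\le C\Vert g\Vert_{L^2(\Gamma)}$, and more generally $\varrho^{k+1/2}\nabla^{k+1}u\in\textit{\textbf L}^2(\Omega)$ for every $k$, which is part iii). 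Part ii) follows by invoking the Green formula of Theorem \ref{lsb3} for $u\in M^s(\Omega)$, $0\le s<1/2$, against $\varphi\in H^{2-s}(\Omega)\cap H^1_0(\Omega)$: since $\Delta u=0$ and $u\textit{\textbf n}_{\vert\Gamma}=g\textit{\textbf n}\in\boldsymbol{L}_{\textit{\textbf N}}^2(\Gamma)\subset[\boldsymbol{H}_{\textit{\textbf N}}^{1/2-s}(\Gamma)]'$, it reduces to $\langle u,\Delta\varphi\rangle_{H^s(\Omega)\times H^{-s}(\Omega)}=\int_\Gamma g\,\frac{\partial\varphi}{\partial\textit{\textbf n}}$.

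The main obstacle is Step 1, and the crucial technical point there is that Theorem \ref{CoruH10DeltaudualH1/2} controls $\frac{\partial\varphi}{\partial\textit{\textbf n}}$ by the $[H^{1/2}(\Omega)]'$-norm of $\Delta\varphi$ rather than by $\Vert\Delta\varphi\Vert_{L^2(\Omega)}$: it is exactly this sharper bound that places the transposed functional in $[H^{-1/2}(\Omega)]'=H^{1/2}(\Omega)$ and not merely in $L^2(\Omega)$, so that one obtains $u\in H^{1/2}(\Omega)$ directly and not just a weak $L^2$ solution. Related care is needed because on a Lipschitz domain $\Delta^{-1}\psi$ (for $\psi\in L^2(\Omega)$) belongs to $H^{3/2}_0(\Omega)$ but not to $H^2(\Omega)$, so all identities must be established in $H^{3/2}_0(\Omega)$ and propagated by the density of $\mathscr{D}(\Omega)$ and of $L^2(\Omega)$ in $H^{-1/2}(\Omega)$. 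A secondary delicate step is the passage, in the identification of the boundary datum, from $u\textit{\textbf n}_{\vert\Gamma}=g\textit{\textbf n}$ in $[\boldsymbol{H}_{\textit{\textbf N}}^{1/2}(\Gamma)]'$ to the genuine $L^2(\Gamma)$ trace statement, which relies on Remark \ref{remrelevbis} and the dense inclusion $\boldsymbol{H}_{\textit{\textbf N}}^{1/2}(\Gamma)\hookrightarrow\boldsymbol{L}_{\textit{\textbf N}}^2(\Gamma)$.
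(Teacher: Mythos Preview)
Your proof is correct and rests on the same two pillars as the paper's: the isomorphism $\Delta:H^{3/2}_0(\Omega)\to[H^{1/2}_{00}(\Omega)]'$ (Theorem \ref{IsoDeltaH3/2H1/2}) and the normal-derivative estimate of Theorem \ref{CoruH10DeltaudualH1/2}, followed by the trace identification via Theorem \ref{lsb3} and the weighted bounds from Theorem \ref{inegharm}. The difference is purely in packaging. The paper approximates $g$ by $g_k\in H^{1/2}(\Gamma)$, takes the $H^1(\Omega)$ solutions $u_k$, and bounds $\Vert u_k\Vert_{H^{1/2}(\Omega)}$ by the duality $\langle f,u_k\rangle=\int_\Gamma g_k\,\partial_{\textit{\textbf n}}(\Delta^{-1}f)$ for $f\in[H^{1/2}(\Omega)]'$, then passes to the limit; you instead read this same relation as a direct transposition, producing $u\in H^{1/2}(\Omega)$ as the Riesz representative of $\psi\mapsto\int_\Gamma g\,\partial_{\textit{\textbf n}}(\Delta^{-1}\psi)$ on $H^{-1/2}(\Omega)$. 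Your route is slightly cleaner: the density step (from $L^2(\Omega)$ to $H^{-1/2}(\Omega)$) is in fact unnecessary, since Theorem \ref{CoruH10DeltaudualH1/2} already applies to any $\psi\in[H^{1/2}(\Omega)]'=H^{-1/2}(\Omega)$; and your uniqueness argument via Theorem \ref{UniciteH1/2} is the genuine one, whereas the paper's ``independence of the approximating sequence'' only shows that the construction is well defined.
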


\begin{proof} We first observe that the above property in Point iii) and the estimate of $\sqrt \varrho\, \nabla u$ in Point i)  are a direct consequence of Theorem \ref{inegharm}. Since uniqueness is given by Theorem \ref{UniciteH1/2}, it suffices to prove the existence of a solution $u$ in $  H^{\, 1/2}(\Omega)$. \smallskip

 Let $g_k \in H^{1/2}(\Gamma)$ be such that $g_k \longrightarrow g$ in $L^2(\Gamma)$ as $k\rightarrow \infty$ and $u_k \in H^1(\Omega)$ satisfying
$$
\Delta u_k = 0 \quad \mathrm{in}\; \Omega \qquad \mathrm{and}\qquad u_k = g_k\quad \mathrm{on}\; \Gamma.
$$
We know that
\begin{equation*}
\Vert u_k \Vert_{H^{1/2}(\Omega)} = \sup_{f\in [H^{1/2}(\Omega)]', f\not= 0}\frac{\vert \langle f, \, u_k\rangle_\Omega \vert}{\Vert f\Vert_ {[H^{1/2}(\Omega)]'}},
\end{equation*}
where $ \langle f, \, u_k\rangle_\Omega =  \langle f, \, u_k\rangle_{ [H^{1/2}(\Omega)]'\times H^{1/2}(\Omega)}$.  
But for any $f\in [H^{1/2}(\Omega)]'$, using Theorem  \ref{IsoDeltaH3/2H1/2}, there exits a unique $v\in H^{3/2}_0(\Omega)$ satisfying $\Delta v = f$ in $\Omega$. Moreover thanks to Theorem \ref{CoruH10DeltaudualH1/2} we have $\partial_{\textit{\textbf n}} v\in L^2(\Gamma)$ and
$$
\Vert v \Vert_{H^{3/2}(\Omega)} + \Vert  \partial_{\textit{\textbf n}} v \Vert_{L^2(\Gamma)}\leq C \Vert f\Vert_ {[H^{1/2}(\Omega)]'}.
$$
Now
\begin{equation*}
\begin{array}{rl}
\langle f, \, u_k\rangle_{\Omega } &= \displaystyle  - \int_\Omega \nabla u_k \cdot \nabla v \, dx + \int_\Gamma g_k   \partial_{\textit{\textbf n}} v\, d\sigma\\\\
& =  \displaystyle\int_\Omega  v \Delta u_k \, dx -  \langle \partial_{\textit{\textbf n}} u_k, \, v\rangle_{ H^{-1/2}(\Gamma)\times H^{1/2}(\Gamma)} + \int_\Gamma g_k   \partial_{\textit{\textbf n}} v\, d\sigma\\\\
&= \displaystyle \int_\Gamma g_k   \partial_{\textit{\textbf n}} v\, d\sigma.
\end{array}
\end{equation*}
Hence 
\begin{equation*}
\vert\langle f, \, u_k\rangle_\Omega\vert = \vert \int_\Gamma g_k   \partial_{\textit{\textbf n}} v\, d\sigma \vert \leq \Vert g_k \Vert_{L^2(\Gamma)} \Vert   \partial_{\textit{\textbf n}} v\Vert_{L^2(\Gamma)} \leq C \Vert g_k \Vert_{L^2(\Gamma)} \Vert \Vert f\Vert_ {[H^{1/2}(\Omega)]'}.
\end{equation*} 
This shows the estimate
\begin{equation}\label{estimnormH1demigbb}
\Vert u_k \Vert_{H^{1/2}(\Omega)} \leq C \Vert g_k \Vert_{L^2(\Gamma)}
\end{equation}
and consequently  $(u_k)$ is a Cauchy sequence in $H^{1/2}(\Omega)$ which converges to some  function $u\in H^{1/2}(\Omega)$ which is harmonic since for any $k$ the function  $u_k$ is harmonic. Clearly by Green formula we have for any $s \in\,  [0, 1/2[$ and $\varphi \in H^{2-s}(\Omega)\cap H^1_0(\Omega)$,
$$
\langle  u_k, \,  \Delta \varphi\rangle_{H^s(\Omega)\times H^{-s}(\Omega)}= \int_\Gamma g_k\partial_{\textit{\textbf n}}\varphi \, d\sigma.
$$
Passing to the limit above we get the following relation: for any  $\varphi \in H^{2-s}(\Omega)\cap H^1_0(\Omega)$
$$
\langle  u, \,  \Delta \varphi\rangle_{H^s(\Omega)\times H^{-s}(\Omega)} = \int_\Gamma g\partial_{\textit{\textbf n}}\varphi \, d\sigma
$$
with the estimate 
\begin{equation*}
\Vert u \Vert_{H^{1/2}(\Omega)} \leq C \Vert g \Vert_{L^2(\Gamma)}
\end{equation*}
thanks to \eqref{estimnormH1demigbb}. From the above equality and Definition \ref{deftr}, that means that $ u =  g$ on $\Gamma$.
\end{proof}

\begin{theorem} i) Let $f\in [H^{1/2}(\Omega)]' $ satisfying the condition 
\begin{equation}\label{condorthoH1/2b}
 \forall  \varphi \in H^{1}(\Omega)\cap \mathscr{H}, \quad \langle f, \, \varphi  \rangle = 0.
\end{equation} 
Then there exists a unique function $u\in  H^{3/2}_{00}(\Omega)$ satisying $\Delta u = f$ in $\Omega$. \\
ii) If moreover $f\in L^2(\Omega) $ and
\begin{equation*}
 \forall  \varphi \in L^{2}(\Omega)\cap \mathscr{H}, \quad \int_\Omega f \, \varphi  \, dx = 0,
\end{equation*} 
then $u\in H^2_0(\Omega)$.
\end{theorem}

\begin{proof} 
 i) Using Theorem \ref{IsoDeltaH3/2H1/2}, we know the existence of  $u\in  H^{3/2}_{0}(\Omega)$ such that $\Delta u = f$ in $\Omega$. So it suffices to prove that $u\in  H^{3/2}_{00}(\Omega)$. From Theorem \ref{CoruH10DeltaudualH1/2}, we deduce that $\partial_{\textit{\textbf n}}u \in L^2(\Gamma)$. We will show that $\partial_{\textit{\textbf n}}u = 0$. For that, let $\mu\in H^{1/2}(\Gamma)$ and $\varphi \in  H^{1}(\Omega)\cap \mathscr{H}$ such that $\varphi = \mu$ on $\Gamma$.  Now using the Green formula  \eqref{estdudnL2DeltaudualH1/2} we get:
$$
\int_\Gamma \mu \partial_{\textit{\textbf n}}u \, d\sigma = \int_\Gamma \varphi  \partial_{\textit{\textbf n}}u \, d\sigma= \langle \Delta u, \, \varphi\rangle -  \langle  u, \, \Delta\varphi\rangle = 0.
$$
That means that $\partial_{\textit{\textbf n}}u = 0$ in the sense $H^{-1/2}(\Gamma)$ and also $a.e$ on $\Gamma$ since $\partial_{\textit{\textbf n}}u \in L^2(\Gamma)$. 

It remains to show that $\nabla u \in   \textit{\textbf H}^{\,1/2}_{00}(\Omega)$.   This property follows directly from the fact that the operator $\Delta$ is an isomorphism from $ H^{1/2}_{00}(\Omega)$ into $\left[ H^{3/2}_{0}(\Omega)\right]'$ (see \eqref{DeltaIsoH3/2H1/2}). Indeed, let $\textit{\textbf z} \in \textit{\textbf H}^{\,1/2}_{00}(\Omega)$ satisfying $\Delta \textit{\textbf z}  =  \nabla f$ in $\Omega$. So the vector field $\textit{\textbf w} = \nabla u - \textit{\textbf z}$ belongs to $\textit{\textbf H}^{\,1/2}(\Omega)$ and is harmonic.  Using Theorem \ref{TracesH1demigradH1demiprime}  we know that $\textit{\textbf z}  = {\bf 0}$ on $ \Gamma$. As $\nabla u   = {\bf 0}$ on $ \Gamma$,  the trace of $\textit{\textbf w}$ is also zero on $\Gamma$. We deduce then by Theorem \ref{UniciteH1/2}  that  $\textit{\textbf w}$ is identically zero  in $\Omega$. Finally, as $u \in H^{3/2}_0(\Omega)$ and $\nabla u \in   \textit{\textbf H}^{\,1/2}_{00}(\Omega)$, we obtain from Corollary 1.4.4.10 in \cite{Gri} that $u\in H^{3/2}_{00}(\Omega)$.\medskip

\noindent ii) Since 
$$
\forall v\in H^2_0(\Omega), \quad \sum_{i,j = 1}^N \Vert \frac{\partial^2 v}{\partial x_i\partial x_j}\Vert^2_{L^2(\Omega)} = \Vert \Delta v\Vert^2_{L^2(\Omega)},
$$
the range of $H^2_0(\Omega)$ by the Laplacian is closed in $L^2(\Omega)$. The required result is then a consequence  of the Fredholm alternative
\end{proof}  

\begin{remark}\upshape Since $H^1(\Omega)$ is dense in $H^{1/2}(\Omega)$, the condition \eqref{condorthoH1/2b} is equivalent to the following:
\begin{equation*}
 \forall  \varphi \in H^{1/2}(\Omega)\cap \mathscr{H}, \quad \langle f, \, \varphi  \rangle = 0.
\end{equation*}

\end{remark}
The next result is a complement of Theorem \ref{TracesH1demigradH1demiprime} with a different proof.

\begin{corollary}\label{carnoyau} The kernel of the linear mapping $\gamma: u \mapsto u_{\vert \Gamma}$ from the space $E(\nabla;\, \Omega)$ into $L^2(\Gamma)$ is equal to $H^{1/2}_{00}(\Omega)$, where $E(\nabla;\, \Omega)$ is defined  in \eqref{defEnablaOmega} 
\end{corollary}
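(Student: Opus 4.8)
The plan is to prove the two inclusions separately, using for the nontrivial one the existence theory of Theorem~\ref{IsoDeltaH3/2H1/2} together with the uniqueness statement of Theorem~\ref{UniciteH1/2}; this is what makes the argument genuinely different from the one already given in Theorem~\ref{TracesH1demigradH1demiprime}. For the inclusion $H^{1/2}_{00}(\Omega)\subseteq\mathrm{Ker}\,\gamma_0$ I would first note that, by the norm equivalence \eqref{equivnorms00} of Theorem~\ref{equivnormH1demi00}, $H^{1/2}_{00}(\Omega)$ embeds continuously into $E(\nabla;\,\Omega)$; since $\mathscr{D}(\Omega)$ is dense in $H^{1/2}_{00}(\Omega)$ and $\gamma_0\varphi=0$ for every $\varphi\in\mathscr{D}(\Omega)$, the continuity of $\gamma_0$ on $E(\nabla;\,\Omega)$ established in Theorem~\ref{TracesH1demigradH1demiprime}, Point i), forces $\gamma_0 v=0$ for all $v\in H^{1/2}_{00}(\Omega)$.

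For the reverse inclusion, take $u\in E(\nabla;\,\Omega)$ with $\gamma_0 u=0$. The first step is to observe that $\Delta u\in[H^{3/2}_0(\Omega)]'$: testing against $\varphi\in\mathscr{D}(\Omega)$ gives $\langle\Delta u,\varphi\rangle=-\langle\nabla u,\nabla\varphi\rangle$, and since $\varphi\mapsto\nabla\varphi$ is continuous from $H^{3/2}_0(\Omega)$ into $\textit{\textbf H}^{\,1/2}(\Omega)$ while $\nabla u\in[\textit{\textbf H}^{\,1/2}(\Omega)]'$, the functional $\varphi\mapsto-\langle\nabla u,\nabla\varphi\rangle$ extends, by density of $\mathscr{D}(\Omega)$ in $H^{3/2}_0(\Omega)$, to an element of $[H^{3/2}_0(\Omega)]'$. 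By Theorem~\ref{IsoDeltaH3/2H1/2} the operator $\Delta:H^{1/2}_{00}(\Omega)\to[H^{3/2}_0(\Omega)]'$ is an isomorphism, so there is a unique $w\in H^{1/2}_{00}(\Omega)$ with $\Delta w=\Delta u$. Setting $v:=u-w$, one obtains a harmonic function $v\in E(\nabla;\,\Omega)\subseteq H^{1/2}(\Omega)$ with $\gamma_0 v=\gamma_0 u-\gamma_0 w=0$, using $\gamma_0 w=0$ from the inclusion just proved.

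It then remains to deduce $v\equiv 0$ from Theorem~\ref{UniciteH1/2}, and this is where the only real difficulty lies: that theorem reads the boundary condition through the trace ${\boldsymbol\gamma}_\textit{\textbf n}$ of Theorem~\ref{lsb3} and the Green formula \eqref{esb3am}, so one must check that $\gamma_0 v=0$ in $L^2(\Gamma)$ implies ${\boldsymbol\gamma}_\textit{\textbf n} v=0$ in $[\boldsymbol{H}_{\textit{\textbf N}}^{1/2-s}(\Gamma)]'$. I would settle this by a compatibility argument: $E(\nabla;\,\Omega)$ embeds continuously into $M^s(\Omega)$ for $0\le s<1/2$ (by the computation of the previous paragraph), $\mathscr{D}(\overline\Omega)$ is dense in both spaces (Lemmas~\ref{DensityDOmegabarH1/2GradH1/2'} and \ref{DensityM(Om)}), and on $\mathscr{D}(\overline\Omega)$ the maps ${\boldsymbol\gamma}_\textit{\textbf n}$ and $v\mapsto(\gamma_0 v)\textit{\textbf n}$ coincide, hence they agree on all of $E(\nabla;\,\Omega)$. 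Thus ${\boldsymbol\gamma}_\textit{\textbf n} v=0$, Theorem~\ref{UniciteH1/2} yields $v\equiv 0$, and therefore $u=w\in H^{1/2}_{00}(\Omega)$. The easy inclusion and the isomorphism step are immediate; the trace-compatibility step is the one that has to be argued with care, since it is precisely what licenses feeding ``$\gamma_0 u=0$'' into the uniqueness theorem.
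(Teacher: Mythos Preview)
Your proof is correct and follows essentially the same approach as the paper: solve $\Delta w=\Delta u$ with $w\in H^{1/2}_{00}(\Omega)$ via Theorem~\ref{IsoDeltaH3/2H1/2}, then invoke Theorem~\ref{UniciteH1/2} on the harmonic remainder $u-w$. You are in fact more careful than the paper, which glosses over both the verification that $\Delta u\in[H^{3/2}_0(\Omega)]'$ and the compatibility between the trace $\gamma_0$ on $E(\nabla;\,\Omega)$ and the trace ${\boldsymbol\gamma}_\textit{\textbf n}$ used in Theorem~\ref{UniciteH1/2}; your density argument for the latter is exactly the right way to close that gap.
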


\begin{proof} The proof of the inclusion  $H^{1/2}_{00}(\Omega)\subset \mathrm{Ker}\, \gamma$ is the same of that  Theorem \ref{TracesH1demigradH1demiprime}. So let $u\in Ker \, \gamma$. Since $\Delta u \in [H^{3/2}_{0}(\Omega)]'$, from Theorem \ref{IsoDeltaH3/2H1/2} there exists a unique function $w \in H^{1/2}_{00}(\Omega)$ satisfying $\Delta w = \Delta u$. The harmonic function $z = u - w$ belonging to $H^{1/2}(\Omega)$ and equal to zero on the boundary is then identically equal to zero. Thus $u = w\in H^{1/2}_{00}(\Omega) $. \end{proof}

The following result for boundary data in $H^1(\Gamma)$ is well known. Using Theorem \ref{CoruH10DeltaudualH1/2}, we give here an other proof that we find in the litterature.

\begin{theorem}\label{IsogdansH1Gamma}   For any $g\in H^{1}(\Gamma)$, the problem $(\mathscr{L}_D^H)$ has a unique solution $u\in H^{3/2}(\Omega)$. Moreover $\sqrt \varrho \, \nabla^2 u \in {\textit{\textbf L}}^2(\Omega)$ and we have the estimate 
\begin{equation*}\label{estimH3demiH1Gamma1}
\Vert u \Vert_{H^{3/2}(\Omega)} + \Vert\sqrt \varrho\, \nabla^2 u \Vert_{\textit{\textbf L}^2(\Omega)} \leq \ C\,\Vert g \Vert_{H^{1}(\Gamma)}.
\end{equation*}
The solution $u$ satisfies also the following property: for any positive integer $k$
$$
\varrho^{k + 1/2}\nabla^{k + 2}u \in \textit{\textbf L}^2(\Omega).
$$
\end{theorem}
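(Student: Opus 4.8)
The plan is to reduce the statement to the Ne\v{c}as property and to the $L^2(\Gamma)$-theory already established in Theorem~\ref{ThIsogL2Gamma}, and then to read off the weighted bounds from the harmonicity of $u$ via Theorem~\ref{inegharm}. \emph{Uniqueness} is immediate: if $u_1,u_2\in H^{3/2}(\Omega)$ both solve $(\mathscr{L}_D^H)$, then $u_1-u_2$ is harmonic, lies in $H^{3/2}(\Omega)\hookrightarrow H^{1/2}(\Omega)$ and has vanishing trace, hence $u_1\equiv u_2$ by Theorem~\ref{UniciteH1/2}.

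For \emph{existence}, note first that $g\in H^1(\Gamma)\hookrightarrow H^{1/2}(\Gamma)$, so lifting $g$ to some $G\in H^1(\Omega)$ and solving $\Delta(u-G)=-\Delta G\in H^{-1}(\Omega)$ with $u-G\in H^1_0(\Omega)$ by Lax--Milgram produces a harmonic $u\in H^1(\Omega)$ with $\gamma_0u=g$; by Theorem~\ref{UniciteH1/2} this is the unique harmonic function of $H^{1/2}(\Omega)$ with trace $g$, so it coincides with the solution of Theorem~\ref{ThIsogL2Gamma}, and in particular $\|u\|_{H^{1/2}(\Omega)}\le C(\Omega)\|g\|_{L^2(\Gamma)}$. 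Since $\Delta u=0\in L^2(\Omega)\subset[H^{1/2}(\Omega)]'$ and $\gamma_0u=g\in H^1(\Gamma)$, the Ne\v{c}as property (Theorem~\ref{CoruH10DeltaudualH1/2} applied to $u$ minus a sufficiently regular lifting of $g$, resp. its extension Theorem~\ref{d02-110118-th1a1}) gives $\partial u/\partial\textit{\textbf n}\in L^2(\Gamma)$ with $\|\partial u/\partial\textit{\textbf n}\|_{\textit{\textbf L}^2(\Gamma)}\le C(\Omega)\inf_{k\in\mathbb{R}}\|g+k\|_{H^1(\Gamma)}\le C(\Omega)\|g\|_{H^1(\Gamma)}$, and moreover $u\in H^{3/2}(\Omega)$. (If one prefers to rely only on the zero-trace version Theorem~\ref{CoruH10DeltaudualH1/2}, one first runs this argument for boundary data $g_k$ that lift to $H^2(\Omega)$, which are dense in $H^1(\Gamma)$, obtaining $u_k=G_k-z_k$ with $z_k\in H^{3/2}_0(\Omega)$ solving $\Delta z_k=\Delta G_k\in L^2(\Omega)$, and then passes to the limit using the uniform estimate produced below.)

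It remains to quantify the $H^{3/2}$-bound and to obtain the weighted estimates. With $u\in H^{3/2}(\Omega)\cap H^1(\Omega)$ harmonic, each $\partial_ju$ is harmonic and lies in $H^{1/2}(\Omega)$, and its $L^2(\Gamma)$-trace is the $j$-th component of $\nabla_\tau g+(\partial u/\partial\textit{\textbf n})\textit{\textbf n}$, which belongs to $L^2(\Gamma)$ with norm $\le C(\Omega)\|g\|_{H^1(\Gamma)}$ by the previous step and by $g\in H^1(\Gamma)$. Applying Theorem~\ref{ThIsogL2Gamma} to $\partial_ju$ with this boundary datum yields $\|\partial_ju\|_{H^{1/2}(\Omega)}+\|\sqrt{\varrho}\,\nabla\partial_ju\|_{\textit{\textbf L}^2(\Omega)}+\sum_{k\ge1}\|\varrho^{k+1/2}\nabla^{k+1}\partial_ju\|_{\textit{\textbf L}^2(\Omega)}\le C(\Omega)\|g\|_{H^1(\Gamma)}$. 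Summing over $j$ and combining with $\|u\|_{L^2(\Omega)}\le C(\Omega)\|g\|_{L^2(\Gamma)}$ gives $\|u\|_{H^{3/2}(\Omega)}\le\|u\|_{L^2(\Omega)}+\|\nabla u\|_{\textit{\textbf H}^{1/2}(\Omega)}\le C(\Omega)\|g\|_{H^1(\Gamma)}$, the bound $\|\sqrt{\varrho}\,\nabla^2u\|_{\textit{\textbf L}^2(\Omega)}\le C(\Omega)\|g\|_{H^1(\Gamma)}$, and $\varrho^{k+1/2}\nabla^{k+2}u\in\textit{\textbf L}^2(\Omega)$ for every $k\ge1$. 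Alternatively, once $u\in H^{3/2}(\Omega)$ is known, the last two properties follow directly from the harmonicity of $u$ by Theorem~\ref{inegharm} with $m=1$, $\theta=1/2$, since $\sigma\simeq\varrho$.

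The main obstacle is the trace identification used in the last step: one must verify that the intrinsic $L^2(\Gamma)$-trace attached to the harmonic function $\partial_ju\in H^{1/2}(\Omega)$ in Theorem~\ref{ThIsogL2Gamma} (and Remark~\ref{remrelevbis}) genuinely agrees with the $j$-th component of the $L^2(\Gamma)$-trace of $\nabla u$ provided by the Ne\v{c}as property; this amounts to checking the mutual compatibility of the trace notions of Section~\ref{traces}. A secondary technical point, if one wishes to use only Theorem~\ref{CoruH10DeltaudualH1/2}, is the construction of a lifting $G$ of $g\in H^1(\Gamma)$ regular enough ($G\in H^2(\Omega)$, hence $\Delta G\in[H^{1/2}(\Omega)]'$ and $\partial G/\partial\textit{\textbf n}\in L^2(\Gamma)$) to reduce to a homogeneous boundary condition; this is available either by the density reduction indicated above or from the regularity theory for the biharmonic Dirichlet problem (Theorem~\ref{RelevBiha}). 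Once these two points are in place, the argument is routine.
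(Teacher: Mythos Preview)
There are two issues, one logical and one substantive.

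First, invoking Theorem~\ref{d02-110118-th1a1} here is circular: in the paper that result is proved as Corollary~\ref{d02-110118-th1a}, and the proof of its parts~(i) and~(iii) begins precisely by taking the harmonic $H^{3/2}$-extension furnished by Theorem~\ref{IsogdansH1Gamma}. So your direct route to ``$u\in H^{3/2}(\Omega)$'' is unavailable; only your density fallback survives, and all the weight falls on the quantitative step.

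Second, the trace identification you flag as ``the main obstacle'' is not a technicality but the substance of the theorem. To apply Theorem~\ref{ThIsogL2Gamma} to $\partial_j u$ you need $\partial_j u\in H^{1/2}(\Omega)$ (the class in which uniqueness holds there) \emph{and} that its trace in the sense of Theorem~\ref{lsb3} agrees with the Ne\v{c}as $L^2(\Gamma)$-trace. Under your density scheme the approximants $u_k=G_k-z_k$ lie only in $H^{3/2}(\Omega)$, not in $H^2(\Omega)$, since for a general Lipschitz $\Omega$ the Dirichlet Laplacian does not map $L^2$ into $H^2$; hence $\partial_j u_k\in H^{1/2}(\Omega)\setminus H^1(\Omega)$ and the compatibility question is just as hard for $u_k$ as for $u$. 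You do not verify it, and it is not routine.

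The paper bypasses this by a different device: it exhausts $\Omega$ from inside by \emph{smooth} subdomains $\Omega_k$, where the harmonic extension $u_k$ (with boundary data obtained from $u$ on $\partial\Omega_k$) automatically lies in $H^2(\Omega_k)$. The key bound $\|\nabla u_k\|_{\textit{\textbf H}^{1/2}(\Omega_k)}\le C\|g\|_{H^1(\Gamma)}$ is then obtained by duality: for $\textit{\textbf F}\in\boldsymbol{\mathscr{D}}(\Omega_k)$ one solves $\Delta\boldsymbol{\varphi}=\textit{\textbf F}$ in $\textit{\textbf H}^{3/2}_0(\Omega_k)$, writes $\int_{\Omega_k}\nabla u_k\cdot\Delta\boldsymbol{\varphi}=\int_{\Gamma_k}\nabla u_k\cdot\partial\boldsymbol{\varphi}/\partial\textit{\textbf n}_k$ by Green's formula (legitimate because $u_k\in H^2$), and controls the right-hand side through the classical Ne\v{c}as bound on $\|\nabla u_k\|_{L^2(\Gamma_k)}$ together with Theorem~\ref{CoruH10DeltaudualH1/2} for $\|\partial\boldsymbol{\varphi}/\partial\textit{\textbf n}_k\|_{L^2(\Gamma_k)}$. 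A direct estimate of the $H^{1/2}$ double integral on $\Omega\setminus\Omega_k$ transfers the bound to $\Omega$, and weak compactness gives $u\in H^{3/2}(\Omega)$. In spirit this duality step \emph{is} your ``apply Theorem~\ref{ThIsogL2Gamma} to $\partial_j u$,'' but carried out where the requisite Green's formula is classically available; that passage to the smooth setting is exactly what your argument is missing.
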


\begin{proof} Let $u\in H^1(\Omega)$ the unique solution of Problem $(\mathscr{L}_D^H)$. As above, by using Theorem \ref{inegharm}, it suffices to prove that $u\in H^{3/2}(\Omega)$. 

Recall that from Ne$\mathrm{\check{c}}$as Property such a solution satisfies 
\begin{equation*}\label{nablaudans L^2Gamma}
\nabla u_{\vert \Gamma} \in \textit{\textbf L}^2(\Gamma), \quad \mathrm{with}\quad  \Vert \nabla u \Vert_ {\textit{\textbf L}^2(\Gamma)} \leq C\, \Vert g \Vert_{H^{1}(\Gamma)}.
\end{equation*}
We then reason as in the proof of Theorem \ref{CoruH10DeltaudualH1/2} (see Step 1), without assuming that $\Omega$ is a Lipschitz hypograph.
For $ \textit{\textbf x} = (\textit{\textbf x}', x_N) \in \Omega$, we set
$$
g_k(\textit{\textbf x}) = \chi(x_N) u (\textit{\textbf x}', \xi_k(\textit{\textbf x}')).
$$
Since $u\in H^2_{\mathrm{loc}}(\Omega)$, there exists a unique solution $u_k \in H^2(\Omega_k)$ satisfying $-\Delta u_k = 0$ in $\Omega_k$ and $ u_k = g_k$ on $\Gamma_k$. Setting now
\begin{equation*}
\widetilde{u_k} = \begin{cases} u_k \quad \; \mathrm{in}\;\; \Omega_k \\
g_k\quad \; \; \textrm{in}\; \; \Omega\setminus \Omega_k.
\end{cases}
\end{equation*}
Clearly,  $ (\widetilde{u_k})_k$ is bounded in $H^1(\Omega)$ and $\widetilde{u_k} \rightharpoonup u$ in $H^1(\Omega)$. We will show that $(\nabla \widetilde{u_k})_k$ is bounded in $\textit{\textbf H}^{\, 1/2}(\Omega)$, which will show that $\nabla u\in \textit{\textbf H}^{\, 1/2}(\Omega)$ and then $u\in  H^{\, 3/2}(\Omega)$.

Let $\textit{\textbf F} \in \boldsymbol{\mathscr{D}}(\Omega_k)$ and $\boldsymbol{\varphi} \in \textit{\textbf H}^{\, 3/2}_0(\Omega_k)$ be such that
\begin{equation*}
\Delta \boldsymbol{\varphi} = \textit{\textbf F}\quad \mathrm{in}\; \Omega_k\quad \mathrm{with}\quad  \Vert \boldsymbol{\varphi}\Vert_{\textit{\textbf H}^{\, 3/2}(\Omega_k)}\leq C \Vert \textit{\textbf F}\, \Vert_{[\textit{\textbf H}^{\, 1/2}(\Omega_k)]'},
\end{equation*}
where $C$ is a positive constant which depends only on the Lipschitz  constant and the Poincar\'e constant of $\Omega$.  By Green's formula we get
$$
\int_{\Omega_{k}}\nabla u_k \cdot \Delta \boldsymbol{\varphi}\, dx = \int_{\Gamma_{k}} \nabla u_k\cdot \partial_{\textit{\textbf n}_k} \boldsymbol{\varphi}\, d\sigma_k.
$$
Hence, using Theorem \ref{CoruH10DeltaudualH1/2} we obtain
$$
\displaystyle \vert \int_{\Omega_{k}}\nabla u_k \cdot \textit{\textbf F}\, \vert \leq \Vert \nabla u_k\Vert_{\textit{\textbf L}^2(\Gamma_k)}\Vert  \partial_{\textit{\textbf n}_k} \boldsymbol{\varphi}\Vert_{\textit{\textbf L}^2(\Gamma_k)} \leq C \Vert \nabla u_k\Vert_{\textit{\textbf L}^2(\Gamma_k)}\Vert\textit{\textbf F}\, \Vert_{[\textit{\textbf H}^{\, 1/2}(\Omega_k)]'}.
$$
But, from classical Ne$\mathrm{\check{c}}$as Property given in Theorem  \ref{NecProt}
$$
\Vert \nabla u_k\Vert_{\textit{\textbf L}^2(\Gamma_k)} \leq C \Vert g_k\Vert_{H^1(\Gamma_k)} \leq C \Vert \nabla u \Vert_ {\textit{\textbf L}^2(\Gamma)}  \leq C \Vert g \Vert_ {H^1(\Gamma)}
$$
As $ \boldsymbol{\mathscr{D}}(\Omega_k)$ is dense in $[\textit{\textbf H}^{\, 1/2}(\Omega_k)]'$, we deduce the following inequality
\begin{equation}\label{inegnablaukH1/2}
\Vert  \nabla u_k\Vert_{\textit{\textbf H}^{\, 1/2}(\Omega_k)}  \leq C \Vert g \Vert_ {H^1(\Gamma)}.
\end{equation}
Observe now that
$$
\Vert  \nabla \widetilde{u_k}\Vert_{\textit{\textbf H}^{\, 1/2}(\Omega)}^2 = \Vert  \nabla \widetilde{u_k}\Vert_{\textit{\textbf L}^{\, 2}(\Omega)}^2 + \int_\Omega\int_\Omega\frac{\vert \nabla \widetilde{u_k}(\textit{\textbf x}) -  \nabla \widetilde{u_k}(\textit{\textbf y})\vert^2}{\vert \textit{\textbf x} - \textit{\textbf y} \vert^{N + 1}}dx\,dy.
$$ 
In order to estimate the last double integral, it suffices thanks to \eqref{inegnablaukH1/2} to estimate the following:
$$
\int_{\Omega_{k}}\int_{\Omega\setminus\Omega_{k}}\frac{\vert \nabla u_k(\textit{\textbf x}) -  \nabla {g_k}(\textit{\textbf y})\vert^2}{\vert \textit{\textbf x} - \textit{\textbf y} \vert^{N + 1}}\, dx\quad \mathrm{and} \quad \int_{\Omega\setminus\Omega_{k}}\int_{\Omega\setminus\Omega_{k}}\frac{\vert \nabla g_k(\textit{\textbf x}) -  \nabla g_k(\textit{\textbf y})\vert^2}{\vert \textit{\textbf x} - \textit{\textbf y} \vert^{N + 1}}\, dx.
$$
But when $\textit{\textbf y} \in \Omega\setminus\Omega_{k}$, we have ${g_k}(\textit{\textbf y}) = u_k(P_k(\textit{\textbf y}) )$, where $P_k(\textit{\textbf y}) =     (\textit{\textbf y}', \xi_k(\textit{\textbf y}'))$. As $\vert \textit{\textbf x} - \textit{\textbf y} \vert \geq \vert \textit{\textbf x} - P_k(\textit{\textbf y}) \vert$, for $\textit{\textbf x}\in \Omega_k$,  and since 
$$
\Omega_k  = \{x\in \Omega;\; \varrho^*(x, \Gamma)  > \frac{1}{k}\},
$$
where $\varrho^*$ is the regularized (signed or not signed) distance to $\Gamma$, which satisfies :
$$
\forall x\in \Omega, \quad C_1 \varrho(x, \Gamma) \leq \varrho^* (x, \Gamma) \leq C_2\varrho(x, \Gamma),
$$
we can verify that
$$
\int_{\Omega_{k}}\int_{\Omega\setminus\Omega_{k}}\frac{\vert \nabla u_k(\textit{\textbf x}) -  \nabla {g_k}(\textit{\textbf y})\vert^2}{\vert \textit{\textbf x} - \textit{\textbf y} \vert^{N + 1}}\, dx \leq \frac{C}{k}\int_{\Omega_{k}}\int_{\Omega_{k}}\frac{\vert \nabla u_k(\textit{\textbf x}) -  \nabla {u_k}(\textit{\textbf y})\vert^2}{\vert \textit{\textbf x} - \textit{\textbf y} \vert^{N + 1}}\, dx.
$$
For the second double integral, we observe that when $(\textit{\textbf x}, \textit{\textbf y} )\in (\Omega\setminus\Omega_{k})\times (\Omega\setminus\Omega_{k})$, we have $\vert \textit{\textbf x} - \textit{\textbf y} \vert \geq C \vert P_k(\textit{\textbf x})- P_k(\textit{\textbf y}) \vert $ since $\xi_k$ is Lipschitzian, with the Lipschitz constant not depending on $k$. So we deduce that
$$
\int_{\Omega\setminus\Omega_{k}}\int_{\Omega\setminus\Omega_{k}}\frac{\vert \nabla g_k(\textit{\textbf x}) -  \nabla g_k(\textit{\textbf y})\vert^2}{\vert \textit{\textbf x} - \textit{\textbf y} \vert^{N + 1}}\, dx \leq \frac{C}{k^2}\int_{\Omega_{k}}\int_{\Omega_{k}}\frac{\vert \nabla u_k(\textit{\textbf x}) -  \nabla {u_k}(\textit{\textbf y})\vert^2}{\vert \textit{\textbf x} - \textit{\textbf y} \vert^{N + 1}}\, dx
$$
and we conclude that $(\Vert\nabla \widetilde{u_k}\Vert_{\textit{\textbf H}^{\, 1/2}(\Omega)})_k $ is bounded. Finally, 
$$
\widetilde{u_k} \rightharpoonup u\quad \mathrm{ in }\; H^{3/2}(\Omega).
$$
\end{proof}

To find solutions in $H^s(\Omega)$ with $1/2 < s < 3/2$, it suffices to use Theorem \ref{ThIsogL2Gamma}, Theorem  \ref{IsogdansH1Gamma}  and an interpolation argument, which leads us to the following result:

\begin{theorem}\label{d02-240118-t1ex}
Let $\Omega$ be a Lipschitz bounded open subset  of $\mathbb{R}^N$ and let $1/2 < s < 3/2$. Then for any $g\in H^{s-1/2}(\Gamma)$, the problem $(\mathscr{L}_D^H)$ has a unique solution $u\in H^s(\Omega)$ with the estimate 
\begin{equation*}\label{d02-240118-e1c}
 \Vert u \Vert_{H^s(\Omega)}\ \leq \ C\, \Vert g \Vert_{H^{s-1/2}(\Gamma)}.
\end{equation*}
\end{theorem}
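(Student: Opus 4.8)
The plan is to obtain the statement by complex interpolation between the two endpoint cases already established. Let $T$ denote the solution operator of Problem $(\mathscr{L}_D^H)$, so that $Tg$ is the harmonic function in $\Omega$ with boundary value $g$. By Theorem \ref{ThIsogL2Gamma} (and its accompanying estimate), $T$ maps $L^2(\Gamma)$ continuously into $H^{1/2}(\Omega)$; by Theorem \ref{IsogdansH1Gamma}, $T$ maps $H^1(\Gamma)$ continuously into $H^{3/2}(\Omega)$. Since $H^1(\Gamma)\hookrightarrow L^2(\Gamma)$, and since by the uniqueness statement of Theorem \ref{UniciteH1/2} a harmonic function in $H^{1/2}(\Omega)$ is determined by its trace, these two maps agree on $H^1(\Gamma)$: the $H^{3/2}(\Omega)$-solution furnished by Theorem \ref{IsogdansH1Gamma} is, in particular, an $H^{1/2}(\Omega)$-solution with the same boundary value. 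Hence $T$ is a well-defined linear operator on the Banach couple $(L^2(\Gamma),H^1(\Gamma))$ taking values in the couple $(H^{1/2}(\Omega),H^{3/2}(\Omega))$, both couples being understood inside the ambient distribution spaces on $\Gamma$ and on $\Omega$.

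Next I would fix $1/2<s<3/2$ and set $\theta=s-1/2\in\,]0,1[$, so that $(1-\theta)\tfrac12+\theta\tfrac32=s$ and $(1-\theta)\cdot 0+\theta\cdot 1=s-1/2$. Using the identification of fractional Sobolev spaces on a Lipschitz boundary as complex interpolation spaces, one has $[L^2(\Gamma),H^1(\Gamma)]_\theta=H^{s-1/2}(\Gamma)$, and, by the interpolation identity for the $H^r(\Omega)$-scale recalled in Section 2 (reducing to $\mathbb{R}^N$ through a bounded extension operator), $[H^{1/2}(\Omega),H^{3/2}(\Omega)]_\theta=H^s(\Omega)$. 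Applying the interpolation theorem to the operator $T$ between these two couples yields that $T:H^{s-1/2}(\Gamma)\to H^s(\Omega)$ is bounded, with $\Vert Tg\Vert_{H^s(\Omega)}\le C\,\Vert g\Vert_{H^{s-1/2}(\Gamma)}$, where $C$ depends only on $s$ and on the norms of the two endpoint operators, hence only on $s$ and the Lipschitz character of $\Omega$.

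It then remains to verify that $u:=Tg$ genuinely solves $(\mathscr{L}_D^H)$. Choose $g_k\in H^1(\Gamma)$ with $g_k\to g$ in $H^{s-1/2}(\Gamma)$; then $Tg_k\to u$ in $H^s(\Omega)$, and since $H^s(\Omega)\hookrightarrow\mathscr{D}'(\Omega)$ and each $Tg_k$ is harmonic, passing to the limit gives $\Delta u=0$. For the boundary condition, since $s>1/2$ the classical trace operator $\gamma_0$ is continuous from $H^s(\Omega)$ into $H^{s-1/2}(\Gamma)$; as $\gamma_0(Tg_k)=g_k\to g$, we obtain $\gamma_0 u=g$. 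Uniqueness is immediate: if $u_1,u_2\in H^s(\Omega)$ are harmonic with the same trace $g$, then $u_1-u_2\in H^{1/2}(\Omega)$ is harmonic and vanishes on $\Gamma$, so $u_1=u_2$ by Theorem \ref{UniciteH1/2}.

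The only genuinely delicate points are the two interpolation identities. The boundary identity $[L^2(\Gamma),H^1(\Gamma)]_\theta=H^{s-1/2}(\Gamma)$ is standard for Lipschitz $\Gamma$: it follows from the local charts and the partition of unity, reducing it to the corresponding statement on $\mathbb{R}^{N-1}$. The domain identity is a consequence of the definition of $H^r(\Omega)$ by restriction together with the existence of a common bounded extension operator $H^r(\Omega)\to H^r(\mathbb{R}^N)$ for $r\in\{1/2,3/2\}$, which makes restriction a retraction and hence compatible with the complex interpolation functor. Beyond these, the argument is routine, the substantive content being contained in Theorems \ref{ThIsogL2Gamma} and \ref{IsogdansH1Gamma}; one should only take a little care that $T$ is a consistent operator on the interpolation couple, which is exactly what the uniqueness result Theorem \ref{UniciteH1/2} provides.
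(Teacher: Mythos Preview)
Your proof is correct and follows exactly the approach the paper indicates: the paper's entire argument is the single sentence ``it suffices to use Theorem \ref{ThIsogL2Gamma}, Theorem \ref{IsogdansH1Gamma} and an interpolation argument,'' and you have simply filled in the details of that interpolation. Your care in checking consistency of the solution operator on the interpolation couple via Theorem \ref{UniciteH1/2}, and in verifying that the interpolated operator indeed produces a harmonic function with the correct trace, goes beyond what the paper spells out but is precisely what is needed to make the one-line sketch rigorous.
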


\begin{proof} Let $S: g \mapsto u_g$ the linear operator defined as follows: for $g\in L^2(\Gamma)$, $u_g \in H^{1/2}(\Omega)$ is the unique harmonic function satisfying $u_g = g$ on $\Gamma$. We know that the operators $S: L^2(\Gamma) \rightarrow  H^{1/2}(\Omega) \cap \mathscr{H}$ and  $S: H^{1}(\Gamma) \rightarrow  H^{3/2}(\Omega) \cap \mathscr{H}$ are linear and continuous. Hence, by interpolation, we deduce that for any $0 < \theta < 1$
$$
S: [H^1(\Gamma), L^2(\Gamma)]_\theta \rightarrow  [H^{3/2}(\Omega) \cap \mathscr{H}, H^{1/2}(\Omega) \cap \mathscr{H}]_\theta 
$$
is continuous. Clearly, we have 
$$
 [H^1(\Gamma), L^2(\Gamma)]_\theta = H^{1-\theta}(\Gamma).
 $$
In order to prove our result, we need to characterize the interpolation space $ [H^{3/2}(\Omega) \cap \mathscr{H}, H^{1/2}(\Omega) \cap \mathscr{H}]_\theta $. However, for all $0< \theta < 1$ we have
	$$
		[ H^1(\Omega) \cap \mathscr{H}, L^2(\Omega)\cap \mathscr{H}]_{\theta} = 
		H^{1-\theta}(\Omega)\cap \mathscr{H},
	$$
	(see \cite{J-K}). In fact, this result can be easily extended as follows: for any positive integer $m$,
	$$
		[ H^m(\Omega) \cap \mathscr{H}, L^2(\Omega)\cap \mathscr{H}]_{\theta} = H^{m(1-\theta)}(\Omega)\cap \mathscr{H}.
	$$
Using this last identity with $m = 2$, we get
$$
H^{3/2}(\Omega) \cap \mathscr{H} = [ H^2(\Omega) \cap \mathscr{H}, L^2(\Omega)\cap \mathscr{H}]_{1/4} \quad\mathrm{and}\quad  H^{1/2}(\Omega) \cap \mathscr{H} = [ H^2(\Omega) \cap \mathscr{H}, L^2(\Omega)\cap \mathscr{H}]_{3/4}.
$$ 
By reiteration theorem (see \cite{Lions}, Theorem 6.1, Chapter 1), we then deduce that
\begin{equation*}
 [ H^{3/2}(\Omega) \cap \mathscr{H},  H^{1/2}(\Omega) \cap \mathscr{H} ]_{\theta} = [ H^2(\Omega) \cap \mathscr{H}, L^2(\Omega)\cap \mathscr{H}]_{(1-\theta)(1/4) + \theta (3/4)} =    H^s(\Omega) \cap \mathscr{H},
\end{equation*}
with $s = 2[1- (1-\theta)(1/4) - \theta (3/4)] = 3/2 - \theta$. That means that
$$ 
[H^{3/2}(\Omega) \cap \mathscr{H}, H^{1/2}(\Omega) \cap \mathscr{H}]_\theta  = H^{3/2- \theta}(\Omega) \cap \mathscr{H}, 
$$
which concludes the proof.
\end{proof}

As a consequence of Theorem \ref{UniciteH1/2} , we have:

\begin{corollary} Let $u \in H^{1/2}(\Omega)$ be a harmonic function in $\Omega$. If for some $ s\in \, ]0, 1 ]$,  we have $u_{\vert \Gamma} \in  H^{s}(\Gamma)$, then $u \in H^{s+1/2}(\Omega)$.
\end{corollary}
\begin{proof} Using Theorem \ref{d02-240118-t1ex}, there exists a unique harmonic function $w \in H^{s+1/2}(\Omega)$ such that $w = u$ on $\Gamma$.
Setting $\chi = u - w$, the harmonic function $\chi$ belongs to $H^{1/2}(\Omega)$ and $\chi = 0 $ on $\Gamma$. Hence $\chi = 0$ in $\Omega$ and $u = w$ in $\Omega$ which means that  $u \in H^{s+1/2}(\Omega)$.
\end{proof}

Using Theorem \ref{d02-240118-t1ex}, we give a second proof of Theorem \ref{isoHsbTheo}, similar to the one given in  \cite{J-K}.

\begin{theorem}
Let $\Omega$ be a Lipschitz bounded open subset of $\mathbb{R}^N$ and \\ $1/2< s<3/2$. Then if $f\in H^{s-2}(\Omega)$, the problem $(\mathscr{L}_D^0)$ has a unique solution $u\in H^s(\Omega)$ with the following estimate
$$
 \Vert u \Vert_{H^s(\Omega)}\ \leq \ C\,\Vert f \Vert_{H^{s-2}(\Omega)}.
$$
\end{theorem}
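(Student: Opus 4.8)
The plan is to reduce the inhomogeneous interior problem $(\mathscr{L}_D^0)$ to the homogeneous (Laplace) problem $(\mathscr{L}_D^H)$ already solved in Theorem \ref{d02-240118-t1ex}, in the spirit of \cite{J-K}. First I would dispose of uniqueness: if $u_1,u_2\in H^s(\Omega)$ both solve $(\mathscr{L}_D^0)$ with the same $f$, then $u_1-u_2\in H^s(\Omega)\hookrightarrow H^{1/2}(\Omega)$ is harmonic in $\Omega$, and since $s>1/2$ it has an honest trace in $H^{s-1/2}(\Gamma)$ which vanishes; Theorem \ref{UniciteH1/2} then forces $u_1=u_2$.

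For existence, the first step is to construct a particular solution with a controlled norm. I would extend $f\in H^{s-2}(\Omega)=[H^{2-s}_0(\Omega)]'$ to a compactly supported $\widetilde f\in H^{s-2}(\mathbb{R}^N)$ with $\Vert\widetilde f\Vert_{H^{s-2}(\mathbb{R}^N)}\le C\Vert f\Vert_{H^{s-2}(\Omega)}$. This is legitimate because $2-s\notin\{1/2\}+\mathbb{N}$, so by \eqref{a2-e7} the extension-by-zero map is an isometry from $H^{2-s}_0(\Omega)=\widetilde H^{2-s}(\Omega)$ onto a closed subspace of $H^{2-s}(\mathbb{R}^N)$; its transpose, which is the restriction map $H^{s-2}(\mathbb{R}^N)\to H^{s-2}(\Omega)$, is therefore onto with a bounded linear right inverse (an extension operator). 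Then, fixing an open ball $B$ with $\overline\Omega\subset B$ and restricting $\widetilde f$ to $B$, the classical $L^2$-theory on the smooth domain $B$ (Lions--Magenes, \cite{Lions}) gives $v\in H^s(B)$ with $-\Delta v=\widetilde f$ in $B$ and $\Vert v\Vert_{H^s(B)}\le C\Vert\widetilde f\Vert_{H^{s-2}(B)}$. Restricting to $\Omega$, the function $v|_\Omega\in H^s(\Omega)$ satisfies $-\Delta(v|_\Omega)=f$ in $\Omega$ with $\Vert v|_\Omega\Vert_{H^s(\Omega)}\le C\Vert f\Vert_{H^{s-2}(\Omega)}$.

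The second step is the correction by a harmonic lift. Since $1/2<s<3/2$, the trace $g:=v|_\Gamma$ belongs to $H^{s-1/2}(\Gamma)$ with $\Vert g\Vert_{H^{s-1/2}(\Gamma)}\le C\Vert v\Vert_{H^s(\Omega)}$. By Theorem \ref{d02-240118-t1ex}, Problem $(\mathscr{L}_D^H)$ with boundary datum $g$ has a unique solution $w\in H^s(\Omega)$, harmonic in $\Omega$, with $\Vert w\Vert_{H^s(\Omega)}\le C\Vert g\Vert_{H^{s-1/2}(\Gamma)}$. Setting $u:=v-w$ we obtain $u\in H^s(\Omega)$, $-\Delta u=-\Delta v+\Delta w=f$ in $\Omega$, and $u|_\Gamma=g-g=0$, i.e. $u\in H^s_0(\Omega)$; chaining the three estimates above yields $\Vert u\Vert_{H^s(\Omega)}\le C\Vert f\Vert_{H^{s-2}(\Omega)}$, which completes the argument.

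The only delicate point — and the main obstacle, such as it is — lies in the particular solution of the second paragraph: one must extend $f$ off $\Omega$ continuously in the negative-order space $H^{s-2}$, which is exactly where the hypothesis $s\ne 1/2,3/2$ (equivalently $2-s\notin\{1/2\}+\mathbb{N}$) is used through \eqref{a2-e7}, and then solve the interior Dirichlet problem with $H^{s-2}$ data on an auxiliary smooth domain via the classical $L^2$-theory. Everything else — the trace step, the harmonic correction, and the tracking of constants — is routine once Theorem \ref{d02-240118-t1ex} and Theorem \ref{UniciteH1/2} are available.
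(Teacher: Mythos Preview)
Your proof is correct and follows essentially the same reduction as the paper's: extend $f$ to $H^{s-2}(\mathbb{R}^N)$, build a particular solution $v\in H^s(\Omega)$ of $-\Delta v=f$, then subtract the harmonic lift of its trace furnished by Theorem \ref{d02-240118-t1ex}. The only cosmetic difference is that the paper obtains $v$ directly as $(\mathscr{E}*\widetilde f)|_\Omega$ via convolution with the fundamental solution on $\mathbb{R}^N$, rather than by solving a Dirichlet problem on an auxiliary smooth ball; your explicit uniqueness argument via Theorem \ref{UniciteH1/2} is a welcome addition the paper leaves implicit.
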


\begin{proof}
We extend the data $f$ by zero outside $\Omega$ and denote by $\widetilde{f}$. We can see that $\widetilde{f}\in H^{s-2}(\mathbb{R}^N)$ and the   solution  $\mathscr{E}*\widetilde{f}$ of Laplace's equation belongs to $H^{s-2}(\mathbb{R}^N)$ where $\mathscr{E}$ is the fundamental solution of $-\Delta$. The restriction $(\mathscr{E}*\widetilde{f})|_\Omega$, belongs to $H^s(\Omega) $ and its trace $\gamma(\mathscr{E}*\widetilde{f})|_\Omega\in H^{s-1/2}(\Gamma).$ Now we consider the following problem
$$
\Delta w = 0\ \textrm{ in } \Omega\ \ \ \textrm{ and} \ \ \ w = \gamma(\mathscr{E}*\widetilde{f}|_\Omega)\ \textrm{ on }\ \Gamma.
$$
From Theorem \ref{d02-240118-t1ex} the problem above has a unique solution $w\in H^s(\Omega)$ with the corresponding estimate. The solution of Problem $(\mathscr{L}_D^0)$ is then given by
$$
u\ = \ \mathscr{E}*\widetilde{f}|_\Omega-w
$$
and belongs to $H^s(\Omega)$ with the estimate 
$$
  \Vert u\Vert_{H^s(\Omega)}\ \leq \ C\, \Vert f \Vert_{H^{s-2}(\Omega)}.
$$
\end{proof}
 
\subsection{Ne\v{c}as Property, second version}
We will now improve Theorem \ref{CoruH10DeltaudualH1/2}  as follows.

\begin{corollary}\label{d02-110118-th1a}
Let 
\begin{equation*}
u\in H^1(\Omega)\quad \mathrm{ with} \quad \Delta u  \in\ [H^{1/2}(\Omega)]'.
\end{equation*}
i) If $u\in H^1(\Gamma)$, then $\partial_\textit{\textbf n} u\in L^2(\Gamma)$ and we have the following estimate
\begin{equation}\label{estimNeuNecu}
 \Vert\partial_\textit{\textbf n} u\Vert_{ \textit{\textbf L}^2(\Gamma)}\ \leq\ C\Big(\inf_{k\in  \mathbb{R}}\left|\left |u + k \right |\right |_{H^1(\Gamma)} + \Vert\Delta u\Vert_{[H^{1/2}(\Omega)]'}\Big). 
\end{equation}
\noindent ii)  If $\partial_\textit{\textbf n} u\in L^2(\Gamma)$, then $u\in H^1(\Gamma)$ and we have the following estimate
\begin{equation*}\label{d02-100118-e3a}
 \inf_{k\in  \mathbb{R}}\Vert u + k\Vert _{H^1(\Gamma)}  \ \leq\ C\Big(\Vert \partial_\textit{\textbf n} u\Vert_{ \textit{\textbf L}^2(\Gamma)} +  \Vert\Delta u\Vert_{[H^{1/2}(\Omega)]')}\Big).
\end{equation*}
\noindent iii)  If $u\in H^1(\Gamma)$ or $\partial_{\textit{\textbf n}} u\in L^2(\Gamma)$, then $u\in H^{3/2}(\Omega)$.
\end{corollary}

\begin{proof} i) Let $u_0 \in H^{3/2}(\Omega)$ be the solution given by Theorem \ref{IsogdansH1Gamma} and satisfying
$$
\Delta u_0 = 0\ \textrm{ in } \Omega\ \ \ \textrm{ and} \ \ \ u_0 = u\ \textrm{ on }\ \Gamma.
$$
Using Theorem \ref{NecProt}, we know that $\partial_\textit{\textbf n} u_0 \in L^2(\Gamma)$ with the estimate
\begin{equation*}\label{estimNeuNecu0}
\Vert\partial_\textit{\textbf n} u_0\Vert_{ \textit{\textbf L}^2(\Gamma)}\ \leq\ C(\Omega) \inf_{k\in  \mathbb{R}}\left|\left |u_0 + k \right |\right |_{H^1(\Gamma)} .
\end{equation*} 
Since the function $v = u - u_0\in H^1_0(\Omega)$ and its Laplacian belongs to $[H^{1/2}(\Omega)]'$, we conclude that  $\partial_\textit{\textbf n} v \in L^2(\Gamma)$ by using Theorem \ref{CoruH10DeltaudualH1/2} with
\begin{equation*}\label{estimNeuNecv}
\Vert\partial_\textit{\textbf n} v\Vert_{ \textit{\textbf L}^2(\Gamma)}\ \leq\ C(\Omega)\Vert\Delta v\Vert_{[H^{1/2}(\Omega)]'}.
\end{equation*}
So $\partial_{\textit{\textbf n}} u \in L^2(\Gamma)$ with the estimate \eqref{estimNeuNecu}. \smallskip

\noindent  ii) Let $z_0 \in H^{1}(\Omega)$ be the solution satisfying
$$
\Delta z_0 = \Delta u\ \textrm{ in } \Omega\ \ \ \textrm{ and} \ \ \ z_0 = 0\ \textrm{ on }\ \Gamma.
$$
Point i) above implies that  $\partial_\textit{\textbf n} z_0 \in L^2(\Gamma)$. Setting $v = u - z_0$. Since by assumption $\partial_\textit{\textbf n} u \in L^2(\Gamma)$, we have also $\partial_\textit{\textbf n} v \in L^2(\Gamma)$. Now, as the harmonic function $v $ belongs to $H^{1}(\Omega)$, we deduce by using again Theorem \ref{NecProt} that its trace satisfies $v\in H^1(\Gamma)$.  So we have also $u\in H^1(\Gamma)$. To prove the corresponding estimate, we proceed as above.\smallskip

\noindent  iii)  Suppose that $u\in H^1(\Gamma)$. We know that there exist $u_0 \in  H^{3/2}_{0}(\Omega)$ satisfying $\Delta u_0 = \Delta u$ in $\Omega$ and $u_1 \in  H^{3/2}(\Omega)$ such that $\Delta u_1 = 0$ in $\Omega$ with $u_1= u$ on $\Gamma$.  Setting $z = u_0 + u_1$ which is in $H^{3/2}(\Omega)$, the function $u - z$ is harmonic and belongs to $H^{1}_0(\Omega)$. Hence $u = z$ and $u\in H^{3/2}(\Omega)$.

Now, suppose that $\partial_\textit{\textbf n} u \in L^2(\Gamma)$. Using Point ii) above we deduce that $u\in H^1(\Gamma)$ and then $u\in H^{3/2}(\Omega)$.
\end{proof}

\begin{remark}\upshape  The condition $\Delta u  \in\ [H^{1/2}(\Omega)]'$ is sufficient and maybe not necessary. However if we replace it by the condition $\sqrt\varrho \Delta u \in L^2(\Omega)$, then the conclusion of Point i) above no longer applies, as we can see by taking the function  $v$ given in Step 2 of Proposition \ref{ConterexampleH1demitrace1b}.

\end{remark}

 \appendix
 \section{Open problems} 

In this work we mainly focused on Laplace equation with Dirichlet boundary condition. And our analysis is mainly based  on $L^2$ theory. However, the theory and the results developed in this paper can be extended to other types of elliptic equations or systems and also to different boundary conditions, as well as the $L^p$ framework. We give below some examples of models for which it would be interesting, we believe, to study the questions of optimal regularity when the domain is only Lipschitz.  In many applications, such as fluid mechanics or electromagnetism, the domain $\Omega$ is indeed not very regular.

\subsection{Laplace equation with Dirichlet boundary condition}\label{Dir}\smallskip

\noindent{\bf 1. $L^p$-{\bf theory}}.  In the 80's, Ne\v{c}as posed the question of solving the problem $(\mathscr{L}_D)$ with the homogeneous boundary condition $g = 0$ on Lipschitz domains, when the RHS $f\in W^{-1,\, p}(\Omega)$. The answer to this question is partially given in the paper of Jerison and  Kenig \cite{J-K}:\medskip

\noindent{\bf Negative results.}  If $N \geq 3$, then for any $p > 3$ (resp. $ p > 4$ if $N = 2$), there is a Lipschitz domain $\Omega$ and $f\in \mathscr{C}^{\infty}(\overline{\Omega})$ such that the solution $u$ of Problem $(\mathscr{L}_D)$ with the homogeneous boundary condition $g = 0$ does not belong to $W^{1,\, p}(\Omega)$.\medskip

\noindent {\bf Positive results.} There exists $q > 3$ when $N \geq 3$ (resp. $q > 4$ when $N = 2$) such that if $q' < p < q$, then the problem $(\mathscr{L}_D)$ has a unique solution $u\in W^{1,\, p}(\Omega)$ satisfying the estimate
$$
\Vert u \Vert_{W^{1,\, p}(\Omega)}\leq C \Vert f \Vert_{W^{-1,\, p}(\Omega)}.
$$
Moreover, if $\Omega$ is $\mathscr{C}^{1}$, we can take $q = \infty$ as stated above. \medskip

In our opinion, the results above for $N \geq 3$, concerning the solvability in $W^{1, p}(\Omega)$, are available only for $N = 3$. This leads us to the following conjecture:\medskip

\noindent{\bf Conjecture 1.  Solvability in }$ W^{1, p}_0(\Omega)$. \smallskip

\noindent {\bf  i)} Let $\Omega$ be a bounded Lipschitz  domain of $\mathbb{R}^N$ with $N \geq 2$. Then, there exists $p_0(\Omega) < 2N/(N+1)$, depending only on $\Omega$,   such that if $p_0(\Omega)  < p <  [p_0(\Omega)]'$ and $f \in W^{-1,\, p}(\Omega)$ the problem $(\mathscr{L}_D^0)$ has a unique solution $u\in W^{1,\, p}(\Omega)$ satisfying the estimate
$$
\Vert u \Vert_{W^{1,\, p}(\Omega)}\leq C \Vert f \Vert_{W^{-1,\, p}(\Omega)}.
$$
\noindent {\bf  ii)} If $N \geq 2$, then for any $p > 2N/(N-1)$, there is a Lipschitz domain $\Omega$ and $f\in W^{-1,\, p}(\Omega)$ such that the solution $u$ of Problem $(\mathscr{L}_D^0)$ does not belong to $W^{1,\, p}(\Omega)$.\medskip
 
\noindent{\bf Conjecture 2.  Solvability in }$ L^{p}_{s,0}(\Omega)$, {\bf with} $ 1/p \leq s \leq 1+1/p$. \medskip

We know that the following operators are  isomorphisms: \\
i)  $ \Delta : H^{s}_0(\Omega) \rightarrow H^{s-2}(\Omega)$ for any $s$ satisfying $1/2 < s < 3/2$,\\
ii)  $\Delta : H^{3/2}_0(\Omega)  \rightarrow [H^{1/2}_{00}(\Omega)]'$ and  $\Delta : H^{1/2}_{00}(\Omega)\rightarrow H^{3/2}_0(\Omega)]'$.\medskip

We claim that for any $p_0 < p < p_0' $, with the same exponent $p_0$ as above,  the following operators are  isomorphisms: \\
i) $\Delta : L^{p}_{s,0}(\Omega)\rightarrow L^{s - 2,p}(\Omega)$ for any $s$ satisfying $1/p < s < 1+1/p$,\\
ii) from $ \Delta :  L^{p}_{1+1/p, 0}(\Omega) \rightarrow [L^{ p'}_{1/p', 0}(\Omega)]'$ and  $  \Delta : L^{p'}_{1/p', 0}(\Omega) \rightarrow[L^{1+1/p, p}_{0}(\Omega)]'$,
where the above spaces are defined in Section \ref{secuniqueness} and
$$
  L^{ p'}_{1/p', 0}(\Omega) = \{v \in L^{1/p', p'}(\Omega); \; \frac{v}{\varrho^{1/p'}} \in L^{p'}(\Omega)\}.
$$
\noindent{\bf 2.}  {\bf Very weak solutions}. The concept of very weak solutions developed in Lions-Magenes \cite{Lions} and also in Ne\v{c}as' book is quite appropriate when the domain $\Omega$ is sufficiently regular. In this case, using a duality argument, it is possible to solve the problem $(\mathscr{L}_D^H)$ in $H^{-s}(\Omega) $ when the boundary data $g$ belongs to $H^{-s-1/2}(\Gamma)$, with $s \ge 0$ which depends on the regularity of $\Omega$. The case $s = 0$ for  Lipschitz domain is treated in \cite{AAB} with some additional assumptions on $g$. \medskip

\noindent{{\bf Problem 1.} So, what is the maximal value of $s$ to find solutions in $H^{-s}(\Omega)$ (or in some weighted $L^2$ Sobolev space with respect to the distance to the boundary)? Similar question holds for the non homogeneous problem $(\mathscr{L}_D^0)$. Particularly, do we have an "optimal  choice" for the data $f$?  \medskip

\noindent{\bf 3.} {\bf The} $\mathrm{div(a\, } \mathrm{grad})$ {\bf operator}.\medskip

\noindent{\bf Problem 2.} What happens now if we replace the Laplacian by the operator 
$$
\mathrm{div(a\, } \mathrm{grad}), 
$$
with the function $a$ \\
i) not necessarily continuous, but possibly $VMO$ (Vanishing Mean Oscillation) and satisfying the inequalities $0 < a_* \leq a \leq a^*$, where $a_*$ and $a^*$ are constants?\\
ii) or equal to $\varrho^\alpha$ with $0 < \alpha \leq 1$, where $\varrho$ is the distance to the boundary of $\Omega$?  \medskip

\noindent{\bf 4. Optimal regularity in $\mathscr{C}^{1, 1}$ domains. } In Section \ref{Inhomogeneous Problem} we have seen that in the case of a Lipschitz domain $\Omega$, the optimal regularity $H^{3/2}(\Omega)$ for the solution of Problem $(\mathscr{L}_D^0)$ is obtained when the RHS $f$ is in the dual space $[H^{1/2}_{00}(\Omega)]'$. \medskip

\noindent{\bf Problem 3.}  When the domain is of class $\mathscr{C}^{1,1}$, do we have the regularity $H^{5/2}(\Omega)$ if $f$ belongs to $H^{1/2}_{00}(\Omega)$? \medskip

\subsection{Laplace equation with Neumann boundary condition}\label{Neu}\smallskip

In \cite{Fabes3} the authors showed that for any  bounded Lipschitz domain $\Omega$ and any $f\in [L^{p'}_{s+1/p'}(\Omega)]' $ and $h\in [B^{p'}_s(\Gamma)]'$ satisfying the compatibility condition $\langle f, 1\rangle_\Omega = \langle h,  1\rangle_\Gamma$, the problem
$$
(\mathscr{L}_N)\ \ \ \  \Delta v = f \quad \ \mbox{in}\ \Omega \quad
\mbox{and } \quad \partial_{\textit{\textbf n}} v= h \ \ \mbox{on }\Gamma,
$$
has a unique (modulo additive constants) solution $v \in L^{p}_{1-s+1/p}(\Omega)$ if the pair $s, p$ satisfies some conditions similar to those given in \cite{J-K} for the Dirichlet case. As in this latter case, these conditions must be modified in the same spirit as in Section \ref{secuniqueness}. Furthermore, the spaces considered for the data $f$, which are not subspaces of $\mathscr{D}'(\Omega)$, are not appropriate. On the other hand,  the above Neumann condition, as defined in \cite{Fabes3}, is related to the linear and continuous form $f$ and therefore varies with $f$. For simplicity, let us consider the case $p = 2$ and recall that for any vector field $\textit{\textbf F}\in\textit{\textbf H}\,^{-s+1/2}(\Omega)$, the authors define its normal component as follows: 
\begin{equation}\label{d02-190118-e2}
\forall\mu\in H^s(\Gamma),\ \ \ \left<\, \textit{\textbf F}\cdot\textit{\textbf n}_{f},\mu\,\right>_{H^{-s}(\Gamma)\times H^{s}(\Gamma)}\ =\ \left<\,f,\varphi\,\right>_\Omega+\left<\,\textit{\textbf F},\nabla\varphi\,\right>_\Omega,
\end{equation}
where $f\in \left[H^{s+1/2}(\Omega)\right]'$  is any extension of  $\textrm{div}\,\textit{\textbf F}\in H^{-s-1/2}(\Omega)$ and $\varphi \in H^{s+1/2}(\Omega)$ is an extension (in the trace sense) of $\mu$.

With this notion of normal component, which differs from the classical one, for which
an additional condition on divergence is required, the authors proved in \cite{Fabes3} that for any $0 < s < 1$, $f\in [H^{s+1/2}(\Omega)]' $ and $h\in H^{-s}(\Gamma)$ satisfying the compatibility condition $\langle f, 1\rangle_\Omega = \langle h,  1\rangle_\Gamma $ the problem: Find $v\in H^{-s + 3/2}(\Omega)$ satisfying
\begin{equation}\label{forvarNeu}
\forall \varphi \in H^{s+1/2}(\Omega), \quad \langle  \nabla v, \nabla \varphi \rangle_\Omega =  -\langle  f, \varphi\rangle_\Omega + \langle h, \varphi\rangle_\Gamma
\end{equation}
admits a unique solution, up to an additive constant. This solution $v$ satisfies the equation $\Delta v = f_{\vert \Omega}$ in $\Omega$ and $\nabla v\cdot\textit{\textbf n}_f = h \ \ \mbox{on }\Gamma$ in the sense of \eqref{d02-190118-e2}.

Let us consider now the following variational formulation:  Find $v\in H^{-s + 3/2}(\Omega)$ satisfying
\begin{equation}\label{forvarNeuF}
\forall \varphi \in H^{s+1/2}(\Omega), \quad \langle  \nabla v, \nabla \varphi \rangle_\Omega =  \langle  \textit{\textbf F}, \nabla\varphi\rangle_\Omega + \langle h, \varphi\rangle_\Gamma
\end{equation}
where $\textit{\textbf F}\in\textit{\textbf H}\,^{-s+1/2}(\Omega)$ and $h\in H^{-s}(\Gamma)$ satisfies $\langle h, 1\rangle = 0$. As above, \eqref{forvarNeuF}  admits a unique solution, up to an additive constant. Moreover \eqref{forvarNeuF} is equivalent to the following Neumann problem:  
$$
(\mathscr{L}'_N)\ \ \ \  \Delta v = \mathrm{div}\, \textit{\textbf F} \quad \ \mbox{in}\ \Omega \quad
\mbox{and } \quad (\nabla v -  \textit{\textbf F}) \cdot\textit{\textbf n} = h \ \ \mbox{on }\Gamma,
$$ 
Note that since $\textit{\textbf F}$ and $\nabla v$ are not sufficiently regular, $\textit{\textbf F} \cdot\textit{\textbf n}$ and $\nabla v\cdot\textit{\textbf n}$ have no sense contrarily to the difference $ (\nabla v -  \textit{\textbf F}) \cdot\textit{\textbf n}$.

And we can easily prove that for any extension $f$ of  $\textrm{div}\,\textit{\textbf F}\in H^{-s-1/2}(\Omega)$, we have the following relation:
$$
 \nabla v\cdot\textit{\textbf n}_f = h + \textit{\textbf F}\cdot\textit{\textbf n}_{f}  \ \ \mbox{on }\Gamma.
 $$ 
 Beside, for any $f\in [H^{s+1/2}(\Omega)]'$ there exist $\textit{\textbf F}\in \textit{\textbf H}\,^{-s+1/2}(\Omega)$ and $g\in H^{-s}(\Gamma)$ such that 
 \begin{equation*}\label{Decompf}
\forall \varphi \in H^{s+1/2}(\Omega), \quad \langle  f, \varphi \rangle_\Omega =  \langle  \textit{\textbf F}, \nabla\varphi\rangle_\Omega + \langle g, \varphi\rangle_\Gamma
\end{equation*}
To prove the above property, we can use the following theorem.\medskip

\noindent{\bf Theorem A.1.}
{\it Let $f\in  \left[H^{s+1/2}(\Omega)\right]'$. Then, there exists a unique $u\in H^{-s+3/2}(\mathbb{R}^{N})$ satisfying 
$$
u-\Delta u=0\quad  in \; \Omega'\quad  with \quad  \Omega'= \mathbb{R}^N\setminus\overline{\Omega}
$$ 
and a unique $h\in H^{-s}(\Gamma)$ such that
\begin{eqnarray*}\label{numlemmadual2}
	\forall\varphi\in H^{s+1/2}(\Omega),\quad \left\langle f,\varphi\right\rangle_{\Omega} = \int_{\Omega}u\varphi \, dx + \langle \nabla u, \nabla \varphi\rangle_\Omega +\left\langle h,\varphi\right\rangle_{\Gamma}.
\end{eqnarray*}
In particular
$$
f_{|_{\Omega}} = u-\Delta u\quad\mbox{in}\;\Omega
$$
and \begin{eqnarray*}
	\left\langle h, \varphi\right\rangle_{\Gamma}=-\left\langle \partial_\textit{\textbf n} u , \varphi\right\rangle_{\partial\Omega'}.
\end{eqnarray*}
}

The proof of this theorem will be given in a forthcoming paper.\medskip

Return now to the study of the Neumann problem $ (\mathscr{L}_N)$. The previous observations show that the two approaches, \eqref{forvarNeu} and \eqref{forvarNeuF} are equivalent. However the second one seems more convenient concerning the definition and the sense of the normal component of non regular vector fields. Another reason for the interest of this choice is given in the next theorem.\medskip

\noindent{\bf Theorem A.2.}
{\it   For any 
\begin{equation*}\label{HypdivFFcdotn}
\textit{\textbf F}\in\textit{\textbf H}\,^{1/2}(\Omega)\quad with \quad div \,\textit{\textbf F}\in  [H^{1/2}(\Omega)]' \quad and \quad\textit{\textbf F}\cdot \textit{\textbf n}\in L^2(\Gamma)
\end{equation*} 
and for any $h\in L^2(\Gamma)$ satisfying the compatibility condition $\langle h,  1\rangle_\Gamma = 0$, there exists a unique $u\in H^{3/2}(\Omega) $, up an additive constant, such that
\begin{equation*}\label{NeumanGeneralFc} 
\Delta u = \mathrm{div}\,\textit{\textbf F}\quad in \; \Omega \quad and \quad  \nabla u\cdot \textit{\textbf n} = \textit{\textbf F}\cdot \textit{\textbf n} +  h\quad  on\;  \Gamma .
 \end{equation*}
}
\begin{proof} Clearly we have the existence of solution $u\in H^1(\Omega)$ and also in $H^{3/2-s}(\Omega)$ for any $s < 1$. The regularity $H^{3/2}(\Omega) $ is an immediate consequence of Corollary \ref{d02-110118-th1a}.
\end{proof} 

\noindent{\bf Conjecture 3.  Solvability in }$ L^{s, p}(\Omega)$, {\bf with} $ 1/p < s < 1+1/p$. \medskip

We claim that for any $p_0 < p < p_0' $, with the same exponent $p_0$ as in Appendix A.1,  and for any $\textit{\textbf F}\in L^{p}_{-s+1/p}(\Omega) $ and $h\in B^{p}_{-s}(\Gamma)$, with $\langle h, 1\rangle = 0$, Problem $ (\mathscr{L}'_N)$ admits   a unique solution $u\in L^{p}_{-s+ 1+ 1/p}(\Omega)$, up to an additive constant.\medskip

\noindent{\bf Problem 4.} What happens for the extreme values $s = 1/p$ and $s = 1+1/p$?

\subsection{Biharmonic Problem}\label{Biha} 
As mentioned above, some ideas and arguments used for Laplace equation may be appropriate for the investigation of the Biharmonic problem, with different boundary conditions. \medskip

\noindent{\bf 1. Biharmonic problem with Dirichlet boundary conditions.} Let us first consider the case of Dirichlet boundary conditions:
\begin{equation*}
 (\mathscr{B}_D)\quad
 \begin{cases}
 \Delta^2 u\ =\ f \quad\textrm{ in }\ \Omega,\\
   u\ =\ g_0 \quad\textrm{ on }\ \Gamma,\\
  \partial_{\textit{\textbf n}} u\ =\ g_1 \quad\textrm{ on }\ \Gamma.
 \end{cases}
 \end{equation*}
 For $\Omega$ of class $\mathscr{C}^{0,1}$, it is proved in \cite{Dahlberg} that for $f= 0$ and any pair
\begin{equation*}\label{d06-140318-e1}
 (g_0, g_1) \in H^1(\Gamma)\times  L^2(\Gamma),
\end{equation*}
there  exists a unique solution $u\in H^{3/2}(\Omega)$ to  Problem $(\mathscr{B}_D)$ satisfying $\sqrt\varrho\,\nabla^2 u\in \textit{\textbf L}^2(\Omega)$.  \medskip

\noindent{{\bf Problem 5.} It would be interesting to see if, as for Laplace equation, one could obtain the $H^{5/2}$-regularity and for which choice of data one can have this result. \medskip

Concerning the problem $(\mathscr{B}_D)$ with $g_0 = g_1= 0$, Adolfsson and Pipher \cite{AP} have established the existence of a solution in $H^{2+s}(\Omega)$  if $f\in H^s(\Omega)$ and $-1/2 < s < 1/2$. They also showed in the same paper a similar result when $f = 0$ and $(g_0, g_1)$ belong to some Whitney array spaces denoted by $WA^2_{3/2+s}(\Gamma)$ following the characterization:
$$
g_0 \in H^1(\Gamma),\; g_1\in H^{1/2+s}(\Gamma) \quad \mathrm{and}\quad \nabla_\tau g_0 + g_1\textit{\textbf n} \in \textit{\textbf H}\,^{1/2+s}(\Gamma).
$$

\noindent{{\bf Problem 6.}  Is it possible to obtain the optimal regularity $H^{5/2}(\Omega)$, for appropriate data and corresponding to the case $s = 1/2$?\\

\noindent{\bf 2. Biharmonic problem with Navier boundary conditions.} The case of Navier boundary conditions 
\begin{equation*}
 (\mathscr{B}_{Na})\quad
 \begin{cases}
 \Delta^2 u\ =\ f \quad\textrm{ in }\ \Omega,\\
   u\ =\ g_0 \quad\textrm{ on }\ \Gamma,\\
 \Delta u =\ g_1 \quad\textrm{ on }\ \Gamma,
 \end{cases}
 \end{equation*}
 is completely open and particularly interesting. \medskip
 
 \noindent{{\bf Problem 7.}  Which assumptions on $f, g_0$ and $g_1$ are appropriate to get solution in $H^s(\Omega$? And for which corresponding values of $s$?\medskip
 
\noindent{\bf 3. Biharmonic problem with "Neumann"  boundary conditions.} For a bounded Lipschitz domain $\Omega$, with connected boundary, Verchota investigated in \cite{Verchota2} the following Neumann problem  
\begin{equation*}
 (\mathscr{B}_{Ne})\quad
 \begin{cases}
 \Delta^2 v\ =\ f \quad\textrm{ in }\ \Omega,\\\\
   \nu \Delta v + (1 - \nu)\frac{\partial^2 v}{\partial \textit{\textbf n}^2}\ = \ \Lambda_0 \quad\textrm{ on }\ \Gamma,\\\\

 \frac{\partial \Delta v } {\partial \textit{\textbf n}} + \frac{1 - \nu}{2}\frac{\partial}{\partial \boldsymbol{\tau}_{ij}}(\frac{\partial^2 v}{\partial \textit{\textbf n}\, \partial\boldsymbol{\tau}_{ij}}) \ = \ \Lambda_1 \quad\textrm{ on }\ \Gamma,
 \end{cases}
 \end{equation*}
 where $\nu$ is a constant known as the Poisson ratio, whose value corresponds to a particular physical situation. Here $\boldsymbol{\tau}_{ij} = n_i \textit{\textbf e}_j - n_j\textit{\textbf e}_i $ is an orthogonal vector field to the outward normal $\textit{\textbf n}$. He showed that if $-1/(N - 1) \leq \nu < 1$, $2- \varepsilon < p  < 2 + \varepsilon$ for some $\varepsilon > 0$ and
 $$
 f= 0, \quad \Lambda_0 \in L^p(\Gamma) \quad \mathrm{and}\quad  \Lambda_1 \in W^{-1, p}(\Gamma),
 $$
 then there exist solutions  to the Neumann problem satisfying in addition the estimate
 $$
 \Vert \nabla^2 u\Vert_{L^p(\Gamma)} \leq C (\Vert \Lambda_0  u \Vert_{L^p(\Gamma)} + \Vert \Lambda_1  u \Vert_{W^{-1, p}(\Gamma)}).
 $$
However, it is not specified in which Sobolev space belong the solutions. \medskip

\noindent{{\bf Problem 8.}  It would therefore be interesting on one hand to give more details on the solutions and, on the other hand, to study the properties of the  Steklov-Poincar\'e operator (Dirichlet-to-Neumann) corresponding to the Bi-Laplacian.

\subsection{Stokes Problem, Elasticity Equations}\label{SE}\smallskip 

One of the first works on Stokes system in Lipschitz domains was done by Fabes, Kenig and Verchota \cite{FKV}. They established the existence of a solution for the homogeneous problem in the case of boundary Dirichlet condition in $L^2(\Gamma)$ or $H^1(\Gamma)$ (see also the papers \cite{BS},  \cite{Shen}). Stokes operator with Neumann  boundary conditions is studied in \cite{MMW} (see also the book \cite{MW}).  \medskip

\noindent{{\bf Problem 9.}  Can the obtained results for the Laplacian in the present work be extended to the Stokes operator?  What happens for Navier, Navier-type or pressure boundary conditions?  \medskip

\noindent{{\bf Problem 10.} What about the elasticity equations? 

\medskip

\noindent{\small{\textit{Acknowledgments.} 
We are deeply indebted towards Professor  Martin Costabel, Professor David Jerison, Professor  Carlos Kenig and Professor S\'ebastien Tordeux.
Our exchanges have been extremely  fruitful for us.
Thanks a lot for them.
}
\medskip

\noindent{\bf Data Availability.} Data sharing not applicable to this article as no datasets were generated or analysed during
the current study.

\end{document}